\theoremstyle{plain}
\newtheorem{thm}{Theorem}[section]
\newtheorem{prop}[thm]{Proposition}
\newtheorem{lem}[thm]{Lemma}
\newtheorem{cor}[thm]{Corollary}
\newtheorem{claim}{Claim}
\theoremstyle{definition}
\newtheorem{defn}{Definition}
\theoremstyle{remark}
\newtheorem{remark}{Remark}
    \def\E{{\mathbb{E}}}         \def\N{{\mathbb{N}}}  \def\P{{\mathbb{P}}}  \def\R{{\mathbb{R}}}        \def\Z{{\mathbb{Z}}}
\def\cA{{\mathcal{A}}}  \def\cC{{\mathcal{C}}}  \def\cE{{\mathcal{E}}} \def\cF{{\mathcal{F}}}  \def\cH{{\mathcal{H}}}       \def\cO{{\mathcal{O}}}   \def\cR{{\mathcal{R}}}  \def\cT{{\mathcal{T}}}      
 \def\sB{{\mathscr{B}}}   \def\sE{{\mathscr{E}}}      \def\sK{{\mathscr{K}}}  \def\sM{{\mathscr{M}}}             
\def\tA{{\tilde{A}}}  \def\tC{{\tilde{C}}}                 \def\tT{{\tilde{T}}}  \def\tV{{\tilde{V}}}    
       \def\th{{\tilde{h}}}                \def\tx{{\tilde{x}}} \def\ty{{\tilde{y}}} 
     \def\tchi{{\widetilde{\chi}}}
                   \def\vT{{\vec{T}}}      
    \def\ve{{\vec{e}}}           \def\vp{{\vec{p}}}  \def\vtr{{\vec{r}}} \def\vs{{\vec{s}}} \def\vt{{\vec{t}}} \def\vu{{\vec{u}}}
\newcommand{\G}{\Gamma}
\newcommand{\Ga}{\Gamma}
\newcommand{\Si}{\Sigma}
\newcommand{\eps}{\epsilon}
\renewcommand\a{\alpha}
\renewcommand\d{\delta}
\newcommand\g{\gamma}
\renewcommand\l{\lambda}
\newcommand\s{\sigma}
\newcommand\z{\zeta}
\newcommand\Bin{\operatorname{Bin}}
\newcommand\Fix{{\operatorname{Fix}}}
\newcommand\Hom{{\operatorname{Hom}}}
\newcommand\Map{{\operatorname{Map}}}
\newcommand\Orb{\operatorname{Orb}}
\newcommand\Part{\operatorname{Part}}
\newcommand\past{\operatorname{Past}}
\newcommand\Prob{\operatorname{Pr}}
\newcommand\proj{\operatorname{pr}}
\newcommand\supp{\operatorname{supp}}
\newcommand\Sym{{\operatorname{Sym}}}
\newcommand\sym{\operatorname{Sym}}
\newcommand\indic[1]{\mathbbm{1}_{#1}}
\def\cc{{\curvearrowright}}
\newcommand{\resto}{\upharpoonright}
\newcommand{\pr}[1]{\left ( #1 \right)}
\newcommand{\abs}[1]{\left | #1 \right |}
\newcommand{\floor}[1]{\lfloor #1 \rfloor}
\begin{document}
\title{A topological dynamical system with two different positive sofic entropies}
\author{Dylan Airey \footnote{supported in part by NSF grant DGE-1656466}, Lewis Bowen\footnote{supported in part by NSF grant DMS-1900386}  and Yuqing Frank Lin\footnote{supported in part by NSF grant DMS-1900386}\\ University of Texas at Austin}
%\author{University of Hawaii}
%\author[Lewis Bowen]{Lewis Bowen$\dagger$}
%\address{Department of Mathematics\\
%University of Hawai'i--Manoa\\
%} %one \address command per author
%\email{lpbowen@math.hawaii.edu}
%%\thanks{$\dagger$ Supported in part by NSF grants DMS-??.}
\maketitle

\begin{abstract}
A sofic approximation to a countable group is a sequence of partial actions on finite sets that asymptotically approximates the action of the group on itself by left-translations. A group is sofic if it admits a sofic approximation. Sofic entropy theory is a generalization of classical entropy theory in dynamics to actions by sofic groups. However, the sofic entropy of an action may depend on a choice of sofic approximation. All previously known examples showing this dependence rely on degenerate behavior. This paper exhibits an explicit example of a mixing subshift of finite type with two different positive sofic entropies. The example is inspired by statistical physics literature on 2-colorings of random hyper-graphs.
\end{abstract}

\noindent
{\bf Keywords}: sofic entropy, random hyper-graphs\\
{\bf MSC}:37A35\\

\noindent
\tableofcontents

\section{Introduction}

The topological entropy of a homeomorphism $T:X\to X$ of a compact Hausdorff space $X$ was introduced in \cite{adler-konheim-mcandrew}.  It was generalized to actions of amenable groups via F\o lner sequences in the 1970s \cite{ollagnier-book} and to certain non-amenable groups via sofic approximations more recently \cite{kerr-li-variational}. It plays a major role in the classification and structure theory of topological dynamical systems.

To explain further, suppose $\G$ is a countable group with identity $1_\G$ and $\sigma:\Ga \to \sym(V)$ is a map where $V$ is a finite set and $\sym(V)$ is the group of permutations of $V$. It is not required that $\sigma$ is a homomorphism. Let $D \Subset \G$ (the symbol $\Subset$ denotes a finite subset) and $\d>0$. Then $\s$ is called 
\begin{itemize}
\item {\bf $(D,\d)$-multiplicative} if 
$$\#\{v\in V:~ \sigma(gh)v = \sigma(g)\sigma(h)v ~ \forall g,h \in D\} > (1- \delta )|V|,$$
\item {\bf $(D,\d)$-trace preserving} if 
$$\#\{v\in V:~ \sigma(f)v\ne v~\forall f\in D\setminus\{1_\G\} \} >(1- \delta) |V|,$$
\item {\bf $(D,\d)$-sofic} if it is both $(D,\d)$-multiplicative and $(D,\d)$-trace preserving.
\end{itemize}
A {\bf sofic approximation} to $\Ga$ consists of a sequence $\Si = \{\sigma_i\}_{i\in \N}$ of maps $\sigma_i:\Ga \to \sym(V_i)$ such that for all finite $D\subset \G$, $\d>0$ and all but finitely many $i$, $\sigma_i$ is $(D,\d)$-sofic. A group is {\bf sofic} if it admits a sofic approximation.  In this paper we will usually assume $|V_i| = i$.

If $\G$ acts by homeomorphisms on a compact Hausdorff space $X$ and a sofic approximation $\Si$ to $\G$ is given then the {\bf $\Si$-entropy} of the action is a topological conjugacy invariant, denoted by $h_\Si(\G \cc X) \in \{-\infty\} \cup [0,\infty]$. It is also called {\bf sofic entropy} if $\Si$ is understood. It was first defined in \cite{kerr-li-variational} where the authors obtain a variational principle connecting it with the previously introduced notion of sofic measure entropy \cite{bowen-jams-2010}. It is monotone under embeddings and additive under direct products but not monotone under factor maps. See \cite{MR4138907} for a survey.

A curious feature of this new entropy is that it may depend on the choice of sofic approximation. This is not always the case; for example, if $\G$ is amenable then sofic entropy and classical entropy always agree. However, there are examples of actions $\G \cc X$  by non-amenable groups $\G$ with sofic approximations $\Si_1,\Si_2$ satisfying
$$h_{\Si_1}(\G \cc X) = -\infty < h_{\Si_2}(\G \cc X).$$
See \cite[Theorem 4.1]{MR4138907}. The case $h_{\Si_1}(\G \cc X) = -\infty$ is considered degenerate: it implies that there are no good models for the action with respect to the given sofic approximation. Until this paper, it was an open problem whether a mixing action could have two different {\em non-negative} values of sofic entropy. Our main result is:

%We state the main result below, deferring precise definitions to \S \ref{sec:sofic-approximations}.

\begin{thm}\label{thm:main0}
There exists a countable group $\G$, a mixing action $\G \cc X$ by homeomorphisms on a compact metrizable space $X$ and two sofic approximations $\Si_1,\Si_2$ to $\G$ such that 
$$0< h_{\Si_1}(\G \cc X) < h_{\Si_2}(\G \cc X) <\infty.$$
\end{thm}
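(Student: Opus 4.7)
The plan is to realize $(\Gamma\cc X)$ as a subshift of finite type whose finitary models are the proper $2$-colorings of random $k$-uniform $d$-regular hypergraphs, and then to exploit the known gap between the ``uniform'' and ``planted'' random hypergraph ensembles in the condensation regime of hypergraph $2$-coloring. Fix $k\geq 3$ and $d\geq 2$ so that the random $d$-regular $k$-uniform hypergraph is typically properly $2$-colorable but sits above the condensation threshold; such $(k,d)$ exist by the work of Achlioptas--Coja-Oghlan and successors. Take $\Gamma = (\Z/k)^{*d}$ with generators $s_1,\ldots,s_d$ of order $k$, and let $X\subset\{0,1\}^\Gamma$ be the SFT consisting of those $x$ such that, for every $\gamma\in\Gamma$ and every $i\in\{1,\ldots,d\}$, the word $(x(\gamma),x(\gamma s_i),\ldots,x(\gamma s_i^{k-1}))$ is not monochromatic. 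Any sofic approximation $\sigma_n:\Gamma\to\sym([n])$ determines, up to $o(n)$ error from the trace-preserving condition, a $d$-regular $k$-uniform hypergraph $H(\sigma_n)$ on $[n]$ whose hyperedges are the orbits of $\sigma_n(s_i)$, and the good finitary models of $X$ relative to $\sigma_n$ are exactly the proper $2$-colorings of $H(\sigma_n)$.

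Define $\Sigma_1 = \{\sigma_n^{(1)}\}$ by taking each $\sigma_n^{(1)}(s_i)$ to be an independent uniformly random permutation of $[n]$ all of whose cycles have length $k$ (the \emph{uniform} model). Define $\Sigma_2 = \{\sigma_n^{(2)}\}$ by first sampling a balanced $\chi_n:[n]\to\{0,1\}$ uniformly and then sampling each $\sigma_n^{(2)}(s_i)$ uniformly among order-$k$ permutations of $[n]$ whose cycles are not $\chi_n$-monochromatic (the \emph{planted} model). In each case, with probability tending to $1$, the resulting $\sigma_n$ is $(D,\delta)$-sofic for any $D\Subset\Gamma$ and $\delta>0$: multiplicativity is automatic because $\{s_i^k=1\}$ are the only relations in $\Gamma$, and the trace-preserving condition follows from the vanishing density of short cycles in random $k$-cycle-permutations (the planted conditioning contributes only a bounded correction to the short-cycle statistics). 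Borel--Cantelli along a countable net of $(D,\delta)$ then yields deterministic sequences $\Sigma_1,\Sigma_2$ with the desired almost-sure properties.

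For $j=1,2$, comparing exact and $\delta$-approximate models contributes only $o(1)$ to the exponent, so $h_{\Sigma_j}(X) = \lim_n \tfrac{1}{n}\log Z_n^{(j)}$, where $Z_n^{(j)}$ is the number of proper $2$-colorings of $H(\sigma_n^{(j)})$. Under $\Sigma_1$, a second-moment/interpolation analysis adapted from the random-CSP literature (Coja-Oghlan and co-authors, building on Panchenko's interpolation) identifies this limit with the quenched entropy $s_q(k,d)$ almost surely; Azuma-type concentration of $\log Z_n^{(1)}$ makes the value deterministic. Under $\Sigma_2$, the planted distribution is the size-biasing of the uniform ensemble by $Z$, so that the typical value of $\tfrac{1}{n}\log Z_n^{(2)}$ is the planted entropy $s_p(k,d) := \lim_n \tfrac{1}{n}\log(\E[Z_n^2]/\E[Z_n])$. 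In the chosen regime one has $0 < s_q(k,d) < s_p(k,d) \leq \log 2$, which is the conclusion. Mixing of $(\Gamma\cc X)$ follows from the tree-like Cayley graph of $(\Z/k)^{*d}$: two locally admissible patterns on disjoint finite subtrees can be joined by freely coloring the intervening ``branches,'' since a single $k$-clique admits $2^k-2$ proper $2$-colorings independently of its boundary values.

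The main obstacle is the quantitative choice of $(k,d)$: one must simultaneously guarantee that $s_q(k,d)>0$, that $s_p(k,d)>s_q(k,d)$ strictly (the condensation gap), and that the random sofic approximations concentrate tightly enough for these exponents to be attained almost surely. The relevant thresholds are well-developed in the statistical-physics literature for large $k$, but porting the analysis to the \emph{regular} ``$k$-cycle permutation'' hypergraph ensemble coming from sofic approximations of $(\Z/k)^{*d}$ requires non-trivial adaptation, particularly in the second-moment step for $\Sigma_1$. A secondary obstacle is verifying that the planted conditioning in $\Sigma_2$ does not distort short-cycle statistics enough to destroy the trace-preserving condition, which requires a uniform control on the density of short cycles after the conditioning.
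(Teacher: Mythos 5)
Your high-level construction — the group $\Gamma=(\Z/k\Z)^{*d}$, the SFT $X$ of proper $2$-colorings of its Cayley hyper-tree, the ``uniform'' and ``planted'' random sofic approximations, and the reduction of sofic entropy to counting (almost-)proper colorings — matches the paper exactly, as does the mixing argument. But there is a structural gap in how you pass from random ensembles to deterministic sofic approximations, and it is precisely the point the paper is careful about.

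You assert that under $\Sigma_1$ the exponential growth rate of $Z$ converges almost surely, and deterministically, to a quenched entropy $s_q$ by ``Azuma-type concentration of $\log Z$,'' and similarly that under $\Sigma_2$ it concentrates on $s_p$; you then invoke Borel--Cantelli to upgrade the random ensembles to deterministic sequences. This does not go through as stated. First, $\log Z$ is not uniformly Lipschitz in single-permutation moves because $Z$ may vanish; more importantly, in the regime the argument must occupy (strictly above the plain second-moment threshold, to open a gap with the planted model) the quenched value $\lim n^{-1}\log Z$ under the uniform model is expected to be \emph{strictly below} the annealed value $f(d,k)=\log 2 + \tfrac{d}{k}\log(1-2^{1-k})$ (this is exactly the condensation phenomenon), and computing it exactly is a hard problem that the literature you cite has not settled for this regular permutation-orbit ensemble. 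The paper deliberately sidesteps it. What it proves (Key Lemma, part~(3)) is a large-deviations \emph{lower} bound: the event $|n^{-1}\log Z(\s)-f(d,k)|<\eps$ has $\P^u_n$-probability decaying \emph{at most subexponentially}, obtained via Paley--Zygmund applied to the ``good'' (small local cluster) colorings and a rigidity argument — the enhanced second moment. Combined with the superexponential decay of the non-sofic event (Lemma~\ref{lem:super-exponential}) and the Markov upper bound from the first moment, this makes a triple intersection of events non-empty for each large $n$, which yields a deterministic sequence $\s_n$ with $h_{\Sigma_1}=f(d,k)$ by pure existence, not by a.s.\ convergence. The value attained is the annealed free energy, achieved along an atypical but not-too-rare sequence of instances, and this is what circumvents the need to know the quenched value at all. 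The planted side is handled even more crudely: one only needs $\P^p_n\{Z\ge e^{nf_p}\}\ge e^{-cn}$ for some $f_p>f(d,k)$, which follows from the trivial $Z\le 2^n$ bound once $\E^p_n[Z]$ is shown to exceed $e^{nf(d,k)}$ exponentially. Your ``$s_p$'' identification is correct as the annealed planted first moment, but again concentration of the planted $\log Z$ is not needed and not proved.

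So: right group, right subshift, right two models, but the bridge from random to deterministic sofic approximations requires the large-deviations/subexponential statement, not concentration, and the enhanced second moment with the cluster/rigidity machinery is what supplies it. As written, your Borel--Cantelli step presupposes a concentration result you have not established, and whose conclusion (a.s.\ convergence to the quenched entropy) would in fact give a \emph{different}, and currently inaccessible, value of $h_{\Sigma_1}$.
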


\begin{remark}
The {\bf range of sofic entropies} for an action $\G \cc X$ is the set of all non-negative numbers of the form $h_\Si(\G \cc X)$ as $\Si$ varies over all sofic approximations to $\G$. By taking disjoint unions of copies of sofic approximations, it is possible to show the range of sofic entropies is an interval (which may be empty or a singleton). So for the example of Theorem \ref{thm:main0}, the range of sofic entropies is uncountable. 
\end{remark}

\begin{remark}
It remains an open problem whether there is a measure-preserving action $\G \cc (X,\mu)$ with two different non-negative sofic entropies. Theorem \ref{thm:main0} does not settle this problem because it is entirely possible that any invariant measure $\mu$ on $X$ with $h_{\Si_2}(\G \cc (X,\mu)) > h_{\Si_1}(\G \cc X)$ satisfies $h_{\Si_1}(\G \cc (X,\mu)) = -\infty$.
\end{remark}

%\begin{remark}
%There is some hope that one might use sofic entropy theory to build a theory of $\ell^2$-homology with coefficients in finite fields. 
%\end{remark}

In this paper we often assume $V_n = [n] := \{1,2,...,n\}$.

\subsection{Random sofic approximations}
We do not know of any explicit sofic approximations to $\G$ which are amenable to analysis. Instead, we study {\bf random sofic approximations}. For the purposes of this paper, these are sequences $\{\P_n\}_n$ of probability measures $\P_n$ on spaces of homomorphisms $\Hom(\G, \sym(n))$ such that, for any finite $D\subset \G$ and $\d>0$ there is an $\eps>0$ such that
$$\P_n(\s \textrm{ is } (D,\d)\textrm{-sofic}) > 1-n^{-\eps n}$$
for all sufficiently large $n$. Because $n^{-\eps n}$ decays super-exponentially, if $\Omega_n \subset \Hom(\G,\sym(n))$ is any sequence with an exponential lower bound of the form $\P_n(\Omega_n)>e^{-cn}$ (for some constant $c>0$) then there exists a sofic approximation $\Si=\{\s_n\}$ with $\s_n \in \Omega_n$ for all $n$. 

It is this non-constructive existence result that enables us to use random sofic approximations to prove Theorem \ref{thm:main0}. 

%\subsection{Proper 2-colorings of hyper-graphs}

%The main example is concerned with proper 2-colorings of hyper-graphs. 

\subsection{Proper colorings of random hyper-graphs from a statistical physics viewpoint}

The idea for our main construction comes from studies of proper colorings of random hyper-graphs. Although these studies have very different motivations than those that inspired this paper, the examples that they provide are roughly the same as the examples used to prove Theorem \ref{thm:main0}. The relevant literature and an outline is presented next. 

A {\bf hyper-graph} is a pair $G=(V,E)$ where $E$ is a collection of subsets of $V$. Elements of $E$ are called {\bf hyper-edges} but we will call them {\bf edges} for brevity's sake. $G$ is {\bf $k$-uniform} if every edge $e\in E$ has cardinality $k$. 

A {\bf 2-coloring} of $G$ is a map $\chi:V \to \{0,1\}$. An edge $e \in E$ is {\bf monochromatic} for $\chi$ if $|\chi(e)|=1$. A coloring is {\bf proper} if it has no monochromatic edges. 

%The idea for the main construction comes from the study of the following random hyper-graph model in \cite{MR2263010, coja-zdeb-hypergraph, MR3205212}. 

Let $H_k(n,m)$ denote a hyper-graph chosen uniformly among all ${ { n \choose k} \choose m}$ $k$-uniform hyper-graphs with $n$ vertices and $m$ edges. We will consider the number of proper 2-colorings of $H_k(n,m)$ when $k$ is large but fixed, and the ratio of edges to vertices $r:=m/n$ is bounded above and below by constants. 

This random hyper-graph model was studied in  \cite{MR2263010, coja-zdeb-hypergraph, MR3205212}. These works are motivated by the {\em satisfiability conjecture}. To explain, the {\bf lower satisfiability threshold} $r^-_{\textrm{sat}}=r^-_{\textrm{sat}}(k)$ is the supremum over all $r$ such that 
$$\lim_{n\to\infty} \Prob[H_k(n, \lceil r n \rceil )\textrm{ is properly 2-colorable}] = 1.$$
The {\bf upper satisfiability threshold} $r^+_{\textrm{sat}}=r^+_{\textrm{sat}}(k)$ is the infimum over all $r$ such that
$$\lim_{n\to\infty} \Prob[H_k(n, \lceil r n \rceil)\textrm{ is properly 2-colorable}] = 0.$$
The satisfiability conjecture posits that $r^-_{\textrm{sat}} = r^+_{\textrm{sat}}$. It is still open.

Bounds on these thresholds were first obtained in \cite{MR2263010} as follows. Let $Z(G)$ be the number of proper 2-colorings of a hyper-graph $G$. A first moment computation shows that
$$f_k(r) = \lim_{n\to\infty} n^{-1} \log \E[Z(H_k(n, \lceil rn \rceil))]$$
where  $f_k(r):=\log(2) + r \log(1-2^{1-k})$. Let $r_{\textrm{first}} = r_{\textrm{first}} (k)$ be such that $f_k(r_{\textrm{first}})=0$.  If $r>r_{\textrm{first}}$ then $f_k(r)<0$. Therefore $r^+_{\textrm{sat}} \le r_{\textrm{first}}$. 

Let $r_{\textrm{second}}$ be the supremum over numbers $r\ge 0$ such that the second moment $\E[Z(H_k(n, \lceil rn \rceil))^2]$ is equal to $\E[Z(H_k(n, \lceil rn \rceil))]^2$ up to sub-exponential factors. The Paley-Zygmund inequality gives the bound $r_{\textrm{second}} \le r^-_{\textrm{sat}}$. 

In \cite{MR2263010}, it is shown that 
\begin{eqnarray*}
r_{\textrm{first}} &=&  \frac{\log(2)}{2} 2^k - \frac{\log(2)}{2} + O(2^{-k}),\\
r_{\textrm{second}} &=& \frac{\log(2)}{2} 2^k - \frac{\log(2)+1}{2} + O(2^{-k}).
\end{eqnarray*}
So there is a constant-sized gap between the two thresholds. 

A more detailed view of the second moment is illuminating. But before explaining, we need some terminology.  Let $[n]$ be the set of natural numbers $\{1,2,..,n\}$. A coloring $\chi$ of $[n]$ is {\bf equitable} if $|\chi^{-1}(0)|=|\chi^{-1}(1)|$. We will assume from now on that $n$ is even so that equitable colorings of $[n]$ exist. Let $Z_e(G)$ be the number of equitable proper colorings of a hyper-graph $G$. A computation shows that $\E[Z(H_k(n, \lceil rn \rceil))]$ equals $\E[Z_e(H_k(n, \lceil rn \rceil))]$ up to sub-exponential factors. This enables us to work with equitable proper colorings in place of all proper colorings. This reduces the computations because there is only one equitable coloring up to the action of the symmetric group $\sym(n)$. 

A computation shows that the second moment factorizes as 
$$\E[Z_e(H_k(n, m))^2] = \E[Z_e(H_k(n, m))]\E[Z_e(H_k(n,m)) | \chi \textrm{ is proper}]$$
where $\chi:[n] \to \{0,1\}$ is any equitable 2-coloring. Let $H^\chi_k(n, m)$ be the random hyper-graph chosen by conditioning $H_k(n,m)$ on the event that $\chi$ is a proper 2-coloring. This is called the {\bf planted model} and $\chi$ is the {\bf planted coloring}. So computing the second moment of $Z_e(H_k(n, m))$ reduces to computing the first moment of $Z_e(H^\chi_k(n,m))$. 

The {\bf normalized Hamming distance} between colorings $\chi,\chi': [n] \to \{0,1\}$ is
$$d_n(\chi,\chi') = n^{-1} \#\{v\in [n]:~\chi(v) \ne \chi'(v)\}.$$
Let $Z^\chi(\d)$ be the number of equitable proper colorings $\chi'$ with $d_n(\chi,\chi')=\d$. Then
$$Z_e(H^\chi_k(n,m)) = \sum_\d Z^\chi(\d).$$
In  \cite{MR2263010}, it is shown that $\E[Z^\chi(\d)| \chi \textrm{ is proper}]$ is equal to $\exp(n \psi(\d))$ (up to sub-exponential factors) where $\psi$ is an explicit function.

Note that $\psi(\d)=\psi(1-\d)$ (since if $\chi'$ is a proper equitable coloring then so is $1-\chi'$ and $d_n(\chi,1-\chi')=1-d_n(\chi,\chi')$). A computation shows $\psi(1/2)=f_k(r)$. If  $r<r_{\textrm{second}}$ then $\psi(\d)$ is uniquely maximized at $\d=1/2$. However, if $r>r_{\textrm{second}}$ then the maximum of $\psi$ is attained in the interval $\d \in [0,2^{-k/2}]$. In fact, $\psi(\d)$ is negative for $\d \in [2^{-k/2}, 1/2 - 2^{-k/2}]$. So with high probability, there are no proper equitable colorings $\chi'$ with $d_n(\chi,\chi') \in [2^{-k/2}, 1/2 - 2^{-k/2}]$. This motivates defining the {\bf local cluster}, denoted $\cC(\chi)$, to be the set of all proper equitable 2-colorings $\chi'$ with $d_n(\chi,\chi') \le 2^{-k/2}$. 

The papers \cite{coja-zdeb-hypergraph, MR3205212} obtain a stronger lower bound on the lower satisfiability threshold using an argument they call the {\em enhanced second moment method}. To explain, we need some terminology. We say a proper equitable coloring $\chi$ is {\bf good} if the size of the local cluster $|\cC(\chi)|$ is bounded by $\E[Z_e(H_k(n,m))]$. One of the main results of \cite{coja-zdeb-hypergraph, MR3205212} is that $\Pr[ \chi \textrm{ is good} | \chi \textrm{ is proper}]$ tends to 1 as $n\to\infty$ with $m  = rn + O(1)$ and $r < r_{\textrm{second}} + \frac{1-\log(2)}{2} + o_k(1)$. An application of the Paley-Zygmund inequality to the number of good colorings yields the improved lower bound
 $$r_{\textrm{second}} + \frac{1-\log(2)}{2} + o_k(1) \le r^-_{\textrm{sat}}.$$

% Let $Z_g$ be the number of good equitable proper colorings.

%$\E[Z_g(H_k(n,m))]$ equals $\E[Z_e(H_k(n,m))]$ (up to sub-exponential factors) and yet 
 %$\E[Z_g(H_k(n,m))^2]$ equals $\E[Z_g(H_k(n,m))]^2$ (up to sub-exponential factors) for $r < r_{\textrm{second}} + \frac{1-\log(2)}{2} + o_k(1)$. So the Paley-Zygmund inequality applied to $Z_g(\cdot)$ leads to the improved lower bound
 %$$r_{\textrm{second}} + \frac{1-\log(2)}{2} + o_k(1) \le r^-_{\textrm{sat}}.$$
 
 The argument showing  $\Pr[ \chi \textrm{ is good} | \chi \textrm{ is proper}]\to 1$ is combinatorial. It is shown that (with high probability) there is a set $R \subset [n]$ with cardinality $|R| \approx (1-2^{-k})n$ which is rigid in the following sense:  if $\chi':[n] \to \{0,1\}$ is any proper equitable 2-coloring then either: the restriction of $\chi'$ to $R$ is the same as the restriction of $\chi$ to $R$ or $d_n(\chi',\chi)$ is at least $cn/k^t$ for some constants $c,t>0$. This rigid set is constructed explicitly in terms of local combinatorial data of the coloring $\chi$ on $H^\chi_k(n,m)$.
 
 In summary, these papers study two random models $H_k(n,m)$ and $H^\chi_k(n,m)$. When $r=m/n$ is in the interval $(r_{\textrm{second}}, r_{\textrm{second}} + \frac{1-\log(2)}{2})$, the typical number of proper colorings of $H_k(n, m)$ grows exponentially in $n$ but is smaller (by an exponential factor) than the expected number of proper colorings of $H^\chi_k(n,m)$. It is these facts that we will generalize, by replacing $H_k(n,m), H^\chi(n,m)$ with random sofic approximations to a group $\G$ so that the exponential growth rate of the number of proper colorings roughly corresponds with sofic entropy. 
 
Although the models that we study in this paper are similar to the models in  \cite{MR2263010, coja-zdeb-hypergraph, MR3205212}, they are different enough that we develop all results from scratch. Moreover, although the strategies we employ are roughly same, the proof details differ substantially. The reader need not be familiar with these papers to read this paper.
 
\subsection{The action}
In the rest of this introduction, we introduce the action $\G \cc X$ in Theorem \ref{thm:main0} and outline the first steps of its proof. So fix positive integers $k,d$. Let
$$\G=\langle s_1,\ldots, s_d:~ s_1^k = s_2^k =\cdots = s_d^k =1 \rangle$$
be the free product of $d$ copies of $\Z/k\Z$. 

The {\bf Cayley hyper-tree} of $\G$, denoted $G=(V,E)$, has vertex set $V=\G$. The edges are the left-cosets of the generator subgroups. That is, each edge $e\in E$ has the form $e=\{gs_i^j:~0\le j \le k-1\}$ for some $g \in \G$ and $1\le i \le d$.  

\begin{remark}
It can be shown by considering each element of $\G$ as a reduced word in the generators $s_1,...,s_d$ that $G$ is a hyper-tree in the sense that there exists a unique ``hyper-path" between any two vertices.  More precisely, for any $v,w \in V$, there exists a unique sequence of edges $e_1,..,e_l$ such that $v \in e_1$, $w \in e_l$, $|e_i \cap e_{i+1}| = 1$, $e_i \neq e_j$ for any $i \neq j$, and $v \notin e_{2}$, $w \notin e_{l-1}$.  More intuitively, there are no ``hyper-loops" in $G$. 
\end{remark}

The group $\G$ acts on $\{0,1\}^\G$ by $(gx)_f = x_{g^{-1}f}$ for $g,f \in \G, x \in \{0,1\}^\G$. Let $X\subset \{0,1\}^\G$ be the subset of proper 2-colorings. It is a closed $\G$-invariant subspace. Furthermore, $\G \cc X$ is topologically mixing: 

\begin{claim}For any nonempty open sets $A$, $B$ in $X$, there exists $N$ such that for any $g \in \G$ with $|g| > N$, $gA \cap B \neq \emptyset$.  Here $|g|$ denotes the shortest word length of representations of $g$ by generators $s_1,...,s_d$. 
\end{claim}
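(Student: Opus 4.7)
The plan is to reduce to cylinder sets and then build the required $z$ by pasting given colorings on two far-apart regions of the Cayley hyper-tree, using the hyper-tree's tree-like structure to control a single separating hyper-edge. Basic open sets in $X$ are of the form $[F,\sigma] := \{x \in X : x|_F = \sigma\}$ for $F \Subset \G$ and $\sigma : F \to \{0,1\}$, and such a set is nonempty precisely when $\sigma$ extends to some element of $X$. It therefore suffices to show: given $F_1, F_2 \Subset \G$ and elements $x_1, x_2 \in X$ with $x_i|_{F_i} = \sigma_i$, there exists $N$ such that for every $g \in \G$ with $|g|>N$ one can find $z \in X$ satisfying $z|_{F_2} = \sigma_2$ and $z_{gf} = \sigma_1(f)$ for all $f \in F_1$; such a $z$ certifies $z \in g[F_1,\sigma_1] \cap [F_2,\sigma_2]$.

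Choose $R$ so that $F_1 \cup F_2$ is contained in the hyper-tree ball of radius $R$ about $1_\G$, and set $N := 2R$. The uniqueness of hyper-paths noted in the preceding Remark allows us to treat the Cayley hyper-tree as a tree: removing a single hyper-edge splits the vertex set into exactly $k$ pieces, one per vertex of the removed edge. When $|g|>2R$, the unique hyper-path from $1_\G$ to $g$ contains at least $2R+1$ hyper-edges, so we may pick one, call it $e$, whose hyper-tree distance to each of $1_\G$ and $g$ strictly exceeds $R$. Removing $e$ yields pieces $C_0, \ldots, C_{k-1}$, with $v_j$ denoting the unique vertex of $e$ lying in $C_j$; the choice of $e$ forces $F_2 \subset C_0$ (the piece containing $1_\G$) and $gF_1 \subset C_i$ (the piece containing $g$) for some $i \neq 0$.

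Define $z : \G \to \{0,1\}$ by setting $z|_{C_0} := x_2|_{C_0}$, $z|_{C_i} := (gx_1)|_{C_i}$, and on each remaining $C_j$ any proper 2-coloring of the corresponding sub-hyper-tree (to be chosen). Each sub-hyper-tree admits a proper 2-coloring with either prescribed value at its root: color vertices by the parity of their hyper-tree distance from the root, so that every hyper-edge $e'$ of the sub-hyper-tree has its unique parent vertex at one parity and its $k-1$ child vertices at the other, hence is non-monochromatic. Every hyper-edge other than $e$ lies entirely in some $C_j$, so $z$ is proper on it. For $e$ itself: if $z(v_0) \neq z(v_i)$ it is already non-monochromatic; otherwise, since $k \geq 3$, pick any $j \notin \{0,i\}$ and arrange the coloring on $C_j$ so that $z(v_j) = 1 - z(v_0)$. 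The resulting $z$ lies in $X$ and by construction satisfies $z|_{F_2} = \sigma_2$ and $z_{gf} = \sigma_1(f)$ for $f \in F_1$.

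The only substantive difficulty is the final step of ensuring $z|_e$ is non-monochromatic; this requires a \emph{free} component $C_j$ disjoint from both $F_2$ and $gF_1$ on which to adjust a single color, and is where the hypothesis $k \geq 3$ is needed. (For $k = 2$ the Cayley hyper-tree is a $d$-regular tree with only two proper 2-colorings forming a two-point orbit under $\G$, and the action genuinely fails to be mixing.) Everything else is a routine verification that the pieced-together coloring is proper and has the prescribed restrictions.
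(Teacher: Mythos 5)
Your proof takes a genuinely different route from the paper. The paper first arranges that $gF_A$ and $F_B$ are disjoint and then fills in a proper coloring \emph{recursively} level by level, observing that at most one hyper-edge along the unique connecting hyper-path ends up with two forced vertices, which $k\geq 3$ can absorb. You instead single out one separating hyper-edge $e$ and paste together colorings of the $k$ components of the hyper-tree minus $e$, repairing $e$ via a free component. The pasting step is correct: the parity coloring makes each free component proper, every hyper-edge other than $e$ lies entirely in one component, and $k\geq 3$ supplies the spare vertex. This surgical version is arguably cleaner than the recursive filling.

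However, the choice $N=2R$ is not sufficient, and the sentence ``When $|g|>2R$, the unique hyper-path from $1_\G$ to $g$ contains at least $2R+1$ hyper-edges'' is false. The number of hyper-edges in that hyper-path equals the number of \emph{syllables} in the reduced form of $g$, not the word length $|g|$. A single syllable $s_i^{r}$ contributes up to $\lfloor k/2\rfloor$ to $|g|$ while crossing only one hyper-edge, so the hyper-path length can be as small as $|g|/\lfloor k/2\rfloor$; concretely $g=s_1^{\lfloor k/2\rfloor}$ has $|g|=\lfloor k/2\rfloor$ but a one-edge hyper-path. (There is also a small off-by-a-constant issue: even $2R+1$ edges would not yield an $e$ at hyper-tree distance strictly exceeding $R$ from both endpoints; roughly $2R+3$ are needed.) Both defects disappear if you take $N$ proportional to $Rk$, e.g.\ $N=(2R+3)\lfloor k/2\rfloor$. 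A cleaner repair, closer in spirit to the paper's bound: choose $N$ large enough, using the word-metric triangle inequality, that $gF_1$ and $F_2$ are disjoint as subsets of $\G$, and then let $e$ be any hyper-edge on the unique hyper-path between the connected sets $F_2$ and $gF_1$. Such an $e$ automatically meets each of $F_2$ and $gF_1$ in at most one vertex (a hyper-edge not contained in a connected sub-hyper-tree can meet it in at most one vertex, else there would be a hyper-cycle), and your pasting construction goes through unchanged without needing any lower bound on the hyper-path length from $1_\G$ to $g$.
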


\begin{proof}
It suffices to show the claim for $A$,$B$ being cylinder sets.  We make a further simplification by assuming each $A$, $B$ is a cylinder set on a union of hyper-edges, and a yet further simplification that each $A$, $B$ is a cylinder set on a connected union of hyper-edges.    Informally, by shifting the ``coordinates" on which $A$ depends so that they are far enough separated from the coordinates on which $B$ depends, we can always fill in the rest of the graph to get a proper coloring.  

More precisely, suppose $A = \{x \in X: x \resto F_A = \chi_A \}$, where $\cF_A \subset E$ such that for any $e_1,e_2 \in \cF_A$ there is a finite sequence of $f_1,\cdots, f_{\ell} \in \cF_A$ such that $e_1 \cap f_1 \neq \emptyset$, $f_i \cap f_{i+1} \neq \emptyset$, and $f_\ell \cap e_2 \neq \emptyset$, $F_A =\cup_{e\in \cF_A}e$, and $\chi_A : F_A \to \{0,1\}$ and similarly $B = \{x \in X: x \resto F_B = \chi_B \}$.  $\chi_A$ and $\chi_B$ must be bichromatic on each edge in their respective domains since $A$ and $B$ are nonempty.   

Let $N = \max\{|h|: h \in F_A\} + \max\{|h|: h \in F_B\} + k$.  Then it can be shown for any $g$ with $|g| > N$, that $g^{-1}F_A \cap F_B = \emptyset$.  It follows from our earlier remark that there exists a unique hyper-path connecting $g^{-1}F_A$ to $F_B$ (otherwise there would be a hyperloop in $G$).  Thus for example one can recursively fill in a coloring on the rest of $\G$ by levels of hyperedges - first the hyperedges adjacent to $g^{-1}F_A$ and $F_B$, then the next layer of adjacent hyperedges, and so on.  At each step, most hyperedges only have one  vertex whose color is determined, so it is always possible to color another vertex of an edge to make it bichromatic.  Only along the hyper-path connecting $g^{-1}F_A$ to $F_B$ at some step there will be a hyperedge with two vertices whose colors are already determined, but since $k$ is large there is still another vertex to color to make the edge bichromatic.

\end{proof}
We will show that for certain values of $k,d$, the action $\G \cc X$ satisfies the conclusion of Theorem \ref{thm:main0}. 

\subsection{Sofic entropy of the shift action on proper colorings}

Given a homomorphism $\s:\G \to \sym(V)$, let $G_\s=(V,E_\s)$ be the hyper-graph with vertices $V$ and edges equal to the orbits of the generator subgroups. That is, a subset $e \subset V$ is an edge if and only if $e = \{\s(s^j_i)v\}_{j=0}^{k-1}$ for some $1\le i \le d$ and $v \in V$. 

Recall that a hyper-graph is {\bf $k$-uniform} if every edge has cardinality $k$. We will say that a homomorphism $\s:\G \to \sym(V)$ is {\bf uniform} if $G_\s$ is $k$-uniform.  Equivalently, this occurs if for all $1\le i \le d$, $\s(s_i)$ decomposes into a disjoint union of $k$-cycles. 

A 2-coloring $\chi:V \to \{0,1\}$ of a hyper-graph $G$ is {\bf $\eps$-proper} if the number of monochromatic edges is $\le \eps |V|$. Using the formulation of sofic entropy in \cite{MR4138907} (which was inspired by \cite{MR3542515}), we show in \S \ref{sec:entropy} that if $\Si=\{\s_n\}_{n\ge 1}$ is a sofic approximation to $\G$ by uniform homomorphisms then the $\Si$-entropy of $\G\cc X$ is:
$$h_\Si(\G \cc X):=  \inf_{\eps>0} \limsup_{i\to\infty} |V_i|^{-1} \log \#\{\eps\textrm{-proper $2$-colorings of $G_{\s_i}$}\}.$$

\subsection{Random hyper-graph models}
\begin{defn}
Let $\Hom_{\rm{unif}}(\G,\sym(n))$ denote the set of all uniform homomorphisms from $\G$ to $\sym(n)$. Let $\P^u_n$ be the uniform probability measure on $\Hom_{\rm{unif}}(\G,\sym(n))$ and let $\E^u_n$ be its expectation operator. The measure $\P^u_n$ is called the {\bf uniform model}. We will always assume $n \in k\Z$ so that $\Hom_{\rm{unif}}(\G,\sym(n))$ is non-empty. In \S \ref{sec:reduction} we show that $\{\P^u_n\}_{n\ge 1}$ is a random sofic approximation. We will use the uniform model to obtain the sofic approximation $\Si_1$ which appears in Theorem \ref{thm:main0}.
\end{defn}

%A coloring $\chi:V \to \{0,1\}$ of a hyper-graph $G=(V,E)$ is {\bf $\eps$-proper} if the number of monochromatic edges is bounded by $\eps |E|$. 

Recall that if $V$ is a finite set, then a 2-coloring $\chi:V \to \{0,1\}$ is {\bf equitable} if $|\chi^{-1}(0)|=|\chi^{-1}(1)|$.  We assume from now on that $n$ is even so that equitable colorings of $[n]$ exist.

\begin{defn}
Fix an equitable coloring $\chi:[n] \to \{0,1\}$. Let $\Hom_{\chi}(\G,\sym(n))$ be the set of all uniform homomorphisms $\s:\G \to \sym(n)$ such that $\chi$ is proper as a coloring on $G_\s$. Let $\P^\chi_n$ be the uniform probability measure on $\Hom_{\chi}(\G,\sym(n))$ and let $\E^\chi_n$ be its expectation operator. The measure $\P^\chi_n$ is called the {\bf planted model} and $\chi$ is the {\bf planted coloring}. When $\chi$ is understood, we will write $\P^p_n$ and $\E^p_n$ instead of $\P^\chi_n$ and $\E^\chi_n$. In \S \ref{sec:reduction} we show that $\{\P^p_n\}_{n\ge 1}$ is a random sofic approximation. We will use the planted model to obtain the sofic approximation $\Si_2$ which appears in Theorem \ref{thm:main0}.
\end{defn}

\begin{remark}
If $\chi$ and $\chi'$ are both equitable $2$-colorings then there are natural bijections from $\Hom_{\chi}(\G,\sym(n))$ to $\Hom_{\chi'}(\G,\sym(n))$ as follows. Given a permutation $\pi \in \sym(n)$ and $\s:\G \to \sym(n)$, define $\s^\pi: \G \to \sym(n)$ by $\s^\pi(g) = \pi \s(g) \pi^{-1}$. Because $\chi$ and $\chi'$ are equitable, there exists $\pi \in \sym(n)$ such that $\chi = \chi' \circ \pi$. The map $\s \mapsto \s^\pi$ defines a bijection from $\Hom_{\chi}(\G,\sym(n))$ to $\Hom_{\chi'}(\G,\sym(n))$. Moreover $\pi$ defines an hyper-graph-isomorphism from $G_\s$ to $G_{\s^\pi}$. Therefore, any random variable on $\Hom(\G,\sym(n))$ that depends only on the hyper-graph $G_\s$ up to hyper-graph-isomorphism has the same distribution under $\P^\chi_n$ as under $\P^{\chi'}_n$. This justifies calling $\P^\chi_n$ {\em the} planted model. 
\end{remark}

\subsection{The strategy and a key lemma}

The idea behind the proof of Theorem \ref{thm:main0} is to show that for some choices of $(k,d)$, the uniform model admits an exponential number of proper 2-colorings, but it has exponentially fewer proper 2-colorings than the expected number of proper colorings of the planted model (with probability that decays at most sub-exponentially in $n$).

 To make this strategy more precise, we introduce the following notation.  Let $Z(\eps;\s)$ denote the number of $\eps$-proper 2-colorings of $G_\s$. A coloring is {\bf $\s$-proper} if it is $(0,\s)$-proper. Let $Z(\s)=Z(0;\s)$ be the number of $\s$-proper 2-colorings.

In \S \ref{sec:reduction}, the proof of Theorem \ref{thm:main0} is reduced to the Key Lemma: 
\begin{lem}[Key Lemma]\label{lem:key}
Let $f(d,k):=\log(2) + \frac{d}{k} \log(1-2^{1-k})$. Also let $r=d/k$. Then
\begin{eqnarray}
f(d,k)&=& \lim_{n\to\infty} n^{-1}\log \E_n^{u}[  Z(\s)]  = \inf_{\eps>0} \limsup_{n\to\infty} n^{-1}\log \E_n^{u}[  Z(\eps;\s)] \label{eqn:1}.
\end{eqnarray}
Moreover, for any
$$0<\eta_0 < \eta_1 < (1-\log 2)/2$$
there exists $k_0$ (depending on $\eta_0,\eta_1$) such that for all $k\ge k_0$ if 
$$r=d/k= \frac{\log(2)}{2} \cdot 2^k - (1+\log(2))/2 +\eta$$
for some $\eta \in [\eta_0,\eta_1]$ then
\begin{eqnarray}
f(d,k) &<& \liminf_{n\to\infty} n^{-1}\log\E_n^{p}[ Z(\s)] \label{eqn:2}.
\end{eqnarray}
Also,
\begin{eqnarray}
0&=& \inf_{\eps>0} \liminf_{n\to\infty} n^{-1} \log \left(\P_n^{u}\left(  \left| n^{-1}\log Z(\s) - f(d,k) \right| < \eps \right)\right). \label{eqn:3}
%1&=& \inf_{\eps>0} \lim_{n\to\infty} \P_n^{p}\left(  \left| n^{-1}\log \#Z(\s) - f(d,k) \right| < \eps \right).
\end{eqnarray}
In all cases above, the limits are over $n \in 2\Z \cap k\Z$.

\end{lem}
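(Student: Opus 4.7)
The plan is to establish the three identities by first and second moment calculations on the uniform and planted random homomorphism models, following the general schema of the statistical-physics papers cited in the introduction but adapted to our setting. The key structural fact throughout is that a uniform $\s\in\Hom_{\rm unif}(\G,\sym(n))$ decomposes each generator $\s(s_i)$ as a uniformly random product of $n/k$ disjoint $k$-cycles, so the edges of $G_\s$ behave, up to sub-exponential corrections, like $dn/k$ independent uniformly random $k$-subsets of $[n]$. For (\ref{eqn:1}) I would then write $\E^u_n[Z(\s)] = \sum_\chi \P^u_n(\chi \text{ is } \s\text{-proper})$; for $\chi$ with $\alpha n$ zeros, each edge is bichromatic with probability $1 - \alpha^k - (1-\alpha)^k + O(1/n)$, so summing $\binom{n}{\alpha n}(1 - \alpha^k - (1-\alpha)^k)^{dn/k}$ over $\alpha$ and applying Laplace's method yields a maximum at $\alpha=1/2$ with value $\exp(n f(d,k) + o(n))$. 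The $\eps$-proper version is handled identically, since the number of $2$-colorings with a defect profile of size at most $\eps n$ exceeds the number of exact proper colorings by at most a factor $\exp(O(\eps\log(1/\eps))n)$, which vanishes as $\eps\to 0$.

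For (\ref{eqn:2}), I would exploit the planted--uniform identity
\[
\E^u_n[Z_e(\s) F(\s)] = \E^u_n[Z_e(\s)]\cdot \E^p_n[F(\s)],
\]
valid for any observable $F$ and any fixed equitable $\chi$, which follows from the definition of the planted model and the symmetry of equitable colorings under $\sym(n)$. Applied to $F(\s)=Z(\s)$, it rewrites the planted first moment as an overlap-stratified sum $\E^p_n[Z(\s)] = \sum_{\d}\E^p_n[Z^\chi(\d;\s)]$, where $Z^\chi(\d;\s)$ counts $\s$-proper colorings at normalized Hamming distance $\d$ from the planted $\chi$. A direct computation using the random-$k$-cycle edge structure conditioned on $\chi$ being proper produces an explicit $\psi(\d)$ with $n^{-1}\log\E^p_n[Z^\chi(\d;\s)]\to \psi(\d)$ satisfying $\psi(1/2)=f(d,k)$ and $\psi(\d)=\psi(1-\d)$. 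Expanding $\psi$ in powers of $2^{-k}$ identifies the threshold $r_{\rm second}$ above which the maximum of $\psi$ on $[0,1/2]$ migrates from $\d=1/2$ to some $\d^{\ast}\in[0,2^{-k/2}]$, and one shows that in the window $r = \tfrac{\log 2}{2}2^k - \tfrac{1+\log 2}{2} + \eta$ with $\eta\in[\eta_0,\eta_1]$ the gap $\psi(\d^{\ast}) - \psi(1/2)$ is bounded below by a positive constant depending only on $\eta_0,\eta_1$ once $k$ is sufficiently large; Laplace summation then yields (\ref{eqn:2}).

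For (\ref{eqn:3}), the upper tail decays exponentially by Markov's inequality applied to (\ref{eqn:1}), so all the work lies in the lower tail. Since (\ref{eqn:2}) forces $\E^u_n[Z(\s)^2]/\E^u_n[Z(\s)]^2$ to be exponentially large, vanilla Paley--Zygmund on $Z(\s)$ itself is doomed, and I plan instead to adapt the enhanced second moment method of \cite{coja-zdeb-hypergraph,MR3205212}. Call a proper equitable $\chi$ \emph{good} if its local cluster $\cC(\chi) = \{\chi':d_n(\chi,\chi')\le 2^{-k/2},\ \chi'\text{ proper and equitable}\}$ satisfies $|\cC(\chi)|\le \E^u_n[Z_e(\s)]$. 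Using a rigidity construction one shows $\P^p_n(\chi\text{ is good}) = 1 - o(1)$; by the planted--uniform identity this translates into $\E^u_n[Z_{\rm good}(\s)] \sim \E^u_n[Z_e(\s)]$ for the restricted count $Z_{\rm good}(\s)$ of good proper equitable colorings. The pairwise-overlap contribution to $\E^u_n[Z_{\rm good}(\s)^2]$ splits into a ``close'' piece controlled by $|\cC(\chi)|$ and a ``far'' piece of overlap $\d\in[2^{-k/2},1/2]$ where $\psi$ is negative; together they give $\E^u_n[Z_{\rm good}(\s)^2]\le e^{o(n)}\E^u_n[Z_{\rm good}(\s)]^2$, and Paley--Zygmund then supplies $\P^u_n(Z(\s)\ge e^{n(f(d,k)-\eps)})\ge e^{-o(n)}$ as needed.

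The principal obstacle will be this rigidity construction. The rigid-set argument of \cite{coja-zdeb-hypergraph} is stated for Erd\H{o}s--R\'enyi $H_k(n,m)$, whereas our hyper-graphs $G_\s$ have each generator contributing a perfect $k$-matching of $[n]$ rather than independent random edges, which changes both the local neighborhood statistics and the correlations between edges in a second moment calculation. Reworking the local combinatorial argument so that it respects this cycle structure, and verifying that the resulting rigidity event really does have planted probability $1 - o(1)$ in the window of parameters covered by (\ref{eqn:2}), is the substantial technical content that must be rebuilt from scratch rather than cited directly.
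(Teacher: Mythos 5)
Your proposal tracks the paper's argument closely in all three parts: a first-moment/type-decomposition estimate for (\ref{eqn:1}), a planted second-moment computation producing an overlap function whose maximum on $[2^{-k/2},1-2^{-k/2}]$ sits at $1/2$ for (\ref{eqn:2}), and the enhanced second moment method with rigid sets for (\ref{eqn:3}), with the cycle-structured rigidity argument correctly identified as the main piece that must be rebuilt. Two small repairs before you execute: the planted--uniform identity $\E^u_n[Z_e F]=\E^u_n[Z_e]\,\E^p_n[F]$ holds only for observables $F$ invariant under hypergraph isomorphism (so that $\E^\chi_n[F]$ is independent of the choice of equitable $\chi$), not for arbitrary $F$; and for (\ref{eqn:1}) the passage from $\eps$-proper to proper should be phrased as a continuity statement about the expected count rather than a count of colorings (for fixed $\eps>0$ the raw number of $\eps$-proper colorings of $G_\s$ vastly exceeds the proper count), which is what the type decomposition in Lemma \ref{lem:first} and Theorem \ref{thm:almost} makes precise.
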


Equations (\ref{eqn:1}) and (\ref{eqn:2}) are proven in \S \ref{sec:1}  and \S \ref{sec:2} using first and second moment arguments respectively. This part of the paper is similar to the arguments used in \cite{MR2263010}.

Given $\s: \G \to \sym(n)$ and $\chi: [n] \to \{0,1\}$, let $\cC_\s(\chi)$ be the set of all proper equitable colorings $\chi':[n]\to\{0,1\}$ with $d_n(\chi,\chi')\le 2^{-k/2}$. In section \S \ref{sec:reduction2}, second moment arguments are used to reduce  equation (\ref{eqn:3}) to the following:

\noindent {\bf Proposition \ref{prop:cluster}}. {\it Let $0<\eta_0 < (1-\log 2)/2$. Then for all sufficiently large $k$ (depending on $\eta_0$), if 
$$r := d/k= \frac{\log(2)}{2} \cdot 2^k - (1+\log(2))/2 +\eta$$
for some $\eta$ with $\eta_0\le \eta <  (1-\log 2)/2$ then with high probability in the planted model, $|\cC_\s(\chi)| \leq  \E^u_n(Z_e)$. In symbols,
$$\lim_{n\to\infty} \P^\chi_n\big(|\cC_\s(\chi)| \leq  \E^u_n(Z_e)\big)=1.$$}

In \S \ref{sec:strategy}, Proposition \ref{prop:cluster} is reduced as follows. First, certain subsets of vertices are defined through local combinatorial constraints. There are two main lemmas concerning these subsets; one of which bounds their density and the other proves they are `rigid'. Proposition \ref{prop:cluster} is proven in \S \ref{sec:strategy} assuming these lemmas. 

The density lemma is proven in \S \ref{sec:Markov} using a natural Markov model on the space of proper colorings that is the local-on-average limit of the planted model. Rigidity is proven in \S \ref{sec:rigidity} using an expansivity argument similar to the way random regular graphs are proven to be good expanders. This completes the last step of the proof of Theorem \ref{thm:main0}.

{\bf Acknowledgements}. L.B. would like to thank Tim Austin and Allan Sly for helpful conversations. We would like to thank the anonymous reviewer for many corrections which have greatly improved the paper.

\section{Topological sofic entropy}\label{sec:entropy}

This section defines topological sofic entropy for subshifts using the formulation from \cite{MR3542515}. The main result is:

\begin{lem}\label{reduce:epsilon}
For any sofic approximation $\Si=\{\s_n\}$ with $\s_n \in \Hom_{\rm{unif}}(\G,\sym(n))$, 
$$h_\Si(\G \cc X) = \inf_{\eps>0} \limsup_{n\to\infty} n^{-1}\log Z(\eps;\s_n).$$
\end{lem}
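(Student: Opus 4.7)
My plan is to first recall from \cite{MR3542515} the formulation of topological sofic entropy for the subshift $X\subset\{0,1\}^\G$ as
$$h_\Si(\G\cc X)=\inf_{F\Subset\G}\inf_{\delta>0}\limsup_{n\to\infty} n^{-1}\log|\Map(X,F,\delta,\s_n)|,$$
where $\Map(X,F,\delta,\s)$ is the set of $x\in\{0,1\}^V$ such that for at least $(1-\delta)|V|$ many $v\in V$ the pullback $g\mapsto x(\s(g)v)$ restricted to $F$ lies in $X\resto F$. The lemma then reduces to a deterministic correspondence between good models and $\eps$-proper colorings of $G_{\s_n}$, made possible by the fact that each $\s_n$ is a genuine uniform homomorphism: for any $v$, $g_0\in\G$, and generator $s_i$, the pullback is bichromatic on the Cayley-edge $g_0\langle s_i\rangle$ iff $x$ is bichromatic on the $G_{\s_n}$-edge $\{\s_n(s_i^j g_0)v:~0\le j<k\}$, which uniformity guarantees is a genuine $k$-element set.

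To prove $\inf_\eps \limsup_n n^{-1}\log Z(\eps;\s_n)\le h_\Si$, I would fix $F\Subset\G$ and $\delta>0$, let $\mathcal{E}_F$ denote the set of Cayley-edges contained in $F$, and observe that for an $\eps$-proper $\chi$ the pullback at $v$ fails to lie in $X\resto F$ only when some $e=g_0\langle s_i\rangle\in\mathcal{E}_F$ has monochromatic pullback at $v$, i.e., when the corresponding $G_{\s_n}$-edge is $\chi$-monochromatic. For each of the at most $\eps n$ monochromatic edges $E$ and each such $e$, there are exactly $k$ such $v$'s (namely $\{\s_n(g_0)^{-1}u:~u\in E\}$), so the number of bad $v$'s is at most $k|\mathcal{E}_F|\eps n$. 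Choosing $\eps<\delta/(k|\mathcal{E}_F|)$ therefore gives $Z(\eps;\s_n)\le|\Map(X,F,\delta,\s_n)|$. This uses that a partial $2$-coloring of the Cayley hyper-tree with no monochromatic edges extends to a proper coloring, which follows by BFS from the no-hyper-loops property noted in the remark just before the mixing claim.

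For the reverse inequality I would specialize to $F=\bigcup_{i=1}^d\langle s_i\rangle$, whose only Cayley-sub-edges are $\langle s_1\rangle,\ldots,\langle s_d\rangle$. For this choice, the pullback at $v$ lies in $X\resto F$ iff every $G_{\s_n}$-edge through $v$ is bichromatic for $x$. If $x\in\Map(X,F,\delta,\s_n)$ then the set $B$ of ``bad'' vertices (those lying in some monochromatic edge) satisfies $|B|\le\delta n$, and double-counting $k\cdot M(x)=\sum_{e\textrm{ mono}}|e|\le\sum_{v\in B}d=d|B|\le d\delta n$ (with $M(x)$ the number of monochromatic edges) shows $x$ is $(d\delta/k)$-proper. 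Choosing $\delta<\eps k/d$ yields $|\Map(X,F,\delta,\s_n)|\le Z(\eps;\s_n)$, hence $h_\Si\le\inf_\eps\limsup_n n^{-1}\log Z(\eps;\s_n)$. Combining the two bounds proves the lemma. The only (quite modest) obstacle is careful bookkeeping of how many vertices each monochromatic edge can spoil and of the BFS-extendability of partial colorings on the hyper-tree; no slop from approximate multiplicativity is needed, since each $\s_n$ is an honest homomorphism.
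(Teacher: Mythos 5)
Your proof is correct and follows essentially the same route as the paper's: the same choice of $F=\bigcup_i\langle s_i\rangle$ and double-counting monochromatic-edge/bad-vertex pairs for $h_\Si\le\inf_\eps\limsup Z$, and the same ``$k|\cF|$ vertices spoiled per monochromatic edge'' count for the reverse inclusion $S(\eps';\s_n)\subset\Omega(\cO_\cF,\eps,\s_n)$. The only cosmetic differences are that you phrase things with $\Map(X,F,\delta,\s)$ and explicitly invoke BFS-extendability on the Cayley hyper-tree, whereas the paper works directly with the open sets $\cO_\cF$ (which makes that extendability observation unnecessary); both are standard reformulations of the same microstate count.
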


Let $\G$ denote a countable group, $\cA$ a finite set (called the {\bf alphabet}). Let $T = (T^g)_{g\in \G}$ be the shift action on $\cA^\G$ defined by $T^gx(f)= x(g^{-1}f)$ for $x \in A^\G$.   Let $X \subset \cA^\G$ be a closed $\G$-invariant subspace. We denote the restriction of the action to $X$ by $\G \cc X$. Also let $\Si=\{\s_i:\G \to \Sym(V_i)\}_{i \in \N}$ be a sofic approximation to $\G$. 

Given $\s:\G \to \Sym(V)$, $v \in V$ and $x:V \to \cA$ the {\bf pullback name of $x$ at $v$} is defined by
$$\Pi^\s_v(x) \in \cA^\G,\quad \Pi^\s_v(x)(g) = x_{\s(g^{-1})v} \quad \forall g \in \G.$$
For the sake of building some intuition, note that when $\s$ is a homomorphism, the map $v \mapsto \Pi^\s_{v}(x)$ is $\G$-equivariant (in the sense that $\Pi^\s_{\s(g)v}(x) = g \Pi^\s_{v}(x)$). In particular $\Pi^\s_{v}(x) \in \cA^\G$ is periodic. In general, we think of $\Pi^\s_v(x)$ as an approximate periodic point.

%(I'm putting in $g^{-1}$ instead of $g$ because I prefer to work with the left shift action.)

Given an open set $\cO \subset \cA^\G$ containing $X$ and an $\eps>0$, a map $x:V \to \cA$ is called an {\bf $(\cO,\eps,\s)$-microstate} if
$$\#\{ v \in V:~ \Pi^\s_v(x) \in \cO\} \ge (1-\eps)|V|.$$
Let $\Omega(\cO,\eps,\s) \subset \cA^V$ denote the set of all $(\cO,\eps,\s)$-microstates. Finally, the {\bf $\Si$-entropy} of the action is defined by
$$h_\Si(\G \cc X):= \inf_\cO \inf_{\eps>0} \limsup_{i\to\infty} |V_i|^{-1} \log \#\Omega(\cO,\eps,\s_i)$$
where the infimum is over all open neighborhoods of $X$ in $\cA^\G$. This number depends on the action $\G \cc X$ only up to topological conjugacy. It is an exercise in \cite{MR4138907} to show that this definition agrees with the definition in \cite{kerr-li-sofic-amenable}.  We include a proof in Appendix \ref{appendix:sofic-entropy} for completeness.

%**insert other standard properties or references here? **

%The road to proving Theorem \ref{thm:main0} passes through ``random sofic approximations''. It is not necessary to give a formal definition of this. All that is needed is the following definition. If $\Si=\{\s_i\}$ is a sequence of random maps $\s_i:\G \to \sym(V_i)$, then the {\bf sofic entropy} is defined
%$$h_\Si(\G \cc X):= \inf_\cO \inf_{\eps>0} \limsup_{i\to\infty} |V_i|^{-1} \log \E_{\s_i}[\#\Omega(\cO,\eps,\s_i)]$$
%where $\E_{\s_i}$ denotes expected value with respect to $\s_i$. 

%Note: Should show that this definition is equivalent to counting expected number of $\eps$-colorings in Lemma \ref{lem:key}(1).  

%Questions:
%1. Do we need to show that expected number of $\eps$-colorings is continuous at 0 with respect to planted model?
%2. Do we need to show that this random sofic entropy is a topological conjugacy invariant?  Probably not since we ultimately extract deterministic sofic approximations.  

\begin{proof}[Proof of Lemma \ref{reduce:epsilon}]
Let $\eps >0$ be given.  Let $S(\eps;\s_n) \subset 2^{V_n}$ be the set of $(\eps,\s_n)$-proper 2-colorings. Let $\cO_0 \subset 2^\G$ be the set of all  2-colorings $\chi:\G \to \{0,1\}$ such that for each generator hyper-edge $e \subset \G$, $\chi(e) = \{0,1\}$. A {\bf generator hyper-edge} is a subgroup of the form $\{s_i^j:~0\le j <k\}$ for some $i$. Note $\cO_0$ is an open superset of $X$.

 We claim that $\Omega(\cO_0, k\eps/d, \s_n) \subset S(\eps;\s_n)$.  To see this, let $\chi \in \Omega(\cO_0, k\eps/d, \s_n)$. Then $\Pi^{\s_n}_v(\chi) \in \cO_0$ if and only if all hyper-edges of $G_\s$ containing $v$ are bi-chromatic (with respect to $\chi$). So if $\Pi_v^{\s_n} \notin \cO_0$, then $v$ is contained in up to $d$ monochromatic hyperedges.  On the other hand, each monochromatic hyperedge contains exactly $k$ vertices whose pullback name is not in $\cO_0$. It follows that  $\chi \in S(\eps; \s_n)$. This implies $h_\Si(\G \cc X) \le \inf_{\eps>0} \limsup_{n\to\infty} n^{-1}\log Z(\eps;\s_n).$
 
 Given a finite subset $\cF$ of hyper-edges of the Cayley hyper-tree, let $\cO_\cF$ be the set of all $\chi \in 2^\G$ with the property that $\chi(e) = \{0,1\}$ for all $e \in \cF$. If $\cO'$ is any open neighborhood of $X$ in $2^\G$ then $\cO'$ contains $\cO_\cF$ for some $\cF$. To see this, suppose that there exist elements $\chi_\cF \in \cO_\cF\setminus \cO' $ for every finite $\cF$. Let $\chi$ be a cluster point of $\{\chi_\cF\}$ as $\cF$ increases to the set $E$ of all hyper-edges. Then $\chi \in X \setminus \cO'$, a contradiction. It follows that
 $$h_\Si(\G \cc X)= \inf_\cF \inf_{\eps>0} \limsup_{i\to\infty} |V_i|^{-1} \log \#\Omega(\cO_\cF,\eps,\s_i).$$

Next, fix a finite subset $\cF$ of hyper-edges of the Cayley hyper-tree. We claim that $S\left(\frac{\eps}{k|\cF|}; \s_n\right) \subset \Omega(\cO_\cF,\eps,\s_n)$.  To see this, let $\chi \in S\left(\frac{\eps}{k|\cF|}; \s_n\right)$ and $B(\chi,\s_n) \subset V_n$ be the set of vertices contained in a monochromatic edge of $\chi$. Now for $v \in V_n$, $\Pi_v^{\s_n}(\chi) \notin \cO_\cF$ if and only if $\Pi_v^{\s_n}(\chi)$ is monochromatic on some edge in $\cF$. This occurs if and only if there is an element $f \in \G$ in the union of $\cF$ such that  $\s_n(f^{-1})v \in B(\chi,\s_n)$.  There are at most $k |\cF||B(\chi,\s_n)|$ such vertices.  But $|B(\chi,\s_n)| \leq (\frac{\eps}{|\cF|})n$, so there are at most $k \eps n$ such vertices.  It follows that $\chi \in \Omega(\cO_\cF, \eps, \s_n)$. Therefore,
$$\inf_{\eps>0} \limsup_{n\to\infty} n^{-1}\log Z(\eps;\s_n) \le \inf_\cF \inf_{\eps>0} \limsup_{i\to\infty} |V_i|^{-1} \log \#\Omega(\cO_\cF,\eps,\s_i)= h_\Si(\G \cc X).$$

%(If no such subset exists, we could create a sequence of points $x_n \notin \cO'$ converging to $X$, contradicting that $\cO'$ is an open neighborhood of $X$).

    \end{proof}

\section{Reduction to the key lemma}\label{sec:reduction}

The purpose of this section is to show how Lemma \ref{lem:key} implies Theorem \ref{thm:main0}. This requires replacing the (random) uniform and planted models with (deterministic) sofic approximations. The next lemma facilitates this replacement.

%The only way that a sequence $\{\s_n\}$ of good homomorphisms can fail to be a sofic approximation is if there is some $g \in \G$ such that number of fixed points of $\s_n(g)$ is not $o(n)$. Interpreted geometrically, this means there are too many cycles in the hyper-graph $G_\s$ associated with $g$. 

%To make this precise, recall that $G_\s=(V_n,E_\s)$ denotes the hyper-graph with vertex set $V_n=[n]$ and, for $i=1,\ldots, d$, $i$-labeled hyper-edges of the form $\{\s(s_i^j)v\}_{j=0}^{k-1}$ for $v \in V_n$. A {\bf path} $p$ in $G_\s$ is a sequence $p=(e_1,\ldots, e_m)$ of hyper-edges $e_i \in E_\s$ such that $e_i \cap e_{i+1} \ne \emptyset$ and $e_i \ne e_{i+1}$ for $1\le i < m$. The {\bf label} of $p$ is $w=(w_1,\ldots, w_m) \in \{1,\ldots, d\}^m$ where each $e_i$ has label $w_i$. A path $p$ is a {\bf cycle} if $e_1 \cap e_m \ne \emptyset$ and $e_1 \ne e_m$. A cycle is a {\bf simple} if $e_i \cap e_j =\emptyset$ for $|i-j| >1$ unless $(i,j) \in \{(1,m), (m,1)\}$. A path $p$ {\bf starts at a vertex $v$} if $v \in e_1$. 

%A path is {\bf locally simple} if $|e_i \cap e_{i+1}| = 1$ for all $1\le i <m$. It is {\bf simple} if it is locally simple and $e_i \cap e_j =\emptyset$ for $|i-j| >1$.  
%A cycle is {\bf locally simple} if $|e_i \cap e_{i+1}| = 1$ for all $i \mod m$. 
 
%This will be shown in a series of lemmas. The next lemma shows that, with high probability as $n\to\infty$, the uniform model produces a sofic approximation to $\G$. 

\begin{lem}\label{lem:super-exponential}
Let $D \subset \G$ be finite and $\d>0$. Then there are constants $\eps,N_0>0$ such that for all $n > N_0$ with $n \in 2\Z \cap k\Z$,
$$\P^u_n\{\s: ~\s ~\rm{is}~\rm{not}~ (D,\d)\rm{-sofic} \} \le n^{-\eps n},$$
$$\P^p_n\{\s: ~\s ~\rm{is}~\rm{not}~ (D,\d)\rm{-sofic} \} \le n^{-\eps n}.$$
\end{lem}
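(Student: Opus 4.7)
Every $\s$ in the support of $\P^u_n$ and of $\P^p_n$ is by construction a genuine group homomorphism $\G \to \sym(n)$: the defining relations $s_i^k = 1$ of $\G$ are built into the uniformity requirement. Hence $(D,\d)$-multiplicativity is automatic for all such $\s$ and every choice of $(D,\d)$, and we need only control the trace-preserving condition. Taking a union bound over $f \in D \setminus \{1_\G\}$, it suffices to exhibit, for each such $f$, an $\eps > 0$ with
$$\P^u_n\bigl(\#\Fix(\s(f)) \ge \tfrac{\d n}{|D|}\bigr) \le n^{-\eps n}$$
for all large $n$ (and similarly for $\P^p_n$). Set $N_f(\s) := \#\Fix(\s(f))$ and $m := \lceil \d n /|D|\rceil$.

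\textbf{A factorial-moment bound in the uniform model.} The measure $\P^u_n$ is invariant under conjugation by $\sym(n)$, so $p_{f,m} := \P^u_n(S \subseteq \Fix(\s(f)))$ is the same for every $S \subset [n]$ with $|S|=m$. Markov applied to the $m$-th factorial moment then yields
$$\P^u_n(N_f \ge m) \;\le\; \E^u_n \binom{N_f}{m} \;=\; \binom{n}{m}\, p_{f,m}.$$
The combinatorial heart of the proof is the estimate
$$p_{f,m} \;\le\; (C/n)^m \qquad (\star)$$
for some constant $C = C(f,k,d)$. Granted $(\star)$, we get $\P^u_n(N_f \ge m) \le \binom{n}{m}(C/n)^m \le (eC|D|/\d)^m n^{-m}$, which is $\le n^{-\eps n}$ for $\eps := \d/(2|D|)$ and all large $n$.

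\textbf{Proof sketch of $(\star)$.} Write $f = s_{i_1}^{a_1} \cdots s_{i_\ell}^{a_\ell}$ in reduced form, with $i_j \ne i_{j+1}$ and $1 \le a_j \le k-1$. The event $\{S \subseteq \Fix(\s(f))\}$ asserts that for each $v \in S$ there is an $\ell$-step closed walk $v = u_0, u_1, \ldots, u_\ell = v$ in $G_\s$ with $u_j = \s(s_{i_j})^{a_j}u_{j-1}$. I would sum the count of valid $\s$ over all choices of intermediate sequences $(u_1^{(v)}, \ldots, u_{\ell-1}^{(v)})_{v\in S}$, using that $\#\Hom_{\rm{unif}}(\G,\sym(n)) = \bigl[n!/(k^{n/k}(n/k)!)\bigr]^d$ and that, within each generator's permutation (uniform among cycle-type $(k,\ldots,k)$ perms), prescribing a single arrow cuts the count by $\Theta(1/n)$. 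Each new vertex of $S$ forces at least one hitherto-unused arrow constraint (collisions between the $m$ walks only increase the number of forced constraints), yielding the $m$-fold product of $O(1/n)$ factors claimed in $(\star)$.

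\textbf{Transfer to the planted model, and main obstacle.} Since $\P^p_n = \P^u_n(\,\cdot \mid \chi \text{ is proper})$ and a direct first-moment computation of the type carried out in \S\ref{sec:1} gives $\P^u_n(\chi \text{ is proper}) = \E^u_n[Z_e]/\binom{n}{n/2} \ge e^{-C'n}$ for some $C' = C'(k,d)$, we obtain
$$\P^p_n(N_f \ge m) \;\le\; e^{C'n}\,\P^u_n(N_f \ge m) \;\le\; e^{C'n} n^{-\eps n},$$
which is $\le n^{-\eps' n}$ for any $\eps' < \eps$ once $n$ is large enough that $(\eps-\eps')\log n > C'$. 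A union bound over $f \in D\setminus\{1_\G\}$ completes the proof. The main obstacle is $(\star)$: keeping the bookkeeping honest for an arbitrary reduced word $f$, especially when distinct closed walks can share vertices or hyper-edges, is the technical heart, and either an induction on $\ell$ or a configuration-model-style reparameterization of $\Hom_{\rm{unif}}(\G,\sym(n))$ seems to be the cleanest route.
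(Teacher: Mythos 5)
The overall skeleton of your argument matches the paper's: multiplicativity is automatic because $\P^u_n$ and $\P^p_n$ are supported on honest homomorphisms, a union bound reduces the trace condition to a single word $f \in D\setminus\{1_\G\}$, and one then bounds $\P(N_f \ge m)$ by a first-moment count $\binom{n}{m}\cdot(\text{per-subset probability})$ — the paper does exactly this after its third reduction. Your transfer to the planted model, though, is genuinely different and cleaner than what the paper does. The paper simply asserts ``the planted model is similar,'' whereas you observe that $\P^p_n = \P^u_n(\,\cdot \mid \chi\text{ proper})$ and that Theorem \ref{thm:1moment} (which is proved independently of Lemma \ref{lem:super-exponential}, so there is no circularity) gives $\P^u_n(\chi\text{ proper}) \ge e^{-C'n}$, so that a sub-exponential denominator cannot spoil a super-exponential bound on the numerator. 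That conditioning step is a worthwhile simplification and I would keep it.

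The genuine gap is $(\star)$, which you yourself flag as ``the technical heart.'' The parenthetical justification — ``collisions between the $m$ walks only increase the number of forced constraints'' — is not correct as stated and is precisely the issue the paper's ``second reduction'' is engineered to handle. Two failure modes are lurking. First, if the closed walk from $v$ is not a simple hyper-cycle (it re-enters a hyper-edge, or a proper sub-walk already closes up on a shorter word $w'$), then the final arrow of $v$'s walk may coincide with an arrow already forced by an earlier walk or by a sub-walk, so the ``one fresh constraint per new vertex of $S$'' accounting breaks, and one fresh constraint is exactly what $(\star)$ is spending. Second, even when every individual walk is simple, two such simple cycles can share vertices, and you cannot literally multiply $O(1/n)$ factors without first making them vertex-disjoint. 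The paper addresses (i) by showing that if $\sigma(w)v = v$ without $v$ representing a simple cycle, then some nearby $v_0$ represents a simple or simple degenerate cycle for a word $w'$ of length $\le |w|+k$ drawn from a finite list, reducing to finitely many clean cases; and addresses (ii) by observing a vertex on a simple cycle has only $O((kl)^2)$ neighbours whose simple cycles can intersect it, so one passes to a vertex-disjoint sub-collection of proportional size. Only after these reductions does the chain-rule bound $\P(F_{i+1}\mid F_i) \le C/n$ (the analogue of your per-step $\Theta(1/n)$) go through. So while $(\star)$ is very likely \emph{true} in the regime you need, proving it honestly would essentially force you to re-derive the simple-cycle / bounded-overlap machinery; as it stands the sketch leaves the hard part unproved.
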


\begin{proof}
The proof given here is for the uniform model. The planted model is similar.

The proof begins with a series of four reductions. By taking a union bound, it suffices to prove the special case in which $D=\{w\}$ for $w \in \G$ nontrivial. (This is the first reduction).

Let $w = s_{i_l}^{r_l}\cdots s_{i_1}^{r_1}$ be the {\bf reduced form} of $w$. This means that $i_j \in \{1,\ldots, d\}$, $i_j \ne i_{j+1}$ for all $j$ with indices mod $l$ and $1\le r_j <k$ for all $j$. Let $|w|=r_1+\cdots + r_l$ be the {\bf length} of $w$. 

For any $g \in \G$, the fixed point sets of $\s(gwg^{-1})$ and $\s(w)$ have the same size. So after conjugating if necessary, we may assume that either $l=1$ or $i_1 \ne i_l$. 

For $1\le j\le l$, the {\bf $j$-th beginning subword} of $w$ is the element $w_j = s_{i_j}^{r_j}\cdots s_{i_1}^{r_1}$. Given a vertex $v \in V_n$ and $\s \in \Hom_{\rm{unif}}(\G,\sym(n))$, let $p(v,\s)=(e_1,\ldots, e_l)$ be the path defined by: for each $j$, $e_j$ is the unique hyper-edge of $G_\s$ labeled $i_j$ containing $\s(w_j)v$.  A vertex $v \in V_n$ {\bf represents a $(\s,w)$-simple cycle} if $\s(w)v=v$ and for every $1\le a<b \le l$, either 
\begin{itemize}
\item $e_a \cap e_b =\emptyset$,
\item $b=a+1$ and $|e_a \cap e_b| = 1$,
\item or $(a,b)=(1,l)$ and $|e_a \cap e_b|=1$.  
\end{itemize}

We say that $v$ {\bf represents a $(\s,w)$-simple degenerate cycle} if $\s(w)v = v$ and $l = 2$ and $|e_1 \cap e_2| \ge 2$.

If $\s(w)v = v$ then either 
\begin{itemize}
\item $v$ represents a $(\s,w)$-simple cycle,
\item there exists nontrivial $w' \in \G$ with $|w'| \le |w|+k$ such that some vertex $v_0 \in \cup_j e_j$ represents a $(\s,w')$-simple cycle,
\item or there exists nontrivial $w' \in \G$ with $|w'| \le |w|+k$ such that some vertex $v_0 \in \cup_j e_j$ represents a $(\s,w')$-simple degenerate cycle.
\end{itemize}
  So it suffices to prove there are constants $\eps,N_0>0$ such that for all $n > N_0$,
$$\P^u_n\big\{\s: ~\#\{v \in [n]:~v \textrm{ represents a $(\s,w)$-simple cycle} \}\ge \d n \big\}  \le n^{-\eps n}.$$ and $$\P^u_n\big\{\s: ~\#\{v \in [n]:~v \textrm{ represents a $(\s,w)$-simple degenerate cycle} \}\ge \d n \big\}  \le n^{-\eps n}.$$
(This is the second reduction).

Two vertices $v,v' \in V_n$ {\bf represent vertex-disjoint $(\s,w)$-cycles} if $p(v,\s)=(e_1,\ldots, e_l), p(v',\s)=(e'_1,\ldots, e'_l)$ and $e_i \cap e'_j =\emptyset$ for all $i,j$. 

Let $G_n(\d,w)$ be the set of all $\s \in \Hom_{\rm{unif}}(\G,\sym(n))$ such that there exists a subset $S \subset [n]$ satisfying
\begin{enumerate}
\item $|S| \ge \d n$,
\item every $v \in S$ represents a $(\s,w)$-simple cycle,
\item the cycles $p(v,\s)$ for $v \in S$ are pairwise vertex-disjoint.
\end{enumerate}

If $v$ represents a simple $(\s,w)$-cycle then there are at most $(kl)^2$ vertices $v'$ such that $v'$ also represents a simple $(\s,w)$-cycle but the two cycles are not vertex-disjoint.  Since this bound does not depend on $n$, it suffices to prove there exist $\eps>0$ and $N_0$ such that 
$$\P^u_n(G_n(\d,w)) \le n^{-\eps n}$$
for all $n \ge N_0$.  (This is the third reduction.  The argument is similar for simple degenerate cycles).

Let $m=\lceil \d n \rceil$ and $v_1,\ldots, v_m$ be distinct vertices in $[n]=V_n$. For $1\le i \le m$, let $F_i$ be the set of all $\s \in \Hom_{\rm{unif}}(\G,\sym(n))$ such that for all $1\le j \le i$
\begin{enumerate}
\item $v_j$ represents a $(\s,w)$-simple cycle,
\item the cycles $p(v_1,\s),\ldots, p(v_i,\s)$ are pairwise vertex-disjoint.
\end{enumerate}

By summing over all subsets of size $m$, we obtain
$$\P^u_n(G_n(\d,w)) \le {n \choose m } \P^u_n(F_m).$$
Since ${n \choose m} \approx e^{H(\d,1-\d)n}$ grows at most exponentially, it suffices to show  there exist $\eps>0$ and $N_0$ such that $\P^u_n(F_m) \le n^{-\eps n}$ for all $n \ge N_0$. (This is the fourth reduction.  The argument is similar for simple degenerate cycles).

Set $F_0= \Hom_{\rm{unif}}(\G,\sym(n))$. By the chain rule
$$\P^u_n(F_m) = \prod_{i=0}^{m-1} \P^u_n(F_{i+1} | F_i ).$$

In order to estimate $\P^u_n(F_{i+1} | F_i )$, $F_i$ can be expressed a disjoint union over the cycles involved in its definition. To be precise, define an equivalence relation $\cR_i$ on $F_i$ by: $\s,\s'$ are $\cR_i$-equivalent if for every $1\le j \le i$, $1\le q \le l$ and $r>0$ 
$$\s(s_{i_q}^rw_q)v_j = \s'(s_{i_q}^rw_q)v_j.$$
In other words, $\s,\s'$ are $\cR_i$-equivalent if they define the same paths according to all vertices up to $v_i$ (so $p(v_j,\s)=p(v_j,\s')$) and their restrictions to every edge in these paths agree. Of course, $F_i$ is the disjoint union of the $\cR_i$-classes.  Note that $\cR_0$ is trivial (everything is equivalent). 

In general, if $A, B_1,\ldots, B_m$ are measurable sets and the $B_i$'s are pairwise disjoint then $\P(A| \cup_i B_i)$ is a convex combination of $\P(A | B_i)$ (for any probability measure $\P$). Therefore, $\P^u_n(F_{i+1} | F_i)$ is a convex combination of probabilities of the form $\P^u_n(F_{i+1} | B_i)$ where $B_i$ is an $\cR_i$-class. 

Now fix a $\cR_i$-class $B_i$ (for some $i$ with $0\le i < m$). Let $K$ be the set of all vertices covered by the cycles defining $B_i$. To be precise, this means $K$ is the set of all $u \in [n]=V_n$ such that there exists an edge $e$ with $u\in e$ such that $e$ is contained in a path $p(v_j,\s)$ with $1\le j \le i$ and $\s \in B_i$. Since each path covers at most $kl$ vertices, $|K| \le ikl$. 

  If $l > 1$ (the case $l=1$ is similar), fix subsets $e_1,\ldots, e_{l-1} \subset [n]$ of size $k$. Conditioned on $B_i$ and the event that the first $(l-1)$ edges of $p(v_{i+1},\s)$ are $e_1,\ldots, e_{l-1}$, the $\P^u_n$-probability that $v_{i+1}$ represents a simple $(\s,w)$-cycle vertex-disjoint from $K$ is bounded by the probability that a uniformly random $k-1$-element subset of 
$$[n] \setminus \left(\bigcup_{2\le j \le l-1} e_j \cup K\right)$$
contains $v_{i+1}$.  Since 
$$ \left|\bigcup_{2\le j \le l-1} e_j \cup K\right| \le (i+1)kl \le m kl = kl \lceil \d n \rceil,$$
this probability is bounded by $C/n$  where $C=C(w,d,k,\d)$ is a constant not depending on $n$ or the choice of $B_i$. It follows that $\P^u_n(F_{i+1}|F_i) \le C/n$ for all $0\le i \le m-1$ and therefore
$$\P^u_n(F_m) \le (C/n)^m \le (C/n)^{\d n}.$$
This implies the lemma (the argument is similar for simple degenerate cycles).

\end{proof}

%The next lemma shows how to re-interpret sofic entropy in terms of the number of $\epsilon$-proper colorings.

\begin{proof}[Proof of Theorem \ref{thm:main0} from Lemma \ref{lem:key}]

%For $\prod_{n \ge 1} \P^u_{nk}$-a.e. $\Si=(\s_k, \s_{2k}, \ldots)$ the following hold:
%\begin{enumerate}
%\item $\Si$ is a sofic approximation to $\G$,
%\item $h_\Si(\G \cc X) \le f(d,k)$,
%\item $h_\Si(\G \cc X) \ge f(d,k)$.
%\end{enumerate}
%This is implied by Lemma \ref{lem:super-exponential} and Lemma \ref{lem:key} equations (1) and  (3) respectively. 

Choose $d,k,\eta_0, \eta_1, \eta$ according to the hypotheses of Lemma \ref{lem:key} so that $\eta \in [\eta_0,\eta_1]$, $k\ge k_0$ and all of the conclusions to Lemma \ref{lem:key} hold. Construct $\G \cc X$ according to Section 1.3.  We will always take $n \in 2\Z \cap k\Z$.  We will first show the existence of a sofic approximation $\Sigma_1 = \{\s_n\}$ to $\G$ such that 
$$\inf_{\eps>0} \limsup_{n\to\infty} n^{-1}\log Z(\eps;\s_n) = f(d,k).$$ 
Then by Lemma \ref{reduce:epsilon}, we will have $h_{\Sigma_1}(\G \cc X) = f(d,k) > 0$.

Observe that Lemma \ref{lem:key}(1) implies the following: for every $\beta>0$ there exists $\eps(\beta)>0$ (with $\eps$ decreasing as $\beta$ decreases), $N$, and $\xi(\beta,\eps,N) > 0$ such that for $n > N$, $\P^u_n(n^{-1} \log (Z(\eps;\s_n))  - f(d,k)\geq \beta) < e^{-\xi n}$.  In other words, the number of $\eps$-proper colorings can only exponentially exceed $f(d,k)$ with exponentially small probability.  This is because the negation of the above would imply that $\inf_{\eps>0} \limsup_{n\to\infty} n^{-1}\log \E_n^{u}[  Z(\eps;\s)] > f(d,k)$, contradicting (1).  Let $F_{n,\beta,\eps}$ be the event that $n^{-1}\log [Z(\eps;\s_n)]  - f(d,k)\geq \beta$.

%Let $G(\beta, n) = \P_n^{u}\left(| n^{-1}\log Z(\s) - f(d,k)|< \beta\right)$. Lemma \ref{lem:key}(3) implies $G$ decays at most subexponentially in $n$. Precisely, for any $c>0$ there exists $N$ such that $n>N$ implies  $G(n) > e^{-cn}$. 

Let $H_{n,\beta}$ be the event that $|n^{-1}\log Z(\s_n) - f(d,k)|<\beta$.  Lemma \ref{lem:key}(3) implies $\P_n^{u}(H_{n,\beta})$ decays at most subexponentially in $n$. Precisely, for any $c>0$ there exists $N=N(c,\beta)$ such that $n>N$ implies  $\P_n^{u}(H_{n,\beta}) > e^{-cn}$.

%Our goal is to show that the existence of a sofic approximation such that for every $\beta>0$ there exists $\eps$ and $N$ such that $n>N$ implies $|n^{-1}\log Z(\eps;\s_n) - f(d,k)| < \beta$. 
Let $I_{n,\beta,\eps}$ be the event that $|n^{-1}\log Z(\eps;\s_n) - f(d,k)| < \beta$.  Notice that since $Z(\eps;\s_n) \geq Z(\s_n)$ for any $\eps$, $F_{n,\beta,\eps}^c \cap H_{n,\beta} \subset I_{n,\beta,\eps}$.

Consider a decreasing sequence $\beta_m \to 0$, and $\eps_m$ and $\xi_m$ depending on $\beta_m$ as discussed above.  Since $\P_n^{u}(H_{n,\beta})$ is decaying only subexponentially, we can choose an increasing sequence $K_m$ satisfying, for each $m$, $\P_n^{u}(H_{n,\beta_m}) > 2\sum_{i=1}^m e^{-\xi_in}$ for all $n > K_m$.  Now it follows that $n>K_m$ implies $\P^u_n(\cap_{j=1}^m F^c_{n,\beta_j,\eps_j} \cap H_{n,\beta_j}) \geq \P_n^{u}(H_{n,\beta_m})  - \sum_{i=1}^m e^{-\xi_in} \geq e^{-\xi_1 n}$. By the observation in the preceding paragraph, $\P^u_n(\cap_{j=1}^m I_{n,\beta_j,\eps_j}) \geq e^{-\xi_1 n}$.  

Given $\d>0$ and $D \subset \G$ finite, let $J_{D,\d,n}$ be the event that $\s_n$ is $(D,\d)$-sofic.  Let $\d_m \to 0$ be a decreasing sequence and $D_m \subset \G$ be an increasing sequence of finite subsets of $\G$.  

Lemma $\ref{lem:super-exponential}$ implies that $\P^u_n(J_{D,\d,n}^c)$ decays super-exponentially in $n$ for any $D$ and $\d$, so there exists an increasing sequence $N_m$ such that for $n>N_m$,  $\P^u_n(\cap_{j=1}^m I_{n,\beta_j,\eps_j} \cap J_{D_m,\d_m,n}) \geq 0.5e^{-\xi_1 n}$.  It follows that we can choose a deterministic sofic approximation sequence $\Sigma_1 = \{\s_n\}$ such that $$\inf_{\eps>0} \limsup_{n\to\infty} n^{-1}\log Z(\eps;\s_n) = f(d,k).$$

We next show the existence of $\Sigma_2$.  Equation (2) of Lemma \ref{lem:key} implies the existence of a number $f_p$ with
$$f(d,k) < f_p < \liminf_{n\to\infty} n^{-1}\log\E_n^{p}[ Z(\s)].$$
Since $Z(\s) \le 2^n$ for every $\s$, there exist constants $c,N_0>0$ such that 
\begin{eqnarray}\label{eqn:exponential}
\P^p_n\{\s:~ Z(\s) \ge \exp(n f_p)\} \ge \exp( -cn)
\end{eqnarray}
for all $n \ge N_0$. 

Now let $\d>0$ and $D \subset \G$ be finite. Then there exists $N_2$ such that if $n>N_2$ and $\s_{n}$ is chosen at random with law $\P^p_{n}$, then with positive probability,
\begin{enumerate}
\item $\s_{n}$ is $(D,\d)$-sofic,
\item $n^{-1} \log Z(\s_n)  \ge  f_p$.
\end{enumerate}
This is implied by Lemma \ref{lem:super-exponential} and equation (\ref{eqn:exponential}). So there exists a sofic approximation $\Si_2=\{\s'_n\}$ to $\G$ such that
$$ \limsup_{n\to\infty} n^{-1}\log Z(\s'_n) \ge f_p.$$
Since $Z(\s'_n) \le Z(\beta; \s'_n)$, Lemma \ref{reduce:epsilon} implies $h_{\Si_2}(\G \cc X) \ge f_p > f(d,k) = h_{\Si_1}(\G \cc X)$. 

\end{proof}

\section{The first moment}\label{sec:1}

To simplify notation, we assume throughout the paper that $n \in 2\Z \cap k\Z$ without further mention. This section proves (\ref{eqn:1}) of  Lemma \ref{lem:key}. The proof is in two parts. Part 1, in \S \ref{sec:almost}, establishes:

\begin{thm}\label{thm:almost}
$$\lim_{\eps \searrow 0} \limsup_{n\to\infty} (1/n) \log \E^u_n[Z(\eps;\s)]  = \limsup_{n\to\infty} (1/n) \log \E^u_n[Z(\s)].$$
\end{thm}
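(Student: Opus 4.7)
The trivial direction $\ge$ follows from $Z(\eps;\s)\ge Z(\s)$, so the plan is to prove the reverse inequality by showing that there is a function $\alpha$ with $\alpha(\eps)\to 0$ as $\eps\searrow 0$ and
$$\E^u_n[Z(\eps;\s)] \le e^{\alpha(\eps)n}\, \E^u_n[Z(\s)] + o\bigl(\E^u_n[Z(\s)]\bigr)$$
for all sufficiently large $n$. Taking $n^{-1}\log$, $\limsup_n$, and then $\eps\to 0$ will then finish the proof.

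First I would reduce to a pointwise estimate. By linearity of expectation,
$$\E^u_n[Z(\eps;\s)]=\sum_\chi q_\chi,\qquad \E^u_n[Z(\s)]=\sum_\chi p_\chi,$$
where $\chi$ ranges over maps $[n]\to\{0,1\}$, $M(\chi,\s)$ denotes the number of monochromatic hyperedges of $\chi$ in $G_\s$, and $q_\chi:=\P^u_n[M(\chi,\s)\le \eps n]$, $p_\chi:=\P^u_n[M(\chi,\s)=0]$. By $\sym(n)$-invariance both $p_\chi$ and $q_\chi$ depend only on the density $\rho:=|\chi^{-1}(0)|/n$. Moreover the permutations $\s(s_1),\ldots,\s(s_d)$ are independent under $\P^u_n$, so $M=M_1+\cdots+M_d$ where $M_i$ counts the monochromatic orbits of $\s(s_i)$ and the $M_i$ are mutually independent.

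The core of the argument is a switching bound: there is a constant $C=C(k,\delta)$ such that, uniformly for $\chi$ with $\rho\in[1/k+\delta,1-1/k-\delta]$,
$$\frac{\P^u_n[M_i(\chi,\s)=m]}{\P^u_n[M_i(\chi,\s)=0]} \le \frac{(Cn)^m}{m!}.$$
To prove it, start from a product of $n/k$ disjoint $k$-cycles with $m$ monochromatic orbits, select (by a canonical rule) a monochromatic orbit $O$ and a bichromatic orbit $O'$ having at least two vertices of each color, then swap a vertex of $O$ with an oppositely colored vertex of $O'$. The result is a partition with $m-1$ monochromatic orbits, since $O'$ retains both colors; a standard double-counting of forward and reverse switches yields the claimed bound, with the density restriction used to guarantee enough ``safe'' bichromatic orbits exist. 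Combining over the $d$ independent color classes via the multinomial identity,
$$\frac{q_\chi}{p_\chi}\le \sum_{m=0}^{\lfloor\eps n\rfloor}\frac{(dCn)^m}{m!}\le (\eps n+1)\Bigl(\frac{edC}{\eps}\Bigr)^{\eps n}\le e^{\alpha(\eps)n},$$
with $\alpha(\eps)=\eps\log(edC/\eps)+O(\eps)\to 0$ as $\eps\to 0$.

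The remaining $\chi$ with $\rho\notin[1/k+\delta,1-1/k-\delta]$ must be handled separately. Since each block of each partition $\s(s_i)$ has size $k$, one has $M\ge d\bigl(n/k-\min(a,n-a)\bigr)$, so $q_\chi=0$ unless $\rho\in[1/k-\eps/d,1-1/k+\eps/d]$. The number of colorings in the narrow shell around $\rho=1/k$ (and by symmetry around $\rho=1-1/k$) is at most $O(n)\binom{n}{(1/k+\delta)n}\le e^{H(1/k+\delta)n+o(n)}$, and a direct first-moment computation shows their $q_\chi$ decays like $e^{-c(k,\delta)n}$ for a positive rate $c(k,\delta)$; tuning $\delta=\delta(\eps)$ makes the combined contribution $o(\E^u_n[Z(\s)])$. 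The main obstacle will be executing the switching cleanly with a constant $C$ independent of $\chi$ throughout the balanced range, which requires a quantitative lower bound on the number of safe bichromatic orbits in partitions with few monochromatic orbits.
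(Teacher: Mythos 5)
Your plan is a genuinely different route from the paper. The paper decomposes $\E^u_n[Z(\eps;\s)]$ by the ``type'' matrix $\vT$ recording, for each generator $s_i$ and each $0\le j\le k$, the density of orbits of $\s(s_i)$ meeting $\chi^{-1}(1)$ in $j$ points; Lemma \ref{lem:first} expresses $(1/n)\log\E^u_n[Z(\eps;\s)]$ as a variational problem $\sup\{F(\vT):\sum_{i,\,j\in\{0,k\}} T_{ij}\le\eps\}$ up to $O(n^{-1}\log n)$, and the theorem is then immediate from continuity of $F$ and compactness of the constraint set as $\eps\searrow 0$. That machinery is reused later (e.g.\ to prove Theorem \ref{thm:1moment}), whereas your switching argument gives only the comparison and not the formula for $f(d,k)$, so you would still need a separate first-moment computation for the right-hand side. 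The switching approach is nevertheless more elementary and could be of independent interest.

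There is, however, a concrete gap in your switching step that goes beyond ``needs quantitative care.'' You define a block $O'$ as safe if it has at least two vertices of \emph{each} color. This is both too strong to establish the density bound you need and stronger than what the switch requires. If $O$ is monochromatic of color $b$, the swap removes one vertex of color $1-b$ from $O'$ and adds one of color $b$; for $O'$ to remain bichromatic and for the monochromatic count to drop by exactly one, you only need $O'$ to contain at least two vertices of color $1-b$. With the two-of-each definition, the forward degree can be zero even for equitable $\chi$: take $\rho=1/2$, $k$ even, and a partition consisting entirely of blocks of type $1$ and $k-1$ (one minority vertex each) plus one all-zero block. There are no blocks with $\ge 2$ of each color, yet the corrected swap works (pick any type-$1$ block as $O'$). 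With the corrected definition, a forward-degree lower bound follows from the equitable constraint: if $\rho\le 1-1/k-\delta$ then $n_{k-1}\le(1/k-\delta/(k-1))n$, so the number of blocks with $\ge 2$ ones is at least $(\delta/(k-1)-\eps)n - o(n)$, and symmetrically for the other color; this gives $N_m/N_{m-1}\le Cn/m$ with $C=C(k,\delta)$ and the rest of your calculation, including the tuning $\delta=\delta(\eps)\to 0$ slowly enough that $\eps\log(edC(\delta)/\eps)\to 0$, goes through. You should also note that no ``canonical rule'' is needed: the bound $N_m/N_{m-1}\le(\max\text{ reverse degree})/(\min\text{ forward degree})$ follows from counting edges in the bipartite switching graph without singling out a canonical swap. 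Finally, in the boundary shell $\rho\in[1/k-\eps/d,\,1/k+\delta]$ the expected number of monochromatic edges per generator is $\Theta(n)$ (order $n/(ke)$ each), so $q_\chi\le e^{-c(k,\delta)n}$ with $c$ of order $r=d/k$, which dominates $H(1/k+\delta)$ by a wide margin; that part of your sketch is fine.
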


Part 2 has to do with equitable colorings, where a 2-coloring $\chi:[n] \to \{0,1\}$ is {\bf equitable} if
$$|\chi^{-1}(0)| = |\chi^{-1}(1)| = n/2.$$
Let $Z_e(\s)$ be the number of proper equitable colorings of $G_\s$. \S \ref{sec:equitable} establishes

\begin{thm}\label{thm:1moment}
$$\lim_{n\to\infty} \frac{1}{n} \log \E^u_n[ Z(\s) ] = \lim_{n\to\infty} \frac{1}{n} \log \E^u_n[ Z_e(\s) ].$$
Moreover,
$$ \frac{1}{n} \log \E^u_n[ Z_e(\s) ] =   f(d,k) +O(n^{-1}\log(n))$$
where $f(d,k)= \log(2) + \frac{d}{k} \log(1-2^{1-k}).$
\end{thm}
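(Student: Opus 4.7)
My strategy is to reduce both expectations to a single auxiliary quantity $P(a)$ --- the probability that one uniform random permutation of cycle type $(k^{n/k})$ is bichromatic for a coloring with $a$ zeros --- then compute $P(n/2)$ up to sub-exponential error by a profile-based Laplace argument, and finally show equitable colorings dominate by a concavity/symmetry argument.

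\emph{Step 1 (Setup).} The uniform model is invariant under simultaneous $\Sym(n)$-conjugation, so the probability that a fixed $\chi:[n]\to\{0,1\}$ is proper on $G_\s$ depends only on $a:=|\chi^{-1}(0)|$. Since $\G$ is a free product of $d$ copies of $\Z/k\Z$, the permutations $\s(s_1),\ldots,\s(s_d)$ are independent and uniform among permutations of cycle type $(k^{n/k})$, so this probability equals $P(a)^d$. Summing over colorings,
$$\E^u_n[Z(\s)] = \sum_{a=0}^n \binom{n}{a} P(a)^d, \qquad \E^u_n[Z_e(\s)] = \binom{n}{n/2} P(n/2)^d.$$

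\emph{Step 2 (Asymptotics of $P(n/2)$).} Cyclic orderings within a cycle are irrelevant to bichromaticity, so $P(a)$ equals the probability that a uniform random partition of $[n]$ into $n/k$ unordered blocks of size $k$ has every block bichromatic. I classify partitions by their \textbf{profile} $\bfm=(m_1,\ldots,m_{k-1})$, where $m_j$ counts blocks with exactly $j$ zeros, constrained by $\sum_j m_j = n/k$ and $\sum_j j m_j = a$. A short multinomial computation gives the number of partitions with profile $\bfm$ as
$$N(\bfm)=\frac{(a)!\,(n-a)!}{\prod_{j=1}^{k-1} (j!)^{m_j}((k-j)!)^{m_j} m_j!}.$$
Writing $P(a) = (\sum_{\bfm} N(\bfm))\big/\bigl(n!/[(n/k)!(k!)^{n/k}]\bigr)$, I apply Stirling to both numerator and denominator. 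The profile space has at most $O(n^{k-2})$ points (polynomial in $n$ with $k$ fixed), so the sum is estimated by its maximum term up to an $O(\log n)$ additive error in the log. Lagrange multipliers yield the optimal profile $m_j^* = (n/k)\binom{k}{j}/(2^k-2)$; the identity $\sum_j j\binom{k}{j} = k(2^{k-1}-1)$ confirms that $m_j^*$ satisfies both constraints. Substituting and using $\log\binom{k}{j}-\log(\binom{k}{j}/(2^k-2)) = \log(2^k-2)$ to telescope the entropy-type sum, the dominant terms simplify to
$$\log P(n/2) = (n/k)\log(1-2^{1-k}) + O(\log n).$$
Combined with $\log\binom{n}{n/2} = n\log 2 + O(\log n)$, this establishes the second conclusion.

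\emph{Step 3 (Equitable dominance).} The inequality $\E^u_n[Z] \ge \E^u_n[Z_e]$ gives $\liminf (1/n)\log\E^u_n[Z] \ge f(d,k)$. For the matching upper bound, carrying out the same Lagrangian analysis at general $a = \lfloor\alpha n\rfloor$ gives $(1/n)\log[\binom{n}{a}P(a)^d] = g(\alpha) + O(n^{-1}\log n)$ uniformly in $\alpha$, where $g(\alpha) = H(\alpha) + d\phi(\alpha)$, $H$ is the binary entropy, and $\phi(\alpha)=\lim_n (1/n)\log P(\lfloor\alpha n\rfloor)$. Both $H$ and $\phi$ are symmetric about $1/2$ (the latter because flipping $\chi$ interchanges $a$ and $n-a$). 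Both are concave: $H$ elementarily, and $-\phi$ is a large-deviations rate function for a uniform random partition, hence convex. So $g$ is concave and symmetric about $1/2$, whence $g(\alpha)\le g(1/2) = f(d,k)$. Summing over the $n+1$ values of $a$ gives $\E^u_n[Z(\s)] \le (n+1) e^{nf(d,k) + O(\log n)}$, proving $\limsup (1/n)\log \E^u_n[Z] \le f(d,k)$ and hence the first conclusion.

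\emph{Main obstacle.} The technical crux is Step 2: identifying the maximizing profile, verifying the constraints are met via the binomial identity, and carrying out the Stirling simplifications without accumulating error. The concavity of $\phi$ used in Step 3 can be avoided (if desired) by a direct computation showing $g'(\alpha) = (1-d)H'(\alpha) + d\mu(\alpha)$, where $\mu(\alpha)$ is the Lagrange multiplier determined by $-d\log S(\mu)/d\mu = \alpha k$ with $S(\mu) = \sum_{j=1}^{k-1}\binom{k}{j}e^{-\mu j}$; this vanishes at $\alpha=1/2$ (where $\mu=0$) and the symmetry $g(\alpha)=g(1-\alpha)$ together with a crude bound for $\alpha$ near $\{0,1\}$ (where $P(a)=0$ once $a\notin[n/k,\,n-n/k]$) yields the global maximum.
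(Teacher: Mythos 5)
Your Steps 1 and 2 are essentially the paper's argument, reorganized: the paper optimizes a function $F(\vT)$ over matrices encoding the types of all $d$ generators jointly and then symmetrizes to a vector via concavity of $H$, while you factor the probability as $P(a)^d$ and optimize block-profiles for a single generator; these are equivalent routes and your formula $N(\bfm)$ matches the paper's equation (\ref{eqn:q-thing}), and the identity $\sum_{j=1}^{k-1}j\binom{k}{j}=k(2^{k-1}-1)$ correctly confirms the optimal profile has $a=n/2$.

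The gap is in Step 3, in the claim that $\phi$ is concave because ``$-\phi$ is a large-deviations rate function, hence convex.'' Writing out the variational formula, $\phi(\alpha) = -H(\alpha,1-\alpha) + \Psi(\alpha) + \mathrm{const}$, where $\Psi(\alpha)=\max\{H(\tilde\bfm)-\sum_j\tilde m_j\log(j!(k-j)!):\sum_j\tilde m_j = 1/k,\ \sum_j j\tilde m_j=\alpha\}$. The concave part $\Psi$ is indeed (up to affine terms) the negative of the Cram\'er rate function of an i.i.d.\ sum with law $q_j\propto\binom{k}{j}$, hence concave. But the full $\phi$ carries an additional $-H(\alpha,1-\alpha)$ term (coming from $a!(n-a)! = n!/\binom{n}{a}$), and $-H$ is \emph{convex}. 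Thus $-\phi = H + (-\Psi)$ is the sum of a concave and a convex function, not itself a rate function, and concavity of $\phi$ (equivalently, of $g=H+d\phi=(1-d)H+d\Psi+\mathrm{const}$, which also mixes a convex term $(1-d)H$, $d\ge 1$, with the concave $d\Psi$) does not follow from this reasoning. Concretely, $\phi''(\alpha) = \frac{1}{\alpha(1-\alpha)} - k/\mathrm{Var}_\mu(J)$ where $J$ has the tilted law $q_j^*\propto\binom{k}{j}e^{-\mu j}$ with $\E[J]=k\alpha$; concavity requires the non-obvious variance bound $\mathrm{Var}_\mu(J)\le k\alpha(1-\alpha)$, which you neither state nor prove.

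Your sketched ``direct computation'' fallback is also short of a proof: showing $g'(1/2)=0$ plus symmetry $g(\alpha)=g(1-\alpha)$ plus decay near the endpoints only shows $1/2$ is a critical point, not that it is the \emph{only} interior critical point, and without that uniqueness (or concavity) you cannot conclude it is the global maximum. This is precisely the content of the paper's analysis: after eliminating the Lagrange multiplier one is left with the equation $\sum_{j=1}^{k-1}(k\alpha-j)\binom{k}{j}\big(\tfrac{1-\alpha}{\alpha}\big)^{j(1-d)/d}=0$, and the paper proves by the binomial theorem and an elementary sign argument that this expression is strictly negative on $(0,1/2)$, hence vanishes only at $\alpha=1/2$. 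That argument (or a genuine proof of the variance bound above) is what is missing from your Step~3; without it, neither the upper bound $\limsup_n n^{-1}\log\E^u_n[Z]\le f(d,k)$ nor the equality of the two limits is established.
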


Combined, Theorems \ref{thm:almost} and \ref{thm:1moment} imply (\ref{eqn:1}) of Lemma \ref{lem:key}.

\begin{remark}
If $r:=(d/k)$ then the formula for $\lim_{n\to\infty} \frac{1}{n} \log \E^u_n[ Z(\s) ]$ above is the same as the formula found in \cite{MR2263010, coja-zdeb-hypergraph, MR3205212} for the exponential growth rate of the number of proper 2-colorings of $H_k(n,m)$. %a sparse random hyper-graph model that is like an Erd\"os-Renyi model.
\end{remark}

\begin{remark}
When we write an error term, such as $O(n^{-1}\log(n))$, we always assume that $n \ge 2$ and the implicit constant is allowed to depend on $k$ or $d$.
\end{remark}

\subsection{Almost proper 2-colorings}\label{sec:almost}

For $0 < x \le 1$, let $\eta(x)=-x\log(x)$. Also let $\eta(0)=0$. If $\vT=(T_i)_{i\in I}$ is a collection of numbers with $0 \le T_i\le 1$, then let
$$H(\vT) := \sum_{i \in I} \eta(T_i)$$
be the {\bf Shannon entropy} of $\vT$. 
\begin{defn}
A {\bf $k$-partition} of $[n]$ is an unordered partition of $[n]$ into sets of size $k$. Of course, such a partition exists if and only if $n/k \in \N$ in which case there are 
\begin{eqnarray}\label{eqn:part3}
\frac{n!}{k!^{n/k} (n/k)!}
\end{eqnarray}
such partitions. By Stirling's formula,
\begin{eqnarray}\label{eqn:part}
\frac{1}{n} \log (\# \{\textrm{$k$-partitions} \}) = (1-1/k)(\log (n) -1) -(1/k)\log (k-1)! + O(n^{-1}\log(n)).
\end{eqnarray}
\end{defn}

\begin{defn}
The {\bf orbit-partition} of a permutation $\rho \in \sym(n)$ is the partition of $[n]$ into orbits of $\rho$. Fix a $k$-partition $\pi$. Then the number of permutations $\rho$ whose orbit partition is $\pi$ equals $(k-1)!^{n/k}$. 

Given $\s\in \Hom_{\rm{unif}}(\G,\sym(n))$, define the $d$-tuple $(\pi^\s_1,\ldots, \pi^\s_d)$ of $k$-partitions by: $\pi^\s_i$ is the orbit-partition of $\s(s_i)$. Fix a $d$-tuple of $k$-partitions $(\pi_1,\ldots, \pi_d)$. Then the number of uniform homomorphisms $\s$ such that $\pi^\s_i = \pi_i$ for all $i$ is $[(k-1)!^{n/k}]^d$. Combined with (\ref{eqn:part3}), this shows the number of uniform homomorphisms into $\sym(n)$ is
$$\left[\frac{n!(k-1)!^{n/k}}{k!^{n/k} (n/k)!} \right]^d.$$
By Stirling's formula,
\begin{eqnarray}\label{eqn:part2}
\frac{1}{n} \log \# \Hom_{\rm{unif}}(\G,\sym(n))= d(1-1/k)(\log n -1)  + O(n^{-1}\log(n)).
\end{eqnarray}

% The {\bf $d$-tuple of $k$-partitions associated with $\s$}, denoted $(\pi^\s_1,\ldots, \pi^\s_d)$, is defined by: $\pi^\s_i$ is the $k$-partition equal to the orbits of $\s(s_i)$ on $[n]$. That is, $\pi^\s_i$ consists of all subsets of the form $\{\s(s_i)^j v:~ 0\le j < k\}$ for $v \in [n]$. 
\end{defn}

\begin{defn}
Let $\pi$ be a $k$-partition, $\chi:[n] \to \{0,1\}$ a 2-coloring and $\vt = (t_j)_{j=0}^k \in [0,1]^{k+1}$ a vector with $\sum_j t_j = 1/k$. The pair $(\pi,\chi)$ has {\bf type $\vt$} if for all $j$,
$$\#\left\{e\in \pi:~ |e \cap \chi^{-1}(1)| = j\right\} = n t_j.$$
\end{defn}

\begin{lem}\label{lem:numberofpartitions}
Let $\vec{t}=(t_0,t_1,\ldots, t_k) \in [0,1]^{k+1}$ be such that $\sum_j t_j=1/k$ and $nt_j \in \Z$. Let $p=\sum_j j t_j$.  Let $\chi: [n] \to \{0,1\}$ be a map such that $|\chi^{-1}(1)|=pn$. Let $f( \vec{t})$ be the number of $k$-partitions $\pi$ of $[n]$ such that $(\pi,\chi)$ has type $\vt$. Then
$$(1/n)\log f(\vec{t}) = (1-1/k)(\log(n)-1) -H(p,1-p)+H(\vec{t})-\sum_{j=0}^k t_j \log(j!(k-j)!) + O(n^{-1}\log(n)).$$
\end{lem}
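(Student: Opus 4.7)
The plan is to first derive an exact formula for $f(\vec{t})$ via a direct counting argument, then apply Stirling's approximation.

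For the counting step, I would build the $k$-partition in three stages. \textbf{Stage 1:} Partition $\chi^{-1}(1)$ (a set of size $pn$) into an unordered family containing $nt_j$ groups of size $j$ for each $j$; the number of such partitions is $(pn)!/\prod_j (j!)^{nt_j}(nt_j)!$. \textbf{Stage 2:} Similarly partition $\chi^{-1}(0)$ (of size $(1-p)n$, which is consistent because $\sum_j(k-j)nt_j = k(n/k) - pn = (1-p)n$) into $nt_j$ groups of size $k-j$; the count is $((1-p)n)!/\prod_j ((k-j)!)^{nt_j}(nt_j)!$. \textbf{Stage 3:} For each $j$ separately, bijectively match the $nt_j$ color-$1$ groups of size $j$ with the $nt_j$ color-$0$ groups of size $k-j$ to form the blocks; this contributes $\prod_j (nt_j)!$. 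Multiplying these three counts and cancelling one factor of $\prod_j (nt_j)!$ yields the exact formula
$$f(\vec{t}) = \frac{(pn)!\,((1-p)n)!}{\prod_j (j!(k-j)!)^{nt_j}\,(nt_j)!}.$$

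For the asymptotic step, I would apply Stirling's approximation $\log(m!) = m\log m - m + O(\log m)$ to each factorial, using the convention $0\log 0 = 0$ to handle any vanishing $t_j$. The numerator contributes $\log(pn)! + \log((1-p)n)! = n\log n - n - n H(p,1-p) + O(\log n)$. The factorials $(nt_j)!$ in the denominator contribute $\sum_j \log(nt_j)! = (n/k)\log n - n/k - n H(\vec{t}) + O(\log n)$, where I use $\sum_j t_j = 1/k$. The remaining factor $\sum_j nt_j \log(j!(k-j)!)$ is already exact. Subtracting and dividing by $n$, the $\log n$ coefficients combine as $1 - 1/k$ and the constant terms combine as $-(1-1/k)$, yielding the claimed expression $(1-1/k)(\log n - 1) - H(p,1-p) + H(\vec{t}) - \sum_j t_j \log(j!(k-j)!)$.

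\textbf{Main obstacle.} There is no conceptual obstacle here; the argument is pure bookkeeping. The only point requiring care is verifying that the Stirling error terms aggregate to $O(n^{-1}\log n)$ after dividing by $n$: since the number of factorials in the denominator is at most $k+1$, which is independent of $n$, summing $k+1$ individual errors of size $O(\log n)$ and dividing by $n$ yields the stated error bound uniformly. One must also explicitly check the boundary case where some $nt_j = 0$, but there the entire corresponding factor becomes $1$ and both sides of the Stirling identity vanish under the $0 \log 0 = 0$ convention.
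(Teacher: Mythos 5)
Your proposal is correct and takes essentially the same approach as the paper: the same three-stage construction (partition $\chi^{-1}(1)$, partition $\chi^{-1}(0)$, match blocks bijectively) yields the identical exact formula $f(\vec{t}) = (pn)!\,((1-p)n)!\big/\prod_j (j!(k-j)!)^{nt_j}(nt_j)!$, after which Stirling's formula gives the asymptotic.
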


\begin{proof}
The following algorithm constructs all such partitions with no duplications:
\begin{itemize}
\item[Step 1.] Choose an unordered partition of the set $\chi^{-1}(1)$ into $t_j n$ sets of size $j$ ($j=0,\ldots, k$).
\item[Step 2.] Choose an unordered partition of the set $\chi^{-1}(0)$ into $t_j n$ sets of size $k-j$ ($j=0,\ldots, k$).
\item[Step 3.] Choose a bijection between the collection of subsets of size $j$ constructed in part 1 with the collection of subsets of size $k-j$ constructed in part 2.
\item[Step 4.] The partition consists of all sets of the form $\alpha \cup \beta$ where $\alpha \subset \chi^{-1}(1)$ is a set of size $j$ constructed in Step 1 and $\beta \subset \chi^{-1}(0)$ is a set of size $(k-j)$ constructed in Step 2 that it is paired with under Step 3.
\end{itemize}
The number of choices in Step 1 is $\frac{(pn)!}{ \prod_{j=1}^k (j)!^{t_jn}(t_jn)!}$. The number of choices in Step 2 is $\frac{((1-p)n)!}{ \prod_{j=0}^{k-1} (k-j)!^{t_jn}(t_jn)!}$. The number of choices in Step 3 is $\prod_{j=1}^{k-1} (t_jn)!$. So 
\begin{eqnarray}\label{eqn:q-thing}
f(\vec{t}) = \frac{(pn)!((1-p)n)!}{ \prod_{j=0}^{k} j!^{t_jn}(k-j)!^{t_jn} (t_jn)!}.
\end{eqnarray}
The lemma follows from this and Stirling's formula.
\end{proof}

Let $\sM$ be the set of all matrices $\vT=(T_{ij})_{1\le i \le d, 0\le j \le k}$ such that
\begin{enumerate}
\item $T_{ij} \ge 0$ for all $i,j$,
\item $\sum_{j=0}^k T_{ij}=1/k$ for all $i$,
\item there exists a number, denoted $p(\vT)$, such that $p(\vT) = \sum_{j=0}^k j T_{ij}$ for all $i$.
\end{enumerate}
Let $\sM_n$ be the set of all $\vT \in \sM$ such that $n\vT$ is integer-valued.

\begin{lem}\label{lem:diophantine}
Let $A$ be an integer-valued $q\times p$ matrix (for some $p,q \in \N$),  $b \in \R^q$ and $\sK \subset \R^p$ the set of all $x\in \R^p$ such that $Ax = b$ and $x_i \ge 0$ for all $i$. For $n\in \N$, let $\sK_n$ be the set of all $x \in \sK$ such that $nx$ is integer-valued. 

Assume $\sK$ is compact and there exists $x \in \sK$ with $x_i>0$ for all $i$. Then there is a constant $C>0$ such that if $\sK_n$ is non-empty then it is $C/n$-dense in $\sK$ in the following sense. For any $x \in \sK$ there exists $x' \in \sK_n$ such that $\|x - x'\|_\infty < C/n$. Moreover, the constant $C$ can be chosen to depend continuously on the vector $b \in \R^q$. 
\end{lem}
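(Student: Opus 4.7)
The plan is to combine a covering-radius bound for the integer lattice $\Lambda := \ker A \cap \Z^p$ with a small convex push of $x$ toward the strictly positive point $x^*$, so that non-negativity survives rounding. The hard part is exactly that non-negativity constraint: without an interior cushion, rounding a boundary point of $\sK$ to denominator $n$ can easily push a zero coordinate negative.

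First I would establish the lattice ingredient. Since $A$ has integer entries, $\ker A \subset \R^p$ admits a rational basis, and clearing denominators gives a basis of integer vectors, so $\Lambda$ is a full-rank lattice inside the subspace $\ker A$. The quotient $(\ker A)/\Lambda$ is then a compact torus, and the covering radius of $\Lambda$ in $(\ker A, \|\cdot\|_\infty)$ is a finite constant $M = M(A)$, depending only on $A$ and not on $b$. The rounding step is then immediate: fixing any $y \in \sK_n$ (available by the hypothesis $\sK_n \ne \emptyset$), for any $u \in \sK$ the difference $u - y$ lies in $\ker A$, so there is $w \in \Lambda$ with $\|n(u - y) - w\|_\infty \le M$, and $u' := y + w/n$ satisfies $A u' = b$, $n u' \in \Z^p$, and $\|u - u'\|_\infty \le M/n$. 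The only possible failure is the sign condition $u' \ge 0$.

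To enforce non-negativity, given the target $x \in \sK$ I would first replace it with $\tilde{x} := (1 - \lambda_n) x + \lambda_n x^*$, where $\delta := \min_i x^*_i > 0$ and $\lambda_n := 2M/(n\delta)$. For all $n$ large enough that $\lambda_n \le 1$, one has $\tilde{x} \in \sK$ with $\tilde{x}_i \ge \lambda_n \delta = 2M/n$ for every $i$, so running the rounding step on $\tilde{x}$ produces $x'$ with $x'_i \ge 2M/n - M/n = M/n \ge 0$; hence $x' \in \sK_n$, and
$$\|x - x'\|_\infty \le \|x - \tilde{x}\|_\infty + \|\tilde{x} - x'\|_\infty \le \lambda_n \operatorname{diam}_\infty(\sK) + M/n = O(1/n).$$
The finitely many small $n$ for which $\lambda_n > 1$ contribute only the trivial bound $\operatorname{diam}_\infty(\sK)$, which is absorbed into a single final constant $C$. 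For the continuous dependence on $b$: $M$ is independent of $b$; $\operatorname{diam}_\infty(\sK(b))$ is continuous in $b$ on the region where $\sK(b)$ is a nonempty compact polytope; and $\delta(b) := \max\{\min_i x^*_i : x^* \in \sK(b)\}$ is continuous and strictly positive on the open set of $b$ for which $\sK(b)$ contains a coordinatewise-positive point. Assembling these yields a continuous $C(b)$ with the required density property.
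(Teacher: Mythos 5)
Your proposal is correct and takes essentially the same route as the paper's proof: both use the covering radius of the lattice $\ker A \cap \Z^p$ inside $\ker A$, then apply a small convex push of $x$ toward a strictly positive interior point so that non-negativity survives the rounding to denominator $n$, and close with the triangle inequality. The only differences are cosmetic (your $M$, $\delta$, $\operatorname{diam}_\infty(\sK)$ play the roles of the paper's $C_1$, $C_2$, $C_3$, and your push coefficient $2M/(n\delta)$ mirrors the paper's $C_1/(C_2 n)$).
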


\begin{proof}
To begin, we will define several constants which will enable us to choose $C>0$. Because $A$ is integer-valued, its kernel, denoted $\ker(A) \subset \R^p$, is such that $\ker(A) \cap  \Z^{p}$ has rank equal to the dimension of $\ker(A)$. Therefore, $\ker(A)\cap \Z^{p}$ is cocompact in $\ker(A)$. So there is a constant $C_1>0$ such that for any $z \in \ker(A)$ there is an element $z' \in\ker(A) \cap  \Z^{p}$ with $\|z - z'\|_\infty < C_1$.

By hypothesis, there is a constant $C_2>0$ and an element $y \in \sK$ such that $y_i > C_2$ for all $i$. Because $\sK$ is compact there is another constant $C_3>0$ such that $\|x-y\|_\infty <C_3$ for all $x\in \sK$. Let $C=\frac{C_1C_3}{C_2} + C_1$. Now let $x \in \sK$ be arbitrary and suppose $\sK_n$ is non-empty. We will show there exists $x'' \in \sK_n$ such that $\|x-x''\|_\infty \le C/n$. 

Let 
$$x'=\left(1-\frac{C_1}{C_2n}\right)x + \frac{C_1}{C_2n}y.$$
Then $x'_i \ge (C_1/C_2n)y_i > C_1/n$ for all $i$. Also $\|x-x'\|_\infty = (C_1/C_2n)\|x-y\|_\infty < \frac{C_1C_3}{C_2n}$.

Because $\sK_n$ is non-empty, there exists $x_n \in \sK_n$. By linearity, $\sK$  is the intersection of the hyperplane $x_n + \ker(A)$ with the positive orthant. Thus we can write $x'=x_n+(1/n)z' + z''$ where $z' \in \ker(A) \cap \Z^{p}$ and $z'' \in \ker(A)$ satisfies   $\|z''\|_\infty < C_1/n$. Let $x''=x_n + (1/n)z'$. Note $\|x''-x'\|_\infty =\|z''\|_\infty < C_1/n$. Since $x'_{i}>C_1/n$ this implies $x''_{i}>0$ for all $i$. It is now straightforward to check that $x'' \in \sK_n$. By the triangle inequality
$$\|x - x''\|_\infty \le \|x - x'\|_\infty + \|x' - x''\|_\infty < \frac{C_1C_3}{C_2n} + C_1/n = C/n.$$
Because $x$ and $n$ are arbitrary, this implies the Lemma with $C=\frac{C_1C_3}{C_2} + C_1$. Moreover $C_1$ does not depend on the vector $b$; while $C_2,C_3$ can be chosen to depend continuously on $b$. 

\end{proof}

\begin{lem}\label{lem:first}
Given a matrix $\vT \in \sM$ define
\begin{eqnarray*}
F(\vT)&:=&H(\vT) + (1-d)H(p,1-p) - (d/k)\log k +\sum_{i=1}^d \sum_{j=0}^k T_{ij} \log {k \choose j}
\end{eqnarray*}
where $p=p(\vT)$. Then for any $\eps\ge 0$,
$$(1/n) \log \E^u_n[Z(\eps;\s)] = \sup\left\{ F(\vT):~\vT \in \sM_n \textrm{ and } \sum_{i=1}^d \sum_{j=0,k} T_{ij} \le \eps\right\} + O(n^{-1}\log(n))$$
where the constant implicit in the error term does not depend on $\eps$. 
\end{lem}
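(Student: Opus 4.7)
The plan is to expand $\E^u_n[Z(\eps;\s)]$ as a sum over colorings, group colorings by their joint type with the orbit-partitions of $\s$, and then use Stirling's formula plus the polynomial bound on the number of types to turn the sum into a maximum. First I would write
\[
\E^u_n[Z(\eps;\s)] \;=\; \sum_{\chi:[n]\to\{0,1\}} \P^u_n\bigl(\chi\text{ is }\eps\text{-proper for }\s\bigr),
\]
and note that for each $\chi$ the homomorphism $\s$ produces $d$ orbit-partitions $\pi^\s_1,\ldots,\pi^\s_d$, with $(\pi^\s_i,\chi)$ having some type $\vt_i$. A hyper-edge $e\in\pi^\s_i$ is monochromatic iff $|e\cap\chi^{-1}(1)|\in\{0,k\}$, so the total number of monochromatic edges is $n\sum_{i=1}^d(t_{i,0}+t_{i,k})$. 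Hence $\chi$ is $\eps$-proper precisely when the assembled matrix $\vT=(T_{ij})$ lies in $\sM_n$ and satisfies $\sum_i(T_{i,0}+T_{i,k})\le\eps$.

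Next I would use that under $\P^u_n$ the generators $\s(s_1),\ldots,\s(s_d)$ are independent uniform products of $k$-cycles, so the partitions $\pi^\s_i$ are independent and uniform among $k$-partitions of $[n]$. Conditioning on $p:=p(\vT)=|\chi^{-1}(1)|/n$ and applying Lemma \ref{lem:numberofpartitions} gives
\[
\P^u_n(\chi\text{ has type }\vT) \;=\; \prod_{i=1}^d \frac{f(\vt_i)}{N_k(n)},
\qquad N_k(n):=\tfrac{n!}{k!^{n/k}(n/k)!}.
\]
Grouping colorings by type (there are $\binom{n}{pn}$ colorings of a given $p$) yields
\[
\E^u_n[Z(\eps;\s)] \;=\; \sum_{\substack{\vT\in\sM_n\\ \sum_i(T_{i,0}+T_{i,k})\le\eps}} \binom{n}{p(\vT)n}\prod_{i=1}^d \frac{f(\vt_i)}{N_k(n)}.
\]

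Then I would apply Stirling's formula to each factor. The term $\binom{n}{pn}$ contributes $nH(p,1-p)+O(\log n)$; the $(1-1/k)(\log n-1)$ terms from $f(\vt_i)$ and $N_k(n)$ cancel; and the identity $\sum_j t_{i,j}\log(j!(k-j)!)=(1/k)\log k!-\sum_j t_{i,j}\log\binom{k}{j}$ (using $\sum_j t_{i,j}=1/k$) together with equation (\ref{eqn:part}) regroups the remaining constants into exactly
\[
nF(\vT)+O(\log n),
\]
with the implicit constant uniform over $\vT\in\sM_n$ since the matrix entries are bounded. Finally, because $\sM_n$ is contained in the intersection of a fixed compact set with $(1/n)\Z^{d(k+1)}$, its cardinality is polynomial in $n$; hence the log of the sum equals the log of its maximum up to $O(n^{-1}\log n)$, with an error that does not depend on $\eps$. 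This yields the stated formula.

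The main obstacle is the bookkeeping in the Stirling step: one must track several dominant terms of order $n\log n$ that are supposed to cancel and then verify that the remaining pieces reassemble to the specific combination $F(\vT)$ defined in the statement. The rest (independence of the $\pi^\s_i$, translation of ``$\eps$-proper'' into the constraint on $T_{i,0}+T_{i,k}$, and the polynomial count of types) is essentially automatic.
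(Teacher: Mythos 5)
Your proposal is correct and follows the same route as the paper: decompose $\E^u_n[Z(\eps;\s)]$ by the joint type $\vT$ of $(\pi^\s_1,\ldots,\pi^\s_d,\chi)$, use symmetry to reduce to a single reference coloring contributing $\binom{n}{pn}$, factor the type probability across the $d$ independent generators via Lemma \ref{lem:numberofpartitions} and equation (\ref{eqn:part}), and absorb the polynomially many types into the $O(n^{-1}\log n)$ error. The only cosmetic difference is that the paper defines a per-type count $Z_\s(\vT)$ and takes its expectation, whereas you start from the sum over colorings and group by type afterward; the computations are identical.
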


\begin{proof}
Given $\s\in \Hom_{\rm{unif}}(\G,\sym(n))$ and $1\le i \le d$, let $\pi^\s_i$ be the orbit-partition of $\s(s_i)$. For $\vT$ as above,  let $Z_\s(\vT)$ be the number of $\eps$-proper colorings $\chi:[n] \to \{0,1\}$ such that $(\pi^\s_i,\chi)$ has type $\vT_i = (T_{i,0},\ldots, T_{i,k})$. It suffices to show that
$$(1/n) \log \E^u_n[Z_\s(\vT)] = F(\vT) + O(n^{-1}\log(n))$$
for all $n\ge 2$ such that $\vT \in \sM_n$.  This is because the size of $\sM_n$ is a polynomial (depending on $k,d$) in $n$ so the supremum above determines the exponential growth rate of $\E^u_n[Z(\eps;\s)]$.

%We can see this from the following two steps. First, because the size of $\sM_n$ is a polynomial (depending on $k,\eps,d$) in $n$, the supremum above determines the exponential growth rate of $\E^u_n[Z(\eps;\s)]$. Let $\vT_*$ denote the maximizer of $F$ over $\sM$, which exists and is unique by the strict convexity of $F$. Furthermore, since $F$ has an infinite partial derivative at points where $\vT_{ij} = 0$ for some $i,j$, we will have $(\vT_*)_{ij} \geq C_{d,k}/n$ for all $i,j$ and $n$ sufficiently large. Then there is some $\vT^{(n)}_* \in \sM_n$ such that $\abs{\vT_* - \vT^{(n)}_*} < \frac{O(1)}{n}$ by Lemma \ref{lem:diophantine}, and by differentiability of $F$ in a neighborhood of its maximizer, we have $\abs{F(\vT_*) - F(\vT^{(n)}_*)} < \frac{O(1)}{n}$. Furthermore $\max_{\vT \in \sM_n} F(\vT) \leq \sup_{\vT \in \sM} F(\vT)$ so
%\begin{align*}
%\abs{ \sup_{\vT \in \sM_n} F(\vT) - \sup_{\vT \in \sM} F(\vT)} < \frac{O(1)}{n}.
%\end{align*}

To prove this, fix a $\vT$ as above and let $n$ be such that $n \vT$ is integer-valued. Fix a coloring $\chi:[n] \to \{0,1\}$ such that $|\chi^{-1}(1)|=pn$. By symmetry,
$$\E^u_n[Z_\s(\vT)] = {n \choose pn} \P^u_n[ (\pi^\s_i,\chi) \textrm{ has type } \vT_i ~\forall i ].$$
The events $\{(\pi^\s_i,\chi) \textrm{ has type } \vT_i\}_{i=1}^d$ are jointly independent. So
\begin{eqnarray}\label{eqn:part4}
\E^u_n[Z_\s(\vT)] = {n \choose pn} \prod_{i=1}^d \P^u_n[ (\pi^\s_i,\chi) \textrm{ has type } \vT_i ].
\end{eqnarray}
By symmetry, $\P^u_n[ (\pi^\s_i,\chi) \textrm{ has type } \vT_i ]$ is the number of $k$-partitions $\pi$ such that $(\pi,\chi)$ has type $\vT_i$ divided by the number of $k$-partitions. By Lemma \ref{lem:numberofpartitions} and (\ref{eqn:part}),
\begin{eqnarray*}
&& \frac{1}{n} \log \P^u_n[ (\pi^\s_i,\chi) \textrm{ has type } \vT_i ] \\
&=&  -H(p,1-p)+H(\vec{T}_i)-\sum_{j=0}^k T_{ij} \log(j!(k-j)!)  +(1/k) \log (k-1)! + O(n^{-1}\log(n)).
\end{eqnarray*}
Combine this with (\ref{eqn:part4})  to obtain 
\begin{eqnarray*}
&&(1/n) \log \E^u_n[Z_\s(\vT)]\\
 &=& (1-d) H(p,1-p) +H(\vec{T}) -  \sum_{i=1}^d \sum_{j=0}^k T_{ij}\log(j!(k-j)!) +(d/k) \log (k-1)! + O(n^{-1}\log(n)).
\end{eqnarray*}
This simplifies to the formula for $F(\vT)$ using the assumption that $\sum_{j=0}^k T_{ij}=1/k$ for all $i$. 

 \end{proof}
 
\begin{proof}[Proof of Theorem \ref{thm:almost}]
By Lemma \ref{lem:diophantine} applied to $\sM$, continuity of $F$ and compactness of $\sM$, 
\begin{eqnarray*}
&&\lim_{n\to\infty} \sup\left\{ F(\vT):~\vT \in \sM_n \textrm{ and } \sum_{i=1}^d \sum_{j=0,k} T_{ij} \le \eps\right\} \\
&&=\sup\left\{ F(\vT):~\vT \in \sM \textrm{ and } \sum_{i=1}^d \sum_{j=0,k} T_{ij} \le \eps\right\}.
\end{eqnarray*}
Theorem \ref{thm:almost} now follows from Lemma \ref{lem:first} by continuity of $F$ and compactness of $\sM$.
\end{proof}

\subsection{Equitable colorings}\label{sec:equitable}

\begin{proof}[Proof of Theorem \ref{thm:1moment}]
Let $\sM_0$ be the set of all $\vT \in \sM$ such that $T_{ij}=0$ whenever $j \in \{0,k\}$.  By Lemma \ref{lem:first}, it suffices to show that $F$ admits a unique global maximum on $\sM_0$ and moreover if $\vT\in \sM_0$ is the global maximum then $p(\vT)=1/2$ and $F(\vT)=f(d,k)$. 

The function $F$ is symmetric in the index $i$. To exploit this, let $\sM'$ be the set of all vectors $\vt=(t_j)_{j=1}^{k-1}$ such that $t_{j} \ge 0$ for all $j$ and  $\sum_{j=1}^{k-1} t_{j}=1/k$. Let
\begin{eqnarray*}
p(\vt) &=& \sum_{j=1}^{k-1} j t_{j} \\
F(\vt) &=& dH(\vt) + (1-d)H(p,1-p) - (d/k)\log k + d \sum_{j=1}^{k-1} t_{j} \log {k \choose j}.
\end{eqnarray*}
Note that $F(\vt)=F(\vT)$ if $\vT$ is defined by $\vT_{ij}=\vt_j$ for all $i,j$. Moreover, since Shannon entropy is strictly concave, for any $\vT \in \sM_0$, if $\vt$ is defined to be the average: $\vt_j = d^{-1} \sum_{i=1}^d \vT_{ij}$ then $F(\vt) \ge F(\vT)$ with equality if and only if $\vt_j = \vT_{ij}$ for all $i,j$. So it suffices to show that $F$  admits a unique global maximum on $\sM'$ and moreover if $\vt \in \sM'$ is the global maximum then $p(\vt)=1/2$ and $F(\vt)=f(d,k)$. 

Because $\frac{\partial H(\vt)}{\partial t_j} = -[\log(t_j)+1]$, $\frac{\partial p}{\partial t_j} = j$, and $\frac{\partial H(p,1-p)}{\partial t_j} = j \log\left( \frac{1-p}{p}\right)$,
\begin{eqnarray*}
\frac{\partial F}{\partial t_j}= -d[\log(t_j)+1] + (1-d)j \log\left( \frac{1-p}{p}\right) + d \log {k \choose j}.
\end{eqnarray*}
Since this is positive infinity whenever $t_j = 0$, it follows that every maximum of $F$ occurs in the interior of $\sM'$. The method of Lagrange multipliers implies that, at a critical point, there exists $\l\in \R$ such that 
$$\nabla F = \l \nabla \left( \vt \mapsto \sum_j t_j\right) = (\l,\l,\ldots,\l).$$
So at a critical point,
$$\frac{\partial F}{\partial t_j} = -d[\log(t_j)+1] + (1-d)j \log\left( \frac{1-p}{p}\right) + d \log {k \choose j} = \l.$$
Solve for $t_j$ to obtain
$$t_j = \exp(-\l/d - 1){k \choose j} \left( \frac{1-p}{p}\right)^{j(1-d)/d}.$$
Note
$$1 = k \sum_{j=1}^{k-1} t_j$$
$$1 = 1/p \sum_{j=1}^{k-1} jt_j$$
implies
$$0 =\sum_{j=1}^{k-1}  (k-j/p) t_j =\sum_{j=1}^{k-1}  (pk-j){k \choose j} \left( \frac{1-p}{p}\right)^{j(1-d)/d}.$$
So define
$$g(x):=\sum_{j=1}^{k-1}  (kx-j){k \choose j} \left( \frac{1-x}{x}\right)^{j(1-d)/d}.$$
It follows from the above that $g(p(\vt))=0$ whenever $\vt$ is a critical point. 

We claim that $g(x)=0$ if and only if $x =1/2$ (for $x \in (0,1)$). The change of variables $j\mapsto k-j$ in the formula for $g$ shows that $g(1-x)=-\left( \frac{x}{1-x}\right)^{k(1-d)/d}g(x)$. So it is enough to prove that $g(x)<0$ for $x \in (0,1/2)$. 
 
To obtain a simpler formula for $g$, set $y(x) = \left( \frac{1-x}{x}\right)^{(1-d)/d}$. The binomial formula implies
\begin{eqnarray*}
g(x)&=&\sum_{j=1}^{k-1}  (kx-j){k \choose j} y^j \\
&=& kx[ (1+y)^k  - 1 - y^k] - ky[ (1+y)^{k-1}-y^{k-1}] \\
&=& k[ (x(1+y) - y)(1+y)^{k-1} -x + (-x+1)y^k].
\end{eqnarray*}
Because $0<x<1/2$, $y > \left( \frac{x}{1-x}\right)$ which implies that the middle coefficient $(x(1+y) - y) = x - y(1-x) < 0$. So
$$g(x)/k < (1-x)y^k  - x<0$$
where the last inequality holds because 
$$y^k=\left( \frac{x}{1-x}\right)^{k(d-1)/d} < \frac{x}{1-x}$$
assuming $k(d-1)/d >1$. This proves the claim. 

So if $\vt$ is a critical point then $p(\vt)=1/2$. Put this into the equation above for $t_j$ to  obtain
$$t_j = C {k \choose j}$$
where $C=\exp(-\l/d - 1)$. Because 
$$1/k = \sum_{j=1}^{k-1} t_j  = C \sum_{j=1}^{k-1} {k \choose j} = C(2^k - 2)$$
it must be that
\begin{eqnarray}\label{max-type}
t_j = \frac{1}{k(2^k-2)}{k \choose j}.
\end{eqnarray}
The formula $F(\vt)=f(d,k)$ now follows from a straightforward computation.  
\end{proof}

\section{The second moment}\label{sec:2}

This section gives an estimate on the expected number of proper colorings at a given Hamming distance from the planted coloring. This computation yields (\ref{eqn:2}) of Lemma \ref{lem:key} as a corollary. It also reduces the proof of (\ref{eqn:3}) to obtaining an estimate on the typical number of proper colorings near the planted coloring.

Before stating the main result, it seems worthwhile to review notation. Fix $n>0$ with $n \in 2\Z \cap k\Z$.  Fix an equitable 2-coloring $\chi:[n] \to \{0,1\}$. This is the {\bf planted coloring}. The planted model $\P^p_n$ is the uniform probability measure on the set $\Hom_{\chi}(\G,\sym(n))$ of all uniform homomorphisms $\s$ such that $\chi$ is $\s$-proper. Also let $Z_e:\Hom_{\chi}(\G,\sym(n)) \to \N$ be the number of equitable proper 2-colorings. For $\d \in [0,1]$, let $Z_\chi(\d;\cdot):  \Hom_{\chi}(\G,\sym(n))  \to \N$ be the number of equitable proper 2-colorings $\tchi$ such that $|d_n(\chi,\tchi) -\d|<1/2n$ where $d_n$ is the normalized Hamming distance defined by
$$d_n(\chi,\tchi) = n^{-1} \#\{ v\in [n]:~ \chi(v) \ne \tchi(v)\}.$$
We will also write $Z_\chi(\d;\sigma)=Z_\chi(\d)=Z(\d)$ when $\chi$ and/or $\sigma$ are understood.

The main result of this section is:
\begin{thm}\label{thm:cluster}
With notation as above, for any $0\le \delta \le 1$ such that $\d n/2$ is an integer,
$$ \frac{1}{n} \log \E^p_n[Z(\delta)] = \psi_0(\d) + O_\d(n^{-1}) + O(n^{-1}\log(n))$$
(for $n\ge 2$) where 
$$\psi_0(\d) = (1-d)H(\d,1-\d) + dH_0(\d,1-\d) + \frac{d}{k} \log\left( 1 - \frac{1-\d_0^k -(1-\d_0)^k }{2^{k-1}-1} \right),$$
$\d_0$ is defined to be the unique solution to
$$\d_0 \frac{ 1-2^{2-k} + (\d_0/2)^{k-1}  }{ 1-2^{2-k} + 2(\d_0/2)^k +2( (1-\d_0)/2)^k } = \d$$
and 
\begin{eqnarray*}
H(\d,1-\d) &:=& -\d\log\d - (1-\d)\log(1-\d),\\
H_0(\d,1-\d) &:=& -\d\log\d_0 - (1-\d)\log(1-\d_0).
\end{eqnarray*}
Moreover, the constant implicit in the error term $O(n^{-1}\log(n))$ may depend on $k$ but not on $\d$. The constant implicit in the $O_\d(n^{-1})$ term depends continuously on $\d$ for $\d \in (0,1/2]$. 
\end{thm}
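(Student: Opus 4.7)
The plan is to compute $\E^p_n[Z(\delta)]$ by direct enumeration over pairs $(\tilde{\chi},\sigma)$, exploiting the fact that under $\P^p_n$ the orbit-partitions $\pi_1^\sigma,\ldots,\pi_d^\sigma$ are i.i.d.\ uniform on the set of $k$-partitions bichromatic with respect to $\chi$. The count of equitable $\tilde{\chi}$ with $d_n(\chi,\tilde{\chi})=\delta$ is $\binom{n/2}{\delta n/2}^2=\exp(nH(\delta,1-\delta)+O(\log n))$, obtained by independently choosing $\delta n/2$ sites to flip within each of $\chi^{-1}(0)$ and $\chi^{-1}(1)$. By symmetry of the planted model under permutations preserving $\chi$, the probability $\P^p_n(\tilde{\chi}\text{ proper})$ depends on $\tilde{\chi}$ only through $\delta$, and by independence across the $d$ generators,
\[
\P^p_n(\tilde{\chi}\text{ proper}) = q_k(\delta)^d,\qquad q_k(\delta) := \frac{\#\{k\text{-partitions bichromatic for both }\chi,\tilde{\chi}\}}{\#\{k\text{-partitions bichromatic for }\chi\}}.
\]

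The ratio $q_k(\delta)$ is evaluated by Laplace's method on the joint edge-type. For a partition $\pi$, classify each block $e$ by the tuple $(a,b,c,d)$ with $a+b+c+d=k$ counting its intersection with the four classes $\chi^{-1}(i)\cap\tilde{\chi}^{-1}(j)$, $(i,j)\in\{0,1\}^2$; let $\tau_{abcd}$ be the fraction of blocks of each type. A direct adaptation of Lemma \ref{lem:numberofpartitions} to four vertex classes gives
\[
N(\tau) = \frac{n_{00}!\,n_{01}!\,n_{10}!\,n_{11}!}{\prod_{(a,b,c,d)}(a!b!c!d!)^{n\tau_{abcd}}\,(n\tau_{abcd})!} = \exp\!\bigl(n\bigl[(1-1/k)(\log n-1) + F_2(\tau)\bigr] + O(\log n)\bigr),
\]
where $n_{ij}=|\chi^{-1}(i)\cap\tilde{\chi}^{-1}(j)|$ and $F_2(\tau) = H(\tau) - H(p) - \sum\tau_{abcd}\log(a!b!c!d!)$, with $p_{ij}=n_{ij}/n$. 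Since only polynomially many $\tau$ are feasible, the exponential asymptotics of $q_k(\delta)$ is the difference $G_k(\delta)-G_k$, where $G_k(\delta)=\sup F_2$ over $\tau$ satisfying the marginals $\sum a\tau=(1-\delta)/2$, $\sum b\tau=\sum c\tau=\delta/2$, $\sum d\tau=(1-\delta)/2$ and bichromaticity for both ($\tau_{abcd}=0$ whenever any of $a+b,c+d,a+c,b+d$ vanishes), and $G_k$ is the analogous maximum for the denominator with only the two $\chi$-marginals and the $\chi$-bichromaticity restrictions (already computed implicitly in the proof of Theorem \ref{thm:1moment}).

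Lagrange multipliers $\mu_{ij}$ for the four marginals yield the exponential-family form $\tau^*_{abcd}\propto \lambda_{00}^a\lambda_{01}^b\lambda_{10}^c\lambda_{11}^d/(a!b!c!d!)$ with $\lambda_{ij}=e^{-\mu_{ij}}$; the symmetries $n_{00}=n_{11}$ and $n_{01}=n_{10}$ force $\lambda_{00}=\lambda_{11}$ and $\lambda_{01}=\lambda_{10}$, so the optimizer is controlled by the single ratio $\delta_0:=\lambda_{01}/(\lambda_{00}+\lambda_{01})$. Expanding the remaining marginal $\sum(b+c)\tau^*=\delta$ via the multinomial theorem and inclusion--exclusion over the four forbidden faces produces exactly the implicit equation defining $\delta_0$. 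Substituting $\tau^*$ back into $F_2$, combining with $H(\delta,1-\delta)$ from the count of $\tilde\chi$'s and the $d$-fold power from independence, and cancelling common terms against $G_k$ yields
\[
\tfrac{1}{n}\log\E^p_n[Z(\delta)] = (1-d)H(\delta,1-\delta) + dH_0(\delta,1-\delta) + (d/k)\log W,
\]
where $W = 1 - (1 - \delta_0^k - (1-\delta_0)^k)/(2^{k-1}-1)$ is precisely the normalizing constant of the exponential family after inclusion--exclusion removes the four $\chi$-monochromatic and $\tilde{\chi}$-monochromatic terms; this is $\psi_0(\delta)$. The $O_\delta(n^{-1})$ uniform error comes from Lemma \ref{lem:diophantine} applied to the 4-index polytope (whose constants vary continuously with the marginal vector and hence with $\delta\in(0,1/2]$), while the $O(n^{-1}\log n)$ term is the Stirling error. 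The main obstacle is the Lagrange/Laplace step: one must verify that $\tau^*$ lies in the relative interior of the feasible polytope (so that the forbidden faces are inactive in the unconstrained variational problem) and carefully execute the inclusion--exclusion over the four forbidden faces to collapse the normalization into the closed form $W$. The four-dimensional edge type and four marginal constraints make the bookkeeping substantially heavier than the one-index analysis of Section \ref{sec:1}, but no new conceptual difficulty arises.
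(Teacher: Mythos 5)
Your proposal is correct and follows essentially the same route as the paper's proof: count equitable $\tilde\chi$ at Hamming distance $\delta$ by $\binom{n/2}{\delta n/2}^2$, factor over the $d$ i.i.d.\ generator-partitions to reduce to a single per-generator ratio of bichromatic-partition counts, classify blocks by the $2\times 2$ joint type (your $(a,b,c,d)$ is the paper's $\ve$), apply Stirling and Lagrange multipliers to identify the exponential-family optimizer with the implicit equation for $\delta_0$, and invoke Lemma~\ref{lem:diophantine} for the $O_\delta(n^{-1})$ rounding error. The only cosmetic difference is that you re-derive the denominator $\#\{\chi\text{-bichromatic partitions}\}$ by the same Laplace argument, whereas the paper simply quotes Theorem~\ref{thm:1moment} for $\P^u_n(F_\chi)={n\choose n/2}^{-1}\E^u_n[Z_e(\s)]$.
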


\begin{remark}
If $\d_0=\d$ then $\d=1/2$. In the general case, $\d_0 = \d+O(2^{-k})$. Theorem \ref{thm:cluster} parallels similar results in  \cite{MR2263010, coja-zdeb-hypergraph} for the random hyper-graph $H_k(n,m)$. This is explained in more detail in the next subsection.
\end{remark}

The strategy behind the proof of Theorem \ref{thm:cluster} is as follows. We need to estimate the expected number of equitable colorings at distance $\d$ from the planted coloring. By symmetry, it suffices to fix another coloring $\tchi$ that is at distance $\d$ from the planted coloring and count the number of uniform homomorphisms $\s$ such that both $\chi$ and $\tchi$ are proper with respect to $G_\s$. This can be handled one generator at a time. Moreover, only the orbit-partition induced by a generator is used in this computation. So, for fixed $\chi,\tchi$, we need to estimate the number of $k$-partitions of $[n]$ that are bi-chromatic under both $\chi$ and $\tchi$. To make this strategy precise, we need the next definitions.

\begin{defn}
Let $\tchi$ be an equitable 2-coloring of $[n]$. An edge $P \subset [n]$ is {\bf $(\chi,\tchi)$-bichromatic} if $\chi(P) = \tchi(P) = \{0,1\}$. Recall that a $k$-partition is a partition $\pi=\{P_1,\ldots, P_{n/k}\}$ of $[n]$ such that every part $P \in \pi$ has cardinality $k$. A $k$-partition $\pi$ is {\bf $(\chi,\tchi)$-bichromatic} if every part $P \in \pi$ is $(\chi,\tchi)$-bichromatic.

Given a $(\chi,\tchi)$-bichromatic edge $P \subset [n]$ of size $k$, there is a $2\times 2$ matrix $\ve(\tchi,P)$ defined by
$$\ve_{i,j}(\tchi,P)  =  |P \cap \chi^{-1}(i) \cap \tchi^{-1}(j)|.$$
Let $\cE$ denote the set of all such matrices (over all $P,\tchi$). This is a finite set. To be precise, $\cE$ is the set of all $2\times 2$ matrices $\ve=(e_{ij})_{i,j=0,1}$ such that 
\begin{itemize}
\item $e_{ij} \in \{0,1,\ldots, k\}$ for all $i,j$
\item $0<e_{10} + e_{11} < k$
\item $0<e_{01}+e_{11}<k$
\item $\sum_{i,j} e_{ij} = k$.
\end{itemize}
If $\pi$ is a $(\chi,\tchi)$-bichromatic $k$-partition then it induces a function $t_{\tchi,\pi}:\cE \to [0,1]$ by
$$t_{\tchi,\pi}(\ve) = n^{-1}\#\left\{P \in \pi:~ \ve=\ve(\tchi,P) \right\}.$$
Let $\cT$ be the set of all functions $t:\cE \to [0,1]$ satisfying
\begin{itemize}
\item $\sum_{\ve\in \cE} t(\ve)=1/k$,
\item $\sum_{\ve \in \cE} (e_{10} + e_{11})t(\ve) = 1/2$,
\item $\sum_{\ve \in \cE} (e_{01} + e_{11})t(\ve) = 1/2$.
\end{itemize}
Also let $\cT_n$ be the set of $t \in \cT$ such that $nt(\ve)$ is integer-valued for each $\ve \in \cE$.
A $k$-partition $\pi$ has {\bf type $(\chi,\tchi,t)$} if $t=t_{\tchi,\pi}$.
\end{defn}

\begin{lem}
Given an equitable 2-coloring $\tchi:[n] \to \{0,1\}$, let $p^\tchi=(p^\tchi_{ij})$ be the $2\times 2$ matrix 
$$p^\tchi_{ij} = (1/n)| \chi^{-1}(i) \cap \tchi^{-1}(j)|.$$
Then
\begin{displaymath}
p^\tchi = \left[\begin{array}{cc}
1/2 - d_n(\chi,\tchi)/2 & d_n(\chi,\tchi)/2 \\
d_n(\chi,\tchi)/2 & 1/2- d_n(\chi,\tchi)/2
\end{array}\right]
\end{displaymath}
In particular, $p^\tchi$ is determined by the Hamming distance $d_n(\chi,\tchi)$.
\end{lem}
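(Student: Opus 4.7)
The plan is an elementary linear-algebra computation using the two equitability constraints together with the definition of the normalized Hamming distance. First, I would observe that the four sets $\chi^{-1}(i) \cap \tchi^{-1}(j)$ for $(i,j) \in \{0,1\}^2$ partition $[n]$, so the four entries of $p^\tchi$ sum to $1$. Equitability of $\chi$ gives the row-sum identities
$$p^\tchi_{00} + p^\tchi_{01} = \tfrac{1}{n}|\chi^{-1}(0)| = \tfrac{1}{2}, \qquad p^\tchi_{10} + p^\tchi_{11} = \tfrac{1}{2},$$
and equitability of $\tchi$ gives the column-sum identities $p^\tchi_{00} + p^\tchi_{10} = p^\tchi_{01} + p^\tchi_{11} = 1/2$. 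Subtracting a row-sum identity from a column-sum identity yields $p^\tchi_{01} = p^\tchi_{10}$ and $p^\tchi_{00} = p^\tchi_{11}$, so the matrix is determined by a single free parameter.

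To pin that parameter down, I would note that $\chi$ and $\tchi$ disagree at $v$ precisely when $v$ lies in $(\chi^{-1}(0)\cap\tchi^{-1}(1)) \sqcup (\chi^{-1}(1)\cap\tchi^{-1}(0))$, hence
$$d_n(\chi,\tchi) = p^\tchi_{01} + p^\tchi_{10} = 2p^\tchi_{01}.$$
Therefore $p^\tchi_{01} = p^\tchi_{10} = d_n(\chi,\tchi)/2$, and plugging this back into the row-sum identity gives $p^\tchi_{00} = p^\tchi_{11} = 1/2 - d_n(\chi,\tchi)/2$, which is exactly the claimed formula. There is no real obstacle in this lemma; it is a bookkeeping step that will be invoked repeatedly in the second-moment computations that follow, where the entries of $p^\tchi$ appear as parameters governing the joint statistics of $\chi$ and $\tchi$.
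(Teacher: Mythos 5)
Your proof is correct and takes essentially the same approach as the paper: both set up the linear system from the two equitability constraints, the partition identity, and the Hamming-distance definition, then solve. The paper lists a minimal set of four equations while you derive $p^\tchi_{01}=p^\tchi_{10}$ and $p^\tchi_{00}=p^\tchi_{11}$ explicitly by subtracting row and column sums, but the content is the same.
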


\begin{proof}
Let $p=p^\tchi$. The lemma follows from this system of linear equations:
\begin{eqnarray*}
1/2 &=& p_{01} + p_{11} \\
1/2 &=& p_{10} + p_{11} \\
d_n(\chi,\tchi) &=& p_{01} + p_{10}\\
1 &=& p_{00} + p_{01} + p_{10} + p_{11}.
\end{eqnarray*}
The first two occur because  both $\chi$ and $\tchi$ are equitable. The third follows from the definition of normalized Hamming distance and the last holds because $\{\chi^{-1}(i) \cap \tchi^{-1}(j)\}_{i,j \in \{0,1\}}$ partitions $[n]$.
\end{proof}

For $t \in \cT$, define the $2\times 2$ matrix $p^t=(p^t_{ij})$ by
\begin{eqnarray*}
p^t_{ij} := \sum_{\ve \in \cE} e_{ij} t(\ve).
\end{eqnarray*}
If  $\pi$ is a $k$-partition that has type $(\chi,\tchi,t)$ (for some equitable $\tchi$) then $p^\tchi = p^t$. This motivates the definition. 

The main combinatorial estimate we will need is:
\begin{lem}\label{lem:g}
Let $t\in \cT_n$ and $\tchi:[n]\to\{0,1\}$ be equitable. Suppose $p^t=p^\tchi$. Let $g(\tchi,t)$ be the number of $k$-partitions of type $(\chi,\tchi,t)$. Also let 
$$G(t):=(1-1/k)(\log (n)-1) - H(p^t) -(1/k)\log (k!) + H(t)  + \sum_{\ve} t(\ve) \log {k \choose \ve}$$
where ${k \choose \ve}$ is the multinomial $\frac{k!}{e_{00}!e_{01}!e_{10}!e_{11}!}$. Then
$$(1/n)\log g(\tchi,t) = G(t) + O(n^{-1}\log(n))$$
(for $n\ge 2$) where the constant implicit in the error term depends on $k$ but not on $\tchi$ or $t$.
\end{lem}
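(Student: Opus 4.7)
The plan is to derive an exact closed-form expression for $g(\tchi,t)$ as a ratio of factorials and then apply Stirling's approximation. The key observation is that $\chi$ and $\tchi$ together partition $[n]$ into four color classes $C_{ij} := \chi^{-1}(i) \cap \tchi^{-1}(j)$ of sizes $np^t_{ij}$ (here $p^t = p^{\tchi}$ by the equitability hypothesis). A $k$-partition $\pi$ of type $(\chi,\tchi,t)$ is equivalent to a simultaneous partition of each class $C_{ij}$ into pieces that are then glued together into parts according to the prescribed types.

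I would count using a labeling device: first count \emph{labeled} configurations and then divide by the appropriate symmetry factor. For each $\ve \in \cE$, label the $nt(\ve)$ parts of type $\ve$ by $1,\ldots,nt(\ve)$. To produce such a labeled configuration, independently partition each class $C_{ij}$ into labeled subsets indexed by $(\ve,l)$ with $\ve \in \cE$ and $1 \le l \le nt(\ve)$, where subset $(\ve,l)$ has size $e_{ij}$. The number of such multinomial partitions of $C_{ij}$ is $(np^t_{ij})! / \prod_\ve (e_{ij}!)^{nt(\ve)}$. Assembling all four classes and then removing the artificial ordering of parts by dividing by $\prod_\ve (nt(\ve))!$, we obtain
$$g(\tchi,t) \;=\; \frac{\prod_{(i,j)} (np^t_{ij})!}{\prod_\ve (nt(\ve))! \cdot \prod_\ve (e_{00}!\,e_{01}!\,e_{10}!\,e_{11}!)^{nt(\ve)}}.$$
One should verify that each valid $k$-partition arises exactly once this way, which is immediate because the labeling determines the parts uniquely and the quotient by $\prod_\ve (nt(\ve))!$ precisely accounts for relabeling parts of the same type.

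Now I apply Stirling in the form $\log(N!) = N\log N - N + O(\log N)$, using $\sum_{i,j} p^t_{ij} = 1$ and $\sum_\ve t(\ve) = 1/k$. A direct computation yields
$$\tfrac{1}{n}\log g(\tchi,t) \;=\; (1-1/k)(\log n - 1) - H(p^t) + H(t) - \sum_\ve t(\ve)\log(e_{00}!\,e_{01}!\,e_{10}!\,e_{11}!) + O(n^{-1}\log n).$$
Using the identity $\log(e_{00}!\,e_{01}!\,e_{10}!\,e_{11}!) = \log k! - \log \binom{k}{\ve}$ together with $\sum_\ve t(\ve) = 1/k$, the last two terms collapse to $-(1/k)\log k! + \sum_\ve t(\ve)\log\binom{k}{\ve}$, recovering the definition of $G(t)$.

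The only subtle point is uniformity of the error term. Some factorials $(np^t_{ij})!$ or $(nt(\ve))!$ may have zero argument (e.g.\ when $\d$ is near $0$ or $1/2$), but with the convention $0\log 0 = 0$ these contribute nothing, and for every nonzero argument $\alpha n \in \Z_{\ge 1}$ Stirling's remainder is $O(\log(\alpha n)) \le O(\log n)$. The number of such terms is bounded by $|\cE| + 4$, which depends only on $k$. Hence the implicit constant depends on $k$ but is uniform in $\tchi$ and in $t \in \cT_n$, as required.
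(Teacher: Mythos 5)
Your proof is correct and takes essentially the same route as the paper: both derive the exact closed formula $g(\tchi,t)=\prod_{ij}(np^t_{ij})!\,\big/\big(\prod_\ve(nt(\ve))!\prod_{ij,\ve}e_{ij}!^{\,nt(\ve)}\big)$ and then apply Stirling. The only cosmetic difference is your counting scheme (label the parts, multinomially distribute each class $C_{ij}$, then quotient by $\prod_\ve(nt(\ve))!$) versus the paper's four-step construct-and-glue algorithm, which arrives at the identical expression; your remark on uniformity of the Stirling remainder is also the right observation.
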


\begin{proof}

The following algorithm constructs all such partitions with no duplications:
\begin{itemize}
\item[Step 1.] Choose a partition $\{Q_{ij}^\ve:~ i,j \in \{0,1\}, \ve \in \cE\}$ of $\chi^{-1}(i) \cap \tchi^{-1}(j)$ such that
$$|Q_{ij}^\ve| = e_{ij}t(\ve) n.$$

\item[Step 2.] For $i,j\in \{0,1\}$ and $\ve \in \cE$, choose an unordered partition $\pi^\ve_{ij}$ of $Q_{ij}^\ve$  into $t(\ve) n$ sets of size $e_{ij}$. 

\item[Step 3.] For $i,j\in \{0,1\}$ with $(i,j) \ne (0,0)$ and $\ve \in \cE$, choose a bijection $\a^\ve_{ij}: \pi^\ve_{00} \to \pi^\ve_{ij}$. 

\item[Step 4.]  The $k$-partition consists of all sets of the form $P \cup \bigcup_{i,j\in \{0,1\}, (i,j) \ne (0,0)} \alpha^{\ve}_{ij}(P)$ over all $P \in \pi^\ve_{00}$ and $\ve \in \cE$. 
\end{itemize}

The number of choices in Step 1 is 
$$ \prod_{i,j \in \{0,1\} }  |\chi^{-1}(i) \cap \tchi^{-1}(j)|! \prod_{\ve \in\cE}  (e_{ij} t(\ve)n)!^{-1}.$$

The combined number of choices in Steps 1 and 2 is 
$$ \prod_{i,j \in \{0,1\} }  |\chi^{-1}(i) \cap \tchi^{-1}(j)|! \prod_{\ve \in\cE}  e_{ij}!^{-t(\ve)n} (t(\ve)n)!^{-1} = \left( \prod_{\ve \in\cE} (t(\ve)n)! \right)^{-4} \prod_{i,j \in \{0,1\} }  |\chi^{-1}(i) \cap \tchi^{-1}(j)|! \prod_{\ve \in\cE}  e_{ij}!^{-t(\ve)n}.$$

The number of choices in Step 3 is $\prod_{\ve \in\cE} (t(\ve)n)!^3.$ So
$$g(\tchi,t) = \left(\prod_{i,j \in \{0,1\}} |\chi^{-1}(i) \cap \tchi^{-1}(j)|! \right) \left( \prod_{\ve \in\cE} (t(\ve)n)! \right)^{-1} \left(\prod_{i,j\in \{0,1\}} \prod_{\ve \in\cE} e_{ij}!^{-t(\ve)n} \right).$$
An application of Stirling's formula gives
$$(1/n)\log g(\tchi,t) =(1-1/k)(\log (n)-1) - H(p^t) + H(t) - \sum_{\ve,i,j} t(\ve) \log (e_{ij}!) + O(n^{-1}\log(n))$$
(for $n\ge 2$) where the constant implicit in the error term depends on $k$ but not on $\tchi$ or $t$. 

Since $\sum_\ve t(\ve)=1/k$, 
$$\sum_{\ve} t(\ve) \log {k \choose \ve} = (1/k)\log (k!) - \sum_{\ve,i,j} t(\ve) \log (e_{ij}!).$$
Substitute this into the formula above to finish the lemma.

\end{proof}

Next we use Lagrange multipliers to maximize $G(t)$. To be precise, for $\d \in [0,1]$, let $\cT(\d)$ be the set of all $t \in \cT$ such that $p^t_{01} = \d/2$. Define $\cT_n(\d) = \cT(\delta) \cap \cT_n$. To motivate this definition, observe that if $\tchi$ is an equitable 2-coloring and $\d=d_n(\chi,\tchi)$ then $p^\tchi_{01}=\d/2$. So if $\pi$ is a $k$-partition with type $(\chi,\tchi,t)$ then $p^t_{01}=\d/2$.

\begin{lem}\label{lem:max}
Let $\d \in [0,1]$. Then there exists a unique $s_\d \in \cT(\d)$ such that 
$$\max_{t\in \cT(\d)} G(t) = G(s_\d).$$ 
Moreover, if $\d_0$, $C>0$ and $t_\d \in \cT(\d)$ are defined by
\begin{eqnarray*}
\frac{\d}{2} &=& \frac{\d_0}{2} \frac{ 1-2^{2-k} + (\d_0/2)^{k-1}  }{ 1-2^{2-k} + 2(\d_0/2)^k +2( (1-\d_0)/2)^k }\\
C &=& \frac{1}{k[1-2^{2-k} + 2(\d_0/2)^k +2( (1-\d_0)/2)^k] }\\
t_\d(\ve) &=& C \left(\frac{1-\d_0}{2} \right)^{e_{00}+e_{11}} \left(\frac{\d_0}{2} \right)^{e_{01}+e_{10}} {k\choose \ve}
\end{eqnarray*}
then $s_\d=t_\d$.
\end{lem}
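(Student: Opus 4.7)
The overall plan is to reduce the maximization on $\cT(\d)$ to an essentially unconstrained strictly concave problem, determine the functional form of the maximizer via Lagrange multipliers together with a symmetry argument, and finally solve for the two remaining parameters using the multinomial theorem with inclusion-exclusion.

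First, I would observe that on $\cT(\d)$ the matrix $p^t$ is in fact completely determined by $\d$: combining the constraint $p^t_{01} = \d/2$ with the three defining constraints of $\cT$ forces $p^t_{10} = \d/2$ and $p^t_{00} = p^t_{11} = (1-\d)/2$. In particular $H(p^t)$ is constant on $\cT(\d)$, so maximizing $G$ is equivalent, up to a $\d$-dependent additive constant, to maximizing
$$\tilde G(t) := H(t) + \sum_{\ve \in \cE} t(\ve) \log \binom{k}{\ve}.$$
Since $H(t)$ is strictly concave and the second term is linear, $\tilde G$ is strictly concave on the nonempty compact convex polytope $\cT(\d)$, so the maximizer $s_\d$ exists and is unique. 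The blow-up $\partial \tilde G / \partial t(\ve) \to +\infty$ as $t(\ve) \downarrow 0$ places $s_\d$ in the relative interior, and Lagrange multipliers for the four linear constraints of $\cT(\d)$ give
$$s_\d(\ve) = A \binom{k}{\ve} \alpha^{e_{10}+e_{11}} \beta^{e_{01}+e_{11}} \gamma^{e_{01}}$$
for some positive $A, \alpha, \beta, \gamma$.

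Next, I would exploit the involution $\iota:(e_{ij}) \mapsto (e_{1-i,1-j})$, which swaps $e_{00} \leftrightarrow e_{11}$ and $e_{01} \leftrightarrow e_{10}$. A direct check shows that $\iota$ preserves each of the four constraints defining $\cT(\d)$ and preserves $\tilde G$, so by uniqueness $s_\d$ must be $\iota$-invariant. Matching exponents in the Lagrange form under $\iota$ forces $\alpha\beta = 1$ and $\alpha = \beta\gamma$, which collapses the form to
$$s_\d(\ve) = A \binom{k}{\ve} \alpha^{e_{01}+e_{10}}.$$
Setting $\alpha = \d_0/(1-\d_0)$ for some $\d_0 \in (0,1)$ and absorbing powers of $(1-\d_0)/2$ and $2$ into a new constant $C$, this rewrites as the claimed form
$$s_\d(\ve) = C \binom{k}{\ve} \left(\tfrac{1-\d_0}{2}\right)^{e_{00}+e_{11}} \left(\tfrac{\d_0}{2}\right)^{e_{01}+e_{10}}.$$

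Finally, I would pin down $C$ and $\d_0$ by imposing $\sum_{\ve} s_\d(\ve) = 1/k$ and $p^{s_\d}_{01} = \d/2$. Setting $x_{00}=x_{11}=(1-\d_0)/2$ and $x_{01}=x_{10}=\d_0/2$, the identity $(x_{00}+x_{01}+x_{10}+x_{11})^k = 1$ sums the weights over all $\ve$ with $\sum e_{ij}=k$; applying inclusion-exclusion to remove the four forbidden events $\{e_{10}+e_{11}\in \{0,k\}\}$ and $\{e_{01}+e_{11} \in \{0,k\}\}$ (each of which zeroes out two of the $x_{ij}$'s in the generating function) yields
$$\sum_{\ve \in \cE} \binom{k}{\ve} \left(\tfrac{1-\d_0}{2}\right)^{e_{00}+e_{11}} \left(\tfrac{\d_0}{2}\right)^{e_{01}+e_{10}} = 1 - 2^{2-k} + 2(\d_0/2)^k + 2((1-\d_0)/2)^k,$$
which, combined with $\sum s_\d = 1/k$, produces the stated $C$. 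Differentiating the multinomial identity in $x_{01}$ before specialization and running the same inclusion-exclusion computes $p^{s_\d}_{01}$ in closed form; equating to $\d/2$ produces exactly the defining equation for $\d_0$. Uniqueness of $\d_0 \in (0,1)$ for each $\d$ then follows from the uniqueness of $s_\d$, since distinct $\d_0$ manifestly give distinct distributions. The one real bookkeeping hurdle is the inclusion-exclusion, but each forbidden event simply kills two of the four variables $x_{ij}$ and so contributes a one-term expression, making the final closed form unambiguous.
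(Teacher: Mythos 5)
Your proof is correct, but the route differs from the paper's in a way worth noting. The paper takes a verification approach: it writes down the general Lagrange form $s_\d(\ve) = C_0 \binom{k}{\ve} x_{00}^{e_{00}} x_{01}^{e_{01}} x_{10}^{e_{10}} x_{11}^{e_{11}}$, observes that any element of $\cT(\d)$ of that form must be the unique critical point of the strictly concave objective, and then simply checks that the claimed $t_\d$ has this form and lies in $\cT(\d)$ --- carrying out the multinomial/inclusion-exclusion computation only for the constraint $\sum_\ve t_\d(\ve) = 1/k$ and leaving the rest to the reader. You instead \emph{derive} the answer: you apply Lagrange multipliers for the four explicit constraints to obtain a four-parameter exponential family, then exploit the involution $\iota:(e_{ij}) \mapsto (e_{1-i,1-j})$ (which, as you correctly verify, preserves both $\cT(\d)$ and the objective, hence fixes the unique maximizer) to force $\alpha\beta=1$ and $\alpha=\beta\gamma$ and collapse the family to one free parameter. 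This symmetry reduction does not appear in the paper and is a genuine improvement in transparency: it explains structurally why the maximizer depends only on $e_{01}+e_{10}$, it makes two of the four constraints of $\cT(\d)$ automatic (they follow from $\iota$-invariance together with $\sum t = 1/k$), and it leaves only the two conditions $\sum_\ve s_\d(\ve) = 1/k$ and $p^{s_\d}_{01}=\d/2$, which you then solve in closed form via the generating-function identity and its $x_{01}$-derivative. Your inclusion-exclusion calculation of $p^{s_\d}_{01}$ also fills in the portion of the verification that the paper delegates to the reader. The only caveat is the closing remark on uniqueness of $\d_0$: that observation is correct, but it is not part of what this lemma asserts (the lemma takes $\d_0$, $C$, $t_\d$ as given by the stated equations and only concludes $s_\d = t_\d$), so it is a bonus rather than a required step.
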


\begin{proof}
Define $F:\cT \to \R$ by
$$F(t)=H(t) + \sum_{\ve} t(\ve) \log {k \choose \ve}.$$ 
For all $t \in \cT(\d)$, $G(t)-F(t)$ is constant in $t$. Therefore, it suffices to prove the lemma with $F$ in place of $G$. 

The function $F$ is concave over $t \in \cT(\d)$. This implies the existence of a unique $s_\d \in \cT(\d)$ such that 
$$\max_{t\in \cT(\d)} F(t) = F(s_\d).$$ 

By definition, $\cT(\d)$ is the set of all functions $t:\cE \to [0,1]$ satisfying
\begin{eqnarray*}
1/k &=&  \sum_{\ve\in \cE} t(\ve) \\
p_{ij} &=&  \sum_{\ve\in \cE} e_{ij} t(\ve),
\end{eqnarray*}
where $p=(p_{ij})$ is the matrix
\begin{displaymath}
p = \left[\begin{array}{cc}
1/2 - \d/2 & \d/2 \\
\d/2 & 1/2- \d/2
\end{array}\right].
\end{displaymath}

For any $\ve \in \cE$,
\begin{eqnarray}\label{f1}
\frac{\partial F}{\partial t(\ve)}  = -\log t(\ve) - 1 + \log {k \choose \ve}.
\end{eqnarray}
Since this is positive infinity when $t(\ve)=0$,  $s_\d$ must lie in the interior of $\cT(\d)$.  
By the method of Lagrange multipliers there exists $\l \in \R$ and a $2\times 2$ matrix $\vec{\mu}$ such that
\begin{eqnarray}\label{f2}
\frac{\partial F}{\partial t(\ve)}(s_\d) = \l + \vec{\mu} \cdot \ve.
\end{eqnarray}
Evaluate (\ref{f1}) at $s_\d$, use $(\ref{f2})$ and solve for $s_\d(\ve)$ to obtain
$$s_\d(\ve) = C_0 {k \choose \ve} x_{00}^{e_{00}} x_{01}^{e_{01}} x_{10}^{e_{10}}x_{11}^{e_{11}}$$
for some constants $C_0, x_{ij}$. In fact, since $F$ is concave, $s_\d$ is the unique critical point and so it is the only element of $\cT(\d)$ of this form. So it suffices to check that the purported $t_\d$ given in the statement of the lemma has this form and that it is in $\cT(\d)$ as claimed. The former is immediate while the latter is a tedious but straightforward computation. For example, to check that $\sum_\ve t_\d(\ve)=1/k$, observe that, by the multinomial formula for any $(x_{ij})_{i,j\in\{0,1\}}$,
\begin{eqnarray*}
&&\sum_{\ve \in \cE}  {k \choose \ve} x_{00}^{e_{00}} x_{01}^{e_{01}} x_{10}^{e_{10}}x_{11}^{e_{11}} \\
&=& \Big[ (x_{00} + x_{01} + x_{10} + x_{11})^k \\
&~~~& - (x_{00} + x_{01})^k  -  (x_{00} + x_{10})^k  - (x_{11} + x_{01})^k -  (x_{11} + x_{10})^k  + x_{00}^k  + x_{01}^k + x_{10}^k + x_{11}^k \Big].
\end{eqnarray*}
Substitute $x_{00}=x_{11}=\frac{1-\d_0}{2}$ and $x_{01}=x_{10}=\d_0/2$ to obtain
\begin{eqnarray*}
\sum_{\ve \in \cE} t_\d(\ve) &=& C\left[ 1 - 4(1/2)^k  + 2\left(\frac{1-\d_0}{2}\right)^k  + 2\left(\frac{\d_0}{2}\right)^k \right] = 1/k.
\end{eqnarray*}
The rest of the verification that $t_\d \in \cT(\d)$ is left to the reader.
\end{proof}

%\begin{lem}\label{lem:psi}
%$$\lim_{n\to\infty} n^{-1} \log \E^p_n[Z_e(\d)] = \psi(\d)$$
%where 
%\begin{eqnarray*}
%\psi(\d)&=&(1-d)H(\d,1-\d) - d\log(2) + dH(t_\d)  + d \sum_{\ve \in \cE}  t_\d(\ve) \log {k \choose \ve } \\
%&&- (d/k)\log k  -(d/k)\log(1-2^{1-k}).
%\end{eqnarray*}
%\end{lem}

\begin{proof}[Proof of Theorem \ref{thm:cluster}]

 Let $\sE(\d)$ be the set of all equitable 2-colorings $\tchi:[n] \to \{0,1\}$ such that $d_n(\tchi,\chi)=\d$. Also let $F_{\tchi} \subset \Hom_{\rm{unif}}(\G,\sym(n))$ be the set of all $\s$ such that $\tchi$ is a proper 2-coloring of the hyper-graph $G_\s$.  By linearity of expectation,
$$\E^p_n[Z_\chi(\d)]  = \sum_{\tchi \in \sE(\d)} \P^u_n(F_{\tchi} | F_{\chi}).$$
The cardinality of $\sE(\d)$ is ${ n/2 \choose \d n/2}^2$. By Stirling's formula
\begin{eqnarray}\label{E(d)}
n^{-1} \log { n/2 \choose \d n/2}^2 = H(\d, 1-\d) + O(n^{-1}\log(n)).
\end{eqnarray}
We have $\P^u_n(F_{\tchi} | F_{\chi})$ is the same for all $\tchi \in \sE(\d)$. This follows by noting that the distribution of hyper-graphs in the planted model is invariant under any permutation which fixes $\chi$. If $\eta, \eta'$ are two configurations with $d_n(\eta, \chi) = d_n(\eta', \chi) = \delta$ then there is a permutation $\pi \in \sym(n)$ which fixes $\chi$ and such that $\eta\circ \pi = \eta'$. To see this note that we simply need to find a $\pi \in \sym(n)$ which maps the sets $\chi^{-1}(i) \cap \eta^{-1}(j)$ to $\chi^{-1}(i) \cap \eta'^{-1}(j)$ for each $i,j \in \{0,1\}$. Such a map exists since for each $i,j$ the two sets have the same size. It follows that
\begin{eqnarray}\label{symmetry2}
n^{-1} \log \E^p_n[Z_\chi(\d)]  =  H(\d, 1-\d) + n^{-1} \log \P^u_n(F_{\tchi} | F_{\chi}) + O(n^{-1}\log(n))
\end{eqnarray}
for any fixed $\tchi \in \sE(\d)$. 

For $1\le i \le d$, let $F_{\chi, i}$ be the set of uniform homomorphisms $\s$ such that the orbit-partition of $\s(s_i)$ is $\chi$-bichromatic in the sense that $\chi(P) = \{0,1\}$ for every $P$ in the orbit-partition of $\s(s_i)$. Then the events $\{F_{\chi,i} \cap F_{\tchi,i}  \}_{i=1}^d$ are i.i.d. and 
$$F_{\chi} \cap F_{\tchi} = \bigcap_{i=1}^d F_{\chi,i} \cap F_{\tchi,i}.$$
 Therefore,
\begin{eqnarray}\label{symmetry}
\P^u_n(F_{\tchi} | F_{\chi}) = \frac{\P^u_n(F_{\tchi,1}  \cap F_{\chi,1})^d  }{ \P^u_n(F_{\chi})}.
\end{eqnarray}

Note $\P^u_n(F_{\chi,1} \cap F_{\tchi,1})$ is, up to sub-exponential factors, equal to the maximum of $g(\tchi,t)$ over $t \in \cT_n(\d)$ divided by the number of $k$-partitions of $[n]$. So equation (\ref{eqn:part}) implies
$$ \frac{1}{n} \log \P^u_n(F_{\tchi,1}  \cap F_{\chi,1}) =  \max_{t\in \cT_n(\d)} \frac{1}{n} \log g(\tchi,t) - (1-1/k)(\log(n)-1) +(1/k)\log ((k-1)!) + O(n^{-1}\log n).$$
So Lemma \ref{lem:g} implies
$$ \frac{1}{n} \log \P^u_n(F_{\tchi,1}  \cap F_{\chi,1}) =  \max_{t\in \cT_n(\d)} G(t) - (1-1/k)(\log(n)-1) +(1/k)\log ((k-1)!) + O(n^{-1}\log n).$$
We apply Lemma \ref{lem:diophantine} to $\cT(\delta)$ to obtain the existence of $s^{(n)}_\d \in \cT_n(\d)$ with $\|s^{(n)}_\d - t_\d\|_\infty < O_\d(n^{-1})$ where the constant implicit in the $O_\d(n^{-1})$ term depends continuously on $\d$ for $\d \in (0,1/2]$. Since $G$ is differentiable in a neighborhood of $t_\d$, we have 
$$\max_{t\in \cT_n(\d)} G(t)=  G(t_\d) + O_\d(n^{-1}).$$
 So Lemma \ref{lem:max}  implies
\begin{eqnarray*}
 \frac{1}{n} \log \P^u_n(F_{\tchi,1}  \cap F_{\chi,1})  &=& -H(\vp) + H(t_\d)  - \sum_{i,j,\ve} t_\d(\ve) \log(e_{ij}!) + (1/k)\log (k-1)! \\
 && + O_\d(n^{-1}) + O(n^{-1}\log(n)).
\end{eqnarray*}
Since $\vp=(\d/2,\d/2,(1-\d)/2,(1-\d)/2)$, $H(\vp) = H(\d,1-\d) + \log(2).$ So
\begin{eqnarray}\label{woody}
 \frac{1}{n} \log \P^u_n(F_{\tchi,1}  \cap F_{\chi,1}) &=&\notag -H(\d,1-\d) - \log(2) + H(t_\d)  - \sum_{i,j,\ve} t_\d(\ve) \log(e_{ij}!) \\
 &&  + (1/k)\log (k-1)! +  O_\d(n^{-1}) + O(n^{-1}\log(n)).
\end{eqnarray}
On the other hand, Theorem \ref{thm:1moment} implies
\begin{eqnarray*}
\frac{1}{n} \log \P^u_n(F_{\chi}) &=& \frac{1}{n} \log \left( {n\choose n/2}^{-1} \E^u_n[Z_e(\s)]\right) \\
&=& (d/k)\log(1-2^{1-k}) + O(n^{-1}\log(n)).
\end{eqnarray*}
Combine this result with (\ref{symmetry2}), (\ref{symmetry}) and (\ref{woody}) to obtain 
\begin{eqnarray*}
 n^{-1} \log \E^p_n[Z_\chi(\d)] &=& (1-d)H(\d,1-\d) - d\log(2) + dH(t_\d)- d\sum_{i,j,\ve} t_\d(\ve) \log(e_{ij}!)\\
&&   + (d/k)\log (k-1)! - (d/k)\log(1-2^{1-k}) + O_\d(n^{-1}) + O(n^{-1}\log(n)).
\end{eqnarray*}
Since $ \sum_{\ve}  t_\d(\ve) =1/k$,
$$\sum_{\ve}  t_\d(\ve) \log {k \choose \ve } = (1/k) \log k! - \sum_{i,j,\ve} t_\d(\ve) \log(e_{ij}!).$$
Substitute this into the previous equation to obtain 
$$n^{-1} \log \E^p_n[Z_\chi(\d)] = \psi_0(\d) + O_\d(n^{-1}) + O(n^{-1}\log(n))$$
where 
\begin{eqnarray*}
\psi_0(\d)&=&(1-d)H(\d,1-\d) - d\log(2) + dH(t_\d)  + d \sum_{\ve \in \cE}  t_\d(\ve) \log {k \choose \ve } \\
&&- (d/k)\log k  -(d/k)\log(1-2^{1-k}).
\end{eqnarray*}
Observe that in every estimate above, the constant implicit in the error term does not depend on $\d$. To finish the lemma, we need only simplify the expression for $\psi_0$. 

%Substitute the formula for $t_\d$ from Lemma \ref{lem:max} to finish the proof. 
By Lemma \ref{lem:max},
\begin{eqnarray*}
H(t_\d) &=& -\sum_{\ve}t_\d (\ve)\log t_\d(\ve)\\ &=& -\sum_{\ve}t_\d (\ve) \left(\log C + (e_{00} + e_{11})\log\left(\frac{1-\d_0}{2}\right) + (e_{01} + e_{10})\log\left(\frac{\d_0}{2}\right) + \log {k \choose \ve}\right)\\
&=& -(1/k)(\log C) - (1-\d)\log (1-\d_0) - \d\log (\d_0) + \log 2 - \sum_\ve t_\d(\ve) \log{k \choose \ve} \\
&=&-(1/k)(\log C) + H_0(\d,1-\d)  + \log 2 - \sum_\ve t_\d(\ve) \log{k \choose \ve}.
\end{eqnarray*}
Combined with the previous formula for $\psi_0$, this implies
\begin{eqnarray*}
\psi_0(\d)&=&(1-d)H(\d,1-\d)    -(d/k)\log C + dH_0(\d,1-\d) - (d/k)\log k  -(d/k)\log(1-2^{1-k}).
\end{eqnarray*}

To simplify further, use the formula for $C$ in Lemma \ref{lem:max} to obtain
\begin{eqnarray*}
-(d/k)\left(\log C + \log k + \log(1-2^{1-k})\right) &=& (d/k)\log \frac{1 - 2^{2-k} + 2(\d_0/2)^k + 2((1-\d_0)/2)^k}{1-2^{1-k}} \\ &=& (d/k) \log \left(1- \frac{1 - \d_0^k - (1-\d_0)^k}{2^{k-1}-1}\right).
\end{eqnarray*}
Thus $\psi_0(\d) = (1-d)H(\d,1-\d) + dH_0(\d,1-\d) + \frac{d}{k}\log(1- \frac{1-\d_0^k-(1-\d_0)^k}{2^{k-1}-1})$.

%The following provides details of the calculation to show that $\psi_0(\d) = (1-d)H(\d,1-d) + dH_0(\d,1-\d) + \frac{d}{k}\log(1- \frac{1-\d_0^k-(1-\d_0)^k}{2^{k-1}-1})$, where $H_0(\d,1-\d) = -(\d\log\d_0 + (1-\d)\log(1-\d_0))$.

\end{proof}

\subsection{Analysis of $\psi_0$ and the proof of Lemma \ref{lem:key} inequality (\ref{eqn:2})}

Theorem \ref{thm:cluster} reduces inequality (\ref{eqn:2}) to analyzing the function $\psi_0$. A related function $\psi$, defined by 
$$\psi(x):=H(x,1-x) + \frac{d}{k} \log \left( 1 - \frac{1 - x^k - (1-x)^k}{2^{k-1}-1}\right)$$
has been analyzed in \cite{MR2263010, coja-zdeb-hypergraph}. It is shown there $\psi(x)$ is the exponential rate of growth of the number of proper colorings at normalized distance $x$ from the planted coloring in the model $H_k(n,m)$. Moreover, if $r=d/k$ is close to $\frac{\log(2)}{2} \cdot 2^k - (1+\log(2))/2$ then the global maximum of $\psi(x)$ is attained at some $x\in (0,2^{-k/2})$. Moreover, $\psi$ has a local maximum at $x=1/2$ and is symmetric around $x=1/2$. It is negative in the region $(2^{-k/2}, 1/2 - 2^{-k/2})$. We will not need these facts directly, and mention them only for context, especially because we will obtain similar results for $\psi_0$.

The relevance of $\psi$ to $\psi_0$ lies in the fact that
\begin{eqnarray}\label{eqn:from psi to psi}
\psi_0(\d)=\psi(\d_0) - (H(\d_0,1-\d_0) - H_0(\d,1-\d)) + (d-1)\left[ H_0(\d,1-\d) - H(\d,1-\d)\right].
\end{eqnarray}
As an aside, note that $H_0(\d,1-\d) - H(\d,1-\d)$ is the Kullback-Leibler divergence of the distribution $(\d,1-\d)$ with respect to $(\d_0,1-\d_0)$. %We will not need this fact directly although it may help to understand the expression for $\psi_0(\d)$.

To prove inequality (\ref{eqn:2}), we first estimate the difference $\psi_0(\d)-\psi(\d_0)$ and then estimate $\psi(\d_0)$. Because the estimates we obtain are useful in the next subsection, we prove more than what is required for just inequality (\ref{eqn:2}).

\begin{lem}\label{lem:4things} 
Suppose $0 \leq \d_0 \leq 1/2$. Define $\varepsilon \ge 0$ by $\d = \d_0(1-\varepsilon)$. Then 
\begin{eqnarray*}
H(\d_0,1-\d_0) - H_0(\d,1-\d) &=& \d_0\varepsilon\log\left(\frac{1-\d_0}{\d_0}\right) \ge 0, \\
H_0(\d,1-\d) - H(\d,1-\d)&=&  O(\d_0\varepsilon^2),\\
\varepsilon &=& O(2^{-k}), \\
(1-\d)_0 &=& 1-\d_0.
 \end{eqnarray*}
  The last equation implies $\psi_0(1-\d) = \psi_0(\d)$.
\end{lem}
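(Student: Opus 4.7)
The four identities are essentially direct algebraic or Taylor-expansion calculations, all anchored to the defining equation for $\d_0$. I will tackle them in the given order, since (3) and (4) require the defining equation, while (1) and (2) are purely about the two entropy-like quantities.

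For identity (1), I will expand both $H(\d_0,1-\d_0)$ and $H_0(\d,1-\d)$ directly and substitute $\d = \d_0(1-\varepsilon)$, $1-\d = (1-\d_0) + \d_0\varepsilon$. The terms involving $\log\d_0$ contribute $(-\d_0 + \d)\log\d_0 = -\d_0\varepsilon\log\d_0$, while those with $\log(1-\d_0)$ contribute $(-(1-\d_0) + (1-\d))\log(1-\d_0) = \d_0\varepsilon\log(1-\d_0)$, and their sum is $\d_0\varepsilon\log((1-\d_0)/\d_0)$. Nonnegativity is immediate from $\d_0 \le 1/2$, which gives $(1-\d_0)/\d_0 \ge 1$.

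For identity (2), I will recognize $H_0(\d,1-\d) - H(\d,1-\d) = \d\log(\d/\d_0) + (1-\d)\log((1-\d)/(1-\d_0))$ as the Kullback--Leibler divergence of $(\d,1-\d)$ from $(\d_0,1-\d_0)$. Writing $\d/\d_0 = 1-\varepsilon$ and $(1-\d)/(1-\d_0) = 1 + \d_0\varepsilon/(1-\d_0)$ and Taylor expanding both logarithms to second order, the first-order contributions $-\d\varepsilon + (1-\d)\cdot\d_0\varepsilon/(1-\d_0)$ cancel (after substituting $\d = \d_0(1-\varepsilon)$), and the surviving second-order piece is $\tfrac{1}{2}\d_0\varepsilon^2/(1-\d_0) + O(\d_0\varepsilon^3)$. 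The bound $1/(1-\d_0) \le 2$ (again from $\d_0 \le 1/2$) together with claim (3) controlling $\varepsilon$ yields the stated $O(\d_0\varepsilon^2)$ bound, with implicit constant independent of $\d_0$.

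For identity (3), I will manipulate the defining relation $1-\varepsilon = (1 - 2^{2-k} + (\d_0/2)^{k-1})/D$, where $D := 1 - 2^{2-k} + 2(\d_0/2)^k + 2((1-\d_0)/2)^k$, to obtain
\[
\varepsilon \;=\; \frac{2((1-\d_0)/2)^k - (1-\d_0)(\d_0/2)^{k-1}}{D} \;=\; \frac{(1-\d_0)\bigl[(1-\d_0)^{k-1} - \d_0^{k-1}\bigr]}{2^{k-1}D}.
\]
The bracket is nonnegative for $\d_0 \le 1/2$ (incidentally giving $\varepsilon \ge 0$ consistent with the hypothesis), and its numerator is bounded by $2^{-(k-1)}$, while $D \ge 1 - 2^{2-k} \ge 1/2$ for $k \ge 3$, yielding $\varepsilon = O(2^{-k})$.

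For identity (4), the key observation is that $D$ is symmetric under $\d_0 \leftrightarrow 1-\d_0$, and the two numerators $N(\d_0) := \d_0(1-2^{2-k} + (\d_0/2)^{k-1})$ and $N(1-\d_0)$ sum exactly to $D$ (since $\d_0\cdot(\d_0/2)^{k-1} = 2(\d_0/2)^k$, and similarly for $1-\d_0$). Thus if $\d = N(\d_0)/D$ then $1-\d = N(1-\d_0)/D$, which is the defining equation for $(1-\d)$ with solution $1-\d_0$; uniqueness of $\d_0$ then gives $(1-\d)_0 = 1-\d_0$. The symmetry $\psi_0(1-\d) = \psi_0(\d)$ follows because $H(\d,1-\d)$ is manifestly symmetric in $\d$, $H_0(\d,1-\d)$ is symmetric under the simultaneous swap $\d \leftrightarrow 1-\d$ and $\d_0 \leftrightarrow 1-\d_0$, and the $\log$ term depends on $\d_0$ only through $\d_0^k + (1-\d_0)^k$. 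I expect no serious obstacle; the only care required is the $k \ge 3$ threshold in (3) to ensure $D$ is bounded below, which is harmless since we are taking $k$ large.
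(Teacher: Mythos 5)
Your proof is correct and follows essentially the same approach as the paper: direct cancellation for identity (1), Taylor expansion of the two log-ratios in the Kullback--Leibler form for identity (2), algebraic manipulation of the defining relation for identity (3), and a symmetry observation for identity (4). Your observation that the two numerators $N(\d_0)$ and $N(1-\d_0)$ sum exactly to $D$ is a slightly slicker way to arrive at the identity $1-\d = N(1-\d_0)/D$ than the paper's direct subtraction, but the computation is the same underneath.
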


\begin{proof}
The first equality follows from:
\begin{eqnarray*}
H(\d_0,1-\d_0) - H_0(\d,1-\d) &=& -\d_0\log\d_0 - (1-\d_0)\log(1-\d_0) + \d\log\d_0 + (1-\d)\log(1-\d_0) \\
&=& (\d_0 - \d)\log(1/\d_0) + (\d_0 - \d)\log(1-\d_0)\\
 &=& \d_0\varepsilon\log\left(\frac{1-\d_0}{\d_0}\right).
 \end{eqnarray*}
The second estimate follows from:
\begin{eqnarray*}
 H_0(\d,1-\d) - H(\d,1-\d)&=& \d\left(\log\d - \log\d_0 \right) +  (1-\d) \left( \log(1-\d) - \log(1-\d_0) \right)\\
 &=& \d\log(1-\varepsilon) + (1-\d)\log\left(\frac{1-\d}{1-\d_0}\right)\\
  &=& -\d\varepsilon + (1-\d)\log\left(1+ \frac{\d_0\varepsilon}{1-\d_0}\right) + O(\d_0\varepsilon^2) \\
   &=& -\d\varepsilon + \d_0\varepsilon + O(\d_0\varepsilon^2)  = O(\d_0\varepsilon^2). 
 \end{eqnarray*}
The third estimate follows from:
\begin{eqnarray}
\varepsilon &=& 1- \frac{\d}{\d_0} \nonumber\\
&=& 1-\frac{1 - 2^{2-k} + (\d_0/2)^{k-1}}{1 - 2^{2-k} + 2(\d_0/2)^{k} + 2((1-\d_0)/2)^k} \nonumber\\
&=& \frac{2(\d_0/2)^{k} -  (\d_0/2)^{k-1}+ 2((1-\d_0)/2)^k}{1 - 2^{2-k} + 2(\d_0/2)^{k} + 2((1-\d_0)/2)^k} \nonumber\\
&=&  \frac{(\d_0/2)^{k-1}(\d_0-1)+ 2((1-\d_0)/2)^k}{1 - 2^{2-k} + 2(\d_0/2)^{k} + 2((1-\d_0)/2)^k}\nonumber\\
&=&  2^{1-k}\cdot (1-\d_0) \cdot \frac{(1-\d_0)^{k-1} - \d_0^{k-1} }{1 - 2^{2-k} + 2(\d_0/2)^{k} + 2((1-\d_0)/2)^k}.\label{vareps}
 \end{eqnarray}
The denominator is $1+O(2^{-k})$ and the numerator is $O(2^{-k})$. The result follows.  

The last equation follows from:
\begin{eqnarray*}
 1-\d &=& 1 - \d_0 \left(\frac{ 1-2^{2-k} + (\d_0/2)^{k-1}  }{ 1-2^{2-k} + 2(\d_0/2)^k +2( (1-\d_0)/2)^k }\right) \\ 
 &=& \frac{1-2^{2-k} + 2(\d_0/2)^k +2\left( (1-\d_0)/2)^k - \d_0(1-2^{2-k} + (\d_0/2)^{k-1} \right)}{1-2^{2-k} + 2(\d_0/2)^k +2( (1-\d_0)/2)^k} \\
 &=& \frac{(1-\d_0)\left(1-2^{2-k}+((1-\d_0)/2)^{k-1}\right)}{1-2^{2-k} + 2(\d_0/2)^k +2( (1-\d_0)/2)^k}.
\end{eqnarray*}

The last expression shows that $(1-\d)_0 = 1-\d_0$.

\end{proof}

%Note that by the first statement of the claim when $\d_0 = 2^{-k}$, $H(\d_0,1-\d_0) - H_0(\d,1-\d) = O(k4^{-k})$, so that $\psi_0(\d) \geq \psi(\d_0) - o(2^{-k}) > \psi(1/2)$ by the previous lemma for large enough $k$.  

%A straightforward computation shows that if $\d \in [0,1/2]$ then $0\le \d \le \d_0 \le 1/2$. 

\begin{lem}\label{lem:psi}
Let $0 \le \eta$ be $O_k(1)$. If 
$$r =d/k= \frac{\log(2)}{2} \cdot 2^k - (1+\log(2))/2 +\eta$$
then
\begin{eqnarray*}
f(d,k) &=& \psi(1/2)=\psi_0(1/2) = (1-2\eta)2^{-k} + O(2^{-2k}) \\
\psi(2^{-k}) &=& 2^{-k} + O(k^2 2^{-2k}).
\end{eqnarray*}
In particular, if $k$ is sufficiently large then $\psi(2^{-k}) > f(d,k)$. 
\end{lem}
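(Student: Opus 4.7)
My plan is to verify the three identities of the first line and the expansion of $\psi(2^{-k})$ separately, then subtract. For $\psi_0(1/2) = \psi(1/2) = f(d,k)$, I would substitute $\d = 1/2$ into the defining equation for $\d_0$ in Theorem \ref{thm:cluster} and observe that $\d_0 = 1/2$ solves it (both numerator and denominator are invariant under $\d_0 \leftrightarrow 1-\d_0$ at this point). This forces $H_0(1/2,1/2) = \log 2 = H(1/2,1/2)$, so the prefactor $(1-d)H + dH_0$ collapses to $\log 2$, and the log terms of $\psi_0$ and $\psi$ coincide. The remaining step $\psi(1/2) = f(d,k)$ reduces to the elementary algebraic identity $1 - \frac{1 - 2^{1-k}}{2^{k-1}-1} = 1 - 2^{1-k}$ (clear from multiplying through by $2^{k-1}-1$).

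For the expansion of $f(d,k) = \log 2 + r\log(1-2^{1-k})$, I would expand $\log(1-2^{1-k}) = -2^{1-k} - 2^{1-2k} + O(2^{-3k})$ and substitute $r = \frac{\log 2}{2}2^k - (1+\log 2)/2 + \eta$. The leading $r \cdot (-2^{1-k})$ contributes $-\log 2$ (cancelling the initial $\log 2$), plus $(1+\log 2)2^{-k}$ from the $-(1+\log 2)/2$ piece of $r$ and $-2\eta \cdot 2^{-k}$ from the $\eta$ piece of $r$; the subleading $r \cdot (-2^{1-2k})$ contributes $-\log 2 \cdot 2^{-k} + O(2^{-2k})$. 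These combine to $(1 - 2\eta)2^{-k} + O(2^{-2k})$.

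For $\psi(2^{-k})$ I would handle the entropy and log terms separately. The entropy yields $H(2^{-k}, 1-2^{-k}) = (k\log 2 + 1)2^{-k} + O(2^{-2k})$. For the log term, the binomial expansion $(1-2^{-k})^k = 1 - k 2^{-k} + O(k^2 2^{-2k})$ (combined with the negligible $2^{-k^2}$) gives $1 - (2^{-k})^k - (1-2^{-k})^k = k 2^{-k} + O(k^2 2^{-2k})$. Dividing by $2^{k-1}-1$ gives a ratio of $2k \cdot 2^{-2k} + O(k^2 2^{-3k})$, so $\log(1 - \text{ratio}) = -2k \cdot 2^{-2k} + O(k^2 2^{-3k})$. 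Multiplying by $d/k = r = \Theta(2^k)$ produces $-k\log 2 \cdot 2^{-k} + O(k^2 2^{-2k})$. Adding the entropy contribution, the $k\log 2 \cdot 2^{-k}$ pieces cancel exactly, leaving $\psi(2^{-k}) = 2^{-k} + O(k^2 2^{-2k})$. Subtracting the two expansions gives
\[
\psi(2^{-k}) - f(d,k) = 2\eta \cdot 2^{-k} + O(k^2 2^{-2k}),
\]
which is positive once $k$ is large enough (as $\eta$ is treated as a positive constant bounded below, and $k^2 2^{-k} \to 0$).

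The main obstacle is purely bookkeeping: the cancellation that leaves a clean $2^{-k}$ in $\psi(2^{-k})$ requires expanding $(1-2^{-k})^k$ past its constant term and tracking the resulting error $O(k^2 2^{-2k})$ through each subsequent step. The algebraic identity $\psi(1/2) = f(d,k)$ is the other potentially surprising point but is a one-line calculation.
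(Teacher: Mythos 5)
Your proposal is correct and follows essentially the same route as the paper: verify $f(d,k)=\psi(1/2)=\psi_0(1/2)$ by direct substitution (the paper calls it "direct inspection"; you spell out that $\d_0=1/2$ solves the defining equation at $\d=1/2$ and that the algebraic identity $1-\tfrac{1-2^{1-k}}{2^{k-1}-1}=1-2^{1-k}$ holds), then Taylor-expand $\log(1-2^{1-k})$ and the entropy/log pieces of $\psi(2^{-k})$ with the stated error orders. One small point worth noting: the lemma as stated allows $\eta=0$, in which case the error terms alone do not force $\psi(2^{-k})>f(d,k)$; your parenthetical that $\eta$ is to be treated as bounded away from $0$ (as it is in every application of the lemma in the paper) is the right reading.
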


\begin{proof}
By direct inspection $f(d,k)=\psi(1/2)=\psi_0(1/2)$. By Taylor series expansion, $\log(1-2^{1-k}) = -2^{1-k} - 2^{1-2k} + O(2^{-3k})$. So
%$$r= \frac{\log(2)}{2^{1-k} + 2^{1-2k}} + (\eta -1/2) + O(2^{-2k}).$$
%So
\begin{eqnarray*}
f(d,k)&=&\log(2) + r \log(1-2^{1-k}) \\
&=& \log(2) + \left(  \frac{\log(2)}{2} \cdot 2^k - (1+\log(2))/2 +\eta \right) \left( - 2^{1-k} - 2^{1-2k} \right) + O(r 2^{-3k}) \\
&=&(1-2\eta)2^{-k} + O(2^{-2k}). % < 2^{-k} + O(2^{-2k}) = \psi(2^{-k}).
\end{eqnarray*}
Next we estimate $\psi(2^{-k})$. For convenience, let $x=2^{-k}$. Then
$$1 - x^k - (1-x)^k = k\cdot 2^{-k} + O(k^2 2^{-2k}).$$
Since $\log(1-x)=-x - x^2/2 + O(x^3)$, 
$$\log \left( 1 - \frac{1 - x^k - (1-x)^k}{2^{k-1}-1}\right) =  -2k \cdot 2^{-2k}   + O(k^2 2^{-3k}).$$
So
$$r \log \left( 1 - \frac{1 - x^k - (1-x)^k}{2^{k-1}-1}\right) = -k\log(2)\cdot 2^{-k} + O(k^2 2^{-2k}). $$
Also,
$$H(x,1-x) = (k\log(2) + 1)\cdot 2^{-k} + O(2^{-2k}).$$
Add these together to obtain
$$\psi(2^{-k}) = 2^{-k} + O(k^22^{-2k}).$$

\end{proof}

\begin{cor}
Inequality (\ref{eqn:2}) of Lemma \ref{lem:key} is true. To be precise, let $0<\eta_0$ be constant with respect to $k$. Then for all sufficiently large $k$ (depending on $\eta_0$), if 
$$r =d/k= \frac{\log(2)}{2} \cdot 2^k - (1+\log(2))/2 +\eta$$
for some $\eta\ge \eta_0$ with $\eta=O_k(1)$ then
$$f(d,k) < \liminf_{n\to\infty} n^{-1}\log\E_n^{p}[ Z(\s)].$$
\end{cor}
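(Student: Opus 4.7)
The plan is to exploit the decomposition $Z(\s) \ge Z_\chi(\d;\s)$ for any admissible $\d$, so that
\[
\liminf_{n\to\infty} n^{-1}\log\E_n^{p}[Z(\s)] \ \ge\ \liminf_{n\to\infty} n^{-1}\log\E_n^{p}[Z_\chi(\d_n;\s)]
\]
for any sequence $\d_n$ with $\d_n n/2 \in \Z$. Given a target $\d^* \in (0,1/2]$, choose $\d_n$ to be the closest admissible value to $\d^*$, so $|\d_n - \d^*| = O(1/n)$. Theorem \ref{thm:cluster} then gives $n^{-1}\log \E^p_n[Z_\chi(\d_n;\s)] = \psi_0(\d_n) + O_{\d_n}(n^{-1}) + O(n^{-1}\log n)$, and since $\psi_0$ is continuous at $\d^*$, the liminf is bounded below by $\psi_0(\d^*)$. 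So it suffices to find a single $\d^* \in (0,1/2]$ with $\psi_0(\d^*) > f(d,k)$, and this choice depends only on $k$, not on $n$.

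I will choose $\d^*$ so that the associated $\d^*_0$ (defined by the equation in Theorem \ref{thm:cluster}) equals $2^{-k}$; solving the monotone relation for $\d^*$ in terms of $\d^*_0$ shows this choice is legitimate for large $k$. The decomposition (\ref{eqn:from psi to psi}),
\[
\psi_0(\d^*) = \psi(\d^*_0) - \bigl(H(\d^*_0,1-\d^*_0) - H_0(\d^*,1-\d^*)\bigr) + (d-1)\bigl[H_0(\d^*,1-\d^*) - H(\d^*,1-\d^*)\bigr],
\]
reduces the estimate to bounding the two correction terms. Lemma \ref{lem:4things} gives $\varepsilon = O(2^{-k})$ for our $\d^*_0$, so the first correction is $\d^*_0 \varepsilon \log((1-\d^*_0)/\d^*_0) = 2^{-k} \cdot O(2^{-k}) \cdot O(k) = O(k\, 2^{-2k})$, and the second (times $d-1 = O(k\, 2^k)$) is $O(k\, 2^k) \cdot O(\d^*_0 \varepsilon^2) = O(k\, 2^{-2k})$. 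By Lemma \ref{lem:psi}, $\psi(2^{-k}) = 2^{-k} + O(k^2 2^{-2k})$, so combining yields $\psi_0(\d^*) = 2^{-k} + O(k^2 2^{-2k})$.

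Finally, Lemma \ref{lem:psi} also gives $f(d,k) = (1-2\eta)2^{-k} + O(2^{-2k})$, and since $\eta \ge \eta_0 > 0$, we get
\[
\psi_0(\d^*) - f(d,k) \ \ge\ 2\eta_0 \cdot 2^{-k} + O(k^2 2^{-2k}),
\]
which is strictly positive for all $k$ sufficiently large (depending on $\eta_0$). Chaining with the liminf bound above completes the proof.

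The main technical issue to watch is purely bookkeeping: one needs the implicit constants in the $O(2^{-2k})$ expansion of $f(d,k)$ and in the $O_\d(n^{-1})$ term of Theorem \ref{thm:cluster} to behave uniformly in the relevant parameters. For the former, the hypothesis $\eta = O_k(1)$ makes the constant uniform in $\eta$; for the latter, we fix $\d^*$ (hence $k$) first and only then let $n \to \infty$, so continuous dependence of the constant on $\d \in (0,1/2]$ is enough. No essentially new ideas beyond Theorem \ref{thm:cluster} and Lemmas \ref{lem:4things} and \ref{lem:psi} are needed.
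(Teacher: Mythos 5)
Your proposal is correct and follows essentially the same route as the paper: reduce to $\liminf_n n^{-1}\log\E_n^{p}[Z(\s)] \ge \psi_0(\d^*)$ via Theorem \ref{thm:cluster} and continuity, then pick $\d^*$ with $\d^*_0 = 2^{-k}$ and invoke Lemmas \ref{lem:4things} and \ref{lem:psi}. The only cosmetic difference is that you estimate both correction terms in (\ref{eqn:from psi to psi}) to get a two-sided estimate $\psi_0(\d^*) = 2^{-k} + O(k^2 2^{-2k})$, whereas the paper simply drops the non-negative Kullback--Leibler term $(d-1)[H_0-H]$ and works with a one-sided lower bound; both yield the same conclusion.
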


\begin{proof}
By definition,
$$n^{-1}\log\E_n^{p}[ Z(\s)] \ge \max_{\d\in [0,1/2]} n^{-1}\log\E_n^{p}[ Z(\d)].$$
By Theorem \ref{thm:cluster},
$$n^{-1}\log\E_n^{p}[ Z(\s)] \ge \max_{\d \in C_n} \psi_0(\d) + O_\d(n^{-1}) + O(n^{-1}\log(n))$$
where $C_n$ is the set of $\d\in [0,1/2]$ such that $\d n/2$ is an integer. Because $\psi_0:[0,1/2] \to \R$ is continuous, $\lim_{n\to\infty} \max_{\d \in C_n} \psi_0(\d) = \max_{\d \in [0,1/2]} \psi_0(\d)$. The constant implicit in the $O_\d(n^{-1})$ depends continuously on $\d$ for $\d \in (0,1/2]$, while the constant implicit in the $O(n^{-1}\log(n))$ error term does not depend on $\d$. So
$$\liminf_{n\to\infty} n^{-1}\log\E_n^{p}[ Z(\s)] \ge \max_{\d \in [\upsilon,1/2]} \psi_0(\d)$$
for any $0<\upsilon<1/2$. Because $\psi_0$ is continuous, 
\begin{eqnarray}\label{eqn:obvious}
\liminf_{n\to\infty} n^{-1}\log\E_n^{p}[ Z(\s)] \ge \max_{\d \in [0,1/2]} \psi_0(\d).
\end{eqnarray}
Because $H_0(\d,1-\d) - H(\d,1-\d) \ge 0$ (since it is a Kullback-Liebler divergence), the first equality of Lemma \ref{lem:4things} implies
\begin{eqnarray*}
\psi_0(\d) &=& \psi(\d_0) - (H(\d_0,1-\d_0) - H_0(\d,1-\d)) + (d-1)\left[ H_0(\d,1-\d) - H(\d,1-\d)\right] \\
&\ge & \psi(\d_0) - \d_0\varepsilon\log\left(\frac{1-\d_0}{\d_0}\right).
\end{eqnarray*}
By Lemma \ref{lem:psi},  $\psi(2^{-k}) = f(d,k)+2\eta 2^{-k} + O(k^2 2^{-2k})$. By  Lemma \ref{lem:4things}, $\varepsilon = O(2^{-k})$. As $\d$ varies over $[0,1/2]$, $\d_0$ also varies over $[0,1/2]$, so there exists $\d$ such that $\d_0=2^{-k}$. For this value of $\d$,
$$\psi_0(\d) \ge \psi(2^{-k}) -2^{-k}\varepsilon\log\left(\frac{1-2^{-k}}{2^{-k}}\right) \ge f(d,k) + 2\eta 2^{-k} + O(k^2 2^{-2k}).$$
Combined with (\ref{eqn:obvious}) this implies the Corollary.
\end{proof}

In the next subsection, we will need the following result.

\begin{prop}\label{prop:5cases}
Let
$$0<\eta_0 < (1-\log 2)/2.$$
Then there exists $k_0$ (depending on $\eta_0$) such that for all $k\ge k_0$ if 
$$r=d/k= \frac{\log(2)}{2} \cdot 2^k - (1+\log(2))/2 +\eta$$
for some $\eta \in [\eta_0,(1-\log 2)/2)$ then in the interval $[2^{-k/2},1-2^{-k/2}]$, $\psi_0$ attains its unique maximum at 1/2. That is,
$$\max\{\psi_0(\d):~ 2^{-k/2} \le \d  \le 1-2^{-k/2}\} = \psi_0(1/2)=f(d,k) = \log(2) + r \log(1-2^{1-k})$$
and if $\d \in [2^{-k/2},1-2^{-k/2}]$ and $\d \ne 1/2$ then $\psi_0(\d)<\psi_0(1/2)$. 
\end{prop}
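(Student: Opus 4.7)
The plan is to reduce everything to an analysis of $\psi$ via the identity
$$\psi_0(\delta)=\psi(\delta_0)-(H(\delta_0,1-\delta_0)-H_0(\delta,1-\delta))+(d-1)\bigl[H_0(\delta,1-\delta)-H(\delta,1-\delta)\bigr]$$
from (\ref{eqn:from psi to psi}), and then to analyze $\psi$ directly. By the symmetry $\psi_0(\delta)=\psi_0(1-\delta)$ established in Lemma \ref{lem:4things}, I may restrict to $\delta\in[2^{-k/2},1/2]$. Using the formula for $\varepsilon$ in Lemma \ref{lem:4things} (which gives $\varepsilon=O(2^{1-k}(1-\delta_0)^k)$) and the fact that $\d\mapsto \d_0$ is a monotone bijection fixing $1/2$, I will check that $\delta\in[2^{-k/2},1/2]$ forces $\delta_0\in[2^{-k/2},1/2]$, and that
$$0\le \psi_0(\delta)-\psi(\delta_0)\le (d-1)\bigl(H_0(\d,1-\d)-H(\d,1-\d)\bigr)=O\bigl((d-1)\delta_0\varepsilon^2\bigr)=O(2^{-k})$$
uniformly on this range (using that $\sup_{\delta_0\in(0,1)}\delta_0(1-\delta_0)^{2k}=O(1/k)$). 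Since $\psi_0(1/2)=\psi(1/2)=f(d,k)$, it therefore suffices to prove $\psi(\delta_0)<\psi(1/2)$ strictly on $[2^{-k/2},1/2)$ with a margin dominating $O(2^{-k})$.

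I will handle this in two regimes, corresponding to $\delta_0$ near or away from $1/2$. For $x$ in a fixed interval $[\alpha,1-\alpha]\subset(0,1)$, the key observation is that the contribution $r(\log u)''(x)$ (where $u(x)=1-(1-x^k-(1-x)^k)/(2^{k-1}-1)$) is exponentially small in $k$, since $u,u',u''$ are all of order $O(k^2(1-\alpha)^{k-2}/2^k)$ in this range. Combined with $H''(x,1-x)=-1/(x(1-x))\le -4$, this gives $\psi''(x)\le -3$ on $[\alpha,1-\alpha]$ for $k$ large. Since $\psi'(1/2)=0$, Taylor's theorem then yields
$$\psi(x)-\psi(1/2)\le -\tfrac{3}{2}(x-1/2)^2<0\quad\text{for }x\in[\alpha,1-\alpha]\setminus\{1/2\}.$$

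For $x\in[2^{-k/2},\alpha]$, I will expand $\psi(x)-\psi(1/2)=-D(x)+r\log(1+\alpha_x)$, where $D(x)=\log 2-H(x,1-x)$ and $\alpha_x=(A_x-2^{1-k})/(2^{k-1}-2+2^{1-k})$ with $A_x:=x^k+(1-x)^k$. Using $r=(\log 2/2)\cdot 2^k+O(1)$ and $\alpha_x=O(2^{-k})$, Taylor expansion of the logarithm gives $r\log(1+\alpha_x)=\log 2\cdot(A_x-2^{1-k})+O(2^{-k})$, so
$$\psi(x)-\psi(1/2)=h(x)-\log 2\cdot 2^{1-k}+O(2^{-k}),\qquad h(x):=H(x,1-x)-\log 2\cdot(1-x^k-(1-x)^k).$$

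The main obstacle is to show that $h$ is sufficiently negative on $[2^{-k/2},\alpha]$ to beat the $O(2^{-k})$ error. I will analyze $h'(x)=\log((1-x)/x)+\log 2\cdot k(x^{k-1}-(1-x)^{k-1})$: for $x\in[2^{-k/2},\alpha]$ and $k$ large, the first term is at most $(k\log 2)/2$ while the second term is of order $-k\log 2\cdot(1-x)^{k-1}$, and an elementary analysis shows $h$ is decreasing on $[2^{-k/2},(\log k)/k]$ and increasing on $[(\log k)/k,\alpha]$. Consequently, the maximum of $h$ on $[2^{-k/2},\alpha]$ is attained at $x=2^{-k/2}$, where a direct Taylor expansion (using $-x\log x=(k\log 2/2)\cdot 2^{-k/2}$, $(1-x)^k\approx 1-k\cdot 2^{-k/2}$) gives
$$h(2^{-k/2})=2^{-k/2}\bigl(1-\tfrac{k\log 2}{2}\bigr)+O(k^2 2^{-k})\le -\tfrac{k\log 2}{4}\cdot 2^{-k/2}$$
for $k$ large. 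Since this dominates $O(2^{-k})$, we conclude $\psi(x)<\psi(1/2)$ strictly on $[2^{-k/2},\alpha]$. Combining the two regimes with the symmetry $x\leftrightarrow 1-x$ and pulling back through $\psi_0(\delta)-\psi(\delta_0)=O(2^{-k})$ gives the proposition.
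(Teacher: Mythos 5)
Your reduction to analyzing $\psi$ via the identity (\ref{eqn:from psi to psi}) is the right starting point and is the same one the paper uses, but your two-regime decomposition (fixed middle interval $[\alpha,1-\alpha]$ where you exploit concavity of $\psi$, plus the boundary interval $[2^{-k/2},\alpha]$ where you expand $h(x)$) is genuinely more streamlined than the paper's five cases. Regime 2 is sound: the computation $h(2^{-k/2})\le -\tfrac{k\log 2}{4}2^{-k/2}$ is correct and dominates $O(2^{-k})$. (Your claimed unimodality of $h$ on $[2^{-k/2},\alpha]$ deserves a bit more care — $h''$ does change sign in that range — but some argument showing $\max_{[2^{-k/2},\alpha]}h=h(2^{-k/2})$ for large $k$ is certainly available.) The identity $\sup_{\delta_0}\delta_0(1-\delta_0)^{2k}=O(1/k)$ giving the uniform bound $(d-1)[H_0-H]=O(2^{-k})$ is also correct.

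The gap is in Regime 1, and it is the same issue the paper's Case 5 is devoted to. You prove $\psi(\delta_0)-\psi(1/2)\le -\tfrac{3}{2}(\delta_0-1/2)^2$ for $\delta_0\in[\alpha,1-\alpha]$ and then want to subtract the error $O(2^{-k})$. But when $|\delta_0-1/2|\lesssim 2^{-k/2}$ the quadratic margin $\tfrac{3}{2}(\delta_0-1/2)^2$ is itself $O(2^{-k})$ and does not dominate the uniform error bound you have derived; your argument therefore does not prove $\psi_0(\delta)<\psi_0(1/2)$ for $\delta_0$ within roughly $2^{-k/2}$ of $1/2$ (and nothing restricts $\delta_0$ away from $1/2$). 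The fix is to observe that near $\delta_0=1/2-\gamma$ the error is not just $O(2^{-k})$ but $O(k2^{-k}\gamma^2)$: from (\ref{vareps}) one has $\varepsilon=O(\gamma 2^{-k})$ when $\gamma$ is small (bounding $L(\gamma):=(1/2+\gamma)^{k-1}-(1/2-\gamma)^{k-1}$ by $\gamma$, since $L'$ is bounded there), so $(d-1)\delta_0\varepsilon^2=O(k2^{-k}\gamma^2)$, which is negligible against $\tfrac{3}{2}\gamma^2$. With that refinement Regime 1 closes and the proof goes through. A small unrelated point: your claimed lower bound $0\le\psi_0(\delta)-\psi(\delta_0)$ is not justified — the first term $-(H(\delta_0,1-\delta_0)-H_0(\delta,1-\delta))=-\delta_0\varepsilon\log\tfrac{1-\delta_0}{\delta_0}$ is nonpositive for $\delta_0\le 1/2$, so the sign of the difference is unclear — but it is also unnecessary, since only the upper bound is used.
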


\begin{proof}
By Lemma \ref{lem:4things}, it suffices to restrict $\d$ to the interval $[2^{-k/2},1/2]$ (because $\psi_0(\d)=\psi_0(1-\d)$). So we will assume $\d \in [2^{-k/2},1/2]$ without further mention.

Define $\psi_1$ by
$$\psi_1(\d_0)=\frac{d}{k} \log \left( 1 - \frac{1 - \d_0^k - (1-\d_0)^k}{2^{k-1}-1}\right).$$
Observe
\begin{eqnarray*}
\psi_1(\d_0) &=& r \left( - \frac{1 - \d_0^k - (1-\d_0)^k}{2^{k-1}-1} + O(4^{-k}) \right) \\ 
&=& -\log(2)[1-(1-\d_0)^k] + O(2^{-k}). \label{eqn:psi1}
\end{eqnarray*}

By (\ref{eqn:from psi to psi}) and the first inequality of Lemma \ref{lem:4things},
\begin{eqnarray}
\psi_0(\d) %&=&\psi(\d_0) - (H(\d_0,1-\d_0) - H_0(\d,1-\d)) + (d-1)\left[ H_0(\d,1-\d) - H(\d,1-\d)\right]\\
&\leq& \psi(\d_0) + (d-1)[H_0(\d,1-\d) - H(\d,1-\d)] \nonumber\\
&=& H(\d_0,1-\d_0) + \psi_1(\d_0) + (d-1)[H_0(\d,1-\d) - H(\d,1-\d)]\label{estimateb}.
\end{eqnarray}
Moreover, $(d-1)=O(k2^k)$ and, by Lemma \ref{lem:4things}, $H_0(\d,1-\d) - H(\d,1-\d) = O(\d_04^{-k})$. Therefore,
\begin{eqnarray}\label{psi-simple}
\psi_0(\d) &\le & H(\d_0,1-\d_0) + \psi_1(\d_0) + O(\d_0 k 2^{-k})\nonumber \\
&\le& H(\d_0,1-\d_0) -\log(2)[1-(1-\d_0)^k] + O((\d_0 k+1) 2^{-k}).\label{psi-simple}
\end{eqnarray}

Observe that $\d_0 \ge \d$. We divide the rest of the proof into five cases depending on where $\d_0$ lies in the interval $[2^{-k/2},1/2]$. 

{\bf Case 1}. Suppose $2^{-k/2} \leq \d_0 \leq \frac{1}{2k}$. We claim that $\psi_0(\d) <0$. Note $-\log(\d_0) \le (k/2)\log(2)$ and $-(1-\d_0)\log(1-\d_0) = \d_0 + O(\d_0^2)$. So
\begin{eqnarray*}
H(\d_0,1-\d_0) &=& -\d_0 \log \d_0 - (1-\d_0)\log(1-\d_0) \\
&\le& \d_0(k/2)\log(2) + \d_0 + O(\d_0^{2}).
\end{eqnarray*}
By Taylor series expansion,
$$1-(1-\d_0)^k \ge k \d_0 - {k \choose 2}\d_0^2 \ge 3k\d_0/4.$$
So by (\ref{psi-simple}) 
\begin{eqnarray*}
\psi_0(\d) &\le& \d_0(k/2)\log(2) + \d_0 - 3k\d_0\log(2)/4+ O(\d_0^{2}) \\
&=& \d_0[1-k\log(2)/4] + O(\d_0^{2}).
\end{eqnarray*}
Thus $\psi_0(\d)<0$ if $k$ is sufficiently large.

{\bf Case 2}. Let $0<\xi_0<1/2$ be a constant such that  $H(\xi_0,1-\xi_0) < \log(2)(1-e^{-1/2})$. Suppose  $\frac{1}{2k} \leq \d_0 \leq \xi_0$. We claim that $\psi_0(\d)<0$ if $k$ is sufficiently large.

By monotonicity, $H(\d_0,1-\d_0) \le H(\xi_0,1-\xi_0)$. Since $1-x \le e^{-x}$ (for $x>0$),
$$[1-(1-\d_0)^k] \ge 1- e^{-k\d_0} \ge 1-e^{-1/2}.$$
By (\ref{psi-simple}),
\begin{eqnarray*}
\psi_0(\d) &\le& H(\xi_0,1-\xi_0) - \log(2)(1-e^{-1/2}) + O( k2^{-k}).
\end{eqnarray*}
This implies the claim. %there exists a positive constant $0<\xi_0<1/2$ such that $\psi_0(\d)<0$ whenever $\frac{1}{2k} \leq \d_0 \leq \xi_0$. 

{\bf Case 3}. Let $\xi_1$ be a constant such that $\max(\xi_0,1/3)<\xi_1<1/2$. Suppose $\xi_0 \leq \d_0 \leq \xi_1$. We claim that $\psi_0(\d)<0$ for all sufficiently large $k$ (depending on $\xi_1$).

By (\ref{psi-simple}),
\begin{eqnarray*}
\psi_0(\d) &\le& H(\xi_1,1-\xi_1) - \log(2)[1-(1-\d_0)^k]  + O( k2^{-k}) \le  H(\xi_1,1-\xi_1) - \log(2) + O( (1-\xi_0)^k). 
\end{eqnarray*}
This proves the claim.

%{\bf Case 3}. Let $0<\eta<1/4$. Suppose $\eta < \d_0 < 1/2 - \eta$. Then
%$$\psi_1(\d_0) = -\log(2) + o(1).$$
%So (\ref{psi-simple}) implies $\psi_0(\d) \le H(\d_0,1-\d_0) - \log(2) + o(1)$ which is non-positive for sufficiently large $k$ and therefore $\psi_0(\d)<\psi_0(1/2)$. 

{\bf Case 4}. We claim that if $\xi_1 \le \d_0 \le 0.5-2^{-k}$ then $\psi_0(\d)<f(d,k)$ for all sufficiently large $k$ (independent of the choice of $\xi_1$). 

Recall that we define $\varepsilon$ by $\d = \d_0(1-\varepsilon)$. By (\ref{vareps}), 
\begin{eqnarray*}
\varepsilon &=& 2^{1-k}\cdot (1-\d_0) \cdot \frac{(1-\d_0)^{k-1} - \d_0^{k-1} }{1 - 2^{2-k} + 2(\d_0/2)^{k} + 2((1-\d_0)/2)^k} \\
&\le & 2^{1-k}(1-\xi_1)^{k} + O\left(4^{-k}\right) \le 2\cdot 3^{-k}
\end{eqnarray*}
since $\xi_1 >1/3$, assuming $k$ is sufficiently large. 

The assumption on $r$ implies $d=O\left(k2^k\right)$. So the second equality of Lemma \ref{lem:4things} implies
$$(d-1)[H_0(\d,1-\d) - H(\d,1-\d)] = O\left(k 4.5^{-k}\right).$$ Through elementary but messy calculus computations one may show using the fact that $r = O(2^k)$ that
\begin{align*}
\psi'(1/2) &= 0 \\
\psi''(1/2) &= -4 + O( k^2 2^{-k}) \\
\psi'''(x) &= \frac{1}{x^2} - \frac{1}{(1-x)^2} + O(k^3 (2/3)^k ) \text{ and for any } x \in [1/3,2/3].
\end{align*}
By Taylor's theorem,
\begin{eqnarray}\label{estimateCOZ}
\psi(\d_0)= \psi(1/2) - (2+o_k(1) )(1/2-\d_0)^2 + \frac{1}{6} \pr{\frac{1}{\upsilon^2} - \frac{1}{(1-\upsilon)^2} + o_k(1) } (1/2 - \d_0)^3.
\end{eqnarray}
for some $\d_0 \leq \upsilon \leq 1/2$. Since $\d_0 \geq 1/3$, we have $1/2 - \d_0 \leq 1/6$. Furthermore, since $x \mapsto \frac{1}{x^2} - \frac{1}{(1-x)^2}$ is monotone decreasing on $(0,1/2]$, we have and $\frac{1}{\upsilon^2} - \frac{1}{(1-\upsilon)^2} \leq \frac{1}{(1/3)^2} - \frac{1}{(2/3)^2} = \frac{27}{4}$. Thus
\begin{align*}
\frac{ \frac{1}{6} \pr{ \frac{1}{\upsilon^2} - \frac{1}{(1-\upsilon)^2}} (1/2 - \d_0)^3}{ 2 (1/2-\d_0)^2} & \leq \frac{1}{12} \cdot \frac{27}{4} \cdot \frac{1}{6} < 1
\end{align*}
and for sufficiently large $k$ we have $\frac{1}{6} \pr{\frac{1}{\upsilon^2} - \frac{1}{(1-\upsilon)^2} + o_k(1) } (1/2 - \d_0)^3 < (2+o_k(1)) (1/2 - \d_0)^2$.
Since $\psi(1/2)=f(d,k)$, (\ref{estimateb}) implies
$$\psi_0(\d) \le f(d,k) - (2+o_k(1))(1/2-\d_0)^2 + \frac{1}{6} \pr{\frac{1}{\upsilon^2} - \frac{1}{(1-\upsilon)^2} + o_k(1)} (1/2 - \d_0)^3 + O\left(k 4.5^{-k}\right)$$
is strictly less than $f(d,k)$ if $k$ is sufficiently large.

{\bf Case 5}.  Suppose $0.5-2^{-k} \le \d_0 < 0.5$.  Let $\g = 0.5 - \d_0$. By (\ref{vareps}), 
$$\varepsilon =   O\left( \left[(1/2 +\g)^{k-1} -(1/2 -\g)^{k-1}\right]2^{-k}\right).$$
Define $L(x):= (1/2 +x)^{k-1} -(1/2 -x)^{k-1}$. We claim that $L(\g) \le \g$. Since $L(0)=0$, it suffices to show that $L'(x)\le 1$ for all $x$ with $|x|\le 0.01$. An elementary calculation shows
$$L'(x) = (k-1)\left[ (1/2 +x)^{k-2} +(1/2 -x)^{k-2}\right].$$
So $L'(x) \le 1$ if $|x|\le 0.01$ and $k$ is sufficiently large. Altogether this proves  $\varepsilon  = O\left(\g 2^{-k}\right)$. So the second equality of Lemma \ref{lem:4things} implies
$$(d-1)\left[H_0(\d,1-\d) - H(\d,1-\d)\right] = O\left(k2^{-k} \g^2\right).$$
By (\ref{estimateCOZ}) and  (\ref{estimateb}),
$$\psi_0(\d) \le f(d,k) - (2+o_k(1))\g^2 + O\left(k2^{-k} \g^2\right).$$
This is strictly less than $f(d,k)$ if $k$ is sufficiently large. 

\end{proof}

\subsection{Reducing Lemma \ref{lem:key} inequality (\ref{eqn:3}) to estimating the local cluster}\label{sec:reduction2}

As in the previous section, fix an equitable coloring $\chi:V_n \to \{0,1\}$. Given a uniform homomorphism $\s \in \Hom_{\rm{unif}}(\G,\Sym(n))$, the {\bf cluster around $\chi$} is the set 
$$\cC_{\s}(\chi) := \left\{\tchi \in Z_e(\s): d_n(\chi,\tchi) \leq 2^{-k/2}\right\}.$$
We also call this the {\bf local cluster} if $\chi$ is understood. %of all $\s$-proper equitable colorings that are within normalized Hamming distance $2^{-k/2}$ of $\chi$. 

In \S \ref{sec:strategy} we prove:
\begin{prop}\label{prop:cluster}
Let $0<\eta_0 <\eta_1< (1-\log 2)/2$. Then for all sufficiently large $k$ (depending on $\eta_0, \eta_1$), if 
$$r := d/k= \frac{\log(2)}{2} \cdot 2^k - (1+\log(2))/2 +\eta$$
for some $\eta$ with $\eta_0\le \eta \le \eta_1$ then with high probability in the planted model, $|\cC_\s(\chi)| \leq  \E^u_n(Z_e)$. In symbols,
$$\lim_{n\to\infty} \P^\chi_n\big(|\cC_\s(\chi)| \leq  \E^u_n(Z_e)\big)=1.$$
\end{prop}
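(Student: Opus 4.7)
The plan is to mimic the ``rigid core'' strategy employed in \cite{coja-zdeb-hypergraph, MR3205212} for the random hyper-graph model. I would construct a subset $R = R(\s,\chi) \subseteq [n]$ of \emph{frozen} vertices, defined by a bounded-radius local condition on the neighborhood of each vertex in $G_\s$ relative to the planted coloring $\chi$, and then establish two structural statements. A \emph{density lemma} (to be carried out in \S \ref{sec:Markov}) asserts that, with $\P^\chi_n$-probability tending to $1$, $|[n]\setminus R| \le \alpha n$ for some $\alpha = \alpha(k)$ that can be made as close to $2^{-k}$ as desired. A \emph{rigidity lemma} (to be carried out in \S \ref{sec:rigidity}) asserts that, with $\P^\chi_n$-probability tending to $1$, any proper equitable $2$-coloring $\tchi$ with $\tchi\resto R \ne \chi\resto R$ must satisfy $d_n(\tchi,\chi) \ge c n / k^t$ for absolute constants $c,t>0$.

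Granting these two lemmas, the proposition follows by a short comparison. For any $\tchi \in \cC_\s(\chi)$ we have $d_n(\tchi,\chi) \le 2^{-k/2}$, which for large $k$ is strictly less than $c n/k^t$, so rigidity forces $\tchi\resto R = \chi\resto R$. Hence
$$|\cC_\s(\chi)| \le 2^{|[n]\setminus R|} \le 2^{\alpha n}.$$
By Theorem \ref{thm:1moment} and Lemma \ref{lem:psi}, $\log \E^u_n[Z_e] = n f(d,k) + O(\log n)$ with $f(d,k) = (1-2\eta)\,2^{-k} + O(2^{-2k})$. The hypothesis $\eta \le \eta_1 < (1-\log 2)/2$ yields $1-2\eta > \log 2$, so for $\alpha = (1+\beta) 2^{-k}$ with $\beta>0$ small enough that $(1+\beta)\log 2 < 1 - 2\eta_1$, and for all sufficiently large $k$, we have $\alpha \log 2 < f(d,k)$ and therefore $|\cC_\s(\chi)| \le \E^u_n[Z_e]$, as required.

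For the density lemma I would exploit the Markov model on proper colorings that serves as the local-on-average limit of the planted model $\P^\chi_n$. The freezing condition at $v$ is a local event depending only on a bounded-radius neighborhood of $v$ in $G_\s$ together with the $\chi$-labels on that neighborhood, so its marginal probability can be read off the Markov model. The quantitative heart of the estimate is that a single generator-hyperedge incident to $v$ is ``critical'' for $v$ (meaning the $\chi$-colors of the other $k-1$ vertices force $v$'s color) with probability of order $2^{-(k-1)}$, up to $o_k(2^{-k})$ corrections; calibrating the freezing criterion so that $v$ need only be critical in a single direction, the probability that $v \notin R$ is $(1+o_k(1))\,2^{-k}$. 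A first-moment computation then yields $\E^\chi_n |[n]\setminus R| = (1+o_k(1))\,2^{-k} n$, and Markov's inequality converts this to the desired high-probability bound with $\alpha = (1+\beta)2^{-k}$.

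The main obstacle is the rigidity lemma. Let $D := \{v \in R : \tchi(v) \ne \chi(v)\}$; by the defining property of $R$, each $v \in D$ forces, via a critical hyperedge, at least one neighbor $u$ with $\tchi(u) \ne \chi(u)$, so the support of the disagreement carries a sub-hyper-graph of $G_\s$ that is non-expanding in a quantitative sense (most of the critical hyperedges incident to $D$ lie inside the disagreement set). I would then perform a union bound over candidate subsets $S \subseteq [n]$ with $|S|$ in the range $[cn/k^t,\, n/2]$, estimating under $\P^\chi_n$ the probability that $S$ supports such a non-expanding configuration, and showing the total probability is $o(1)$. This step parallels the classical argument that random regular graphs are good expanders, but it must be adapted to the planted random hyper-graph model. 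This adaptation, together with the delicate matching of constants which is precisely what forces the threshold $\eta < (1-\log 2)/2$, is the most technical part of the argument.
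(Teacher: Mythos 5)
Your overall architecture — a density lemma plus a rigidity lemma for a locally defined set $R$, followed by a cluster-size count compared to $\E^u_n[Z_e]$ via the first-moment asymptotics — is the same as the paper's, and your constant accounting (the threshold $\eta<(1-\log 2)/2$ coming from $1-2\eta>\log 2$, and the choice $\alpha=(1+\beta)2^{-k}$) is essentially right. Two points, one of which is a genuine gap.

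First, the step ``Markov's inequality converts this to the desired high-probability bound'' does not work. If $\E^\chi_n\,|[n]\setminus R| = (1+o_k(1))\,2^{-k}n$ then Markov's inequality gives
$$\P^\chi_n\bigl(|[n]\setminus R| > (1+\beta)2^{-k}n\bigr) \le \frac{1+o_k(1)}{1+\beta},$$
which is a constant bounded away from $0$ for small $\beta$; it does not tend to $0$ as $n\to\infty$, so it does not give the proposition's $\lim_{n\to\infty}\P^\chi_n(\cdot)=1$ statement. Something that produces $o_n(1)$ failure probability is needed. The paper establishes a concentration-of-measure inequality for Lipschitz functions on the planted model (Theorem \ref{thm:planted-concentration}) and uses it through Lemma \ref{lem:bsconc} to show empirical frequencies of local events concentrate about the Markov-model prediction; this yields the density statement with probability $1-o_n(1)$. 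You need something of that strength; a bare first moment plus Markov is insufficient.

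Second, a more minor but substantive difference: you invoke a \emph{strong} rigidity — that $\tchi\resto R \ne \chi\resto R$ forces $d_n(\tchi,\chi) \ge cn/k^t$ — which then yields the clean bound $|\cC_\s(\chi)| \le 2^{|[n]\setminus R|}$. The paper proves only a \emph{weak} rigidity (its ``$\rho$-rigid''): for every proper coloring, the number of disagreements on $R$ is either $<\rho n$ or $>2^{-k/2}n$. Under that weaker statement the cluster bound picks up an extra factor, $|\cC_\s(\chi)| \le \binom{|R|}{\rho n}2^{\rho n}\,2^{n-|R|}$, and one must choose $\rho,\delta$ small enough that $\log(2)\delta + H(\rho,1-\rho)+\log(2)\rho < (1-2\eta-\log 2)2^{-k}$ to close the comparison. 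Your version is simpler if it can be proved, but you have not verified that the stronger form is achievable in this model; given that the disagreement set can leak a bounded number of vertices of $R$ at the boundary of the forcing trees, the weaker $\rho$-rigid form (with the binomial correction in the count) is likely what your union-bound argument would actually deliver, and then your simple bound $2^{|[n]\setminus R|}$ would be wrong as stated and needs to be replaced by the corrected count.

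Your final comparison to $\E^u_n[Z_e]$ using Lemma \ref{lem:psi} is correct and identical to the paper's.
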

The rest of this section proves Lemma \ref{lem:key} inequality (\ref{eqn:3}) from Proposition \ref{prop:cluster} and the second moment estimates from earlier in this section. So we assume the hypotheses of Proposition \ref{prop:cluster} without further mention.

We say that a coloring $\chi$ is {\bf $\s$-good} if it is equitable and $|\cC_\s(\chi)| \leq \E^u_n(Z_e(\s))$.  Let $S_g(\s)$ be the set of all $\s$-good proper colorings and let $Z_g(\s) = |S_g(\s)|$ be the number of $\s$-good proper colorings. 

We will say a positive function $G(n)$ is {\bf sub-exponential in $n$} if  $\lim_{n \to \infty} n^{-1} \log G(n) = 0$.  Also we say functions $G$ and $H$ are {\bf asymptotic}, denoted by $G(n) \sim H(n)$, if $\lim_{n \to \infty}G(n)/H(n) = 1$. Similarly, $G(n) \lesssim H(n)$ if $\limsup_{n \to \infty}G(n)/H(n) \le 1$.

\begin{lem}\label{lem:g to e}
 $\E^u_n(Z_g) \sim \E^u_n(Z_e) = F(n)\E^u_n(Z)$ where $F(n)$ is sub-exponential in $n$.  
\end{lem}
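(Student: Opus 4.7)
The plan is to handle the two claims separately. For the asymptotic equivalence $\E^u_n(Z_g) \sim \E^u_n(Z_e)$, the key observation is that the uniform model $\P^u_n$ is invariant under conjugation by $\sym(n)$, so every equitable coloring $\chi$ plays a symmetric role. Fixing a reference equitable coloring $\chi_0:[n]\to\{0,1\}$ and using linearity of expectation,
\[
\E^u_n[Z_e] \;=\; \binom{n}{n/2}\, \P^u_n\bigl(\chi_0 \text{ is $\s$-proper}\bigr),
\]
\[
\E^u_n[Z_g] \;=\; \binom{n}{n/2}\, \P^u_n\bigl(\chi_0 \text{ is $\s$-proper and $\s$-good}\bigr).
\]

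Dividing, the binomial factors and the probability that $\chi_0$ is proper cancel, giving
\[
\frac{\E^u_n[Z_g]}{\E^u_n[Z_e]} \;=\; \P^u_n\bigl(\chi_0 \text{ is $\s$-good} \,\big|\, \chi_0 \text{ is $\s$-proper}\bigr) \;=\; \P^{\chi_0}_n\bigl(\chi_0 \text{ is $\s$-good}\bigr),
\]
since by definition conditioning on $\chi_0$ being proper in the uniform model yields the planted model with planted coloring $\chi_0$. Being $\s$-good means $\chi_0$ is equitable (which it is by choice) and $|\cC_\s(\chi_0)| \le \E^u_n(Z_e)$, so Proposition \ref{prop:cluster} applied to $\chi_0$ gives $\P^{\chi_0}_n(\chi_0 \text{ is $\s$-good}) \to 1$. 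Hence $\E^u_n(Z_g)/\E^u_n(Z_e) \to 1$.

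For the second claim $\E^u_n(Z_e) = F(n)\E^u_n(Z)$ with $F$ sub-exponential, define $F(n) := \E^u_n(Z_e)/\E^u_n(Z)$. Since every equitable proper coloring is a proper coloring, $Z_e \le Z$ deterministically, so $0 < F(n) \le 1$. On the other hand, Theorem \ref{thm:1moment} gives
\[
\lim_{n\to\infty} \frac{1}{n}\log \E^u_n[Z] \;=\; \lim_{n\to\infty} \frac{1}{n}\log \E^u_n[Z_e] \;=\; f(d,k),
\]
so $\frac{1}{n}\log F(n) \to 0$, i.e.\ $F$ is sub-exponential.

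The only non-routine step is the identification in the first paragraph: one must check carefully that the event ``$\chi_0$ is $\s$-good'' depends only on $\s$ through the hyper-graph $G_\s$ (so that conditioning on $\chi_0$ being proper really produces $\P^{\chi_0}_n$) and that the symmetry argument is valid for the $\s$-good condition. Both are immediate since $|\cC_\s(\chi_0)|$ is determined by $G_\s$ together with $\chi_0$, and the threshold $\E^u_n(Z_e)$ is a deterministic quantity depending only on $n$. Given those checks, the conclusion follows directly from Proposition \ref{prop:cluster}, which is the genuinely hard input postponed to \S \ref{sec:strategy}.
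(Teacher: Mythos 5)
Your proof is correct and follows essentially the same approach as the paper's: both expand $\E^u_n[Z_g]$ via linearity of expectation, identify that conditioning the uniform model on $\chi$ being proper yields the planted model $\P^\chi_n$, invoke Proposition \ref{prop:cluster} to get the asymptotic, and cite Theorem \ref{thm:1moment} for the second claim. The only cosmetic difference is that you invoke the $\sym(n)$-symmetry to collapse the sum over equitable colorings to a single reference $\chi_0$, whereas the paper keeps the sum and observes $\sim$ termwise — same argument either way.
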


\begin{proof}
For brevity, let $\cH = \Hom_{\rm{unif}}(\G,\Sym(n))$. Let $\P^\chi_n$ be the probability operator in the planted model of $\chi$. By definition, 
\begin{eqnarray*}
\E^u_n(Z_g) &=& |\cH|^{-1}\sum_{\s \in \cH} Z_g(\s) = |\cH|^{-1}\sum_{\s\in \cH} \sum_{\chi: V\to \{0,1\}}  1_{S_g(\s)}(\chi) \\
&=& \sum_{\chi}\P^u_n(\chi \in S_g(\s)) \\
&=& \sum_{\chi \ \text{equitable}}\P^u_n\left(|\cC_\s(\chi)| \leq \E^u_n(Z_e)|\chi \ \text{proper}\right)\P^u_n(\chi \ \text{proper}) \\
&=& \sum_{\chi \ \text{equitable}}\P^{\chi}_n\left(|\cC_\s(\chi)| \leq \E^u_n(Z_e))\P^u_n(\chi \ \text{proper}\right) \\
&\sim & \sum_{\chi \ \text{equitable}} \P^u_n\left(\chi \ \text{proper}\right)= \E^u_n(Z_e)
\end{eqnarray*}
where the asymptotic equality $\sim$ follows from Proposition \ref{prop:cluster}. The equality $\E^u_n(Z_e) = F(n)\E^u_n(Z)$ holds by Theorem \ref{thm:1moment}.
\end{proof}

\begin{lem}\label{lem:pz estimate}
$\E_n^u(Z_g^2) \leq C(n)\E_n^u(Z_g)^2$, where $C(n) = C(n,k,r)$ is sub-exponential in $n$.
\end{lem}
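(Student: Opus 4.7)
The plan is a conditioning-and-splitting argument in the spirit of the enhanced second moment method of \cite{coja-zdeb-hypergraph}. Conjugation by $\sym(n)$ preserves $\P^u_n$, acts transitively on equitable colorings, and carries $S_g(\sigma)$ to $S_g(\pi\sigma\pi^{-1})$ via $\chi\mapsto\chi\circ\pi^{-1}$ (the local cluster size $|\cC_\sigma(\chi)|$ and the threshold $\E^u_n(Z_e)$ are both conjugation-invariant). This symmetry lets one write
$$\E^u_n(Z_g^2) \;=\; \E^u_n(Z_g)\cdot\E^u_n(Z_g\mid\chi_0\in S_g)$$
for any fixed equitable $\chi_0$, reducing the lemma to showing that $\E^u_n(Z_g\mid\chi_0\in S_g)\le C(n)\E^u_n(Z_g)$ for some sub-exponential $C(n)$.

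Next I would stratify
$$\E^u_n(Z_g\mid\chi_0\in S_g) \;=\; \sum_\delta\sum_{\chi':\,d_n(\chi_0,\chi')=\delta}\P^u_n(\chi'\in S_g\mid\chi_0\in S_g)$$
and treat three regimes separately. For $\delta\le 2^{-k/2}$ any such $\chi'$ lies in $\cC_\sigma(\chi_0)$, and $|\cC_\sigma(\chi_0)|\le\E^u_n(Z_e)$ by the good condition on $\chi_0$, so the near contribution is at most $\E^u_n(Z_e)$. The far regime $\delta\ge 1-2^{-k/2}$ collapses onto the near regime via the color-swap involution $\chi'\mapsto 1-\chi'$, which preserves properness, equitability, and the good condition while inverting distances.

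For the middle regime $\delta\in[2^{-k/2},1-2^{-k/2}]$ I would relax $\{\chi'\in S_g\}$ to the event that $\chi'$ is $\sigma$-proper, replace $\P^u_n(\cdot\mid\chi_0\in S_g)$ by the planted-model probability $\P^{\chi_0}_n(\cdot)$ at a multiplicative cost of $1+o(1)$ (the content of Proposition \ref{prop:cluster}), and recognize the resulting sum as $\E^{\chi_0}_n(Z_{\chi_0}(\delta))$. Theorem \ref{thm:cluster} gives $(1/n)\log\E^{\chi_0}_n(Z_{\chi_0}(\delta)) = \psi_0(\delta)+o(1)$, and Proposition \ref{prop:5cases} bounds $\psi_0(\delta)\le\psi_0(1/2)=f(d,k)$ throughout this interval. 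Summing the three regimes over only $O(n)$ values of $\delta$ yields a total of $e^{nf(d,k)+o(n)}$, which by Lemma \ref{lem:g to e} is a sub-exponential multiple of $\E^u_n(Z_g)\sim\E^u_n(Z_e)$.

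The main obstacle will be uniformity in $\delta$ for the middle regime: Theorem \ref{thm:cluster} carries an error term $O_\delta(n^{-1})$ whose implicit constant depends continuously on $\delta\in(0,1/2]$, so I must check that this constant stays bounded on the compact subinterval $[2^{-k/2},1/2]$ (for fixed $k$) in order to combine it with the pointwise bound from Proposition \ref{prop:5cases} into a single exponential estimate valid uniformly across the range. A secondary subtlety is the planted-to-uniform conversion, which requires that $\P^u_n(\chi_0\in S_g)/\P^u_n(\chi_0\text{ is proper})\to 1$; this is exactly what Proposition \ref{prop:cluster} provides, so the conversion is clean once that proposition is in hand.
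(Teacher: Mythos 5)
Your proposal follows essentially the same approach as the paper's proof: the same decomposition into a near regime (bounded via the good-coloring cap $|\cC_\s(\chi_0)|\le\E^u_n(Z_e)$), a far regime handled by the color-swap involution, and a middle regime handled by passing to the planted model via Proposition \ref{prop:cluster} and applying Theorem \ref{thm:cluster} and Proposition \ref{prop:5cases}, with the final bookkeeping done through Lemma \ref{lem:g to e}. The paper writes the opening identity as $\E^u_n(Z_g^2)=\sum_\chi\P^u_n(\chi\in S_g)\,\E^u_n(Z_g\mid\chi\in S_g)$ and then fixes one $\chi$, relying implicitly on the conjugation symmetry you spell out; and the uniformity concern you flag about the $O_\d(n^{-1})$ term on $[2^{-k/2},1/2]$ is real but resolved exactly as you anticipate, since the implicit constant is continuous and hence bounded on that compact interval.
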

\begin{proof}
\begin{eqnarray}
\E^u_n(Z_g^2)  &=& |\cH|^{-1}\sum_{\s \in \cH} \left(\sum_\chi 1_{S_g(\s)}(\chi)\right)^2\nonumber\\
&=& |\cH|^{-1}\sum_{\s \in \cH} \sum_{\chi, \tchi}1_{S_g(\s)}(\chi)1_{S_g(\s)}(\tchi) \nonumber\\
&=& \sum_{\chi,\tchi}\P^u_n\left(\chi \in S_g \ \textrm{and } \tchi \in S_g\right) \nonumber\\
&=& \sum_{\chi,\tchi}\P^u_n(\chi \in S_g)\P^u_n(\tchi \in S_g | \chi \in S_g) \nonumber\\
&=& \sum_{\chi}\P^u_n(\chi \in S_g) \E^u_n(Z_g|\chi \in S_g). \label{plant me}
\end{eqnarray}
For a fixed $\chi \in S_g(\s)$ we analyze $\E^u_n(Z_g|\chi \in S_g)$ by breaking the colorings into those that are close (i.e. in the local cluster) and those that are far. So let $Z_g(\d): \Hom_{\chi}(\G,\sym(n)) \to \N$ be the number of good proper colorings such that $d_n(\chi,\tchi) = \d$. (We will also use $Z_e(\d) = Z_\chi(\d)$ for the analogous number of equitable proper colorings). Then 
\begin{eqnarray}
\E^u_n(Z_g|\chi \in S_g) \leq 2\E^u_n\left(\sum_{0\leq \d \leq 2^{-k/2}}Z_g(\d) \Big| \chi \in S_g\right) + 2\E^u_n\left(\sum_{2^{-k/2} < \d \leq 1/2}Z_g(\d)\Big| \chi \in S_g\right).\label{decompose}
\end{eqnarray}
The coefficient $2$ above accounts for the following symmetry: if $\tchi$ is a good coloring with $d_n(\chi,\tchi)=\d$ then $1-\tchi$ is a good coloring with $d_n(\chi,1-\tchi)=1-\d$. 
Note that 
\begin{eqnarray}
\E^u_n\left(\sum_{0\leq \d \leq 2^{-k/2}}Z_g(\d)\Big|\chi \in S_g\right)\leq \E^u_n\big(\#\cC_\s(\chi) |\chi \in S_g\big)  \le  \E^u_n(Z_e)\label{near}
\end{eqnarray}
where the last inequality holds by definition of $S_g$.

For colorings not in the local cluster, 
\begin{eqnarray*}
\E^u_n\left(\sum_{2^{-k/2} < \d \leq 1/2} Z_g(\d)\Big|\chi \in S_g\right) &\leq & \E^u_n\left(\sum_{2^{-k/2} < \d \leq 1/2}Z_e(\d)\Big|\chi \in S_g\right) \\
&\leq & \E^u_n\left(\sum_{2^{-k/2} < \d \leq 1/2}Z_e(\d)\Big|\chi \ \text{proper}\right) \frac{\P^u_n(\chi \ \text{proper})}{\P^u_n(\chi \in S_g)}
\end{eqnarray*}
where the sum is over all $\d \in \Z[1/n]$ in the given range. By definition and  Proposition \ref{prop:cluster},
$$\frac{\P^u_n(\chi \ \text{proper})}{\P^u_n(\chi \in S_g)} = \frac{1}{\P^u_n(\chi \in S_g |\chi \ \text{proper} )} = \frac{1}{\P^\chi_n\big(|\cC_\s(\chi)| \leq  \E^u_n(Z_e)\big)}\to 1$$
as $n\to\infty$. Since $\E^u_n(\cdot| \chi \ \text{proper})= \E^\chi_n(\cdot)$, the above inequality now implies
\begin{eqnarray}
\E^u_n\left(\sum_{2^{-k/2} < \d \leq 1/2} Z_g(\d)\Big|\chi \in S_g\right) &\lesssim & \sum_{2^{-k/2} < \d \leq 1/2} \E^{\chi}_n(Z_e(\d)) \leq  C_1\sum_{2^{-k/2}< \d \leq 1/2} e^{n\psi_0(\d)}\nonumber \\
&\le& C_1 n e^{nf(d,k)}\le C_2 \E^u_n(Z_e)\label{far}
\end{eqnarray}
where the second inequality holds by Theorem \ref{thm:cluster} for some function $C_1 = C_1(n,k,r)$ which is sub-exponential in $n$. The second-to-last inequality holds because the number of summands is bounded by $n$ since $\d$ is constrained to lie in $\Z[1/n]$ and by Proposition \ref{prop:5cases}, $\psi_0(\d) \le f(d,k)$. The last inequality holds for some function $C_2=C_2(n,k,r)$ that is sub-exponential in $n$ since by Theorem \ref{thm:1moment}, $n^{-1} \log \E^u_n(Z_e)$ converges to $f(d,k)$. 

Combine (\ref{decompose}), (\ref{near}) and (\ref{far}) to obtain
$$\E^u_n(Z_g|\chi \in S_g) \le 2(1+C_2) \E^u_n(Z_e).$$
Plug this into (\ref{plant me}) to obtain
\begin{eqnarray*}
\E^u_n(Z_g^2) \le 2(1+C_2) \E^u_n(Z_e)^2 \sim 2(1+C_2) \E^u_n(Z_g)^2
\end{eqnarray*}
where the asymptotic $\sim$ holds by Lemma \ref{lem:g to e}. This proves the lemma.

\end{proof}

\begin{cor}
Lemma \ref{lem:key} inequality (\ref{eqn:3}) is true. That is:
\begin{eqnarray*}
0&=& \inf_{\eps>0} \liminf_{n\to\infty} n^{-1} \log \left(\P_n^{u}\left(  \left| n^{-1}\log Z(\s) - f(d,k) \right| < \eps \right)\right). 
%1&=& \inf_{\eps>0} \lim_{n\to\infty} \P_n^{p}\left(  \left| n^{-1}\log \#Z(\s) - f(d,k) \right| < \eps \right).
\end{eqnarray*}

\end{cor}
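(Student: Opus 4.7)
The plan is to deduce the bound from a Paley--Zygmund lower bound on $Z_g$ combined with a Markov upper bound on $Z$, using Lemmas \ref{lem:g to e} and \ref{lem:pz estimate} as the key inputs. Fix $\eps>0$ and set $A_n^\eps := \{|n^{-1}\log Z(\s)-f(d,k)|<\eps\}$. Since $\P^u_n(A_n^\eps)\le 1$, we trivially have $\liminf_n n^{-1}\log \P^u_n(A_n^\eps)\le 0$, so it suffices to prove the matching lower bound $\liminf_n n^{-1}\log \P^u_n(A_n^\eps)\ge 0$ for every fixed $\eps>0$; by monotonicity of the $\liminf$ in $\eps$ this gives the infimum equal to $0$.

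First I will write $A_n^\eps\supseteq B_n\cap C_n$, where $B_n=\{Z\ge \tfrac12\E^u_n(Z_g)\}$ and $C_n=\{n^{-1}\log Z<f(d,k)+\eps\}$. Theorem \ref{thm:1moment} together with Lemma \ref{lem:g to e} gives $\E^u_n(Z_g)\sim \E^u_n(Z_e)=e^{nf(d,k)+o(n)}$, so on the event $B_n$, $n^{-1}\log Z\ge f(d,k)-o(1)>f(d,k)-\eps$ for all large $n$. Since $Z\ge Z_g$, $B_n\supseteq\{Z_g\ge \tfrac12\E^u_n(Z_g)\}$, and the Paley--Zygmund inequality together with Lemmas \ref{lem:g to e} and \ref{lem:pz estimate} yields
$$\P^u_n(B_n)\ \ge\ \P^u_n\!\left(Z_g\ge \tfrac12\E^u_n(Z_g)\right)\ \ge\ \tfrac14\cdot\frac{\E^u_n(Z_g)^2}{\E^u_n(Z_g^2)}\ \ge\ \frac{1}{4C(n)},$$
which is $e^{-o(n)}$ since $C(n)$ is subexponential. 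On the other hand, Markov's inequality combined with Theorem \ref{thm:1moment} gives
$$\P^u_n(C_n^c)\ \le\ \frac{\E^u_n(Z)}{e^{n(f(d,k)+\eps)}}\ \le\ e^{-n\eps+o(n)},$$
which decays exponentially.

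Combining the two bounds by inclusion-exclusion,
$$\P^u_n(A_n^\eps)\ \ge\ \P^u_n(B_n)-\P^u_n(C_n^c)\ \ge\ \frac{1}{4C(n)}-e^{-n\eps+o(n)}\ \ge\ \frac{1}{8C(n)}$$
for all sufficiently large $n$. Taking logarithms and dividing by $n$, we get $n^{-1}\log\P^u_n(A_n^\eps)\ge -n^{-1}\log(8C(n))\to 0$, which is the desired lower bound. Since $\eps>0$ was arbitrary, $\inf_{\eps>0}\liminf_n n^{-1}\log\P^u_n(A_n^\eps)=0$, establishing (\ref{eqn:3}). No genuinely new estimates are required; the only subtlety is that the subexponential loss from $C(n)$ and from $\E^u_n(Z_e)/\E^u_n(Z)$ must be absorbed into the $o(1)$ tolerance on $n^{-1}\log Z$, which is exactly what the statement allows.
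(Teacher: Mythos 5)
Your proposal is correct and follows essentially the same route as the paper: a Markov upper-tail bound from Theorem \ref{thm:1moment} to control $\P^u_n(n^{-1}\log Z > f(d,k)+\eps)$, the inequality $Z\ge Z_g$ together with Lemma \ref{lem:g to e} to transfer the problem to $Z_g$, and then Paley--Zygmund plus Lemma \ref{lem:pz estimate} for the subexponential lower bound. The only difference is cosmetic — you package the argument as an intersection of two explicit events $B_n\cap C_n$ while the paper proceeds by a chain of reductions — but the key lemmas and estimates invoked are identical.
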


\begin{proof}
By Theorem \ref{thm:1moment},
$$n^{-1} \log \E_n^{u}(Z(\s)) \to f(d,k)$$
as $n\to\infty$. In particular, for every $\eps >0$, for large enough $n$, $\P^u_n\left(n^{-1}\log Z(\s) > f(d,k) + \eps\right) < \exp{\pr{-\frac{\epsilon}{2} n}}$. So it suffices to prove
\begin{eqnarray*}
0&=& \inf_{\eps>0} \liminf_{n\to\infty} n^{-1} \log \left(\P_n^{u}\left(  n^{-1}\log Z(\s) \ge f(d,k) - \eps \right)\right). 
%1&=& \inf_{\eps>0} \lim_{n\to\infty} \P_n^{p}\left(  \left| n^{-1}\log \#Z(\s) - f(d,k) \right| < \eps \right).
\end{eqnarray*}
Since $Z(\s) \ge Z_g(\s)$, it suffices to prove the same statement with $Z_g(\s)$ in place of $Z(\s)$. By Lemma \ref{lem:g to e} and Theorem \ref{thm:1moment}, $n^{-1}\log\left(\E_n^{u}[Z_g(\s)]\right)$ converges to $f(d,k)$ as $n\to\infty$. So we may replace $f(d,k)$ in the statement above with $n^{-1}\log\left(\E_n^{u}[Z_g(\s)]\right)$. Then we may multiply by $n$ both sides and exponentiate inside the probability. So it suffices to prove
\begin{eqnarray}
0&=& \inf_{\eps>0} \liminf_{n\to\infty} n^{-1} \log \left(\P_n^{u}\left(  Z_g(\s) \ge \E_n^{u}[Z_g(\s)]e^{-n\eps} \right)\right).\label{last} 
\end{eqnarray}
By the Paley-Zygmund inequality and Lemma \ref{lem:pz estimate}
$$\P_n^{u}\left(Z_g(\s) > \E_n^{u}[Z_g(\s)]e^{-n\eps}\right) \ge (1-e^{-n\eps})^2 \frac{\E_n^{u}[Z_g(\s)]^2}{\E_n^{u}[Z_g(\s)^2]} \ge \frac{1}{C}$$
where $C=C(n)$ is sub-exponential in $n$. This implies (\ref{last}).
\end{proof}

\section{The local cluster}\label{sec:strategy}

To prove Proposition \ref{prop:cluster}, we show that with high probability in the planted model, there is a `rigid' set of vertices with density approximately $1-2^{-k}$. Rigidity here means that any proper coloring either mostly agrees with the planted coloring on the rigid set or it must disagree on a large density subset. Before making these notions precise, we introduce the various subsets, state precise lemmas about them and prove Proposition \ref{prop:cluster} from these lemmas which are proven in the next two sections.

So suppose $G=(V,E)$ is a $k$-uniform $d$-regular hyper-graph and $\chi:V \to \{0,1\}$ is a proper coloring. An edge $e \in E$ is {\bf $\chi$-critical} if there is a vertex $v \in e$ such that $\chi(v) \notin \chi(e\setminus \{v\})$. If this is the case, then we say {\bf $v$ supports $e$ with respect to $\chi$}. If $\chi$ is understood then we will omit mention of it. We will apply these notions both to the case when $G$ is the Cayley hyper-tree of $\G$ and when $G=G_\s$ is a finite hyper-graph. 

For $l \in \{0,1,2,\ldots\}$, define the {\bf depth $l$-core of $\chi$} to be the subset $C_l(\chi) \subset V$ satisfying
$$C_0(\chi) = V,$$
$$C_{l+1}(\chi) = \{v \in V:~ v \textrm{ supports at least 3 edges } e \textrm{ such that } e \setminus \{v\} \subset C_l(\chi) \}.$$
By induction, $C_{l+1}(\chi) \subset C_l(\chi)$ for all $l\ge 0$. Also let $C_\infty(\chi) = \cap_l C_l(\chi)$.

The set $C_{l}(\chi)$ is defined to consist of vertices $v$ so that if $v$ is re-colored (in some proper coloring) then this re-coloring forces a sequence of re-colorings in the shape of an immersed hyper-tree of degree at least 3 and depth $l$.  Re-coloring a vertex of $C_{\infty}(\chi)$ would force re-coloring an infinite immersed tree of degree at least 3. 

Also define the {\bf attached vertices} $A_l(\chi) \subset V$ by: $v \in A_l(\chi)$ if $v \notin C_l(\chi)$ but there exists an edge $e$, supported by $v$ such that $e \setminus \{v\} \subset C_{l-1}(\chi)$. Thus if $v \in A_l(\chi)$ is re-colored then it forces a re-coloring of some vertex in $C_{l-1}(\chi)$. In this definition, we allow $l=\infty$ (letting $\infty - 1 = \infty)$. 

In order to avoid over-counting, we also need to define the subset $A'_l(\chi)$ of vertices $v \in A_l(\chi)$ such that there exists a vertex $w \in A_l(\chi)$, with $w \ne v$, and edges $e_v$, $e_w$ supported by $v, w$ respectively such that 
\begin{enumerate}
\item $e_v \setminus \{v\}, e_w \setminus \{w\} \subset C_{l-1}(\chi)$,
\item $e_v \cap e_w \ne \emptyset$. 
\end{enumerate}
In this definition, we allow $l=\infty$. 

We will need the following constants:
$$\l_0=\frac{ 1 }{ 2^{k-1}-1}, \quad \l := d\l_0 =\frac{ d }{ 2^{k-1}-1}.$$
 The significance of $\l_0$ is: if $e$ is an edge and $v \in e$ a vertex then $\l_0$ is the probability $v$ supports $e$ in a uniformly random proper coloring of $e$. So $\l=d\l_0$ is the expected number of edges that $v$ supports. For the values of $d$ and $k$ used in the Key Lemma \ref{lem:key}, $\l=\log(2)k + O(k2^{-k})$.

For the next two lemmas, we assume the hypotheses of Proposition \ref{prop:cluster}. %The main results concerning these subsets are:

\begin{lem}\label{lem:density}
For any $\d>0$ there exists $k_0$ such that $k \ge k_0$ implies
$$\lim_{l \to \infty} \liminf_{n\to\infty} \P^\chi_n\left(\frac{|C_l(\chi) \cup A_l(\chi) \setminus A'_l(\chi)|}{n} > 1-e^{-\l}(1+\delta) \right) = 1.$$
\end{lem}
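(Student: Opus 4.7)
The approach is to reduce the statement to a single-vertex marginal computation in the Markov model on proper colorings of the Cayley hyper-tree developed in \S\ref{sec:Markov}, and then upgrade from expectation to high probability by a second-moment argument. The key observation is that, for $v\in[n]$, membership of $v$ in $C_l(\chi)$, $A_l(\chi)$, and $A'_l(\chi)$ depends only on the ball of radius $l{+}1$ around $v$ in $G_\s$, equipped with the restriction of $\chi$. Since the planted model converges in the local-on-average sense to the Markov model, it suffices to show that the Markov-model probability that a root vertex $v$ lies in $C_l\cup A_l\setminus A'_l$ is at least $1-e^{-\lambda}(1+\delta)$ for $l$ and $k$ large.

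In the tree model, conditional on $\chi(v)$ and on $\chi$ being proper on an adjacent edge $e$, the probability that $v$ supports $e$ is exactly $\lambda_0=1/(2^{k-1}-1)$, and the events at the $d$ edges incident to $v$ are independent. The tree structure then yields a branching-process recursion of the form
$$q_l := \P[v\in C_l] \;\approx\; \P\bigl[\operatorname{Bin}(d,\lambda_0 q_{l-1}^{k-1})\ge 3\bigr],\qquad q_0 = 1,$$
modulo back-edge corrections that are negligible for large $d$. Since $\lambda := d\lambda_0 \sim k\log 2$ exceeds the cut-off $3$ by a large margin, a Chernoff estimate gives $1-q_1 = O(\lambda^2 e^{-\lambda}) = O(k^2 2^{-k})$, and induction propagates this bound uniformly in $l$, whence $q_{l-1}^{k-1}=1-o_k(1)$. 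Consequently,
$$\P[v\notin C_l\cup A_l] \;=\; (1-\lambda_0 q_{l-1}^{k-1})^d \;\le\; e^{-\lambda q_{l-1}^{k-1}} \;\le\; e^{-\lambda}\bigl(1+\tfrac{\delta}{2}\bigr)$$
for $k$ sufficiently large. A separate union bound---over the attachment edge $e_v$ witnessing $v\in A_l$, the core vertex $u\in e_v$ hosting a second attachment, and the companion vertex $w\in A_l$ together with its supported edge $e_w$---shows that $\P[v\in A'_l] = O(d^2 k\lambda_0^2) = o_k(e^{-\lambda})$, so that $\P[v\in C_l\cup A_l\setminus A'_l]\ge 1-e^{-\lambda}(1+\delta)$ in the Markov-model limit.

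To transfer this bound to the planted model and upgrade it to high probability, I would combine the local-convergence output ``expected density is at least $1-e^{-\lambda}(1+\delta)-o(1)$'' with a second-moment estimate on $|C_l\cup A_l\setminus A'_l|$. A standard path-counting argument shows that for all but an $O_{k,l}(1/n)$ fraction of ordered pairs $(u,v)\in[n]^2$, the $(l{+}1)$-balls around $u$ and $v$ in $G_\s$ are vertex-disjoint; on this event the joint law of the two balls converges to a pair of independent Markov trees, so the pair event asymptotically factorizes. This yields $\operatorname{Var}^\chi_n|C_l\cup A_l\setminus A'_l| = o(n^2)$, and Chebyshev's inequality concludes the proof.

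The main obstacle will be making the marginal computation in the Markov model quantitatively precise. Because $\lambda$ is only logarithmic in $2^k$, one must carefully track that $q_{l-1}^{k-1}=1-o_k(1)$ holds uniformly in $l$ and that the contribution of $A'_l$ is $o_k(e^{-\lambda})$, in order to preserve the factor $1+\delta$ rather than some larger multiple in the final bound. The remaining ingredients---local convergence of the planted model to the Markov model, and approximate independence of disjoint local neighborhoods---are of the standard form for random-regular-type models and are established in \S\ref{sec:Markov}.
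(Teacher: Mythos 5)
Your overall strategy matches the paper: reduce to a single‑vertex marginal in the Markov model via local convergence, compute the limiting densities of $\tilde C_\infty$, $\tilde A_\infty$, $\tilde A'_\infty$ by a branching recursion and a union bound, and then transfer to the planted model. The branching recursion for $C_l$ and the resulting estimate $\mu(\tilde C_\infty\cup\tilde A_\infty)\ge 1-e^{-\lambda}(1+o_k(1))$ are essentially the paper's Proposition \ref{prop:core}. Your concentration step (two‑point local convergence and Chebyshev) is a genuinely different route from the paper, which instead proves Lipschitz measure concentration for the planted model (Theorem \ref{thm:planted-concentration}) and only needs the one‑point local limit (Lemma \ref{lem:bsconc}); both are reasonable, though the Lipschitz route avoids having to justify asymptotic independence of two random balls in the conditioned model.

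However, there is a genuine gap in your estimate for $A'_l$. You claim
$$\P[v\in A'_l] = O(d^2 k\lambda_0^2) = o_k(e^{-\lambda}),$$
but with $d\sim \tfrac{\log 2}{2}\,k\,2^{k}$ and $\lambda_0 = 1/(2^{k-1}-1)$ one has $d\lambda_0=\lambda\sim k\log 2$, hence $d^2 k\lambda_0^2 = k\lambda^2 = \Theta(k^3)$. This quantity does not even tend to $0$, let alone lie below $e^{-\lambda}\sim 2^{-k}$. The union bound you describe --- over the choice of $e_v$, of $u\in e_v$, of $e_w$ at $u$, and the event that $v$ and $w$ each support their edge --- ignores the crucial constraint that both $v$ and $w$ must lie in $A_l$, i.e.\ both must fail to be in $C_l$. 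This failure event has probability $O(\lambda e^{-\lambda})$ for each vertex (conditional on supporting one edge in $C_{l-1}$), and it is exactly these extra factors of $e^{-\lambda}$ that make the union bound converge; without them the count of candidate pairs $(v,w)$ swamps the probability. The paper's Lemma \ref{lem:A'} implements this carefully: it bounds $\mu(G)=O(k^2 e^{-2\lambda})$ by using the bound $\P(\mathrm{Bin}(d-1,p_\infty)\le 1)=O(\lambda e^{-\lambda})$ for the companion vertex, and then obtains $\mu(\tilde A'_\infty)\lesssim d^2\mu(G)^2 = O(k^6 4^{-k})=o(e^{-\lambda})$. There is also a subtlety you do not address: $\mu(\tilde A'_l)$ is not monotone in $l$, so passing to the $l\to\infty$ limit requires an additional argument (the paper's Lemma \ref{lem:double-prime}); a direct finite‑$l$ bound would sidestep this, but only if the bound were correct.
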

%The idea is for large enough k, the size of the level infinity core is close to what we want.  Then for large enough $l$, the size of the level $l$ core/attached is close to the level infinity core/attached, and for large enough $n$, the level $l$ core/attached in the finite model is close to the infinite model
Lemma \ref{lem:density} is proven in \S \ref{sec:Markov}.

\begin{defn}
Fix a proper 2-coloring $\chi:V \to \{0,1\}$. Let $\rho>0$. A subset $R \subset V$ is {\bf $\rho$-rigid}  (with respect to $\chi$) if for every proper coloring $\chi': V \to \{0,1\}$, $|\{v\in R: \chi(v) \neq \chi'(v)\}|$ is either less than $\rho |V|$ or greater than $2^{-k/2}|V|$.
\end{defn}

\begin{lem}\label{lem:rigidity}
For any $\rho>0$,
$$\lim_{l \to \infty} \liminf_{n\to\infty} \P^\chi_n\left(C_l(\chi) \cup A_l(\chi) \setminus A'_l(\chi) \textrm{ is $\rho$-rigid} \right) = 1.$$
\end{lem}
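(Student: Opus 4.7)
The plan is to derive rigidity by an expansion-type first-moment argument. Suppose $R := C_l(\chi) \cup (A_l(\chi) \setminus A'_l(\chi))$ is not $\rho$-rigid: then there exists a proper coloring $\chi'$ of $G_\sigma$ with full disagreement set $T' := \{v \in V: \chi(v) \neq \chi'(v)\}$ such that $T := T' \cap R$ satisfies $\rho n \leq |T| \leq 2^{-k/2} n$. The crucial \emph{forcing} observation is: if $v \in T'$ and $v$ supports an edge $e$ under $\chi$, then $|T' \cap e| \geq 2$, for otherwise $e$ would become monochromatic under $\chi'$. In particular, $T'$ is closed under forcing.

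I would next build, for each $v \in T \cap C_l$, a depth-$l$ rooted ``forcing tree'' embedded in $G_\sigma$ with branching factor $3$. By definition of $C_l$, one may choose three supporting edges $e_1, e_2, e_3$ at $v$ with $e_i \setminus \{v\} \subset C_{l-1}$; by the forcing property each $e_i$ contains a child $w_i \in T' \cap C_{l-1}$; iterate down to depth $l$. For $v \in T \cap (A_l \setminus A'_l)$ an analogous structure is obtained with only a single supporting edge at the root, the $A'_l$-exclusion guaranteeing that roots of such trees do not share initial supporting edges.

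The heart of the proof is a union-bound / first-moment computation over the planted model: the number of candidate sets $T' \subset V$ of size $s$ contributes entropy at most $n H(s/n, 1 - s/n)$, and for each such $T'$ the expected number of embeddings of a depth-$l$ branching-$3$ forcing tree into $G_\sigma$ with all vertices in $T'$ decays like a power of $s/n$ whose exponent is proportional to $3^l$. Combining with the estimate $|T'| \leq |T| + |V \setminus R| = O(2^{-k/2} n)$, which follows from Lemma \ref{lem:density}, the decay factor $(s/n)^{3^l}$ beats both the entropy of choosing $T'$ and the multiplicity of tree shapes, provided $l$ is chosen sufficiently large. Markov's inequality then yields $\P^\chi_n(R \text{ is not } \rho\text{-rigid}) \to 0$ as $n \to \infty$.

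The main obstacle is the combinatorial bookkeeping: forcing trees rooted at different vertices may share internal vertices and edges, and within a single tree the underlying hyper-graph embedding may be non-injective. One must sum over unlabeled tree shapes, vertex-to-tree assignments consistent with the edges of $G_\sigma$, and the patterns of $\chi$ on these edges that make them supported; these factors are only polynomial in $n$ or simple-exponential in $l$ and $k$ (rather than doubly exponential in $l$), so the decay from depth-$l$ branching-$3$ trees dominates. This is precisely the reason for using branching factor $3$ in the definition of $C_l$: it is the smallest integer producing an expander-type structure in the random hyper-graph, directly mirroring the classical expansion argument for random regular graphs.
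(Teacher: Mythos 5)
Your key observation — that a re-colored vertex forces a second re-coloring in every edge it supports — is precisely the deterministic mechanism the paper uses. But the first-moment computation you sketch has a genuine gap that the acknowledged ``bookkeeping'' does not repair, and it is not merely a matter of polynomial correction factors. The per-tree decay $(s/n)^{O(3^l)}$ is a constant independent of $n$, while the entropy of choosing $T'$ of size $s$ is $\binom{n}{s}\sim e^{\Theta(n)}$. To close the union bound you need total decay with exponent $\Theta(n)$, which requires on the order of $\rho n$ \emph{essentially disjoint} forcing trees. If the trees rooted at different $v\in T\cap C_l$ share internal vertices heavily — e.g.\ all exit $T\cap C_l$ through a common bottleneck and collapse onto the same subtree — the joint decay is only $(s/n)^{O(3^l)}$ and the bound fails. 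Establishing the needed quasi-disjointness is not a bookkeeping exercise; it is the analytic heart of the lemma, and your proposal does not supply it. There is also a subtler issue that the sets $C_l(\chi)$, $A_l(\chi)$, $A'_l(\chi)$ are themselves functions of the random hyper-graph, so the union bound must be phrased over deterministic candidate sets $T'$, with the forcing constraints expressed directly in terms of $\sigma$.

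The paper's route sidesteps trees completely. It proves a standalone expansion property of $G_\sigma$ (Lemma~\ref{lem:expansivity}): w.h.p.\ every $T$ with $|T|\le 2^{-k/2}n$ has at most $2|T|$ $\chi$-critical edges meeting $T$ in $\ge 2$ vertices. This is a first-moment bound over pairs consisting of a small vertex set $T$ and a set $S$ of $2|T|$ critical edges, where the chain-rule estimate $\left(\tfrac{2kl^2}{n^2}\right)^{2l}$ yields exponential decay $\sim -4|T|\log(n/|T|)$ that beats the combined entropy $\sim 2|T|\log(n/|T|)$ of choosing $T$ and $S$. Rigidity is then a \emph{deterministic consequence} of expansion: one compares $T_l$ with $T_{l+1}$ in a single forcing step, using $|C_l(\chi)\setminus C_{l+1}(\chi)|<\rho n/3$ from Lemma~\ref{lem:density} to show that $|E_{T_l}|\ge 3|T_{l+1}|\ge 3|T_l|-\rho n > 2|T_l|$ whenever $\rho n < |T_l| \le 2^{-k/2}n$, contradicting expansion. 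No iterated trees and no sum over tree shapes are needed. Your branching-factor-$3$ intuition is exactly why the factor $3$ appears in the definition of $C_l$, but the correct way to exploit it is via a single global expansion statement rather than per-root trees whose interactions you would otherwise have to control.
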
  

Lemma \ref{lem:rigidity} is proven in \S \ref{sec:rigidity}. We can now prove Proposition \ref{prop:cluster}:

\noindent {\bf Proposition \ref{prop:cluster}}. {\it Let $0<\eta_0 < \eta_1 < (1-\log 2)/2$. Then for all sufficiently large $k$ (depending on $\eta_0, \eta_1$), if 
$$r := d/k= \frac{\log(2)}{2} \cdot 2^k - (1+\log(2))/2 +\eta$$
for some $\eta$ with $\eta_0\le \eta \le \eta_1$ then with high probability in the planted model, $|\cC_\s(\chi)| \leq  \E^u_n(Z_e)$. In symbols,
$$\lim_{n\to\infty} \P^\chi_n\big(|\cC_\s(\chi)| \leq  \E^u_n(Z_e)\big)=1.$$}

\begin{proof}
Let $0<\rho,\delta$ be small constants satisfying
\begin{eqnarray}\label{small stuff}
\log(2)\d +H(\rho,1-\rho) + \log(2)\rho<(1-2\eta - \log(2))2^{-k}.
\end{eqnarray}
Let $l$ be a natural number. Also let $\s:\G \to \sym(n)$ be a uniform homomorphism and $\chi:[n] \to \{0,1\}$ a proper coloring. To simplify notation, let 
$$R= C_l(\chi) \cup A_l(\chi) \setminus A'_l(\chi).$$
By Lemmas \ref{lem:density} and \ref{lem:rigidity} it suffices to show that if 
$ |R|/n > 1-e^{-\l} - \d$ and $R$  is $\rho$-rigid then $|\cC_\s(\chi)| \leq  \E^u_n(Z_e)$  (for all sufficiently large $n$). So assume $ |R|/n > 1-e^{-\l} - \d$ and $R$  is $\rho$-rigid.

Let $\chi' \in \cC_\s(\chi)$. By definition, this means $d_n(\chi',\chi)\le 2^{-k/2}$. Since $R$ is $\rho$-rigid, this implies
$$|\{v\in R: \chi(v) \neq \chi'(v)\}| \le \rho n.$$
Since this holds for all $\chi' \in \cC_\s(\chi)$, it follows that
$$|\cC_\s(\chi)| \le {|R| \choose \rho n} 2^{\rho n} 2^{n - |R|}.$$
By Stirling's formula 
$$n^{-1} \log {|R| \choose \rho n} \le n^{-1} \log {n \choose \rho n} \le H(\rho,1-\rho) + O(n^{-1}\log(n)).$$
Since $|R|/n > 1-e^{-\l} - \d = 1-2^{-k} -\d + O(k2^{-2k})$, 
$$n^{-1}\log(2^{n-|R|}) \le \log(2)[2^{-k}+\d] + O(k2^{-2k}).$$
Thus,
$$n^{-1}\log |\cC_\s(\chi)| \le \log(2)2^{-k}+\log(2)\d +H(\rho,1-\rho) + \log(2)\rho + O(k2^{-2k} + n^{-1}\log(n)).$$
On the other hand,
$$n^{-1}\log \E^u_n(Z_e) = f(d,k) + O(n^{-1}\log(n))= (1-2\eta)2^{-k} + O(2^{-2k})+O(n^{-1}\log(n))$$
by Lemma \ref{lem:psi} and Theorem \ref{thm:1moment}. 

Therefore, the choice of $\rho,\delta$ in (\ref{small stuff}) implies $|\cC_\s(\chi)| \le \E^u_n(Z_e)$ for all sufficiently large $n$. This also depends on $k$ being sufficiently large, but the lower bound on $k$ is uniform in $n$. 

\end{proof}

\section{A Markov process on the Cayley hyper-tree}\label{sec:Markov}

Here we study a $\G$-invariant measure $\mu$ on $X$ which is, in a sense, the limit of the planted model. We will define it by specifying its values on cylinder sets. 

Let $D \subset \G$ be a connected finite union of hyper-edges.  Let $\xi \in \{0,1\}^{D}$ be a proper coloring. We define $C(\xi)\subset X$ to be the set of proper colorings $\xi' \in X$ with $\xi' \resto D = \xi$ (where $\resto$ means ``restricted to''). We set $\mu(C(\xi))$ equal to the reciprocal of the number of proper colorings of $D$. In particular, if $\xi' \in \{0,1\}^D$ is another proper coloring of $D$, then $\mu(C(\xi))=\mu(C(\xi'))$.

Because $X$ is totally disconnected, any Borel probability measure on $X$ is determined by its values on clopen subsets (since these generate the topology). Since every clopen subset is a finite union of cylinder sets of the form above, 
Kolmogorov's Extension Theorem implies the existence of a unique Borel probability $\mu$ on $X$ satisfying the aforementioned equalities. 

Note this measure has the following Markov property. Let $x = (x_g)_{g \in G}$ be a random element of $X$ with law $\mu$. Let $v \in \G$ and let $e \subset \G$ be a hyperedge containing $v$. Let $\past(e,v)$ be the set of all $g \in \G$ such that every path in the Cayley hyper-tree from $g$ to an element of $e$ passes through $v$. In particular, $e \cap \past(e,v) = \{v\}$.  Then the distribution of $(x_g)_{g \in e \setminus \{v\}}$ conditioned on $\{x_g:~g \in \past(e,v)\}$ is uniformly distributed on the set of all colorings $y:(e\setminus \{v\}) \to \{0,1\}$ such that there exists some $h \in e \setminus \{v\}$ with $y(h) \ne x(v).$

\subsection{Local convergence}

We will prove the following lemma. 

\begin{lem}\label{lem:bsconc}
Let $\chi : V \to \{0,1\}$ be an equitable coloring with $|V|=n$. If $B \subseteq X$ is clopen, then for every $\epsilon > 0$
\begin{align*}
\lim_{n \to \infty} \P^\chi_n \pr{\abs{\frac{1}{n} \sum_{v \in V} \indic{B}(\Pi_v^{\sigma_n}(\chi)) - \mu(B) } > \epsilon } = 0.
\end{align*}
\end{lem}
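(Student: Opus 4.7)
The plan is to prove Lemma \ref{lem:bsconc} by a first-and-second-moment (Chebyshev) argument that encodes the local weak convergence of the planted model to $\mu$. First I would reduce to cylinder sets: since $X$ is totally disconnected and $B$ is clopen, $B$ is a finite disjoint union of cylinder sets $C(\xi_j)$ indexed by proper colorings $\xi_j$ of finite connected unions of hyperedges $D_j \subset \G$. By linearity in $B$ of both $\frac{1}{n}\sum_v \indic{B}(\Pi_v^{\sigma_n}(\chi))$ and $\mu(B)$, it suffices to treat $B = C(\xi)$ for a proper coloring $\xi \colon D \to \{0,1\}$ with $1_\G \in D$.

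Let $Y_B(\sigma) := \frac{1}{n}\sum_{v} \indic{B}(\Pi_v^\sigma(\chi))$. For the first moment, I would observe that $\Pi_v^\sigma(\chi) \in C(\xi)$ if and only if the map $\phi_v \colon g \mapsto \sigma(g^{-1})v$ intertwines $(D,\xi)$ with the colored-hypergraph structure $G_\sigma$ inherits on $\phi_v(D)$. I will show: (i) for each fixed $v$, the event that $\phi_v$ is injective on $D$ and that the induced sub-hypergraph of $G_\sigma$ on $\phi_v(D)$ is isomorphic to $D$ has $\P^\chi_n$-probability $1 - O_D(1/n)$; (ii) conditional on this event, the distribution of $\chi \upharpoonright \phi_v(D)$ converges to the marginal of $\mu$ on $D$. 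Part (ii) is the technical heart. In the planted model each generator $s_i$ induces a uniform $\chi$-bichromatic $k$-partition of $[n]$, so when I explore $D$ from $v$ in breadth-first order, the colors of each newly revealed hyperedge are, conditional on earlier choices, uniform over bichromatic completions given the already-fixed color at the pivot vertex, up to $O(1/n)$ corrections from the removal of previously used vertices. This is precisely the Markov transition rule defining $\mu$, so iterating along the tree $D$ yields the desired convergence. Combining (i) and (ii) gives $\E^\chi_n[Y_B] \to \mu(C(\xi))$.

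For the variance, the same analysis applied to ordered pairs $(v,w)$ of distinct vertices shows that for all but $o(n^2)$ pairs the images $\phi_v(D)$ and $\phi_w(D)$ are vertex-disjoint embedded copies of $D$ in $G_\sigma$, and conditional on this disjointness the restrictions of $\chi$ to the two images are asymptotically independent each with the $\mu$-distribution. This gives $\P^\chi_n(\Pi_v^\sigma(\chi) \in B,\, \Pi_w^\sigma(\chi) \in B) = \mu(B)^2 + o(1)$ uniformly over such pairs, hence $\operatorname{Var}^\chi_n(Y_B) = o(1)$, and Chebyshev's inequality finishes the proof. The main obstacle is the careful quantification in (ii): the coupling between the random hypergraph and the fixed coloring introduced by the $\chi$-properness conditioning must be controlled uniformly along every path in $D$. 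The key observation is that the bichromaticity reweighting on each hyperedge is a bounded multiplicative factor (essentially $(1 - 2^{1-k})^{-1}$ times a not-all-equal indicator) and does not compound beyond $O_D(1/n)$ corrections over a ball $D$ of bounded radius.
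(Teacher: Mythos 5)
Your overall strategy is a legitimate alternative to the paper's. Where the paper separates the problem into (a) a Lipschitz concentration inequality for the planted model (Proposition~\ref{prop:bsconc} via Theorem~\ref{thm:planted-concentration}) and (b) convergence of $\E^\chi_n[f_n]$ to $\mu(B)$ (Lemma~\ref{lem: approx}), you propose to handle (a) by a second-moment/Chebyshev bound and (b) by breadth-first exploration. Chebyshev is weaker (polynomial rather than exponential tails), but that is all this lemma needs, and the pair-exploration argument for the variance is standard once the single-vertex first moment is available.

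The genuine gap is in the step you call the ``technical heart.'' You assert that, conditional on earlier choices in the breadth-first exploration, the colors revealed on a new hyperedge are uniform over bichromatic completions up to $O(1/n)$, and you attribute this to the boundedness of the bichromaticity reweighting factor. That reasoning does not work, because the bias does not come from a per-edge reweighting at all: it comes from the global $k$-partition constraint. In the planted model the hyperedges for a given generator $s_i$ are not sampled independently; they are the parts of a single uniformly random $\chi$-bichromatic $k$-partition. The marginal law of the part containing $v$ is weighted by the number of $\chi$-bichromatic $k$-partitions of the complementary $n-k$ vertices, and that count depends on the color profile of the revealed part. A priori this gives a multiplicative factor of the form $\exp\bigl((n-k)\bigl(F(p)-F(p')\bigr)\bigr)$ with $p-p'=O(1/n)$, which is $\exp(O(1))$, not $1+O(1/n)$, unless $F'(1/2)=0$. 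The reason the bias actually vanishes is the symmetry $F(p)=F(1-p)$ at $p=1/2$, which is only available because $\chi$ is equitable. Your sketch never invokes equitability, and a ``bounded multiplicative factor'' is exactly the wrong thing to conclude uniformity from: a bounded but $\ne 1$ factor would give a non-Markov limit. This is precisely the content of Lemma~\ref{lemma:smallball}, which shows the sum over partition types concentrates at $t^*_j \propto \binom{k}{j}$, and it is where the actual work of Lemma~\ref{lem: approx} lives. Your variance step inherits the same gap, since asymptotic independence for disjoint balls requires the same control of the $k$-partition conditioning across both neighborhoods. The plan can be repaired, but by carrying out the type-concentration computation, not by the boundedness observation.
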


To prove this lemma we will first show that if $f_n: \Hom_{\chi}(\G,\sym(n)) \to \R$ is the function
 $$f_n(\sigma_n)  : = \frac{1}{n} \sum_{v\in V} \indic{B}(\Pi_v^{\sigma_n}(\chi))$$ 
 then $f_n$ concentrates about its expectation using Theorem \ref{thm:planted-concentration}, and then we will show that this expectation is given by $\mu(B)$.

\begin{prop}\label{prop:bsconc}
We have
\begin{align*}
\lim_{n \to \infty} \P^\chi_n\pr{|f_n - \E^\chi_n[f_n]| > \epsilon } = 0.
\end{align*}
\end{prop}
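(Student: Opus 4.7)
The plan is to deduce concentration of $f_n$ directly from \texttt{Theorem \ref{thm:planted-concentration}}, the planted-model concentration inequality stated earlier in the paper. The key step is to verify that $f_n$ has a bounded-differences property under local modifications of $\sigma$, and the basic observation is that $B$, being clopen in $X \subset \{0,1\}^\G$, depends on only finitely many coordinates.

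First I would choose a finite $F \subset \G$ so that $B$ is a union of cylinders determined by coordinates in $F$. For any $\sigma \in \Hom_{\chi}(\G,\sym(n))$, the indicator $\indic{B}(\Pi_v^{\sigma}(\chi))$ depends on $\sigma$ only through the values $\sigma(g^{-1})v$ for $g \in F$; equivalently, only through the restriction of $\sigma$ to a neighborhood of $v$ of bounded radius in the hyper-graph $G_\sigma$. In particular, a modification of $\sigma$ that affects the hyper-graph only on a set of $M$ vertices can change $\indic{B}(\Pi_v^{\sigma}(\chi))$ for at most $C_0 M$ values of $v$, where $C_0 = C_0(F,d,k)$.

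Next I would quantify the Lipschitz constant under a natural family of elementary moves on $\Hom_{\chi}(\G,\sym(n))$. An element of this set is specified by, for each generator $s_i$, a $(\chi,\chi)$-bichromatic $k$-partition of $[n]$ together with a cyclic ordering on each part. A natural elementary move swaps two equally-$\chi$-colored vertices between two parts of one such partition, which preserves bichromaticity and thereby stays inside the planted set. This alters $G_\sigma$ only at a bounded number of edges, so it can change $\Pi_v^\sigma(\chi)$ for at most $C_1 = C_1(F,d,k)$ vertices $v$. Consequently $f_n$ changes by at most $C_1/n$ under any single such move.

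The main obstacle is that $\P^\chi_n$ is uniform on a highly constrained subset of $\sym(n)^d$ rather than a product measure, so the naive McDiarmid inequality does not apply; this is precisely the issue \texttt{Theorem \ref{thm:planted-concentration}} is designed to address, providing an Azuma/Talagrand-type bound for functions with bounded differences along the swap graph on $\Hom_{\chi}(\G,\sym(n))$. Plugging in the Lipschitz estimate $C_1/n$ yields $\P^\chi_n(|f_n - \E^\chi_n f_n| > \epsilon) \le \exp(-c(\epsilon, F, d, k)\, n)$ for all sufficiently large $n$, which implies the proposition. The only subtlety is checking that the swap graph on $\Hom_\chi(\G,\sym(n))$ is connected (so that the concentration theorem applies), which follows from a standard transposition argument within each bichromatic $k$-partition.
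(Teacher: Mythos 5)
Your high-level plan is the same as the paper's: observe that $B$ depends on only finitely many coordinates $D\Subset\G$, deduce a quantitative regularity property of $f_n$, and feed that into Theorem \ref{thm:planted-concentration}. But you have misread what that theorem asks for, and the bridge you would need is missing.

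Theorem \ref{thm:planted-concentration} is not an ``Azuma/Talagrand-type bound for functions with bounded differences along a swap graph,'' and it has no hypothesis about connectivity of any move graph. Its hypothesis is that $f$ be $1$-Lipschitz with respect to the normalized Hamming metric $d_{\Hom}$ on $\Hom_\chi(\G,\Sym(n))$ (the sum over generators of the normalized Hamming distance between the corresponding permutations). What you verify instead is a bounded-differences estimate under a single elementary swap move, i.e.\ Lipschitzness with respect to the \emph{path metric} of a swap graph. To convert that into what the theorem needs, you would have to show that any two homomorphisms at $d_{\Hom}$-distance $\delta$ are joined by a swap path of length $O(\delta n)$; that is a nontrivial combinatorial claim and you neither state nor prove it. Checking that the swap graph is \emph{connected} (what you flag as the ``only subtlety'') does not suffice, since connectivity gives no control on path lengths.

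The paper avoids this detour entirely. Because $\indic{B}$ is $\cF_D$-measurable,
\begin{align*}
|f_n(\sigma)-f_n(\sigma')|
&\le n^{-1}\#\{v : \sigma(\gamma^{-1})v \ne \sigma'(\gamma^{-1})v \text{ for some } \gamma\in D\}
\le \sum_{\gamma\in D} d_{\Sym(n)}\bigl(\sigma(\gamma^{-1}),\sigma'(\gamma^{-1})\bigr),
\end{align*}
and then bi-invariance of $d_{\Sym(n)}$ and the triangle inequality give $d_{\Sym(n)}(\sigma(\gamma),\sigma'(\gamma))\le |\gamma|\,d_{\Hom}(\sigma,\sigma')$, so $f_n$ is $L$-Lipschitz with $L=\sum_{\gamma\in D}|\gamma|$. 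This directly puts $f_n$ in the scope of Theorem \ref{thm:planted-concentration}. If you replace your swap-graph discussion with this Hamming-Lipschitz computation (or, alternatively, prove the path-length comparison you would need), the argument closes.
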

\begin{proof}
For $g \in \G$, let $\proj_g:X \to \{0,1\}$ be the projection map $\proj_g(x)=x_g$. For $D \subset \G$, let $\cF_D$ be the smallest Borel sigma-algebra such that $\proj_g$ is $\cF_D$-measurable for every $g \in D$. 

Since every clopen subset $B$ of $X$ is a finite union of cylinder sets, the function $\indic{B}$ is $\cF_D$-measurable for some finite set $D \subset \Gamma$. 

We will use the normalized Hamming metrics $d_{\Sym(n)}$ and $d_{\Hom}$ on $\Sym(n)$ and $\Hom_{\chi}(\G,\sym(n))$ respectively. These are defined in the beginning of Appendix \ref{sec:concentration}. We claim $f_n$ is $L$-Lipschitz for some $L < \infty$. Let $\s, \s' \in\Hom_{\chi}(\G,\sym(n))$. Because $\indic{B}$ is $\cF_D$-measurable,
\begin{align*}
|f_n(\sigma) - f_n(\sigma')| &\leq n^{-1} \#\{v \in [n] : \chi(\sigma(\gamma^{-1})(v)) \neq \chi(\sigma'(\gamma^{-1})(v)) \text{ for some } \gamma \in D\} \\
& \leq n^{-1} \# \{ v \in [n] : \sigma(\gamma^{-1})(v) \neq \sigma'(\gamma^{-1})(v) \text{ for some } \gamma \in D\} \\
& \leq  \sum_{\gamma \in D} d_{\Sym(n)}(\sigma(\gamma^{-1}),\sigma'(\gamma^{-1})).
\end{align*}
Now $d_{\Sym(n)}$ is both left and right invariant. So
$$d_{\Sym(n)}(gh,g'h') \leq d_{\Sym(n)}(gh,gh') + d_{\Sym(n)}(gh',g'h') = d_{\Sym(n)}(h,h') + d_{\Sym(n)}(g,g')$$
for any $g,g',h,h' \in \Sym(n)$. Furthermore we immediately have for any $1 \leq i \leq d$ that $d_{\Sym(n)}(\s(s_i),\s'(s_i)) \leq d_{\Hom}(\s,\s')$. Together these imply $d_{\Sym(n)}(\s(\g),\s'(\g)) \le |\g| d_{\Hom}(\s,\s')$ for any $\g \in \G$ where $|\gamma|$ is the distance from $\gamma$ to the identity in the word metric on $\Gamma$. Thus if we take $L = \sum_{\gamma \in D} |\gamma| < \infty$ we see that $|f_n(\sigma) - f_n(\sigma')|\le L d_{\Hom}(\s,\s')$ as desired.
%induct on the length of the word $\g$ and use the homomorphism property of $\s$ 

The Proposition now follows from Theorem \ref{thm:planted-concentration}. 
\end{proof}

To finish the proof of Lemma \ref{lem:bsconc}, it now suffices to show the expectation of $f_n$ with respect to the planted model converges to $\mu(B)$ as $n\to\infty$. We will prove this by an inductive argument, the inductive step of which is covered in the next lemma. 

\begin{lem}\label{lem: approx}
Let $D \subset \G$ be either a singleton or a connected finite union of hyperedges containing the identity. Let $\xi \in \{0,1\}^{D}$ be a proper coloring. Let $F_{D,\xi,v}\subset \Hom_{\chi}(\G,\sym(n))$ be the event that $\Pi_{v}^{\sigma_n}(\chi) \resto D = \xi \resto D$. Let $Q(D)$ be the number of proper colorings of $D$. Then
\begin{align}\label{e:approx}
\lim_{n \to \infty} \sup_{v \in \chi^{-1}(\xi(1_\G))} \abs{\P^\chi_n\pr{F_{D,\xi,v} } - \frac{2}{Q(D)} } = 0.   
\end{align}

\end{lem}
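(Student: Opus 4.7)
The plan is to proceed by induction on the number of hyperedges in $D$. The base case $D = \{1_\Gamma\}$ is immediate: $F_{\{1_\Gamma\}, \xi, v}$ is the event $\chi(v) = \xi(1_\Gamma)$, which holds for every $v \in \chi^{-1}(\xi(1_\Gamma))$, and $Q(\{1_\Gamma\}) = 2$. For the inductive step, I would decompose $D' = D \cup e$, where $e$ is a leaf hyperedge of $D'$ in its hyper-tree structure, chosen so that $1_\Gamma \in D$ and $e \cap D = \{w\}$ for a single vertex $w$; such a decomposition always exists. Letting $s_i$ label $e$, so $e = w\{s_i^j : 0 \le j < k\}$, and letting $\xi'$ extend $\xi$ to a proper coloring of $D'$, the ratio $Q(D')/Q(D) = 2^{k-1} - 1$ (the number of bichromatic extensions of $\xi(w)$ to all of $e$) combined with the identity
$$\P^\chi_n(F_{D', \xi', v}) = \P^\chi_n(F_{D, \xi, v}) \cdot \P^\chi_n(F_{D', \xi', v} \mid F_{D, \xi, v})$$
reduces the inductive step to establishing $\P^\chi_n(F_{D', \xi', v} \mid F_{D, \xi, v}) \to 1/(2^{k-1} - 1)$ uniformly in $v \in \chi^{-1}(\xi(1_\Gamma))$.

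To analyze this conditional probability, I would set $v' = \sigma(w^{-1})v$. Conditional on $F_{D, \xi, v}$, we have $\chi(v') = \xi(w)$, and $F_{D', \xi', v}$ further requires that the $\sigma(s_i)$-cycle through $v'$ realizes a specific bichromatic color pattern $\vec{c} = (c_0, \ldots, c_{k-1})$ with $c_0 = \chi(v')$ in the appropriate cyclic orientation; there are $2^{k-1} - 1$ such patterns. Since $e \not\subset D$, the cycle through $v'$ is not determined by $F_{D, \xi, v}$: letting $K_i$ denote the $O(1)$-size set of vertices already lying in determined $\sigma(s_i)$-cycles, and $P_i = [n] \setminus K_i$ (with color counts $p_i, q_i = n/2 + O(1)$), the restriction of $\sigma(s_i)$ to $P_i$ is uniformly distributed among bichromatic $k$-cycle permutations of $P_i$. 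For each cyclically ordered tuple $(v', u_1, \ldots, u_{k-1})$ in $P_i$, letting $O = \{v', u_1, \ldots, u_{k-1}\}$, the probability that this is the cycle of $v'$ equals $\tilde{N}(P_i \setminus O)/[(k-1)!\, \tilde{N}(P_i)]$, where $\tilde{N}(X)$ counts bichromatic $k$-partitions of $X$; and the number of such tuples realizing a given pattern $\vec{c}$ is $\sim (n/2)^{k-1}$, independent of $\vec{c}$.

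The main obstacle is showing that $\tilde{N}(P_i \setminus O)/\tilde{N}(P_i)$ is asymptotically independent of the color type $(m_0^O, m_1^O)$ of $O$. A double-counting argument, considering pairs (bichromatic $k$-partition of $P_i$, distinguished block of type $(m_0^O, m_1^O)$), gives
$$\frac{\tilde{N}(P_i \setminus O)}{\tilde{N}(P_i)} = \frac{\E\bigl[B_{(m_0^O, m_1^O)}\bigr]}{\binom{p_i}{m_0^O}\binom{q_i}{m_1^O}},$$
where $B_{(m_0, m_1)}$ counts blocks of color type $(m_0, m_1)$ in a uniformly random bichromatic $k$-partition of $P_i$. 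The first moment analysis of Section \ref{sec:1} (strict concavity of Shannon entropy plus a polynomial count of types) shows that the partition type concentrates on the unique maximizer $\vec{t}^*$ of (\ref{max-type}), so that $\E[B_{(m_0, m_1)}]/n \to t^*_{m_0} = \binom{k}{m_0}/[k(2^k-2)]$. The key observation is that $t^*_{m_0}/\binom{k}{m_0}$ is the same constant $1/[k(2^k-2)]$ across all bichromatic types, so $\tilde{N}(P_i \setminus O)/\tilde{N}(P_i) \sim (k-1)! \cdot 2^{k-1}/[(2^{k-1}-1) n^{k-1}]$ independently of the color pattern of $O$. Combining with the count of matching tuples yields $\P^\chi_n(F_{D', \xi', v} \mid F_{D, \xi, v}) \to 1/(2^{k-1}-1)$, with uniformity in $v$ following because all $O(1)$-constants depend only on $|D|$ and $k$.
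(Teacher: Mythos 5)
Your argument is correct in outline and takes a genuinely different route from the paper's, though the two rely on the same underlying combinatorial facts. The paper reduces by translation invariance to the case $e \cap D = \{1_\Gamma\}$, conditions explicitly on a no-collision event $E_{D\cup e,v}$, and then carries out the computation via the explicit quantities $N_j(\chi,h)$, $T_{D,n}$, and $S_{D,n}(\vec t)$, showing that the ratio of the two type-sums concentrates near $t^*$ (Lemma \ref{lemma:smallball}) and that $S_{D\cup e,n}(f_n(\vec t))/S_{D,n}(\vec t) = \tfrac{k\cdot k!}{\binom{k}{m}}t_m$, which at $t^*$ is independent of $m$. Your approach instead fixes the leaf vertex $w$, works with the conditional law of $\sigma(s_i)\restriction_{P_i}$ (uniform on bichromatic $k$-cycle permutations of $P_i$), and packages the key computation as the double-counting identity
\begin{align*}
\frac{\tilde N(P_i \setminus O)}{\tilde N(P_i)} = \frac{\E\bigl[B_{(m_0^O,m_1^O)}\bigr]}{\binom{p_i}{m_0^O}\binom{q_i}{m_1^O}},
\end{align*}
from which the type-independence falls out of $t^*_{m_0}/\binom{k}{m_0}$ being constant — the same algebraic coincidence the paper exploits, but stated more transparently. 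Your version is arguably cleaner and more probabilistic, at the cost of invoking concentration of partition types somewhat loosely (the paper proves this carefully as Lemma \ref{lemma:smallball}, and the first-moment computations of Section 4 do not literally contain this statement about a random bichromatic $k$-partition of $P_i$, though the entropy-concavity argument is the same).

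One genuine gap to patch: you tacitly assume the map $g \mapsto \sigma(g^{-1})v$ restricted to $D\cup e$ is injective, so that the cycle of $v' = \sigma(w^{-1})v$ is genuinely unconstrained by $F_{D,\xi,v}$ and $K_i$ is a bona fide union of complete cycles disjoint from the cycle of $v'$. If a collision occurs, $v'$ could already lie in a determined cycle, or the conditional distribution of $\sigma(s_i)$ on $P_i$ might not be uniform of the stated form. The paper handles this by conditioning on the event $E_{D\cup e,v}$ and noting $\P^\chi_n(E_{D\cup e,v}) = 1 - o_n(1)$ uniformly in $v$, which follows from Lemma \ref{lem:super-exponential}; you should insert the same conditioning (or equivalently, split off the $o_n(1)$-probability bad event before the per-tuple computation). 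A second, minor point: you compute $\P^\chi_n(F_{D',\xi',v} \mid F_{D,\xi,v})$, but the per-tuple computation really takes place after fixing a specific injective $h : D \to V$; you need the additional observation (as in the paper) that by symmetry the answer does not depend on the choice of $h$, so averaging over $h$ changes nothing. Neither issue is fatal, but both need to be said to make the argument rigorous.
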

%We take the supremum over $v \in \chi^{-1}(\xi(1_\Gamma))$ since otherwise the event $F_{D,\xi,v}$ is empty.
%Not sure why $F_{D,\xi,v}$ is empty

\begin{remark}
The factor of $2$ in $\frac{2}{Q(D)}$ is there to account for the fact that we are requiring $v \in \chi^{-1}(\xi(1_\G))$. 
\end{remark}

\begin{proof}
%we have to condition on no loops otherwise conditioning on $F_{D,\xi}$ could lead to the coloring of the edge $e$ already determined to be something other than $\xi\resto e$.  

The statement is immediate if $D=\{1_\G\}$ is a singleton. For induction we assume that the statement is true for some finite $D \subset \G$. Let $D' \supset D$ be a connected union of hyper-edges such that there exists a unique hyper-edge $e$ with $D' = D \cup e$. By induction, it suffices to prove the statement for $D'$. 

Note by symmetry that $\P^\chi_n(F_{D,\xi,v})$ is the same for all $v \in \chi^{-1}(\xi(1_\G))$. Let us denote this common value by $c_{D,\xi}$. Define $A_{D,\xi} = \{ (v,\sigma) : v \in V, \sigma \in F_{D,\xi,v}\}$. If $g \in D$ then $(v,\sigma) \mapsto (\sigma(g^{-1}) v,\sigma)$ is a bijection from $A_{D,\xi}$ to $A_{g^{-1}D,g^{-1}\xi}$. This implies
\begin{align*}
\frac{n}{2} c_{D,\xi} \abs{ \Hom_\chi(\G,\Sym(n))} &= \abs{A_{D,\xi}} \\
& = \abs{A_{g^{-1}D,g^{-1}\xi}} \\
& = \frac{n}{2} c_{g^{-1} D, g^{-1} \xi} \abs{\Hom_\chi(\G,\Sym(n))}.
\end{align*}
Thus $c_{g^{-1} D, g^{-1} \xi} = c_{D,\xi}$. After replacing $D'$ with $\g^{-1} D'$ where $\g \in e \cap D$ is the unique element in $e \cap D$, we may therefore assume without loss of generality that $e \cap D = \{1_\G\}$. By symmetry we may also assume $\xi(1_\G)=1$ and $e = \{s_1^i:~i=0,\cdots, k-1\}$ is the hyper-edge generated by $s_1$.

For any subset $W \subset \G$, let $E_{{W, v}}$ be the event that $\sigma_n(g^{-1})({v}) \neq \sigma_n(g'^{-1})({v})$ for any $g, g' \in W$ with $g \neq g'$. Since the planted model is a random sofic approximation, by Lemma \ref{lem:super-exponential} we have $\P^\chi_n(E_{D \cup e, v}) = 1 - o_n(1)$ uniformly in $v$.  We will show that  
\begin{align}\label{noloops}
\lim_{n \to \infty} \sup_{v \in \chi^{-1}(1)} \abs{\P^\chi_n\pr{F_{e,\xi,v} \bigg | F_{D,\xi,v}, E_{D \cup e, v} } - \frac{1}{2^{k-1}-1}} = 0.
\end{align}

Next we show how to finish the lemma assuming (\ref{noloops}). We claim that $Q(D\cup e)=Q(D)(2^{k-1}-1)$. To see this, observe that if $\xi'$ is a proper coloring of $D$ and $\xi''$ is any coloring of $e \setminus \{1_\G\}$ then the concatenation $\xi' \sqcup \xi''$ is a proper coloring of $D \cup e$ unless $\xi''(g)=\xi'(1_\G)$ for all $g \in e \setminus \{1_\G\}$. Since $|e \setminus \{1_\G\}|=k-1$, this implies that every proper coloring of $D$ admits $2^{k-1}-1$ proper extensions to $D \cup e$. So $Q(D\cup e)=Q(D)(2^{k-1}-1)$ as claimed.

Since $F_{D\cup e,\xi,v} = F_{e,\xi,v} \cap F_{D,\xi,v}$, 
\begin{eqnarray*}
\P^\chi_n\pr{F_{D\cup e,\xi,v} }  &=& \P^\chi_n\pr{F_{e,\xi,v} \bigg | F_{D,\xi,v} } \P^\chi_n\pr{F_{D,\xi,v} } \\
&=& \P^\chi_n\pr{F_{e,\xi,v} \bigg | F_{D,\xi,v}, E_{D \cup e, v} } \P^\chi_n\pr{F_{D,\xi,v} } + o_n(1).
\end{eqnarray*}
The lemma now follows from the inductive hypothesis, (\ref{noloops}) and $Q(D \cup e) = Q(D)(2^{k-1}-1)$.

It now suffices to prove (\ref{noloops}).  Let $D_j$ denote the union of all subsets $\{g s_j^i : i = 0, \cdots, k-1\}$ over all $g \in D$ such that $\{g s_j^i : i = 0, \cdots, k-1\} \subseteq D$. That is, $D_j$ is the union of all hyperedges generated by $s_j$ which lie completely within $D$.  Because $e = \{s_1^i:~i=0,\cdots, k-1\}$, $e \cap D_1 = \emptyset$. Multiplication by $s_j$ on the right preserves $D_j$.

Fix $v \in V=\{1,\ldots, n\}$ and consider an injective function $h : D \to V$ such that $h(1_\Gamma) = v$ and $\chi\pr{h\pr{e'}} = \{0,1\}$ for all hyper-edges $e' \in D$. Let $N(\chi,h)$ be the set of $\s \in \Hom_{\chi}(\G,\sym(n))$ such that $\s(g^{-1})v = h(g)$ for all $g \in D$. By definition,
\begin{align*}%\label{E:restriction0}
\P^\chi_n \pr{\sigma_n(g^{-1})(v)  = h(g)~\forall g \in D} & =\frac{|N(\chi,h)|}{|\Hom_{\chi}(\G,\sym(n))|}.
\end{align*}
For each $j \in \{1,\ldots, d\}$, let $N_j(\chi,h)$ be the set of permutations $\pi_j \in \Sym(n)$ such that
\begin{enumerate}
\item the orbit-partition of $\pi_j$ is a $k$-partition which is properly colored by $\chi$,
\item $h(gs_j) = \pi_j^{-1} h(g)$ for all $g \in D \cap Ds_j^{-1}$.  
\end{enumerate}
We claim that the map 
$$N(\chi,h) \mapsto N_1(\chi,h) \times \cdots \times N_d(\chi,h)$$
that sends $\s$ to $(\s(s_1),\ldots, \s(s_d))$ is a bijection.

The fact that it is injective is immediate since $\G$ is generated by $s_1,\ldots, s_d$. In order to prove that it is surjective, fix elements $\pi_j \in N_j(\chi,h)$ for all $j$. Define $\s\in \Hom_{\chi}(\G,\sym(n))$ by $\s(s_j) = \pi_j$. It suffices to check that $\s(g^{-1})(v)  = h(g)~\forall g \in D$. By induction on the number of edges in $D$, it suffices to assume that the equation  $\s(g^{-1})(v)  = h(g)$ holds for some $g \in D \cap Ds_j^{-1}$ and prove   $\s((gs_j)^{-1})(v)  = h(gs_j)$. This follows from:
\begin{eqnarray*}
\s((gs_j)^{-1})(v) =\s(s_j^{-1}) \s(g^{-1})(v)  =  \pi_j^{-1} h(g) = h(gs_j).
\end{eqnarray*}
This proves the claim. Therefore,
\begin{align}\label{E:restriction}
\P^\chi_n \pr{\sigma_n(\cdot)^{-1}(v) \resto D = h} & = \frac{ \prod_{j =1 }^d |N_j(\chi,h)|}{|\Hom_{\chi}(\G,\sym(n))|}.
\end{align}

Let $H_D = \{ h : D \to V : h \text{ is injective, } \chi(h(g)) = \xi(g)~\forall g\in D\text{, and } h(1_\Gamma) = v\}$. We have
\begin{align*}
\P^\chi_n(F_{D,\xi,v}, E_{D, v}) & = \sum_{h \in H_D} \P^\chi_n(\sigma_n(\cdot)^{-1}(v) \resto D = h).
\end{align*}
Similarly
\begin{align*}
\P^\chi_n(F_{D\cup e,\xi,v}, E_{D \cup e, v}) & = \sum_{h' \in H_{D \cup e}} \P^\chi_n(\sigma_n(\cdot)^{-1}(v) \resto D \cup e = h').
\end{align*}
Each $h \in H_D$ is the restriction to $D$ of 
\begin{align}\label{E:extension}
(m-1)! \binom{n/2 - \# \xi^{-1}(1) \cap D}{m-1}(k-m)! \binom{n/2 - \#\xi^{-1}(0) \cap D}{k-m}
\end{align}
distinct $h' \in H_{D\cup e}$ where $m=|\xi^{-1}(1) \cap e|$. We can express
\begin{align*}
\P^\chi_n(F_{e,\xi,v} | F_{D,\xi,v}, E_{D \cup e, v}) & = \frac{\P^\chi_n(F_{D \cup e, \xi,v}, E_{D \cup e, v})}{\P^\chi_n(F_{D,\xi,v}, E_{D \cup e, v})} \\
& = \frac{\P^\chi_n(F_{D \cup e, \xi,v}, E_{D \cup e, v})}{\P^\chi_n(F_{D,\xi,v}, E_{D, v})} +o_n(1)\\
& = \frac{\sum_{h' \in H_{D\cup e}} \P^\chi_n(\sigma_n(\cdot)^{-1}(v) \resto D \cup e = h')}{\sum_{h \in H_D} \P^\chi_n(\sigma_n(\cdot)^{-1}(v)\resto D = h)} +o_n(1) \\
& = \frac{\sum_{h' \in H_{D\cup e}} \prod_{j=1}^d |N_j(\chi,h')|}{\sum_{h \in H_D} \prod_{j=1}^d |N_j(\chi,h)|} + o_n(1)
\end{align*}
where the $o_n(1)$ term follows from the inductive hypothesis and the fact that $\P^\chi_n(E_{D,v}) \ge \P^\chi_n(E_{D\cup e,v}) = 1-o_n(1)$.  

%The associated sets $D'_j \subseteq D \cup e$ and $D_j \subseteq D$ are equal for $j\neq 1$ since $e$ is generated by $s_1$.

If $h_1, h_2 \in H_D$ then $\chi(h_1(g)) = \xi(g) = \chi(h_2(g))$ for all $g\in D$. Observe that $|N_j(\chi,h_1)| = |N_j(\chi,h_2)|$. This follows by conjugating by a permutation in $\Sym(n,k)$ which preserves the color classes of $\chi$ and which maps $h_1(g)$ to $h_2(g)$ for each $g \in D$. Similarly, if $h'_1, h'_2 \in H_{D\cup e}$ then $|N_j(\chi,h'_1)| = |N_j(\chi,h'_2)|$. Therefore,
\begin{align*}
\P^\chi_n(F_{e,\xi,v} | F_{D,\xi,v}, E_{D \cup e, v}) 
&= \frac{|H_{D\cup e}| \prod_{j=1}^d |N_j(\chi,h')|}{| H_D | \prod_{j=1}^d |N_j(\chi,h)|} + o_n(1)
%& = \frac{\prod_{j=1}^d |N_j(\chi,h')|}{\prod_{j=1}^d |N_j(\chi,h)|} (m-1)! \binom{n/2 - \# \xi^{-1}(1) \cap D}{m-1}(k-m)! \binom{n/2 - \xi^{-1}(0) \cap D}{k-m}
\end{align*}
for any $h\in H_D$ and $h' \in H_{D \cup e}$.

Next we choose $h' \in H_{D\cup e}$ and $h \in H_D$ so that $h'$ extends $h$. Note $|N_j(\chi,h')| = |N_j(\chi,h)|$ for all $j \ne 1$ because $D_j = (D \cup e)_j$. Therefore,
\begin{align}
&~ \quad \P^\chi_n(F_{e,\xi,v} | F_{D,\xi,v}, E_{D \cup e, v}) \nonumber \\
 &= \frac{|H_{D\cup e}| |N_1(\chi,h')|}{| H_D | |N_1(\chi,h)|} + o_n(1)
\nonumber \\
& = \frac{|N_1(\chi,h')|}{ |N_1(\chi,h)|} (m-1)! \binom{n/2 - \# \xi^{-1}(1) \cap D}{m-1}(k-m)! \binom{n/2 - \#\xi^{-1}(0) \cap D}{k-m}  + o_n(1)
\nonumber \\
& \sim \frac{|N_1(\chi,h')|}{ |N_1(\chi,h)|} (n/2)^{k-1} \label{E:P}
\end{align}
where the second equality uses (\ref{E:extension}). Here we are using the notation $f(n) \sim g(n)$ to mean $\lim_{n\to\infty} \frac{f(n)}{g(n)}=1$.  

To compute $|N_1(\chi,h)|$, let
$$T_{D,n} =\left\{ \vec{t} \in [0,1]^{k+1} : t_0 = t_k = 0, \sum_i t_i = \frac{1}{k} , \sum_i i t_i = \frac{\# \chi^{-1}(1) \setminus h(D_1)}{\# V \setminus h(D_1)}, (n - \# D_1) t_i \in \Z\right\}$$
be the set of possible types of orbit-partitions of permutations of $V \setminus h(D_1)$ that contribute to the count $|N_1(\chi,h)|$. To be precise, if $\vec{t} \in T_{D,n}$ then there is a $k$-partition of $V \setminus h(D_1)$ such that the number of parts $P$ of the partition with $|P \cap \chi^{-1}(1)|=i$ is $t_i (n- \#D_1)$.

If we fix $\vt \in T_{D,n}$ then the number of permutations whose orbit partition has type $\vt$ is
$$(k-1)!^{ (n-\#D_1)/k}\frac{(n/2-\#\xi^{-1}(1)\cap D_1)!(n/2-\#\xi^{-1}(0)\cap D_1)!}{ \prod_{j=0}^{k} j!^{t_j(n-\#D_1)}(k-j)!^{t_j(n-\#D_1)} (t_j(n-\#D_1))!}$$
where we have used (\ref{eqn:q-thing}). It follows that
\begin{align*}
|N_1(\chi,h)| &= (k-1)!^{(n-\#D_1)/k} \frac{(n/2 - \#\xi^{-1}(1) \cap D_1 )! (n/2 - \#\xi^{-1}(0) \cap D_1)!}{\pr{\frac{n - \#D_1}{k}}!} \times \sum_{\vec{t} \in T_{D,n}} S_{D,n}(\vt)
\end{align*}
where
\begin{align}\label{E:SDn}
S_{D,n}(\vt) &= \frac{((n-\#D_1)/k)!}{\prod_{i=1}^{k-1} i!^{t_i (n - \# D_1)} (k-i)!^{t_i (n - \# D_1)} (t_i (n - \# D_1))!}.
\end{align}
So
\begin{align*}
\frac{|N_1(\chi,h')|}{ |N_1(\chi,h)|} &\sim \frac{(n/k)}{(k-1)!(n/2)^{k} } \frac{\sum_{\vec{t} \in T_{D\cup e,n}} S_{D \cup e,n}(\vt)}{\sum_{\vec{t} \in T_{D,n}} S_{D,n}(\vt)}.
\end{align*}
We plug this into (\ref{E:P}) to obtain
\begin{align}\label{E:almost-there}
\lim_{n \to \infty} \P_n^\chi(F_{e,\xi,v} | F_{D,\xi,v}, E_{D \cup e, v}) & =
\lim_{n \to \infty} \frac{2}{k! } \frac{\sum_{\vec{t} \in T_{D\cup e,n}} S_{D \cup e,n}(\vt)}{\sum_{\vec{t} \in T_{D,n}} S_{D,n}(\vt)}.
\end{align}

Define $t^* \in [0,1]^{k+1}$ by $t^*_0 = t^*_k = 0$ and $t^*_i = \frac{1}{k(2^k-2)} \binom{k}{i}$ for $0<i<k$. We establish asymptotic estimates of the sum of $S_{D,n}(\vec{t})$ in Lemma \ref{lemma:smallball} and show that it suffices to sum over a small ball around $t^*$.

Recall $m=|\xi^{-1}(1) \cap e|$. Let $\tT_{D,n} = \{\vt \in T_{D,n}: t_m >0\}.$ We define $f_n : \tT_{D,n} \to T_{D \cup e, n}$ by
\begin{align*}
f_n(\vec{t})_j = \begin{cases}
 \frac{(n-\# D_1)t_j-1}{n-\#D_1 - k} \text{ if } j = m \\
 \frac{n - \# D_1}{ n - \#D_1 - k} t_j \text{ if } j \neq m
\end{cases}.
\end{align*}
Because $e \cap D_1 = \emptyset$, if $\vec{t} \in \tT_{D,n}$ then $f_n(\vec{t}) \in T_{D\cup e,n}$.

We claim that for any $\d>0$ there exists an $N:=N(\delta)$ and $\delta' > 0$ such that for $n > N$, $T_{D\cup e,n} \cap B_{\delta'}(t^*) \subseteq f_n(\tT_{D,n} \cap B_\delta(t^*))$. This follows from observing that $f_n$ is invertible and that $d(f_n(\vt),\vt) \leq \frac{k^2}{n-\#D_1-k}$. By Lemma \ref{lemma:smallball} we have
\begin{align}\label{E:smallball}
\lim_{n \to \infty} \frac{\sum_{\vec{t} \in T_{D \cup e, n}} S_{D \cup e, n}(\vec{t})}{\sum_{\vec{t} \in T_{D,n}} S_{D,n}(\vec{t})} = \lim_{n \to \infty} \frac{\sum_{\vec{t} \in \tilde{T}_{D,n} \cap B_\delta(t^*)} S_{D \cup e, n}(f_n(\vec{t}))}{ \sum_{\vec{t} \in T_{D,n} \cap B_\delta(t^*)} S_{D,n}(\vec{t}) }.
\end{align}
Furthermore, since $1 \leq m \leq k-1$ and $t^*_m > 0$, for $\delta > 0$ sufficiently small we have $\tilde{T}_{D,n} \cap B_\delta(t^*) = T_{D,n} \cap B_\delta(t^*)$.
Because $\#(D \cup e)_1 =\#D_1 +k$, the ratio $\frac{S_{D \cup e, n}(f_n(\vec{t}))}{S_{D,n}(\vec{t})}$ simplifies to
\begin{align*}
\frac{k}{n-\#D_1} m!(k-m)! t_m(n-\#D_1) = \frac{k (k!)}{\binom{k}{m}} t_m.
\end{align*}
In particular, at $t^*$ we obtain
\begin{align*}
\frac{S_{D \cup e, n}(f_n(t^*))}{S_{D,n}(t^*)} = \frac{k!}{2^{k} - 2}.
\end{align*}
Suppose $\vec{t} \in B_\delta(t^*)$. Then $|t_m - t_m^*| < \delta$. So
\begin{align*}
\abs{ \frac{S_{D \cup e, n}(f_n(\vec{t}))}{S_{D,n}(\vec{t})} - \frac{k!}{2^{k} - 2}}= \abs{ \frac{S_{D \cup e, n}(f_n(\vec{t}))}{S_{D,n}(\vec{t})} - \frac{S_{D \cup e, n}(f_n(t^*))}{S_{D,n}(t^*)}} = \frac{k (k!)}{\binom{k}{m}} |t_m-t^*_m|\le k! \delta.
\end{align*}
Since $\delta$ is arbitrary, this and (\ref{E:smallball}) imply
\begin{align*}
\lim_{n \to \infty} \frac{\sum_{\vec{t} \in T_{D \cup e, n}} S_{D \cup e, n}(\vec{t})}{\sum_{\vec{t} \in T_{D,n}} S_{D,n}(\vec{t})} = \frac{k!}{2^k - 2}.
\end{align*}
We plug this into (\ref{E:almost-there}) to obtain
\begin{align*}
\lim_{n \to \infty} \P_n^\chi(F_{e,\xi,v} | F_{D,\xi,v}, E_{D \cup e, v}) &  = \frac{1}{2^{k-1}-1}.
\end{align*}

\end{proof}

\begin{lem} \label{lemma:smallball}
For any $\delta > 0$ we have
\begin{align*}
\lim_{n \to \infty} \frac{\sum_{\vec{t} \in T_{D,n} \cap B_{\delta}(t^*)} S_{D,n}(\vec{t})}{\sum_{\vec{t} \in T_{D,n}} S_{D,n}(\vec{t})} = 1.
\end{align*}
\end{lem}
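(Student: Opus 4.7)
The plan is to recognize $S_{D,n}(\vt)$ as essentially a multinomial coefficient to which Stirling's formula applies uniformly, so that $\log S_{D,n}(\vt)$ is proportional to $M := n - \#D_1$ times a continuous, strictly concave function of $\vt$ whose unique maximizer on the appropriate simplex is $t^*$. The lemma then follows by a standard Laplace-type argument: away from $t^*$ the integrand decays exponentially while the ambient index set is only polynomially large, so the total mass concentrates on any neighborhood of $t^*$.

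More concretely, I would first set $M = n - \#D_1$ and apply Stirling to each factorial in (\ref{E:SDn}), together with the identity $\sum_i t_i = 1/k$, to obtain an estimate of the form
\begin{equation*}
\tfrac{1}{M}\log S_{D,n}(\vt) = \phi(\vt) + O\bigl(M^{-1}\log M\bigr),
\end{equation*}
uniformly in $\vt \in T_{D,n}$, where
\begin{equation*}
\phi(\vt) := -\tfrac{1}{k}\log k + H(\vt) - \sum_{i=1}^{k-1} t_i \log\bigl(i!(k-i)!\bigr),
\end{equation*}
with $H(\vt) = -\sum_i t_i \log t_i$. I would then identify the limiting constraint set $T_D^\infty := \{\vt \in [0,1]^{k+1} : t_0 = t_k = 0,\ \sum_i t_i = 1/k,\ \sum_i i t_i = 1/2\}$ (noting that $\tfrac{\#\chi^{-1}(1)\setminus h(D_1)}{n - \#D_1} \to 1/2$, which one handles as a small perturbation) and apply Lagrange multipliers exactly as in the proof of Theorem~\ref{thm:1moment}: at a critical point $t_i \propto \binom{k}{i} e^{-\mu i}$ for some $\mu$, and the constraint $\sum i t_i = 1/2$ forces $\mu = 0$ by symmetry of the binomial distribution, so the unique critical point is $t_i = t^*_i = \binom{k}{i}/[k(2^k - 2)]$. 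Strict concavity of $-\sum t_i \log t_i$ then upgrades this to a unique global maximum.

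With $t^*$ identified as the unique maximizer, compactness and continuity give an $\eps = \eps(\delta) > 0$ such that $\phi(\vt) \le \phi(t^*) - \eps$ for all $\vt \in T_D^\infty \setminus B_\delta(t^*)$. For the numerator in the lemma, I would note that $|T_{D,n}| = O(n^{k-1})$, so
\begin{equation*}
\sum_{\vt \in T_{D,n}\setminus B_\delta(t^*)} S_{D,n}(\vt) \le |T_{D,n}| \cdot e^{M(\phi(t^*) - \eps)}\cdot e^{O(\log M)}.
\end{equation*}
For the denominator, I would use Lemma~\ref{lem:diophantine} applied to $T_D^\infty$ (after the small perturbation in the equality constraint) to produce a lattice point $\vt^{(n)} \in T_{D,n}$ with $\|\vt^{(n)} - t^*\|_\infty = O(1/n)$, so by continuity of $\phi$ near $t^*$,
\begin{equation*}
\sum_{\vt \in T_{D,n}} S_{D,n}(\vt) \ge S_{D,n}(\vt^{(n)}) \ge e^{M\phi(t^*) + o(M)}.
\end{equation*}
Taking the ratio kills the difference of $e^{-M\eps}$ against polynomial factors and yields the claim.

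The main obstacle I anticipate is twofold and largely technical: first, confirming that the affine constraint $\sum i t_i = p_n$ (with $p_n \to 1/2$) does not move the maximizer away from $t^*$ by more than $o(1)$, which amounts to a continuous-dependence-of-constrained-optima argument plus care in the Stirling error term so that constants stay uniform in the drifting constraint; second, verifying nonemptiness of $T_{D,n}$ and existence of a lattice approximation to $t^*$ at rate $O(1/n)$ via Lemma~\ref{lem:diophantine}. Both are routine once set up, so once the Stirling estimate and the Lagrange calculation are in place, the Laplace-type bound closes the argument.
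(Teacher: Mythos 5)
Your overall plan — Stirling to get $\tfrac{1}{n}\log S_{D,n}(\vt)\to\phi(\vt)$, Lagrange multipliers to pin the unique maximizer at $t^*$, polynomial cardinality of $T_{D,n}$ against exponential decay off $B_\delta(t^*)$ — is precisely the paper's strategy, and your $\phi$ is identical to the paper's $\psi$ after applying $\sum_i t_i = 1/k$ to the $\log\binom{k}{i}$ term. Two remarks on the deltas.

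First, a minor inefficiency: you carry the extra constraint $\sum_i i t_i = p_n$ (with $p_n \to 1/2$) into the Lagrange computation, which forces you to worry about how the drifting constraint moves the constrained optimum. The paper simply maximizes $\psi$ over the \emph{larger} set $M = \{t_0=t_k=0,\ \sum_i t_i = 1/k\}$, without the $\sum_i i t_i$ constraint, and verifies $t^*$ is the unique max there; since $T_{D,n}\subset M$ for every $n$, the gap $\psi(t^*)-\sup_{M\setminus B_\delta(t^*)}\psi>0$ automatically applies uniformly in $n$, and the drift issue evaporates. It happens that $t^*$ also satisfies $\sum_i i t^*_i = 1/2$, so both routes land at the same point, but dropping the constraint is cleaner.

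Second, and more substantively: you propose to get a lattice point near $t^*$ by invoking Lemma~\ref{lem:diophantine}, but that lemma is \emph{conditional} on $\sK_n$ being non-empty (``if $\sK_n$ is non-empty then it is $C/n$-dense\ldots''). It cannot by itself certify that $T_{D,n}\cap B_\eta(t^*)\ne\emptyset$; you would still have to establish non-emptiness of $T_{D,n}$ by some other argument. This is exactly what the paper does instead, via an explicit three-step construction of a point $t^{(3)}\in T_{D,n}\cap B_\eta(t^*)$: round $t^*$ to a multiple of $1/(n-\#D_1)$, fix the $\sum_i t_i = 1/k$ constraint by absorbing the rounding error into $t_1$, then fix the $\sum_i i t_i$ constraint by transferring mass between $t_1$ and $t_2$, all while tracking that the perturbation is $O(1/n)$ so positivity and proximity to $t^*$ are preserved. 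Your remark that non-emptiness is ``routine'' is optimistic: it is the actual content of the paper's proof of the claim, and Lemma~\ref{lem:diophantine} as stated does not provide it. You would either need to replicate that explicit construction, or strengthen Lemma~\ref{lem:diophantine} to produce a lattice point unconditionally from the existence of a strictly interior point and the divisibility hypothesis $k\mid (n-\#D_1)$.
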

\begin{proof}
We use the following general consequence of Stirling's formula: let $l \in \N$, $\vt \in [0,1]^l$ with $\sum_i t_i = 1$.  Let $\vt^{(n)}$ be any sequence in $[0,1]^l$ such that $n\vt^{(n)} \in \N^l$ and $\vt^{(n)} \to \vt$ as  $n\to\infty$.  If $\vt^{(n)} = (t_{n,i})_{i=1}^l$ then $\lim_{n \to \infty} n^{-1} \log \binom{n}{nt_{n,1},\ldots,nt_{n,l}} = H(\vt)$.  

By (\ref{E:SDn})
$$S_{D,n}(\vec{t}) = \binom{(n-\#D_1)/k}{t_{n,1}(n-\#D_1),\ldots,t_{n,k-1}(n-\#D_1)} \left(\prod_{i=1}^{k-1} \binom{k}{i}^{t_i(n-\#D_1)} \right) k!^{-(n-\#D_1)/k}.$$
By setting $N=(n-\#D_1)/k$, we obtain
\begin{eqnarray*}
&&\lim_{n\to\infty} n^{-1} \log \binom{(n-\#D_1)/k}{t_{n,1}(n-\#D_1),\ldots,t_{n,k-1}(n-\#D_1)} \\
&=& 
k^{-1}\lim_{n\to\infty} N^{-1} \log \binom{N}{t_{n,1}kN,\ldots,t_{n,k-1}kN} \\
&=& k^{-1} H(k\vt) = - \sum_{i=0}^k t_i \log(kt_i) = H(\vt) -k^{-1} \log(k).
\end{eqnarray*} 

We thus have:
\begin{align*}
\limsup_{n \to \infty} \sup_{\vec{t} \in T_{D,n}} \abs{\frac{1}{n} \log S_{D,n}(\vec{t})  - \psi(\vt)}= 0
\end{align*}
where $\psi$ is given by
\begin{align*}
\psi(\vt) :=  -\frac{1}{k} \log(k(k)!) + H(\vec{t}) + \sum_{i=0}^{k} t_i  \log \binom{k}{i} .
\end{align*}
Note that $\psi$ does not depend on $n$ or $D$. Furthermore $\psi$ is continuous and strictly concave. By the method of Lagrange multipliers (see Section \ref{sec:equitable}) its maximum on $M:= \{x \in [0,1]^{k+1} : x_0 = x_k = 0, \sum_i x_i = \frac{1}{k} \}$ occurs at $t^*$. In particular given $\delta > 0$ there is some $0 < \eta < \delta$ such that
\begin{align*}
\inf_{x \in M \cap B_{\eta}(t^*)} \psi(x) - \sup_{x \in M \setminus B_{\delta}(t^*)} \psi(x) > 0.
\end{align*}
We claim that $B_\eta(t^*) \cap T_{D,n} \subseteq B_\eta(t^*) \cap M$ is non-empty for sufficiently large $n$. Since $\# T_{D,n}  \leq n^k$ this claim and standard arguments involving the exponential growth of $S_{D,n}(t)$ imply the lemma.
% The rest of the proof checks that $B_\eta(t^*) \cap T_{D,n} \subseteq B_\eta(t^*) \cap M$ is non-empty for sufficiently large $n$. 

To prove the claim, we exhibit a member of $B_\eta(t^*) \cap T_{D,n}$ in a series of three steps. First, let $t^{(1)} \in \R^{k+1}$ be defined by $t^{(1)}_i = \frac{\floor{t^*_i(n-\#D_1)}}{n-\#D_1}$ for each $0 \leq i \leq k$. This satisfies the integrality condition, that is $(n-\#D_1) t_i^{(1)} \in \Z$ for all $i$. Furthermore $|t^* - t^{(1)}| \leq \sum_{i = 0}^{k} |t^*_i - t^{(1)}_i| \leq \frac{k-1}{n - \# D_1}$. 

Second, define $t^{(2)} \in \R^{k+1}$ by
\begin{align*}
t^{(2)}_i = \begin{cases}
t^{(1)}_1 + \pr{\frac{1}{k} - \sum_{j=1}^{k-1} t^{(1)}_j} \text{ if } i =1 \\
t^{(1)}_i \text{ otherwise}
 \end{cases}.
\end{align*}
Since $\frac{n-\#D_1}{k} \in \Z$ and $(n-\#D_1) t_i^{(1)} \in \Z$ for each $i$ we maintain the integrality condition, and $\sum_i t_i^{(2)} = \frac{1}{k}$. Furthermore
\begin{align*}
\abs{t^{(2)} - t^{(1)}} \leq \abs{\frac{1}{k} - \sum_{i=1}^{k-1} t_i^{(1)}} & = \abs{\sum_{i=0}^k (t_i^* - t_i^{(1)})}  \leq \sum_{i=0}^k \abs{t_i^* - t_i^{(1)}} \leq \frac{k-1}{n - \#D_1}.
\end{align*}

Finally let $\Delta = \frac{\#\chi^{-1}(1) \setminus h(D_1)}{n - \# D_1} - \sum_{i=0}^k i t^{(2)}_i$.  Define $t^{(3)} \in \R^{k+1}$ by
\begin{align*}
t_i^{(3)} = 
\begin{cases}
t_1^{(2)} - \Delta \text{ if } i = 1 \\
t_2^{(2)} + \Delta \text{ if } i = 2 \\
t_i^{(2)} \text{ otherwise}
\end{cases}.
\end{align*}
We claim $t^{(3)} \in B_\eta(t^*) \cap T_{D,n}$ for $n$ sufficiently large. First $t^{(3)}_0 = t^{(2)}_0 = t^{(1)}_0 = t^*_0 = 0$, with the same equalities holding for $t^{(3)}_k$. We have $t^{(3)}_i (n-\#D_1) \in \Z$ since $t^{(2)}_i(n-\#D_1) \in \Z$ for each $i$ and $\Delta (n-\#D_1) \in \Z$. We also have $\sum_i t^{(3)}_i = \sum_i t^{(2)}_i = \frac{1}{k}$. Furthermore $\sum_i i t_i^{(3)} = \sum_i i t_i^{(2)} + 2\Delta - \Delta = \frac{\chi^{-1}(1) \setminus h(D_1)}{n - \#D_1}$. Finally by repeated application of the triangle inequality
\begin{align*}
|t^* - t^{(3)}| \leq \frac{k-1}{n - \#D_1} + \frac{k-1}{n-\#D_1} +  2|\Delta|.
\end{align*}

We show that $|\Delta|$ is small for $n$ sufficiently large. This will not only imply that $t^{(3)} \in B_\eta(t^*)$, but also that $t^{(3)}_i \in [0,1]$ for each $i$.  Note that
$$
\frac{n/2 - \#D_1}{n} \leq \frac{\#\chi^{-1}(1) \setminus h(D_1)}{n - \# D_1} \leq \frac{n/2}{n - \#D_1} = \frac{(n-\#D_1)/2 + \#D_1/2}{n-\#D_1}.
$$
This implies $\abs{\frac{\#\chi^{-1}(1) \setminus h(D_1)}{n - \#D_1} - \frac{1}{2}} \leq \max\{\frac{\#D_1}{n}, \frac{\#D_1}{2(n-\#D_1)}\}$. Thus
\begin{align*}
\abs{\Delta} & \leq \abs{\frac{\#\chi^{-1}(1) \setminus h(D_1)}{n - \# D_1} - \frac{1}{2}} + \abs{\frac{1}{2} -  \sum_{i=0}^{k} i t^{(2)}_i} \\
& \leq  \max\left \{\frac{\#D_1}{n}, \frac{\#D_1}{2(n-\#D_1)}\right \} + \abs{\sum_{i=0}^k it_i^* - \sum_{i=0}^k i t_i^{(1)}} + \abs{\sum_{i=0}^k i t_i^{(1)} - \sum_{i=0}^k i t_i^{(2)}} \\
& \leq   \max\left \{\frac{\#D_1}{n}, \frac{\#D_1}{2(n-\#D_1)}\right \} + \frac{2(k-1)^2}{n-\#D_1}.
\end{align*}
As $k$ and $\# D_1$ are fixed, we have $|t^*-t^{(3)}| < \eta$ for large enough $n$. Since $t^*_i \in (0,1)$ for each $1 \leq i \leq k-1$ we have $|t^{(3)}_i - t^*_i| \leq |t^{(3)} - t^*|$. Thus for sufficiently large $n$ we will also have $t^{(3)}_i \in (0,1)$ for each $1 \leq i \leq k-1$. Therefore $t^{(3)} \in [0,1]^{k+1}$ and we have $t^{(3)} \in B_\eta(t^*) \cap T_{D,n}$ for $n$ sufficiently large.

\end{proof}

\begin{proof}[Proof of Lemma \ref{lem:bsconc}]

Let $\mu_n^\chi$ be the Borel probability measure on $X$ defined by 
$$\mu_n^\chi(B) = \E_n^\chi\pr{ \frac{1}{\# V} \sum_{v \in V} \indic{B}\pr{\Pi_v^{\sigma_n}(\chi)}}$$
for any Borel set $B \subset X$. By Proposition \ref{prop:bsconc}, it suffices to show that $\mu_n^\chi(B) \to \mu(B)$ as $n\to\infty$ for any clopen set $B \subset X$. Because clopen sets are finite unions of cylinder sets, it suffices to show  that if  $D \subset \Gamma$ is a finite subset and $\xi \in \{0,1\}^D$ then $\lim_{n \to \infty} \mu_n^{\chi}([\xi])  = \mu([\xi])$ where $[\xi]$ is the cylinder set $\{x \in X : x\resto D = \xi\}$.  We can further assume $D$ to be a connected finite union of hyperedges with $1_\Gamma \in D$. The proof now follows from Lemma \ref{lem: approx} since 
\begin{align*}
\mu_n^{\chi}([\xi]) &= n^{-1} \sum_{v\in V}\P^\chi_n\pr{F_{D,\xi,v} } = n^{-1} \sum_{v\in \chi^{-1}(\xi(1_\G))}\P^\chi_n\pr{F_{D,\xi,v} } = (1/2) \P^\chi_n\pr{F_{D,\xi,v_0} }
\end{align*}
where, in the last equality, $v_0$ is any vertex in $\chi^{-1}(\xi(1_\G))$. These equations are justified as follows.  By symmetry, if $v_0,v_1 \in \chi^{-1}(\xi(1_\G))$, then $\P^\chi_n\pr{F_{D,\xi,v_0} }= \P^\chi_n\pr{F_{D,\xi,v_1} }$. On the other hand, if $v \notin \chi^{-1}(\xi(1_\G))$, then $\P^\chi_n\pr{F_{D,\xi,v} }=0$. Lastly, $|\chi^{-1}(\xi(1_\G))|=n/2$.

\end{proof}

\subsection{The density of the rigid set}

This subsection proves Lemma \ref{lem:density}.  So we assume the hypotheses of Proposition \ref{prop:cluster}. An element $x \in X$ is a 2-coloring of the Cayley hyper-tree of $\G$. Interpreted as such, $C_l(x), A_l(x), A'_l(x)$ are well-defined subsets of $\G$ (see \S \ref{sec:strategy} to recall the definitions). 

For $l \in \N \cup \{\infty\}$, let 
\begin{eqnarray*}
\tC_l&=&\{x\in X:~ 1_\G \in C_l(x)\}, \\
\tA_l&=&\{x\in X:~ 1_\G \in A_l(x)\}, \\
\tA'_l&=&\{x\in X:~ 1_\G \in A'_l(x)\}.
\end{eqnarray*}

%$\tC_l$ be the set of all $x \in X$ that $1_\G \in C_l(x)$. Similarly, let $\tA_l$ be the set of all $x \in X$ that $1_\G \in C_l(x)$.

Recall that $\l_0 = \frac{1}{2^{k-1}-1}$ and $\l=d\l_0$. Since we assume the hypothesis of Prop. \ref{prop:cluster}, $\l$ is asymptotic to $\log(2)k$ as $k \to \infty$. 
\begin{prop}\label{prop:core}
\begin{eqnarray*}
\mu(\tC_\infty) &\ge& 1 - \l^2 e^{-\l} + O(k^6 2^{-2k}), \\
\mu(\tC_\infty \cup \tA_\infty) &\ge& 1- e^{-\l} + O(k^4 2^{-2k}).
\end{eqnarray*}
\end{prop}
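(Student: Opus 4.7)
My plan is to exploit the Markov structure of $\mu$ on the Cayley hyper-tree to reinterpret the events $\{1_\G \in C_\infty\}$ and $\{1_\G \in C_\infty \cup A_\infty\}$ as branching-process events, express the corresponding probabilities as binomial tails evaluated at an appropriate fixed point, and then estimate that fixed point tightly enough.

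For the branching interpretation, fix $v = 1_\G$ and observe that by the Markov property defining $\mu$, conditional on $\chi(v)$ the colorings of the $d$ edges at $v$ (restricted to their ``subtree'' sides) are independent, and for each such edge $e$ the probability that $v$ supports $e$ is exactly $\l_0 = 1/(2^{k-1}-1)$. When $v$ supports $e$, each $w \in e \setminus \{v\}$ has color opposite to $\chi(v)$ and so cannot support $e$; hence $\{w \in C_\infty\}$ depends only on the subtree generated by $w$'s $d-1$ other edges, and these subtrees are pairwise disjoint across the $w$'s and across the edges at $v$. Let $q_l$ denote the probability that such a $w$ lies in $C_l$. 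A direct induction on $l$ gives $q_0 = 1$, the recursion
\[ q_{l+1} = \P\bigl(\Bin(d-1,\, \l_0\, q_l^{k-1}) \ge 3\bigr), \]
and, evaluated at the root,
\[ \mu(\tC_\infty) = \P\bigl(\Bin(d,\, \l_0\, q_\infty^{k-1}) \ge 3\bigr), \qquad \mu(\tC_\infty \cup \tA_\infty) = \P\bigl(\Bin(d,\, \l_0\, q_\infty^{k-1}) \ge 1\bigr), \]
where $q_\infty = \lim_l q_l$ (the limit exists since $C_l$ is decreasing in $l$ and the iteration preserves this monotonicity).

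Next I would control $q_\infty$. A direct Poisson approximation of the binomial gives $1 - q_1 = \P(\Bin(d-1,\l_0) \le 2) = (1 + \l + \l^2/2)\,e^{-\l} + O(k^3 4^{-k}) = O(k^2 2^{-k})$. A short computation with the binomial tail shows that the map $q \mapsto \P(\Bin(d-1,\l_0\, q^{k-1}) \ge 3)$ has derivative $O(k^4 2^{-k})$ uniformly on $[q_1, 1]$, so the iteration is a strong contraction near $1$ and $1 - q_\infty = (1-q_1) + O(k^6 4^{-k})$. Consequently $q_\infty^{k-1} = 1 - O(k^3 2^{-k})$ and $\l\, q_\infty^{k-1} = \l - O(k^4 2^{-k})$. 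Substituting into the formulas above, using $1-x \le e^{-x}$ together with the standard three-term Taylor expansion of the binomial tail, yields
\[ 1 - \mu(\tC_\infty \cup \tA_\infty) = (1-\l_0\, q_\infty^{k-1})^d \le e^{-\l q_\infty^{k-1}} = e^{-\l} + O(k^4 4^{-k}), \]
\[ 1 - \mu(\tC_\infty) \le (1 + \l + \l^2/2)\, e^{-\l q_\infty^{k-1}} + O(k^6 4^{-k}) \le \l^2 e^{-\l} + O(k^6 4^{-k}), \]
where the last inequality uses $1 + \l + \l^2/2 \le \l^2$ for $\l \ge 1+\sqrt{3}$, which holds for all sufficiently large $k$ since $\l = \log(2)k + O(k\, 2^{-k})$.

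The main obstacle I anticipate is Step 1, namely the careful bookkeeping needed to rigorously justify the full conditional independence used in the binomial representation: although it is morally transparent (distinct branches of the hyper-tree are genuinely disjoint and $\mu$ is Markov), one must verify both that the ``$v$ supports $e$'' events are i.i.d.\ Bernoulli$(\l_0)$ across the $d$ edges at $v$ and that, given $v$ supports $e$, the events $\{w \in C_\infty\}$ for $w \in e \setminus \{v\}$ are i.i.d.\ with common probability $q_\infty$. Once that is set up, the analytic estimates of the remaining steps are routine, though error terms must be tracked through several composed approximations to confirm they remain within the claimed $O(k^6 2^{-2k})$ and $O(k^4 2^{-2k})$ tolerances.
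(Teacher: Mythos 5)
Your proposal is correct and follows essentially the same route as the paper: after the change of variable $p_l = \lambda_0\, q_l^{k-1}$, your recursion $q_{l+1} = \Pr(\Bin(d-1,\lambda_0 q_l^{k-1}) \ge 3)$ and root formulas coincide with the paper's $p_{l+1} = f(p_l)$ and $\mu(\tC_l) = \Pr(\Bin(d,p_{l-1}) \ge 3)$, $\mu(\tC_l\cup\tA_l) = \Pr(\Bin(d,p_{l-1}) > 0)$. The only cosmetic difference is in bounding the fixed point: you use a Lipschitz/contraction estimate to show $q_\infty = q_1 + O(k^6 4^{-k})$, while the paper shows that $f$ preserves the interval $[\lambda_0(1-\lambda^2 e^{1-\lambda})^{k-1},\lambda_0]$; both give $p_\infty = \lambda_0 + O(k^3 2^{-2k})$ and the same final error terms.
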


\begin{proof}

For brevity, let $e_i \subset \G$ be the subgroup generated by $s_i$. So $e_i$ is a hyper-edge of the Cayley hyper-tree. Let $F^i_l \subset X$ be the set of all $x$ such that 
\begin{enumerate}
\item $1_\G$ supports the edge $e_i$ with respect to $x$ and
\item $e_i \setminus \{1_\G\} \subset C_l(x)$. 
\end{enumerate}
Since $C_{l+1}(x) \subset C_l(x)$, it follows that $F^i_{l+1} \subset F^i_l$. The events $F^i_l$ for $i=1,\ldots, d$ are i.i.d. Let $p_l = \mu(F^i_l)$ be their common probability.

We write $\Prob(\Bin(n,p) =m) = {n \choose m}p^m(1-p)^{n-m}$ for the probability that a binomial random variable with $n$ trials and success probability $p$ equals $m$. Since the events $F^1_{l-1},\ldots, F^d_{l-1}$ are i.i.d., $\tA_{l}$ is the event that either 1 or 2 of these events occur and $\tC_l$ is the event that at least 3 of these events occur, it follows that
$$\mu(\tA_{l}) = \Prob(\Bin(d,p_{l-1}) \in \{1,2\}).$$
$$\mu(\tC_l) = \Prob(\Bin(d,p_{l-1})\ge 3).$$
Thus
$$\mu(\tC_l \cup \tA_{l}) = \Prob(\Bin(d,p_{l-1}) >0).$$

\begin{claim} 
$p_0=\l_0$ and for $l\ge 0$, $p_{l+1}=f(p_l)$ where 
$$f(t) = \l_0 \Prob(\Bin(d-1,t) \ge 3)^{k-1}.$$
\end{claim}

\begin{proof}
To reduce notational clutter, let $F_l=F^1_l$. 
Note that $p_0 =\mu(F_0)=\l_0$ is the probability that the edge $e_1$ is critical. So
$$p_{l+1} = \mu(F_0) \mu(F_{l+1}~  |~ F_0) = \l_0 \mu(F_{l+1}~  |~ F_0).$$
Conditioned on $F_0$, $F_{l+1}$ is the event that $e_1 \setminus \{1_\G\} \in C_{l+1}(x)$. By symmetry and the Markov property $\mu(F_{l+1}~  |~ F_0)$ is the $(k-1)$-st power of the probability that $s_1 \in C_{l+1}(x)$ given that $1_\G$ supports $e_1$. By translation invariance, that probability is the same as the probability that $1_\G \in C_{l+1}(x)$ given that $1_\G$ does not support the edge $e_1$. By definition of $C_{l+1}(x)$ and the Markov property, this is the same as the probability that a binomial random variable with $(d-1)$ trials and success probability $p_l$ is at least 3. This implies the claim.
% In general with success probability $t$, this probability is denoted $\Prob(\Bin(d-1,t) \ge 3)$.
\end{proof}

The next step is to bound $\Prob(\Bin(d-1,t) \ge 3)$ from below:
\begin{align}
\Prob(\Bin(d-1,t) \ge 3) &= 1  - (1-t)^{d-1} - (d-1)t(1-t)^{d-2} - {d - 1 \choose 2}t^2(1-t)^{d-3} \nonumber \\
&\ge 1 - e^{ -(d-1)t}\left(1 + \frac{(d-1)t}{1-t} + \frac{(d-1)^2t^2}{2(1-t)^2}\right).\label{binomial}
\end{align}
The last inequality follows from the fact that $(1-t)^{d-1} \le e^{-(d-1)t}$. This motivates the next claim:

\begin{claim}\label{claim2}
Suppose $t$ is a number satisfying $\l_0\left(1 - \l^2 e^{1-\l} \right)^{k-1} \le t \le \l_0$. Then for all sufficiently large $k$,
\begin{eqnarray*}\label{assumptions}
0 \le \l-(d-1)t &\le& 1 \\
1 + \frac{(d-1)t}{1-t} + \frac{(d-1)^2t^2}{2(1-t)^2} &\le& (d-1)^2t^2.
\end{eqnarray*}
\end{claim}
\begin{proof}
The first inequality follows from:
$$\l-(d-1)t \ge \l-(d-1)\l_0 = \l_0 >0.$$
The second inequality follows from:
\begin{align*}
\l-(d-1)t &\le \l-(d-1)\l_0\left(1 - \l^2 e^{1-\l} \right)^{k-1}\\
& = d\l_0 - (d-1)\l_0\left(1 - \l^2 e^{1-\l} \right)^{k-1} \\
&\le d\l_0 - (d-1)\l_0( 1- (k-1) \l^2 e^{1-\l})\\
& \le \l_0 + (d-1)\l_0 (k-1) \l^2 e^{1-\l} \le \l_0 + k\l^3 e^{1-\l} \to_{k\to\infty} 0. 
\end{align*}
The third line follows from the general inequality $(1-x)^{k-1} \ge 1-(k-1)x$ valid for all $x \in [0,1]$. To see the limit, observe that under the hypotheses of Proposition \ref{prop:cluster}, $d \sim (\log(2)/2)k2^k$. So $\l \sim \log(2) k$. In particular, $k\l^3 e^{1-\l} \to 0$ and $\l_0 \to 0$ as $k\to\infty$. The implies the limit. 
Thus if $k$ is large enough then the second inequality holds.

%To prove the second inequality, it suffices to show
%$$\l-(d-1)\l_0\left(1 - \l^2 e^{1-\l} \right) \le 1.$$
%This is also true for large enough $k$ by definition of $\l$.

To see the last inequality, observe that since $t \le \l_0$, $t \to 0$ as $k \to \infty$. On the other hand, $(d-1)t \sim \l \sim \log(2)k$. Thus $\frac{(d-1)t}{1-t}$ and $(d-1)t$ are asymptotic to $\log(2)k$. Since  $1 + \log(2)k + \frac{\log(2)^2k^2}{2} \le \log(2)^2k^2$ for all sufficiently large $k$, this proves the last inequality assuming $k$ is sufficiently large. 
\end{proof}

Now suppose that $t$ is as in Claim 3. Then
\begin{eqnarray*}
f(t) &\ge & \l_0\left(1 - e^{ -(d-1)t} \left(1 + \frac{(d-1)t}{1-t} + \frac{(d-1)^2t^2}{2(1-t)^2}\right)\right)^{k-1} \\
%&=& \l_0\left(1 - \left(1 + \frac{s}{1-t} + \frac{s^2}{2(1-t)^2}\right)e^{-s+\l} e^{-\l} \right) \\
&\ge& \l_0\left(1 - e^{1-\l} (d-1)^2t^2 \right)^{k-1} \ge \l_0\left(1 - \l^2 e^{1-\l} \right)^{k-1}.
\end{eqnarray*}
The first inequality is implied by (\ref{binomial}). The second and third inequalities follow from Claim 3. For example, since $\l -(d-1)t \le 1$, $e^{-(d-1)t} \le e^{1-\l}$. 

Therefore, if $p_l$ satisfies the bounds $\l_0\left(1 - \l^2 e^{1-\l} \right)^{k-1} \le p_l \le \l_0$ then $f(p_l)=p_{l+1}$ satisfies the same bounds. Since $p_\infty = \lim_{l\to\infty} f^l(\l_0)$, it follows that
\begin{eqnarray}\label{E:p-infinity}
\l_0 \ge p_\infty \ge \l_0\left(1 - \l^2 e^{1-\l} \right)^{k-1} = \l_0 + O(k^3 2^{-2k}).
\end{eqnarray}
Because $(1-\frac{t}{n})^n \le e^{-t}$ for any $t,n>0$,
\begin{eqnarray*}
\mu(\tC_\infty \cup \tA_\infty) &= & \lim_{l \to \infty} \mu(\tC_l \cup \tA_{l}) = \lim_{l \to \infty} \Prob(\Bin(d, p_{l-1}) > 0 ) = \Prob(\Bin(d, p_\infty) > 0 ) \\
& = & 1 - (1-p_\infty)^d \ge 1 - \exp(-p_\infty d) = 1 - e^{-\l} + O(k^4 2^{-2k}).
\end{eqnarray*}
The first equality occurs because $\tC_l\cup \tA_l$ decreases to $\tC_\infty \cup \tA_\infty$. By (\ref{binomial}) and Claim \ref{claim2} (with $d$ in place of $d-1$),
\begin{eqnarray*}
\mu(\tC_\infty ) &=& \Prob(\Bin(d, p_\infty) \ge 3 ) \ge \Prob(\Bin(d, \l_0 + O(k^3 2^{-2k})) \ge 3 ) \\
&\ge & 1 - \exp(-\l_0 d)\left(1 + \frac{d\l_0 }{1-\l_0 } + \frac{d^2\l_0 ^2}{2(1-\l_0 )^2}\right) + O(k^6 2^{-2k})\\
&\ge& 1 - \l^2 e^{-\l} + O(k^6 2^{-2k}).
\end{eqnarray*}

\end{proof}

\begin{lem}\label{lem:A'}
$\mu(\tA'_{\infty}) = o(e^{-\l})$ where the implied limit is as $k\to\infty$ and $\eta$ is bounded.
\end{lem}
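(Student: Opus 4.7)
The plan is to bound $\mu(\tA'_\infty)$ by a union bound over possible ``witness'' configurations. First I do a brief geometric case analysis. If $1_\G \in A'_\infty(x)$, then there exist $w \in A_\infty(x)\setminus\{1_\G\}$ and hyper-edges $e_v \ni 1_\G$, $e_w \ni w$ supported by $1_\G$ and $w$ respectively, with the remaining vertices of each edge in $C_\infty$ and $e_v \cap e_w \ne \emptyset$. Since a hyper-edge of size $k \ge 3$ has at most one supporting vertex in a 2-coloring, we must have $e_v \ne e_w$; the hyper-tree property then forces $e_v \cap e_w = \{u\}$ for a unique $u$. The choice $u = 1_\G$ would give $1_\G \in e_w \setminus \{w\} \subset C_\infty$, contradicting $1_\G \notin C_\infty$, and symmetrically $u \ne w$. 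So the witnesses are 4-tuples $(e_v, u, e_w, w)$ where $e_v$ is one of the $d$ edges at $1_\G$, $u$ one of $k-1$ vertices in $e_v \setminus \{1_\G\}$, $e_w$ one of the $d-1$ edges at $u$ other than $e_v$, and $w$ one of $k-1$ vertices in $e_w \setminus \{u\}$---a total of $O(d^2 k^2)$ tuples.

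For each such tuple let $J$ denote the event
\[
\{1_\G \text{ supp.\ } e_v\} \cap \{e_v\setminus\{1_\G\}\subset C_\infty\} \cap \{w \text{ supp.\ } e_w\} \cap \{e_w\setminus\{w\}\subset C_\infty\} \cap \{1_\G, w \notin C_\infty\},
\]
so that $\tA'_\infty \subseteq \bigcup J$. I will compute $\mu(J)$ by exploiting the tree Markov property of $\mu$. The support constraints force a unique proper coloring of $e_v \cup e_w$ up to a global color swap, with $x(u)$ equal to the common color on $e_v \setminus \{1_\G\}$ and $e_w \setminus \{w\}$ and $x(1_\G) = x(w)$ opposite. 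Conditioning on $x(u)$ makes the colorings of $e_v$ and $e_w$ independent, each uniform on its $2^{k-1}-1$ proper extensions, so this specific coloring occurs with $\mu$-probability $\lambda_0^2$. Conditioned on this coloring, the events $\{y \in C_\infty\}$ for $y \in (e_v \cup e_w) \setminus \{1_\G, w\}$ depend on pairwise disjoint subtrees (via the edges at $y$ other than $e_v, e_w$) and are jointly independent, with probability $\Prob(\Bin(d-1, p_\infty) \ge 3)$ for $y \ne u$ and $\Prob(\Bin(d-2, p_\infty) \ge 3)$ for $y = u$ (which supports neither $e_v$ nor $e_w$); both factors tend to $1$ by the analysis in Proposition~\ref{prop:core}. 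Finally, the events $\{1_\G \notin C_\infty\}$ and $\{w \notin C_\infty\}$ depend on the subtrees through the $d-1$ remaining edges at $1_\G$ and at $w$ respectively, which are disjoint from each other and from the previous subtrees, so they are independent of everything above, each reducing to $\Prob(\Bin(d-1, p_\infty) \le 1) = (1-p_\infty)^{d-1} + (d-1)p_\infty(1-p_\infty)^{d-2} = (1+\lambda)e^{-\lambda}(1+o(1))$.

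Combining these factors yields $\mu(J) = O\bigl(\lambda_0^2 (1+\lambda)^2 e^{-2\lambda}\bigr)$. Summing over the $O(d^2 k^2)$ tuples and using $d\lambda_0 = \lambda \sim k\log 2$,
\[
\mu(\tA'_\infty) = O\bigl(k^2 \lambda^2 (1+\lambda)^2 e^{-2\lambda}\bigr) = O(k^6 e^{-2\lambda}) = o(e^{-\lambda}),
\]
since $k^6 e^{-\lambda} = O(k^6 2^{-k}) \to 0$ as $k \to \infty$ with $\eta$ bounded. The main obstacle is the conditional-independence bookkeeping in the Markov factorization (especially the simultaneous treatment of $u$'s role in both edges); the case analysis and the final arithmetic are routine once the decomposition is in place.
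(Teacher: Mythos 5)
Your proof is correct and takes a genuinely different route from the paper. The paper's argument is a \emph{charging} argument: it shifts attention to a junction vertex $u \in C_\infty$ and bounds $\mu(\tA'_\infty)$ by $\sum_{m\ge 2} m\,\mu(F=m)$, where $F$ counts attaching edges at $1_\G$; it then decomposes $\mu(F=m)$ binomially via an event $G$ (one attaching edge plus the support vertex lying in $A_\infty$), estimates $\mu(G) = O(k^2 e^{-2\l})$, and sums. Your argument instead does a direct union bound over the $O(d^2k^2)$ explicit witness $4$-tuples $(e_v,u,e_w,w)$, then factorizes $\mu(J)$ using the tree Markov property into: the support constraints ($\l_0^2$), the $C_\infty$-constraints on the $2k-4$ remaining vertices plus $u$ (which tend to $1$), and the two $\notin C_\infty$ constraints at $1_\G$ and $w$, each costing $\Prob(\Bin(d-1,p_\infty)\le 1) = O(\l e^{-\l})$. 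Both approaches lean on the same two ingredients --- the $\l_0$-per-support factor and the $O(\l e^{-\l})$ bound for ``in $A_\infty$ hence not in $C_\infty$'' --- and both land on $O(k^6 e^{-2\l})$. Your route is more transparent (one sees explicitly where every factor comes from and where in the tree each conditional-independence claim is used) at the cost of more bookkeeping; the paper's charging bound $\mu(\tA'_\infty)\le\sum_m m\,\mu(F=m)$ is slicker but requires the reader to verify the somewhat delicate translation-invariance step. One small thing worth noting explicitly in a write-up: when you condition on the coloring of $e_v\cup e_w$ and on the $C_\infty$ status of the vertices in $(e_v\cup e_w)\setminus\{1_\G,w\}$, the probability that a given edge $e_i\ne e_v$ at $1_\G$ is supported with tail in $C_\infty$ is still exactly $p_\infty$ because the event lives entirely in the subtree past $e_i$ and, by color-flip symmetry, is independent of $x(1_\G)$; you gesture at this but it is the load-bearing step in passing from ``disjoint subtrees'' to the stated binomial probabilities.
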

\begin{proof}
%We look at a core vertex and compute how likely it is that it has multiple attached vertices, and then use translation invariance to get an upper bound for A' by excluding all of these multiple attached vertices (even though we could have kept one of them, but we don't need to be so precise in our estimates).  

As in the previous proof, let $e_i \subset \G$ be the subgroup generated by $s_i$. So $e_i$ is a hyper-edge of the Cayley hyper-tree.

Let $x\in X$. We say that an edge $e$ is {\bf attaching} (for $x$) if it is supported by a vertex $v \in A_\infty(x)$ and $e \setminus \{v\} \subset C_\infty(x)$. Let $F(x)=0$ if $1_\G \notin C_\infty(x)$. Otherwise, let $F(x)$ be the number of attaching edges containing $1_\G$. Then by translation invariance,
\begin{eqnarray}\label{A'}
\mu(\tA'_\infty) \le \sum_{m=2}^d m \mu(F(x)=m).
\end{eqnarray}
Let $G \subset X$ be the set of all $x$ such that 
\begin{enumerate}
\item $e_1$ is a critical edge supported by some vertex $v \neq 1_\G$, 
\item $e_1 \setminus \{v,1_\G\} \subset C_\infty(x)$,
\item $v \in A_\infty(x)$.
\end{enumerate}
By the Markov property and symmetry,
\begin{eqnarray}\label{F}
\mu(F(x)=m) \le {d \choose m}\mu(G)^m(1-\mu(G))^{d-m}.
\end{eqnarray}
%Note that the right hand side above includes a few cases where $1_\G \notin C_\infty$, i.e. F(x) = 0.  This is okay since we only need an upper bound; and it seemed too annoying to compute everything precisely.  
Let 
\begin{itemize}
\item $G_1 \subset X$ be the set of all $x$ such that $e_1$ is supported by $s_1$,
\item $G_2 \subset X$ be the set of all $x$ such that $e_1 \setminus \{s_1,1_\G\} \subset C_\infty(x)$,
\item $G_3 \subset X$ be the set of all $x$ such that $s_1 \in A_\infty(x)$.
\end{itemize}
By symmetry 
 $$\mu(G) = (k-1)\mu(G_3|G_2 \cap G_1)\mu(G_2|G_1)\mu(G_1).$$
Conditioned on $G_1 \cap G_2$, if $G_3$ occurs then there are no more than $1$ attaching edge $e$ supported by $s_1$ with $e\ne e_1$. By the Markov property and symmetry, 
%technically $s_1$ shouldn't have two attaching edges other than $e_1$ since we're eventually considering $e_1$ to be attaching, but maybe its too annoying to disentangle the events and compute a precise conditional sequence
 $$\mu(G_3|G_2\cap G_1) \leq \Prob(\Bin(d-1,p_\infty) \le 1) = O(\l e^{-\l})$$
 where we have used (\ref{E:p-infinity}). Also $\mu(G_1)=\l_0$. Thus $\mu(G) \leq O(k^2e^{-2\l})$. So (\ref{A'}) and (\ref{F}) along with straightforward estimates imply $\mu(\tA'_\infty) = o(e^{-\l})$.

\end{proof}

%There is a small complication arising from the fact that the subsets $\tA'_l$ may not be decreasing and so it is not clear whether $\mu(\tA'_l)$ converges to $\mu(\tA'_\infty)$. To account for this, we define 

\begin{lem}\label{lem:double-prime}
$\limsup_{l\to\infty} \mu(\tA'_l) \le \mu(\tA'_\infty)$.
\end{lem}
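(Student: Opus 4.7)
The plan is to establish the pointwise inequality $\limsup_{l\to\infty} \mathbf{1}_{\tA'_l}(x) \le \mathbf{1}_{\tA'_\infty}(x)$ for every $x \in X$, and then conclude by the reverse Fatou lemma. The key observation is that the event $\{1_\G \in A'_l(x)\}$ is witnessed by a triple $(e_1, w, e_w)$ drawn from a finite set independent of $l$, so a pigeonhole argument promotes infinitely many finite witnesses to a single witness at $l = \infty$.

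Fix $x \in X$ and suppose $1_\G \in A'_l(x)$ for $l$ in some infinite set $L \subset \N$. By definition of $A'_l$, for each $l \in L$ there exists a triple $(e_{1,l}, w_l, e_{w_l,l})$ with $1_\G \in e_{1,l}$, $w_l \in A_l(x) \setminus \{1_\G\}$, $w_l \in e_{w_l,l}$, $1_\G$ supports $e_{1,l}$, $w_l$ supports $e_{w_l,l}$, $e_{1,l} \cap e_{w_l,l} \ne \emptyset$, and $e_{1,l} \setminus \{1_\G\},\, e_{w_l,l} \setminus \{w_l\} \subset C_{l-1}(x)$. Since $1_\G$ lies in exactly $d$ hyper-edges, each containing $k$ vertices, and each such vertex lies in at most $d$ hyper-edges, the number of triples $(e_1, w, e_w)$ of this combinatorial shape is bounded by $d^2 k^2$, independently of $l$. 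By pigeonhole there is an infinite $L' \subset L$ and a single triple $(e_1, w, e_w)$ which is the witness for every $l \in L'$.

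Next I verify that this triple witnesses $1_\G \in A'_\infty(x)$. The facts that $1_\G$ supports $e_1$ and $w$ supports $e_w$ depend only on $x$ restricted to $e_1 \cup e_w$ and are independent of $l$. The sets $C_l(x)$ are nested decreasing in $l$ with intersection $C_\infty(x)$, so the inclusion $e_1 \setminus \{1_\G\} \subset C_{l-1}(x)$ holding for all $l \in L'$ — an infinite set — yields $e_1 \setminus \{1_\G\} \subset \bigcap_{l \in L'} C_{l-1}(x) = C_\infty(x)$, and similarly $e_w \setminus \{w\} \subset C_\infty(x)$. Since $1_\G \notin C_l(x)$ for some (indeed every) $l \in L'$ and the $C_l$ are decreasing, $1_\G \notin C_\infty(x)$; likewise $w \notin C_\infty(x)$. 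Combined with the existence of edges $e_1, e_w$ as above, this shows $1_\G, w \in A_\infty(x)$ with $w \ne 1_\G$, and the edges $e_1, e_w$ satisfy the remaining conditions defining $A'_\infty$. Hence $1_\G \in A'_\infty(x)$, completing the pointwise claim.

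Finally, since $\mathbf{1}_{\tA'_l} \le 1$ is uniformly dominated by the $\mu$-integrable constant $1$, the reverse Fatou lemma gives
\[
\limsup_{l\to\infty} \mu(\tA'_l) = \limsup_{l\to\infty} \int \mathbf{1}_{\tA'_l}\, d\mu \le \int \limsup_{l\to\infty} \mathbf{1}_{\tA'_l}\, d\mu \le \int \mathbf{1}_{\tA'_\infty}\, d\mu = \mu(\tA'_\infty),
\]
which is the desired inequality. The only nontrivial step is the finiteness-of-configurations/pigeonhole reduction to a fixed witness triple; once that is in place the passage to $C_\infty$ is automatic from the monotonicity of $C_l$, so no serious obstacle is expected.
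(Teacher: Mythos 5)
Your proof is correct and takes essentially the same approach as the paper: both arguments reduce the claim to the set-theoretic inclusion $\limsup_{l} \tA'_l \subseteq \tA'_\infty$ (the paper via the auxiliary decreasing sets $\tA''_l = \bigcup_{m\ge l}\tA'_m$ and monotone convergence, you via indicator functions and reverse Fatou), and both prove that inclusion by pigeonholing the finitely many witness triples near $1_\G$ to fix a single witness along an infinite subsequence, then invoking the monotonicity of the sets $C_l$.
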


\begin{proof}
Given a coloring $\chi:\G \to \{0,1\}$ of the Cayley hyper-tree and $l \in \N$, define $A''_l(\chi) = \cup_{m\ge l} A'_m(\chi)$. Also define 
$\tA''_l=\{x\in X:~ 1_\G \in A''_l(x)\}.$ Since $\tA''_l \supset \tA'_l$ and the sets $\tA''_l$ are decreasing in $l$, it suffices to prove that $\cap_{l\ge 0} \tA''_l \subset \tA'_\infty$. 

Suppose $x \in \cap_{l\ge 0} \tA''_l$. Then there exists an infinite set $S \subset \N$ such that $x \in \tA'_l$ $(\forall l \in S)$. So $1_\G \in A'_l(x)$ $(\forall l \in S)$. So for each $l \in S$, there exist $g_l \in A_l(x) \setminus \{1_\G\}$ and hyper-edges $e_l, f_l \subset \G$ such that
\begin{enumerate}
\item $1_\G$ supports $e_l$ (with respect to $x$),
\item $g_l$ supports $f_l$ (with respect to $x$),
\item $e_l \cup f_l \setminus \{1_\G, g_l\} \subset C_{l-1}(x)$,
\item $e_l \cap f_l \ne \emptyset$. 
\end{enumerate}
Because $e_l \cap f_l \ne \emptyset$, $g_l$ is necessarily contained in the finite set $\{s^{p_i}_is^{p_j}_j:~1 \le i,j\le d, 0 \le p_i \le k\}$. So after passing to an infinite subset of $S$ if necessary, we may assume there is a fixed element $g \in \G$ such that $g=g_l$ $(\forall l \in S)$. Similarly, we may assume there are edges $e,f \subset \G$ such that $e_l=e$ and $f_l=f$ $(\forall l \in S)$. 

Observe that $1_\G \notin C_\infty(x)$ because $1_\G \in A_l(x)$  implies $1_\G \notin C_l(x)$ $(\forall l \in S)$. Similarly, $g \notin C_\infty(x)$. Because $e_l \cup f_l \setminus \{1_\G, g_l\} \subset C_l(x)$ $(\forall l \in S)$ and the sets $C_l(x)$ are decreasing in $l$, it follows that $e \cup f \setminus \{1_\G, g\} \subset C_\infty(x)$. Therefore $\{1_\G,g\} \subset A_\infty(x)$. This verifies all of the conditions showing that $1_\G \in A'_\infty(x)$ and therefore $x \in \tA'_\infty$ as required. 

\end{proof}

We can now prove Lemma \ref{lem:density}:

\begin{proof}[Proof of Lemma \ref{lem:density}]
Observe that the sets $\tC_l,\tA_l, \tA'_l$ are clopen for finite $l$. By Lemma \ref{lem:bsconc}, 
\begin{eqnarray}\label{finiteconc}
\lim_{\delta\searrow 0} \liminf_{n\to\infty} \P^\chi_n\left(\left|\frac{|C_l(\chi) \cup A_l(\chi) \setminus A'_l(\chi)|}{n} - \mu\left(\tC_l \cup \tA_l \setminus \tA'_l\right) \right| <\delta \right)=1.
\end{eqnarray}
for any finite $l$. Since $\tA_\infty \cup \tC_\infty$ is the decreasing limit of $\tA_l \cup \tC_l$, Lemma \ref{lem:double-prime} implies
$$\liminf_{l\to\infty} \mu(\tC_l \cup \tA_l \setminus \tA'_l) \ge \mu(\tC_\infty \cup \tA_\infty \setminus \tA'_\infty).$$
By Proposition \ref{prop:core} and Lemma \ref{lem:A'}, 
$$\mu(\tC_\infty \cup \tA_\infty \setminus \tA'_\infty) \geq  1 - e^{-\l} + o(e^{-\l}).$$
Together with (\ref{finiteconc}), this implies the lemma. 
\end{proof}

\section{Rigid vertices}\label{sec:rigidity}

%In this section, we will attempt to state and prove an analog of \cite[Prop. 5.4]{coja-zdeb-hypergraph}. However, I'll use slightly different notation just because I really don't like their notation. Hopefully, my notation is simpler, easier-to-understand. Feel free to change it at will!

This section proves Lemma  \ref{lem:rigidity}. So we assume the hypotheses of Proposition \ref{prop:cluster}.

As in the previous section, fix an equitable coloring $\chi:V \to \{0,1\}$. We assume $|V|=n$ and let $\s:\G \to \Sym(V)$ be a uniformly random uniform homomorphism conditioned on the event that $\chi$ is proper with respect to $\s$.

%The next lemma is key to many of the estimates in the main arguments.

\begin{lem}[Expansivity Lemma]\label{lem:expansivity}
There is a constant $k_0>0$ such that the following holds. If $k\ge k_0$ then with high probability (with respect to the planted model), as $n\to\infty$, for any $T \subset V$ with $|T| \le  2^{-k/2} n$ the following is true. For a vertex $v$ let $E_v$ denote the set of hyperedges supported by $v$. Let $E_T$ be the set of all edges $e\in \cup_{v\in T} E_v$ such that $|e \cap T| \ge 2$. Then
$$\#E_T \le 2 \#T.$$ 
\end{lem}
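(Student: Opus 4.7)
The plan is to prove the Expansivity Lemma by a union bound over all candidate sets $T$, where for each fixed $T$ with $|T|=t$ we control the tail probability $\P^\chi_n(|E_T|\ge 2t+1)$ by the first factorial moment. The case $t=1$ is trivial ($|E_T|=0$), so we assume $t\ge 2$ throughout. The main obstacle is obtaining a per-$T$ tail bound that is strong enough so that summing the Stirling factor $\binom{n}{t}$ and then over $t$ still gives $o(1)$. This requires a $2t{+}1$-st factorial moment rather than the (insufficient) first moment.

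\textbf{Step 1: Parametrize edges of $E_T$.} Each edge $e\in E_T$ is specified by a triple $(v,i,S)$ where $v\in T$ is its (unique) supporter, $i\in\{1,\dots,d\}$ is the generator whose $\sigma(s_i)$-orbit equals $S=e$, and $S$ is a $k$-subset of $V$ containing $v$. The supporter condition forces all other $k-1$ vertices of $S$ to have color $1-\chi(v)$; the condition $|S\cap T|\ge 2$ forces at least one of these $k-1$ vertices to lie in $T\cap \chi^{-1}(1-\chi(v))$. The number of admissible $S$ for a given $v$ is
\[
\binom{|\chi^{-1}(1-\chi(v))|}{k-1}-\binom{|\chi^{-1}(1-\chi(v))\setminus T|}{k-1}\;\le\; \binom{n/2}{k-1}\cdot\frac{2(k-1)t}{n},
\]
using $(1-x)^{k-1}\ge 1-(k-1)x$. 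Hence the total number of admissible triples is at most $t\cdot d\cdot \binom{n/2}{k-1}\cdot 2(k-1)t/n$.

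\textbf{Step 2: Per-configuration probability in the planted model.} Across generators $i=1,\dots,d$ the orbit partitions of $\sigma(s_i)$ are independent, each uniform on bichromatic $k$-partitions of $V$. By a Stirling computation analogous to the one in Section 4.2, for any disjoint bichromatic $k$-subsets $S_1,\dots,S_{m_i}$,
\[
\P^\chi_n\bigl(S_1,\dots,S_{m_i}\text{ are orbits of }\sigma(s_i)\bigr)=\frac{\#\{\text{bichromatic $k$-partitions of }V\setminus\bigcup S_j\}}{\#\{\text{bichromatic $k$-partitions of }V\}}=\bigl(1+o_n(1)\bigr)\,p^{m_i},
\]
where $p:=(k-1)!/[n^{k-1}(1-2^{1-k})]$. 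If some $S_j,S_{j'}$ for the same generator are not disjoint then the joint probability is $0$, so summing over all ordered tuples (not just disjoint ones) of distinct triples yields a valid upper bound on the $m$-th factorial moment. Combining Steps 1 and 2 with $\binom{n/2}{k-1}\cdot p\;\approx\;2^{-(k-1)}$ and $d(k-1)/2^{k-2}=O(k^2)$ (using the hypothesis $d=O(k2^k)$) gives, for $m=2t+1$,
\[
\E^\chi_n\!\binom{|E_T|}{m}\;\le\;\frac{1}{m!}\Bigl(td\cdot \tfrac{2(k-1)t}{n}\cdot \tbinom{n/2}{k-1}\cdot p\Bigr)^{m}\;\le\;\frac{1}{m!}\Bigl(\tfrac{C k^2 t^2}{n}\Bigr)^{m}
\]
for an absolute constant $C$.

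\textbf{Step 3: Union bound and splitting over $t$.} Using Markov's inequality $\P(|E_T|\ge m)\le \E\binom{|E_T|}{m}$, the Stirling estimates $\binom{n}{t}\le (en/t)^t$ and $m!\ge (m/e)^m$ with $m=2t+1\ge 2t$, and summing over $|T|=t$:
\[
\sum_{|T|=t}\P^\chi_n\bigl(|E_T|\ge 2t+1\bigr)\;\le\;\Bigl(\tfrac{en}{t}\Bigr)^t\Bigl(\tfrac{eC k^2 t}{n}\Bigr)^{2t}=\bigl(C' k^4 t/n\bigr)^{t}.
\]
Choose $k_0$ so that $C' k^4 \cdot 2^{-k/2}\le 1/2$ for all $k\ge k_0$. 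Given such $k$, and $t\le 2^{-k/2}n$, split into two ranges: for $t\le \sqrt{n}$ we have $C' k^4 t/n\le C' k^4/\sqrt{n}$ and the geometric-style sum is $o_n(1)$; for $\sqrt{n}<t\le 2^{-k/2}n$ each term is at most $(1/2)^t\le (1/2)^{\sqrt{n}}$ and again sums to $o_n(1)$. Finally sum over $t\ge 2$ (the $t=1$ case is trivial) to conclude $\P^\chi_n(\exists T\text{ violating the lemma})\to 0$, which is the desired statement.
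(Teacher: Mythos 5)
You follow essentially the same union-bound-plus-high-moment strategy as the paper, and Steps 1 and 3 are sound, but Step 2 — which is the technical crux — has a genuine gap. You assert
\[
\P^\chi_n\bigl(S_1,\dots,S_{m_i}\text{ are orbits of }\sigma(s_i)\bigr)=\bigl(1+o_n(1)\bigr)\,p^{m_i}
\]
and then use $p^{m_i}$ as an upper bound in the factorial moment. But in your argument $m=2t+1$ with $t$ allowed to be as large as $2^{-k/2}n$, so $m_i$ can be of order $n$. The ``$1+o_n(1)$'' is really a per-step multiplicative error that accumulates to something like $(1+\varepsilon_n)^{m_i}$, which is not $1+o(1)$ when $m_i$ grows with $n$. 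What you actually need is a \emph{uniform} bound $\le (Cp)^{m_i}$ for an absolute constant $C$ (valid for all $k\ge k_0$, all $t\le 2^{-k/2}n$), and proving this requires controlling the ratios of bichromatic $k$-partition counts as vertices are removed, including the case where the removed set has an unbalanced colour profile. That is not an elementary Stirling computation; even the $m_i=1$ case — that the chance a fixed bichromatic $k$-set is a part of a uniformly random bichromatic $k$-partition of $[n]$ is $\approx (k-1)!/[(1-2^{1-k})n^{k-1}]$ — is a local-limit type statement needing concentration of types around $\vt^*$, in the spirit of Lemma \ref{lem: approx}/Lemma \ref{lemma:smallball}.

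The paper avoids this difficulty by conditioning on the exact counts $c$ of critical edges of each generator and colour. Given $c$, the critical edges can be sampled one at a time, and Claim \ref{claim:chain} gives a clean per-step bound $\P^\chi_{c,n}(F_{T,s_0}\mid F_{T,S})\le 2kl^2/n^2$ by an elementary argument: the supporter of the next critical edge is a uniform vertex of the appropriate colour among those not yet used, and the other $k-1$ vertices of that edge are a uniform $(k-1)$-subset of the opposite colour class. Constants are tracked explicitly using $l\le 2^{-k/2}n$ (so that $n_0,n_1\ge n/\sqrt 8$) together with the entropy inequality of Claim \ref{claim:k}. Your parametrization by triples $(v,i,S)$, the use of the $(2t{+}1)$-st factorial moment, and the split of the $t$-sum into $t\le\sqrt n$ and $\sqrt n<t\le 2^{-k/2}n$ all closely parallel the paper's $\binom{n}{l}\binom{|c|}{2l}(2kl^2/n^2)^{2l}$ calculation. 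To close the gap you should either adopt the conditioning on $|c|$ and the sequential placement argument, or prove the uniform partition-count-ratio bound $\le(Cp)^{m_i}$ directly — which in effect re-derives the kind of local estimate the paper proves in Section 6.
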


\begin{proof}
%(Insert proof here! Probably the proof will mirror \cite[Proof of Lemma 5.9]{coja-zdeb-hypergraph}. The $e^2k \l$ in the statement might have to change.)

\begin{claim}\label{claim:k}
There exists $k_0 \in \N$ such that $k\ge k_0$ implies 
\begin{itemize}
\item $k/2 \le \l \le k$,
\item $1/2-k2^{1-k/2} \ge \frac{1}{\sqrt{8}}$, 
\item and for any $0 < t\le 2^{-k/2}$ and $k/2 \le \l' \le k$
 $$H(t,1-t) + \l'H(2t/ \l', 1-2t/ \l' ) + 2t \log(4k) + 4t \log(t) \le 0.9t\log(t).$$
 \end{itemize}
\end{claim}

\begin{proof}
Recall that $\l = \log(2)k + O(k 2^{-k})$. So the first two requirements are immediate for $k_0$ large enough.

We estimate each of the first three terms on the left as follows. Because $1 = \lim_{t \searrow 0} \frac{H(t,1-t)}{-t\log(t)}$, there exists $k_0 \in \N$ such that $k \ge k_0$ implies $\frac{H(t,1-t)}{-t\log(t)} \le 1.01$.

Note,
\begin{eqnarray*}
\l'H(2t/ \l', 1-2t/ \l' )  &=& - 2t\log( 2t/\l') - (\l' - 2t)\log(1-2t/\l') \\ 
&=& - 2t\log( 2t/\l') + O(t) \le -2t\log(t) + 2t \log(\l') + O(t)\\
&\le& -2t\log(t) + 2t \log(k) + O(t).
\end{eqnarray*}
So by making $k_0$ larger if necessary, we may assume 
$$\frac{\l'H(2t/ \l', 1-2t/ \l' )}{-t\log(t)} \le 2.01.$$
Since
$$\frac{2t \log(4k)}{-t\log(t)} \le \frac{2\log(4k)}{(k/2)\log(2)}$$ 
we may also assume $\frac{2t \log(4k)}{-t\log(t)} \le 0.01$. Combining these inequalities, we obtain
\begin{eqnarray*}
&&H(t,1-t) + \l'H(2t/ \l', 1-2t/ \l' ) + 2t \log(4k) + 4t \log(t)\\
 &\le& (1.01 + 2.01 + 0.01 -4)(-t\log(t)) \le 0.9t\log(t).
\end{eqnarray*}
\end{proof}
From now on, we assume $k\ge k_0$ with $k_0$ as above. To simplify notation, let $\zeta=2^{-k/2}$. 
Given a $2d$-tuple $c=(c_{1,0}, c_{1,1} \ldots, c_{d,0},c_{d,1})$ of natural numbers, let $E_c$ be the event that there are exactly $c_{i,0}$ critical edges of the form $\{\s(s_i)^j(v) : 0 \leq j \leq k-1\}$ and supported by a vertex of color $0$, and $c_{i,1}$ critical edges of the form $\{\s(s_i)^j(v) : 0 \leq j \leq k-1\}$ and supported by a vertex of color $1$. We denote $|c| = \sum_{1\leq i \leq d, b\in \{0,1\}} c_{i,b}$. Let $\P_{c,n}^\chi$ be the planted model conditioned on $E_c$.

%Before proving this claim, we show how it implies the lemma.  %By **, with high probability, the total number of critical edges is close to $\l n$.  In particular, fix some $\l'>\l$ such that $\left(\frac{e^3 {\l'}^2 k^2 t}{4}\right) = C < 1$. Then with high probability $|c| < \l'n$. Let $\tilde{\P}^\chi_n$, be the planted model conditioned on $|c| < \l'n$. 

%Even though the expected value of $|\cT_l|$ could be large when the number of critical edges is much larger than $\l_n$, we don't have to apply the first moment since we already know it's highly unlikely

This measure can be constructed as follows. Let $I_c$ be the set of triples $(i,b,j)$ with $1\le i \le d$, $b \in \{0,1\}$ and $1\le j \le c_{i,b}$. First choose edges 
$\{e_{i,b,j}\}_{(i,b,j)\in I_c}$
uniformly at random subject to the conditions:
\begin{enumerate}
\item each $e_{i,b,j} \subset [n]$ has cardinality $k$ and $e_{i,b,j} \cap e_{i,b',j'} = \emptyset$ whenever $b \neq b'$ or $j \ne j'$,
\item each $e_{i,b,j}$ is critical and is supported by a vertex of color $b$ with respect to $\chi$.
\end{enumerate}
Next choose a uniformly random uniform homomorphism $\s$ subject to:
\begin{enumerate}
\item $\chi$ is a proper coloring with respect to $\s$,
\item each $e_{i,b,j}$ is of the form $\{\s(s_i)^j(v) : 0 \leq j \leq k-1\}$ with respect to $\s$,
\item the edges $\{e_{i,b,j}\}_{(i,b,j) \in I_c}$ are precisely the critical edges of $\chi$ with respect to $\s$.
\end{enumerate}
Then $\s$ is distributed according to $\P_{c,n}^\chi$.

For $1\le l \le n$, let $\cT_l$ be the collection of all subsets $T \subset V=[n]$ such that $|T| = l$ and $|E_T| > 2|T|$.  Note that $\cT_1$ is empty.

To prove the lemma, we claim it suffices to show the following:
\begin{claim}\label{claim:expansivity}
If $n$ is sufficiently large and $kn/2 \le |c| \le kn$ then 
$\sum_{2 \le l \le \zeta n} \E_{c,n}^\chi[\#\cT_l]$ tends to zero in $n$.
%where $|c| = \sum_i c_i$. 
\end{claim}

We briefly return to the model $\P^\chi_n$ not conditioned on $E_c$.  Let $E$ be the set of all critical edges. By Lemma \ref{lem:bsconc}, with high probability in $\P^\chi_n$, $|E|$ is asymptotic to $\l n$ as $n\to\infty$. Let $E'_n$ be the event that $(k/2)n \le |E| \le kn$. So $\P^\chi_n(E'_n) \to 1$ as $n\to\infty$.

By a first moment argument and the above paragraph, to prove the lemma it suffices to show that $\sum_{2 \le l \leq \z n}\E^\chi_n(\#\cT_l|E_n')$ tends to $0$ in $n$.  Now $\E^\chi_n(\#\cT_l|E_n')$ is a convex combination of $\E^\chi_{c,n}(\#\cT_l)$ over those $c$ such that $kn/2 \le |c| \le kn$, so the lemma follows from Claim \ref{claim:expansivity}. 

Before proving the above claim we need to prove another claim, which needs the following setup.  For $s\in I_c$ and $T\subset V$, let $F_{T,s}$ be the event that $e_s$ is supported by a vertex in $T$ and $|e_s \cap T|\ge 2$. For $S \subset I_c$, let $F_{T,S} = \cap_{s\in S} F_{T,s}$. 
 Note that for any $T \subset [n]$, the event $\{ T \in \cT_l\}$ is contained in $\cup_S F_{T,S}$ where the union is over all $S \subset I_c$ with $|S|=2l$. So
\begin{eqnarray}\label{eqn:eqn}
\E^\chi_{c,n}[ \#\cT_l ] \le \sum_{S,T} \P^\chi_{c,n}(F_{T,S}).
\end{eqnarray}
where the sum is over all $T \subset [n]$ and $S \subset I_c$ with $|T|=l$ and $|S| = 2l$.

Before proving the claim above, we need to prove:

\begin{claim}\label{claim:chain}
For $1 \le l \le \zeta n$, any $T \subset V$ with cardinality $|T|=l$, any $S \subset I_c$ with $|S| \le 2l-1$, and any $s_0 \in I_c \setminus S$,
one has $\P^\chi_{c,n}(F_{T,s_0}| F_{T,S}) \le \frac{2kl^2}{n^2}.$  
\end{claim}

\begin{proof}[Proof of Claim \ref{claim:chain}]
For $s\in I_c$, let $e_s$ be a random edge in $[n]$ with cardinality $k$ as in the sampling algorithm above. Without loss of generality, we imagine that $e_s$ for $s\in S$ has been chosen before $e_{s_0}$. Let $s_0=(i_0,b_0,j_0)$. 
We say that an edge $e$ is of type $i$ if $e$ is of the form $\{\s(s_i)^j(v) : 0 \leq j \leq k-1\}$. Let $S_0 \subset S$ be those edges of type $i_0$, and let $V_0 = \cup_{e \in S_0} e$,

\begin{itemize}
\item $n_i$ be the number of vertices $v \in [n] \setminus V_0$ such that $\chi(v)=i$, and
\item $l_i$ be the number of vertices $v \in T \setminus V_0$ such that $\chi(v)=i$.
\end{itemize}
Let $e_0 = e_{s_0}$. The probability that $e_{0}$ is supported by a vertex $v$ in $T$ is $l_{b_0}/n_{b_0}$ (this is conditioned on the edges $e_s$ for $s\in S$). %given by the number of uniform planted homomorphisms satisfying such conditions as well as $F_{T,S}$ and $E_c$, divided by the number of all uniform planted homomorphisms satisfying just $F_{T,S}$ and $E_c$.  This can be reduced to

%$$\frac{l_0{n_1 \choose k-1}\xi}{n_0{n_1 \choose k-1}\xi + n_1{n_0 \choose k-1}}$$
%where $\xi \in [0,\infty]$ is a factor depending on $n_0/n_1$.  In fact $\xi$ decreases with $n_0/n_1$.  For example in the extreme case, supposing $n_0 = (k-1)n_1$ (although this is not possible with our restrictions on $|T|$), $\xi$ would be $0$ because any uniform homomorphism must partition all remaining vertices into edges all of which are supported by a vertex of color $1$.  Similarly if $n_1 = (k-1)n_0$ then $\xi = \infty$.   Similarly the probability that $e_0$ is supported by a vertex $v$ in $T$ with $\chi(v) = 1$ is

%$$\frac{l_1{n_0 \choose k-1}}{n_0{n_1 \choose k-1}\xi + n_1{n_0 \choose k-1}}.$$

Suppose first that $e_0$ is supported by a vertex $v$ in $T$ with $\chi(v) = 0$ (so $b_0 = 0$). Then the probability that $|e_{0} \cap T| =1$ is
$$\frac{{n_1-l_1 \choose k-1}}{{n_1 \choose k-1}}.$$

%A similar argument can be employed for the case of $\chi(v) = 1$. 

It follows that for $b_0 = 0$
$$\P^\chi_{c,n}(F_{T,s_0}| F_{T,S}) 
= \frac{l_0}{n_0} \left(1-\frac{{n_1-l_1 \choose k-1}}{{n_1 \choose k-1}}\right) 
. $$
In order to bound this expression, consider
\begin{eqnarray*}
\frac{{n_1-l_1 \choose k-1}}{{n_1 \choose k-1}} &=& \left(\frac{n_1-l_1}{n_1}\right)\cdots \left(\frac{n_1-l_1-k+2}{n_1-k+2}\right)\\
 &\ge & \left(\frac{n_1-l_1-k+2}{n_1-k+2}\right)^{k-1} =  \left(1-\frac{l_1}{n_1-k+2}\right)^{k-1}  \\
 &\ge &1-\frac{(k-1)l_1}{n_1-k+2}  \ge 1 - \frac{k l_1}{n_1}.
 \end{eqnarray*}
Thus,
 \begin{eqnarray*}
 \P^\chi_{c,n}(F_{T,s_0}| F_{T,S})  &\le& \frac{kl_0l_1}{n_0n_1}.
 \end{eqnarray*}
A similar argument shows that the same bound above holds for the case $b_0 = 1$.

Because $l_0+l_1 \le |T|=l$, $(l_0+l_1)^2 - (l_0 - l_1)^2 \leq l^2$, so that $l_0l_1\le l^2/4$. Note 
$$n_0 \ge n/2 - 2kl \ge n(1/2 - k2^{1-k}) \ge n/\sqrt{8}$$
where we have used the assumption $l\le \zeta n = 2^{-k/2}n$ and Claim 4. Similarly, $n_1 \ge n/\sqrt{8}$. Substitute these inequalities above to obtain
 \begin{eqnarray*}
 \P^\chi_{c,n}(F_{T,s_0}| F_{T,S})  &\le&  \frac{2kl^2}{n^2}.
 \end{eqnarray*} 

\end{proof}
We now prove Claim 5. Apply the chain rule and Claim \ref{claim:chain} to obtain: if $S \subset I_c$ has $|S|=2l$ and $T \subset [n]$ with $|T| = l$ then
$$\P^\chi_{c,n}(F_{T,S}) \le \left(\frac{2kl^2}{n^2}\right)^{2l}.$$
By (\ref{eqn:eqn})
\begin{eqnarray*}
\E^\chi_{c,n}[\#\cT_l] &\le& \sum_{S,T} \P^\chi_{c,n}(F_{T,S}) \le {n \choose l}{|c| \choose 2l}\left(\frac{2kl^2}{n^2}\right)^{2l}
\end{eqnarray*}
where the sum is over all $T \subset [n]$ and $S \subset I_c$ with $|T|=l$, $|S| = 2l$. 
Define $t, \l'$ by $tn = l$ and $|c|=\l' n$. By hypothesis $k/2\le \l' \le k$. Consider the following cases:

\noindent \underline{Case 1}: $2 \leq l \leq n^{0.1}$.  Then we make the following estimates:
\begin{eqnarray*}
{n \choose l} &\leq&  n^l
,\\
{|c| \choose 2l} &\leq & (kn)^{2l}. 
\end{eqnarray*}

It follows that $\E^\chi_{c,n}[\#\cT_l] \le \left(\frac{4k^4l^4}{n}\right)^l$, which is bounded by $n^{-1.1}$ for large enough $n$.

\noindent \underline{Case 2}: $l > n^{0.1}$.  
We make the following estimates: 
\begin{eqnarray*}
{n \choose l} &=&  \exp(nH(t, 1-t)+ 0.5\log(n) + O(1))
,\\
{|c| \choose 2l} &\le& \exp(\l' n H(2t/\l', 1-2t /\l' )+ 0.5\log(kn) + O(1))
\end{eqnarray*}
so that 
$$\E^\chi_{c,n}[\#\cT_l] \le Ckn\exp(n(H(t,1-t) + \l'H(2t/ \l', 1-2t/ \l' ) + 2t \log(2k) + 4t \log(t)))$$ for some constant $C$.
For $n$ sufficiently large and $t \le 2^{-k/2}$, this is bounded above by $Ckn\exp(0.9nt\log(t)) \le Ckn2^{-0.45kn^{0.1}}$ by Claim \ref{claim:k} and the choice of $k_0$ and $l$.  It follows that in this range of $l$,  $\E^\chi_{c,n}[\#\cT_l]$ decays super-polynomially.     

This proves Claim \ref{claim:expansivity} and finishes the lemma.

\end{proof}

\begin{lem}\label{lem:core-rigidity}
Let $\rho>0$. Then there exists $L$ such that $l>L$ implies $C_l(\chi) \subset [n]$ is $\rho$-rigid (with high probability in the planted model as $n\to\infty$).
\end{lem}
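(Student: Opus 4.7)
The strategy is a cascade argument driven by the Expansivity Lemma~\ref{lem:expansivity}. Pick $L$ large enough so that $(2/3)^L \cdot 2 \cdot 2^{-k/2} < \rho$, and let $l > L$. Given any proper coloring $\chi'$, set $T := \{v \in V : \chi(v) \neq \chi'(v)\}$ and $S_j := T \cap C_j(\chi)$ for $0 \le j \le l$. The goal is to show that $|S_l|$ cannot land in the ``medium'' range $[\rho n, 2^{-k/2} n]$, so assume for contradiction that it does.

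The heart of the argument is the recursive bound $3|S_j| \le |E_{S_{j-1}}|$ for each $j \ge 1$. To see this, note that by definition of $C_j(\chi)$, every $v \in S_j$ supports (with respect to $\chi$) at least three edges $e$ with $e \setminus \{v\} \subset C_{j-1}(\chi)$. Because $\chi$ is constant on $e \setminus \{v\}$ in the color opposite to $\chi(v)$, propriety of $\chi'$ forces at least one vertex of $e \setminus \{v\}$ to be flipped, and that vertex lies in $T \cap C_{j-1}(\chi) = S_{j-1}$. Hence each such edge is supported by $v \in S_j \subseteq S_{j-1}$ and satisfies $|e \cap S_{j-1}| \ge 2$, placing it in $E_{S_{j-1}}$; since distinct vertices support distinct edges, summing over $v \in S_j$ gives the bound.

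Next I would combine this with Lemma~\ref{lem:expansivity}: provided $|S_{j-1}| \le 2^{-k/2} n$, the Expansivity Lemma gives $|E_{S_{j-1}}| \le 2|S_{j-1}|$, and hence $|S_j| \le (2/3)|S_{j-1}|$. Iterating from $j=1$ to $j=l$ produces $|S_l| \le (2/3)^l |T|$. To ensure the hypothesis of Expansivity at every stage, I would invoke Proposition~\ref{prop:core} and the concentration statement of Lemma~\ref{lem:bsconc} applied to the clopen set $\tC_l$: with high probability in the planted model, $|V \setminus C_l(\chi)| = O(k^2 2^{-k}) \, n$. Combined with the contradiction hypothesis $|S_l| \le 2^{-k/2} n$, this yields
$$|T| \;\le\; |S_l| + |V \setminus C_l(\chi)| \;\le\; (1 + o_k(1)) \, 2^{-k/2} n,$$
so that $|S_{j-1}| \le |T| \le (1 + o_k(1)) 2^{-k/2} n$ for all $j$.

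The main obstacle is precisely this mild overshoot: $|T|$ may slightly exceed the Expansivity threshold $2^{-k/2} n$ because flipped vertices outside the core contribute to $T$. I would resolve this by inspecting the proof of Lemma~\ref{lem:expansivity}; the combinatorial estimate of Claim~\ref{claim:k} is robust under enlarging the threshold from $2^{-k/2}$ to $(1 + o_k(1)) 2^{-k/2}$ for $k$ sufficiently large, so the Expansivity conclusion persists on this slightly extended range. With this minor extension in place, the cascade runs uninterrupted from $j=1$ through $j=l$, giving $|S_l| \le (2/3)^l (1 + o_k(1)) 2^{-k/2} n < \rho n$ by the choice of $L$, in contradiction with $|S_l| \ge \rho n$. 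Hence no such $\chi'$ exists and $C_l(\chi)$ is $\rho$-rigid.
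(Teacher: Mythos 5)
Your cascade argument is correct in spirit --- the recursive inequality $3|S_j| \le |E_{S_{j-1}}|$ is exactly the right combinatorial step, and it mirrors the paper's inequality $|E_{T_l}| \ge 3|T_{l+1}|$ --- but the route differs from the paper's in a way that creates the very overshoot you flagged, and the paper's argument is built specifically to sidestep it. Rather than iterating a contraction through all levels $0,\dots,l$, the paper compares only levels $l$ and $l+1$: it chooses $L$ so that $|C_l(\chi)\setminus C_{l+1}(\chi)| < (\rho/3)n$ holds with high probability for $l>L$ (via Lemma~\ref{lem:bsconc} and the monotone convergence of $\mu(\tC_l)$ to $\mu(\tC_\infty)$), observes that this gives $|T_l\setminus T_{l+1}|<(\rho/3)n$, and then $|E_{T_l}|\ge 3|T_{l+1}|\ge 3|T_l|-\rho n > 2|T_l|$ whenever $|T_l|>\rho n$; applying Lemma~\ref{lem:expansivity} contrapositively to $T_l$ then forces $|T_l|>2^{-k/2}n$. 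Because Expansivity is applied only to $T_l$ --- the very set whose size is being constrained --- the threshold $2^{-k/2}n$ is never exceeded in the argument, and no density bound on $V\setminus C_l(\chi)$ is needed. In your version, Expansivity must be applied to $S_0=T$, which can exceed $2^{-k/2}n$ by up to the $O(k^2 2^{-k})n$ contribution from $V\setminus C_l(\chi)$, so you need to re-prove Lemma~\ref{lem:expansivity} at an enlarged threshold. Your proposed fix does go through --- the estimates in Claim~\ref{claim:k} retain slack at threshold $2\cdot 2^{-k/2}$, say, at the cost of a larger $k_0$ --- so the cascade is recoverable, but the paper's single-step comparison reaches the same conclusion without modifying the Expansivity Lemma and without invoking Proposition~\ref{prop:core}.
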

%Recall that the idea is that if we recolor a vertex of $C_\infty$, then it causes a chain reaction of recolorings in $C_\infty$.  On the other hand, if we recolor a vertex of $C_l$, if it happens not to be in $C_{l+1}$, then it may only force recolorings in $C_{l-1}$ which is outside of our set of consideration.  But for large enough $l$ the set $C_{l}\C_{l+1}$ is tiny so most of the time we are forced to recolor inside $C_l$, and the expansivity lemma makes it so that the recolorings don't all get crammed into $C_{l}\C_{l+1}$.  
\begin{proof}
Without loss of generality, we may assume that $0<\rho< \mu(\tC_\infty)$. 

Observe that the sets $\tC_l$ are clopen for finite $l$. By Lemma \ref{lem:bsconc}, 
$$\lim_{\eta\searrow 0} \liminf_{n\to\infty} \P^\chi_n\left(\left|\frac{|C_l(\chi)|}{n} - \mu\left(\tC_l\right) \right| <\eta \right)=1.$$
Since the sets $C_l(\chi)$ are decreasing with $l$, this implies the existence of $L$ such that $l>L$ implies 
$$\liminf_{n\to\infty} \P^\chi_n\left(\left|\frac{|C_l(\chi)|}{n} - \frac{|C_{l+1}(\chi)|}{n}\right| <\rho/3 \right)=1.$$
%$$\liminf_{n\to\infty} \P^\chi_n\left(\left|\frac{|C_l(\chi)|}{n} > \rho \right)=1.$$
Choose $l >L$. Let $\psi: V \to \{0,1\}$ be a $\s$-proper coloring.  Let 
$$T_l = \{v\in C_l(\chi):~ \chi(v) \neq \psi(v)\}.$$
Define $T_{l+1}$ similarly. Since $|C_l(\chi) \setminus C_{l+1}(\chi)| < \rho n/3$ (with high probability) and $T_l \setminus T_{l+1} \subset  C_l(\chi) \setminus C_{l+1}(\chi)$, it follows that $|T_l \setminus T_{l+1}| < \rho n/3$ (with high probability).

%Suppose $|T_l | > \rho n$.   

For every $v \in T_{l+1}$, let $F_v \subset E_v$ be the subset of $\chi$-critical edges $e$ such that $e \subset C_l(\chi)$.  

We claim that if $v \in T_{l+1}$ then $F_v \subset E_{T_l}$ where 
$$E_{T_l}=\{e \in \cup_{v\in T_l} E_v:~ |e\cap T_l | \ge 2\}.$$
Since $v \in T_{l+1}$, $\psi(v) \ne \chi(v)$. If $e \in F_v$ then $v$ supports $e$ with respect to $\chi$. Therefore because $\psi:[n] \to \{0,1\}$ is a proper coloring, there must exist a vertex $w \in e \setminus \{v\}$ such that $\psi(w) \ne \psi(v)$. This, combined with $\chi(w) \neq \chi(v)$, implies $\psi(w) \neq \chi(w)$ since there are only two possible colors. Since $\{v,w\} \subset e \subset C_l(\chi)$, this means that $|e \cap T_l | \ge 2$ and therefore $e \in E_{T_l}$, which proves the claim.

For every $v \in T_{l+1}$, $|F_v| \ge 3$ by the definition of the sets $C_l(\chi)$. Since edges can only be supported by one vertex, the sets $F_v$ are pairwise disjoint. So 
$$|E_{T_l}| \geq \left|\bigcup_{v\in T_{l+1} } F_v\right| \geq 3|T_{l+1}| \ge 3|T_l| - \rho n.$$
If $|T_l| > \rho n$ then $ |E_{T_l}|\geq 3|T_l| - \rho n > 2|T_l|$. So it follows from Lemma \ref{lem:expansivity} that (with high probability), $|T_l| > 2^{-k/2} n$. Thus $C_l$ is $\rho$-rigid.     
\end{proof}

%\begin{lem}
%Let $\rho>0$. Then there exists $L$ such that $l>L$ implies $C_l(\chi) \cup A_l(\chi) \setminus A'_l(\chi) \subset [n]$ is $\rho$-rigid (with high probability in the planted model as $n\to\infty$).
%\end{lem}
We can now prove Lemma \ref{lem:rigidity}.
\begin{proof}[Proof of Lemma \ref{lem:rigidity}]
Let $\rho>0$. By Lemma \ref{lem:core-rigidity}, there exists $L$ such that $l>L$ implies $C_l(\chi)$ is $(\rho/3)$-rigid with high probability in the planted model as $n\to\infty$.  So without loss of generality we condition on the event that $C_l(\chi)$ is $(\rho/3)$-rigid.

%\begin{itemize}
%\item $C_l(\chi)$ is $(\rho/4)$-rigid and
%\item $|A'_l(\chi)| < (\rho/4)n$
%\end{itemize}

 % and $|A'_l(\chi)| < (\rho/4)n$. 

Now let $l-1>L$. Let $\psi: V \to \{0,1\}$ be a $\s$-proper coloring.  Let 
$$T_{l-1} = \{v\in C_{l-1}(\chi):~ \chi(v) \neq \psi(v)\}.$$
$$T_l = \{v\in C_l(\chi):~ \chi(v) \neq \psi(v)\}.$$
$$T' = \{v\in A_l(\chi) \setminus A'_l(\chi):~ \chi(v) \neq \psi(v)\}.$$
We claim that $|T_{l-1}| \ge |T'|$. To see this, let $v \in T'$. Then there exists an edge $e$ supported by $v$ (with respect to $\chi$) with $e \setminus \{v\} \subset C_{l-1}(\chi)$. Since $\psi$ is proper and $\psi(v)\ne \chi(v)$, there must exist a vertex $w \in e \setminus \{v\}$ with $\psi(w) \ne \chi(w)$. Necessarily, $w \in T_{l-1}$. So there exists a function $f:T' \to T_{l-1}$ such that $f(v)$ is contained in an edge $e$ supported by $v$ with $e \setminus \{v\} \subset C_{l-1}(\chi)$. Because $v \notin A'_l(\chi)$, $f$ is injective. This proves the claim.

Now suppose that $|T_l\cup T'| > \rho n$. Since $T_l$ and $T'$ are disjoint, either $|T_l| > (\rho/3)n$ or $|T'| > (2\rho/3)n$.  If $|T_l| > (\rho/3)n$, we are done because by assumption that $C_l(\chi)$ is $(\rho/3)$-rigid, $|T_l| > 2^{-k/2}n$ and so $$|T_l \cup T'| > 2^{-k/2}n.$$
If $|T'| > (2\rho/3)n$ the claim implies $|T_{l-1}| > (2\rho/3)n$.  Now in the proof of Lemma \ref{lem:core-rigidity} we have shown that $|T_{l-1}\setminus T_l| < (\rho/3)n$, so $|T_l| > (\rho/3)n$ and we are again done.
This proves the lemma.
\end{proof}

\appendix

\section{Topological sofic entropy notions}\label{appendix:sofic-entropy}

In this appendix, we recall the notion of topological sofic entropy from \cite{kerr-li-sofic-amenable} and prove that it coincides with the definition given in \S \ref{sec:entropy}.

Let $T$ be an action of $\G$ on a compact metrizable space $X$. So for $g\in \G$, $T^g:X \to X$ is a homeomorphism and $T^{gh}=T^gT^h$. We will also denote this action by $\G \cc X$.   Let $\s: \G \to \Sym(n)$ be a map, $\rho$ be a pseudo-metric on $X$, $F \Subset \G$ be finite and $\d>0$. For $x,y \in X^n$, let 
$$\rho_\infty (x,y) = \max_i \rho(x_i,y_i), \ \ \
\rho_2(x,y) = \left( \frac{1}{n}\sum_i\rho (x_i,y_i)^2 \right)^{1/2}$$
be pseudo-metrics on $X^n$.  Also let
$$\Map(T,\rho,F,\d,\s) = \{x\in X^n : \forall f\in F,\ \rho_2(T^fx,x\circ \s(f))< \d\}.$$ 
Informally, elements of $\Map(T,\rho,F,\d,\s)$ are ``good models" that approximate partial periodic orbits with respect to the chosen sofic approximation.

For a pseudo-metric space $(Y,\rho)$, a subset $S \subset Y$ is {\bf $(\rho,\eps)$-separated} if for all $s_1\neq s_2 \in S$, $\rho(s_1,s_2) \geq \eps$.  Let $N_\eps(Y,\rho) = \max\{|S|: S\subset Y, S\ \text{is}\ (\rho,\eps) \text{-separated}\}$ be the maximum cardinality over all $(\rho,\eps)$-separated subsets of $Y$.

Given a sofic approximation $\Si$ to $\G$, we define 
$$\th_\Sigma (\G \cc X,\rho) = \sup_{\eps>0}\inf_{F\Subset \G}\inf_{\d>0}\limsup_{i\to \infty}|V_i|^{-1}\log(N_\eps(\Map(T,\rho, F, \d, \s_i), \rho_\infty))$$
where the symbol $F\Subset \G$ means that $F$ varies over all finite subsets of $\G$.

We say that a pseudo-metric $\rho$ on $X$ is {\bf generating} if for every $x \neq y$ there exists $g \in \G$ such that $\rho(gx, gy) > 0$. By \cite[Proposition 2.4]{kerr-li-sofic-amenable}, if $\rho$ is continuous and generating,  $\th_\Sigma(T, \rho)$ is invariant under topological conjugacy and does not depend on the choice of $\rho$. So we define $\th_\Sigma(T)=\th_\Sigma(T, \rho)$ where $\rho$ is any continuous generating pseudo-metric. The authors of \cite{kerr-li-sofic-amenable} define the topological sofic entropy of $\G \cc X$ to be $\th_\Sigma (T)$. The main result of this appendix is:

\begin{prop}
Let $\cA$ be a finite set and $X \subset \cA^\G$ a closed shift-invariant subspace. Let $T$ be the shift action of $\G$ on $X$.  Then $h_\Sigma(\G \cc X) =\th_\Sigma(T)$ where $h_\Sigma(\G \cc X)$ is as defined in \S \ref{sec:entropy}.

\end{prop}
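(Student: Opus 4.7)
The plan is to work with the particularly simple continuous generating pseudo-metric $\rho$ on $X$ given by $\rho(x,y):=\mathbbm{1}[x(1_\G)\ne y(1_\G)]$. It is obviously continuous, and is generating because $x\ne y$ in $X$ forces $x(h)\ne y(h)$ for some $h\in\G$, whence $\rho(T^{h^{-1}}x,T^{h^{-1}}y)=1$; by \cite[Proposition 2.4]{kerr-li-sofic-amenable} we therefore have $\th_\Si(T)=\th_\Si(T,\rho)$. Because $\rho$ takes only the values $0$ and $1$, so does $\rho_\infty$, and for every $\eps\in(0,1]$ the separated count $N_\eps(\Map(T,\rho,F,\d,\s),\rho_\infty)$ equals the cardinality of the image of the trace map $\tau:X^V\to\cA^V$, $\tau(y)_v:=y_v(1_\G)$, restricted to $\Map(T,\rho,F,\d,\s)$. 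Hence
$$\th_\Si(T,\rho)=\inf_{F\Subset\G,\,\d>0}\limsup_{i\to\infty}|V_i|^{-1}\log\#\tau(\Map(T,\rho,F,\d,\s_i)).$$

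The central dictionary is the following identity, obtained by unwrapping definitions: for $y\in X^V$ and $g\in\G$,
$$\rho_2(T^gy,y\circ\s(g))^2=|V|^{-1}\#\{v\in V:y_v(g^{-1})\ne y_{\s(g)v}(1_\G)\}.$$
Since $\s$ is a homomorphism, $\Pi^\s_v(\tau(y))(g^{-1})=\tau(y)(\s(g)v)=y_{\s(g)v}(1_\G)$, so the good-model inequality $\rho_2(T^gy,y\circ\s(g))<\d$ says exactly that $y_v$ and $\Pi^\s_v(\tau(y))$ agree at the single coordinate $g^{-1}$ for all but a $\d^2$-fraction of $v$. Moreover, every open neighborhood of $X$ in $\cA^\G$ contains a cylindrical neighborhood $\cO_E:=\{z\in\cA^\G:z\resto E\in\pi_E(X)\}$ for some finite $E\Subset\G$ with $1_\G\in E$ (where $\pi_E(X)\subset\cA^E$ is the projection), so both entropies may be rephrased using only such cylindrical neighborhoods.

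To show $\th_\Si(T,\rho)\le h_\Si(\G\cc X)$: given $F\Subset\G$ and $\d>0$, put $E:=F^{-1}\cup\{1_\G\}$ and apply a union bound over $g\in F$ to the displayed identity; this shows that every $y\in\Map(T,\rho,F,\d,\s)$ has $\Pi^\s_v(\tau(y))\resto E=y_v\resto E\in\pi_E(X)$ for all but $|F|\d^2|V|$ vertices $v$, so $\tau(y)\in\Omega(\cO_E,|F|\d^2,\s)$. For the reverse $h_\Si(\G\cc X)\le\th_\Si(T,\rho)$: fix $\cO\supset X$ open and $\eps>0$, choose $E\ni 1_\G$ with $\cO_E\subset\cO$, and fix once and for all a section $\Psi:\pi_E(X)\to X$. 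Given $\hat y\in\Omega(\cO,\eps,\s)$, define $y\in X^V$ by
$$y_v:=\begin{cases}\Psi(\Pi^\s_v(\hat y)\resto E)&\text{if }\Pi^\s_v(\hat y)\in\cO_E,\\ y_0&\text{otherwise},\end{cases}$$
for some arbitrary but fixed $y_0\in X$. A direct check using $g^{-1}\in E$ and $1_\G\in E$ shows that the bad set in the displayed identity has size at most $2\eps|V|$, so $y\in\Map(T,\rho,E^{-1},\sqrt{2\eps},\s)$; moreover $\tau(y)$ agrees with $\hat y$ on every ``good'' $v$, so $\hat y$ lies within normalized Hamming distance $\eps$ of $\tau(y)$. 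A Hamming-ball count then yields
$$\#\Omega(\cO,\eps,\s)\le\binom{|V|}{\lceil\eps|V|\rceil}|\cA|^{\lceil\eps|V|\rceil}\cdot\#\tau(\Map(T,\rho,E^{-1},\sqrt{2\eps},\s)),$$
and sending $\eps\searrow 0$ makes $H(\eps)+\eps\log|\cA|$ vanish while letting $(E^{-1},\sqrt{2\eps})$ range over all admissible $(F,\d)$ as $\cO$ (equivalently $E$) varies.

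The one piece of care required is the bookkeeping of the nested operations $\inf_\cO$, $\inf_\eps$, $\limsup_i$, and $\inf_{F,\d}$ so that the two sides align after the constructions above; this relies on the elementary monotonicity that $\beta(F,\d):=\limsup_i|V_i|^{-1}\log\#\tau(\Map(T,\rho,F,\d,\s_i))$ is non-decreasing in $\d$ (clear from $\Map$ itself being monotone in $\d$), which permits the $\eps\searrow 0$ limit to be pulled past the $\inf_{F,\d}$ cleanly. This is the only nontrivial step; everything else is a matter of unwinding definitions.
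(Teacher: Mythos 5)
Your proof is correct and takes essentially the same route as the paper's Appendix~A: the same $\{0,1\}$-valued pseudo-metric $\rho$, the same reduction to cylindrical neighborhoods $\cO_E$, and the same two-way correspondence between $\Omega$ and $\Map$ realized by the trace map $y\mapsto(y_v(1_\G))_v$ and a lift through a section of $\pi_E$. One technical point you elide that the paper tracks explicitly via $\Fix(1_\G)$: a sofic approximation need not satisfy $\s(1_\G)=\id$, so $\Pi_v^\s(\tau(y))(1_\G)=\tau(y)_{\s(1_\G)v}$ rather than $\tau(y)_v$; this forces one either to include $1_\G$ in $F$ so the union bound covers that coordinate, or to enlarge the bad set by $V\setminus\Fix(\s(1_\G))$, whose size the sofic hypothesis makes $o(|V|)$, leaving the conclusion unaffected.
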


\begin{proof}
To begin, we choose a pseudo-metric on $\cA^\G$ as follows. For $x,y \in \cA^\G$, let $\rho(x,y) = 1_{x_{1_\G} \neq y_{1_\G}}$.  Then $\rho$ is continuous and generating. So $\th_\Sigma(\G \cc X) = \th_\Sigma(\G \cc X,\rho)$.

Let $\eps>0$, $\cO \subset \cA^\G$ be an open set. We first analyze $\Omega(\cO,\eps,\s)$ from the definition of $h_\Sigma(\G \cc X)$.  Note that the topology for $\cA^\G$ is generated by the base $\sB = \{[a]: a \in \cA^F, F\Subset \G\}$ where if $a \in \cA^F$ then  $[a] = \{x \in \cA^\G: x|_F = a\}$.  In other words, open sets of $\sB$ are those that specify a configuration on a finite subset of coordinates.  For $F\Subset \G$ let $\cO(F) = \{y \in \cA^\G: \exists x\in X, y|_F = x|_F\} = \cup_{\substack{a\in \cA^F \\ [a] \cap X \neq \emptyset}}[a]$ be the open set containing all elements containing some configuration that appears in $X$ in the finite window $F$. 

\begin{claim}
Every open superset $\cO \supset X$ contains some open set of the form $\cO(F)$.
\end{claim}
Because $\Omega(\cO,\eps,\s)$ decreases as $\cO$ decreases, it suffices to only consider open sets of the form $\cO(F)$ in the definition of $h_\Sigma(\G \cc X)$.
\begin{proof}
$\cO$ is a union of elements in $\sB$ and $X$ is compact, so that there exists $ X \subset \cO' \subset \cO$ with $\cO'$ containing only finitely many base elements.   Let $F$ be the union of all coordinates specified by base elements in $\cO'$.  It follows that $\cO'$ contains $\cO(F)$. 
\end{proof}

Without loss of generality and for convenience we can assume that $F$ is symmetric, i.e. $F = F^{-1}$, and contains the identity.  This is because we can replace any $F$ with the larger set $F \cup F^{-1} \cup \{1_\G\}$, and both $\Map(T,\rho,F,\d,\s_i)$ and $\Omega(\cO(F),\eps,\s_i)$ are monotone decreasing in $F$.  

Let $n = |V_i|$. We assume $\lim_{i \to \infty} |V_i| = \infty$.  Now for each $x \in \Omega(\cO(F),\eps,\s_i)$ we obtain an element $\tx \in X^n$ and then show that these partial orbits form a good estimate for $\th_\Sigma$. Let $G(x) = \{v\in V_i: \Pi_v^{\s_i}(x) \in \cO(F)\}$.  For every $v\in G(x)$, choose some $\tx_v \in X$ that agrees with $\Pi_v^{\s_i}(x)$ on $F$.  For $v\notin G(x)$ choose an arbitrary element $\tx_v \in X$. Thus $\tx \in X^n$ .

Now for $v \in G(x), f \in F$, $T^f\tx_v(1_\G) = \tx_v(f^{-1}) = x_{\s_i(f)v}$. On the other hand we also want $\tx_{\s_i(f)v}(1_\G) = x_{\s_i(f)v}$, which is true if $v \in \s_i(f)^{-1}G(x)$ and $\s_i(1_\G)\s_i(f) v = \s_i(f)v$.  It follows that $\rho_2(T^f\tx, \tx \circ \s_i(f))<\sqrt{2\eps}$. 

Now consider separation of $\{\tx:x\in \Omega(\cO(F),\eps,\s_i)\}$.  We will show that a slightly smaller subset is $(\rho_\infty,1)$-separated.  By the pigeonhole principle there exists a subset $\bar{V_i}$ of size at least $(1-\eps)n$ such that $ \Omega(\cO(F),\eps,\s_i, \bar{V_i}): = \{x \in \Omega(\cO(F),\eps,\s_i): G(x) = \bar{V_i}\}$ has cardinality at least $e^{-n(H(\eps,1-\eps)+o(1))}\#\Omega(\cO(F),\eps,\s_i)$. Furthermore, if $x,y \in \Omega(\cO(F),\eps,\s_i, \bar{V_i})$ then $\rho_\infty(\tx, \ty) = 1$ if $x(v) \neq y(v)$ for some $v \in \bar{V_i} \cap \Fix(1_\G) $, where $\Fix(1_\G) = \{v\in V_i : \s_i(1_\G)v = v\}$.  Since there are at most $|\cA|^{(\eps + o(1)) n}$ configurations in $\cA^{V_i}$ with some fixed configuration on $\bar{V_i} \cap \Fix(1_\G)$, there exists a $(\rho_\infty,1)$-separated subset of $\Omega(\cO(F),\eps,\s_i, \bar{V_i})$ of size at least $|\cA|^{-(\eps+o(1)) n}\#\Omega(\cO(F)),\eps,\s_i,\bar{V_i})$.  It follows that
$$N_1(\Map(T,\rho,F,\sqrt{2\eps},\s_i),\rho_\infty) \geq |\cA|^{-(\eps +o(1)) n}e^{-n(H(\eps,1-\eps)+o(1))}\#\Omega(\cO(F),\eps,\s_i).$$

On the other hand, suppose we have some $\tx \in \Map(T,\rho,F,\d,\s_i)$.  This means that for every $f \in F$, there exists a set $\tV_i(f)$ of size $>(1- \d^2)n$ such that for $v \in \tV_i(f)$, $\tx_{\s_i(f)v}(1_\G) = T^f\tx_v(1_\G) = \tx_v(f^{-1})$.  Let $\tV_i = \cap_{f\in F}\tV_i(f)$. Then $|\tV_i| > (1-|F|\d^2)n$ and for $v \in \tV_i$, for every $f \in F$, $\tx_{\s_i(f)v}(1_\G) = T^f\tx_v(1_\G) = \tx_v(f^{-1})$. 

Define $x \in \cA^{V_i}$ by $x_v = \tx_v(1_\G)$.  Then for any fixed $v\in \tV_i$, for every $f\in F$, $\Pi_v^{\s_i}(x)(f) = x_{\s_i(f^{-1})v} = \tx_{\s_i(f^{-1})v}(1_\G) = T^{f^{-1}}\tx_v(1_\G) = \tx_v(f)$.  Since $\tx_v \in X$, it follows that $x \in \Omega(\cO(F), \d^2|F|,\s_i)$.

Also note that $\tx,\ty \in \Map(T,\rho,F,\d,\s_i)$ are $(\rho,\eps)$-separated for any $\eps \leq 1$ if and only if $\tx_v(1_\G) \neq \ty_v(1_\G)$ for some $v \in V_i$, so that $x \neq y$.  It follows that 
$$N_\eps(\Map(T,\rho,F,\d,\s_i),\rho_\infty) \leq \#\Omega(\cO(F), \d^2|F|,\s_i).$$  

Note that in the definitions of $h_\Sigma$ and $\th_\Sigma$, $F$ is fixed with respect to $\d$.

\end{proof}

\section{Concentration for the planted model}\label{sec:concentration}

\begin{defn}[Hamming metrics]
Define the {\bf normalized Hamming metric} $d_{\Sym(n)}$ on $\Sym(n)$ by
$$d_{\Sym(n)}(\s_1,\s_2) = n^{-1} \#\{ i \in [n]:~ \s_1(i) \ne \s_2(i)\}.$$
Define the {\bf normalized Hamming metric} $d_\Hom$ on $\Hom(\G, \Sym(n))$ by
$$d_\Hom(\s_1,\s_2) = \sum_{i=1}^d d_{\Sym(n)}(\s_1(s_i),\s_2(s_i)).$$
%By restriction, we will also consider this to be a metric on various subspaces of $\Hom(\G,\Sym(n))$, such as $\Hom_\chi(\G,\Sym(n))$. 
\end{defn}

The purpose of this section is to prove:
\begin{thm}\label{thm:planted-concentration}
There exist constants $c,\l>0$ (depending only on $k,d$) such that for every $\d>0$ there exists $N_\d$ such that for all $n>N_\d$, for every $1$-Lipschitz $f:\Hom_\chi(\G,\Sym(n))\to \R$,
\begin{eqnarray*}
\P^\chi_n\left( \left| f - \E^\chi_n[f] \right| > \delta \right) \le c\exp(-\l \d^2n).
\end{eqnarray*}
\end{thm}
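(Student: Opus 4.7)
The plan is to exploit the product structure of $\P^\chi_n$ and reduce to a single-coordinate concentration estimate, which is then handled by a bounded-differences martingale argument.

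First, I would record the product structure. Under the uniform model $\P^u_n$, the images $\s(s_1),\ldots,\s(s_d)$ are jointly independent, each uniform over the set of permutations whose orbit partition is a $k$-partition; moreover, as noted in the proof of Theorem~\ref{thm:cluster}, the events $F_{\chi,i}:=\{\chi\text{ is bichromatic on the orbit partition of }\s(s_i)\}$ are independent across $i$. Hence $\P^\chi_n$ factors as a product $\mu^{\otimes d}$, where $\mu$ is the uniform measure on
$$H^\chi:=\{\pi\in\Sym(n):\pi\text{ is a union of disjoint $k$-cycles with $\chi$-bichromatic orbit partition}\}.$$
Since $d_\Hom(\s,\s')=\sum_{i=1}^d d_{\Sym(n)}(\s(s_i),\s'(s_i))$, any $1$-Lipschitz $f$ on $\Hom_\chi(\G,\Sym(n))$ is $1$-Lipschitz coordinate-wise, and the standard tensorization of sub-Gaussian concentration for product measures reduces the theorem to the single-coordinate statement: for $1$-Lipschitz $g:\Sym(n)\to\R$,
$$\mu\bigl(|g-\E_\mu g|>t\bigr)\le c_1\exp(-\l_1 nt^2),$$
where $c_1,\l_1>0$ depend only on $k$.

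To establish this single-coordinate concentration, I would sample $\pi\sim\mu$ via a two-stage procedure and apply the Azuma--Hoeffding inequality to the Doob martingale of $g$ along the corresponding filtration. In stage one, generate a uniformly random $\chi$-bichromatic $k$-partition $P$ of $[n]$ sequentially: process elements in increasing order, and whenever the current element is not yet assigned to a part, form a new part by uniformly selecting $k-1$ further unassigned elements conditioned on the bichromatic property. In stage two, for each part of $P$ independently sample a uniformly random $k$-cycle on it. The resulting martingale has $O(n/k)$ increments, and each increment---either the revelation of one new part or the choice of its cycle---changes at most $k$ values of $\pi$, and hence changes $g$ by at most $k/n$. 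Azuma's inequality then gives a sub-Gaussian tail of the desired form $\exp(-\Omega(nt^2/k))$.

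The main technical obstacle is justifying that the sequential stage-one procedure indeed has the uniform distribution on $\chi$-bichromatic $k$-partitions as its law, and that the bichromatic conditioning near the end of the procedure (when few unassigned elements remain) does not spoil the bounded-difference estimate. A cleaner, more robust alternative is to analyze the natural swap Markov chain on $H^\chi$ that swaps two elements lying in different parts whenever the bichromatic property is preserved and then randomizes the $k$-cycles on the two affected parts. This chain is irreducible and reversible with stationary distribution $\mu$; a spectral gap lower bound of order $\Omega(1/n)$, combined with an Aida--Stroock or Bobkov--Ledoux type concentration inequality, yields the required sub-Gaussian bound and avoids the subtleties of the direct sequential construction. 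Either route completes the proof with constants $c,\l>0$ depending only on $k$ and $d$.
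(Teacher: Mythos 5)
Your top-level reduction matches the paper's: $\Hom_\chi(\G,\Sym(n))$ is the $d$-fold product of copies of $\Sym(n,k;\chi)$ with the sum metric, so by Lemma~\ref{lem:product-concentration} the theorem reduces to concentration on a single copy (which the paper records as Corollary~\ref{cor:1-permutation}). The gap is in your single-coordinate argument. The sequential ``anchor-and-fill'' sampler you describe does not have the uniform law on $\chi$-bichromatic $k$-partitions: a given partition $\pi$ is produced with probability $\prod_i N_i(\pi)^{-1}$, where $N_i(\pi)$ counts the bichromatic $(k-1)$-subsets available at step $i$, and these counts depend on the residual color budget, which varies with $\pi$. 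Worse, the procedure can dead-end (a step-wise bichromatic choice can leave a tail set that has no bichromatic completion), so one would have to condition on the unknown number of extensions at every step. If you instead interpret the filtration as exposing the blocks of a genuinely uniform $\pi$, the claim ``each increment changes $g$ by at most $k/n$'' is no longer a counting observation about the sampler; it is a statement about the conditional expectations $\E[g\mid\mathcal{F}_i]$, and to prove it you need a coupling between the conditional laws of the unrevealed part of $\pi$ given two partial partitions that differ in one block. With the bichromatic constraint in force this coupling is not the obvious swap coupling of the unconstrained permutation case, and you do not supply it. You flag this yourself, but your fallback --- a swap Markov chain on $H^\chi$ with an asserted $\Omega(1/n)$ spectral gap feeding an Aida--Stroock type inequality --- replaces one unproven step with another: the spectral gap bound on this constrained state space is a genuine theorem requiring a comparison or canonical-paths argument you don't give.

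The paper's route for the single coordinate is quite different and avoids both issues. It stratifies $\Sym(n,k;\chi)$ by the type $\vec{t}$ of the induced $k$-partition, observes that the constant-to-one orbit map $\Orb:\Sym(n,k;\chi,\vec{t})\to\Part(n,k;\chi,\vec{t})$ has product-of-cycle-group fibers, and that $H_\chi=\Sym(\chi^{-1}(0))\times\Sym(\chi^{-1}(1))$ acts transitively on $\Part(n,k;\chi,\vec{t})$ via a Lipschitz map, so concentration on each type class follows from known concentration for $\Sym(m)$ through Lemmas~\ref{lem:lipschitz}, \ref{lem:base} and~\ref{lem:sym2}. It then shows almost all of the mass sits on types near the maximizer (Lemma~\ref{lem:average}), constructs an explicit coupling between nearby type classes (Lemma~\ref{lem:coupling}), and assembles the pieces with the union-of-concentrated-pieces criterion of Lemma~\ref{lem:concentration-criterion}. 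This imports off-the-shelf symmetric-group concentration rather than proving a martingale bounded-difference bound or a spectral gap from scratch; if you want to pursue your approach, the missing ingredient in either variant would have to carry comparable weight to the transitive-action-plus-coupling machinery the paper builds.
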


\subsection{General considerations}
To begin the proof we first introduce some general-purpose tools.

\begin{defn}
A {\bf metric measure space} is a triple $(X,d_X,\mu)$ where $(X,d_X)$ is a metric space and $\mu$ is a Borel probability measure on $X$.  We will say $(X,d_X,\mu)$ is {\bf $(c,\l)$-concentrated} if for any 1-Lipschitz function $f:X \to \R$,
$$\mu\left( \left|f  - \int f~d\mu\right| > \eps\right) <  ce^{-\l \eps^2}.$$ 
 
If $(X,d_X,\mu)$ is $(c,\l)$-concentrated and $f:X \to \R$ is $L$-Lipschitz, then since $f/L$ is $1$-Lipschitz,
\begin{eqnarray}\label{L}
\mu\left( \left|f  - \int f~d\mu\right| > \eps\right)  = \mu\left( \left|f/L  - \int f/L~d\mu\right| > \eps/L\right)  <   c\exp(-\l \eps^2/L^2).
\end{eqnarray}
\end{defn}

\begin{lem}\label{lem:lipschitz}
Let $(X,d_X,\mu)$ be $(c,\l)$-concentrated. If $\phi:X \to Y$ is an $L$-Lipschitz map onto a measure metric space $(Y,d_Y,\nu)$ and $\nu = \phi_*\mu$ is the push-forward measure, then $(Y,d_Y,\nu)$ is $(c,\l/L^2)$-concentrated.
\end{lem}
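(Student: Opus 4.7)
The plan is a direct composition-plus-pushforward argument using equation (\ref{L}) from the paper. Let $g\colon Y\to\R$ be an arbitrary $1$-Lipschitz function. First I would observe that $g\circ\phi\colon X\to\R$ is $L$-Lipschitz: for $x_1,x_2\in X$,
\[
|g(\phi(x_1))-g(\phi(x_2))|\le d_Y(\phi(x_1),\phi(x_2))\le L\,d_X(x_1,x_2).
\]

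Next, by the change-of-variables formula for the pushforward $\nu=\phi_*\mu$,
\[
\int_X g\circ\phi\,d\mu \;=\; \int_Y g\,d\nu,
\]
and more generally, for any measurable $E\subset\R$,
\[
\mu\bigl(\{x\in X : (g\circ\phi)(x)\in E\}\bigr) \;=\; \nu\bigl(\{y\in Y : g(y)\in E\}\bigr).
\]
Applying this with $E=\{t\in\R : |t-\int g\,d\nu|>\eps\}$ yields
\[
\nu\!\left(\left|g-\int g\,d\nu\right|>\eps\right)
=\mu\!\left(\left|g\circ\phi-\int g\circ\phi\,d\mu\right|>\eps\right).
\]

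Finally, since $g\circ\phi$ is $L$-Lipschitz on the $(c,\lambda)$-concentrated space $(X,d_X,\mu)$, equation (\ref{L}) gives
\[
\mu\!\left(\left|g\circ\phi-\int g\circ\phi\,d\mu\right|>\eps\right)<c\exp(-\lambda\eps^2/L^2).
\]
Combining the last two displays shows that $(Y,d_Y,\nu)$ is $(c,\lambda/L^2)$-concentrated, as desired. There is no real obstacle here; the only thing to be slightly careful about is that $g\circ\phi$ is indeed measurable (which follows because $g$ and $\phi$ are continuous, hence Borel measurable), so the pushforward identity applies.
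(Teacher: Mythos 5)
Your proof is correct and follows exactly the same route as the paper: observe that the pullback $g\circ\phi$ is $L$-Lipschitz, use the pushforward identity to transfer the deviation probability from $(Y,\nu)$ to $(X,\mu)$, and apply equation (\ref{L}). The extra remark about measurability is fine but not needed for the paper's purposes.
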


\begin{proof}
This follows from the observation that if $f:Y \to \R$ is 1-Lipschitz, then the pullback $f \circ \phi: X \to \R$ is $L$-Lipschitz. So  equation (\ref{L}) implies 
$$\nu\left( \left|f  - \int f~d\nu\right| > \eps\right) = \mu\left( \left|f\circ \phi  - \int f\circ \phi~d\mu\right| > \eps\right)   < c\exp(-\l \eps^2/L^2).$$
\end{proof}

The next lemma is concerned with the following situation. Suppose $X=\sqcup_{i\in I} X_i$ is a finite disjoint union of spaces $X_i$. Even if we have good concentration bounds on the spaces $X_i$, this does not imply  concentration on $X$ because it is possible that a $1$-Lipschitz function $f$ will have different means when restricted to the $X_i$'s. However, if most of the mass of $X$ is concentrated on a sub-union $\cup_{j \in J} X_j$ (for some $J \subset I$) and the sets $X_j$ are all very close to each other, then there is a weak concentration inequality on $X$.

\begin{lem}\label{lem:concentration-criterion}
Let $(X,d_X,\mu)$ be a measure metric space with diameter $\le 1$. Suppose $X=\sqcup_{i\in I} X_i$ is a finite disjoint union of spaces $X_i$, each with positive measure ($\mu(X_i)>0$). Let $\mu_i$ be the induced probability measure on $X_i$. Suppose there exist $J\subset I$ and constants $\eta,\d,\l,c>0$ satisfying:
\begin{enumerate}
\item $\mu( \cup_{j \in J} X_j) \ge 1-\eta \ge 1/2.$
%\item $\eta \le \exp(-\l \eps^2/4)$
\item For every $j,k \in J$, there exists a measure $\mu_{j,k}$ on $X_j \times X_k$ with marginals $\mu_j,\mu_k$ respectively such that 
$$\mu_{j,k}(\{(x_j,x_k):~d_X(x_j,x_k) \le \delta\}) = 1.$$
\item For each $j \in J$, $(X_j,d_X,\mu_j)$ is $(c,\l)$-concentrated.
\end{enumerate}
Then for every $1$-Lipschitz function $f:X \to \R$ and every $\eps>\d+2\eta$,
\begin{eqnarray*}
\mu\left( \left| f - \int f~d\mu\right| > \eps\right) \le \eta + c\exp\left( - \l\left( \eps -\delta - 2\eta \right)^2\right). 
\end{eqnarray*}
%$(X,d_X,\mu)$ is $(2\d+4\exp(-\l \d^2), c+1, \l/4)$-concentrated.

%Then for every 1-Lipschitz function $f$ on $X$ and every $\eps>\delta +2\eta$,
%\begin{eqnarray*}
%\mu\left( \left| f - \int f~d\mu\right| > \eps\right)  
%&\le& e^{-\l \delta^2} + c\exp\left( - \l\left( \eps -\delta - 2\eta \right)^2\right).
%\end{eqnarray*}
\end{lem}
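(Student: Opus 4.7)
The plan is to reduce the concentration statement on all of $X$ to the given concentration on each $X_j$ (for $j\in J$), after first showing that the local means $\bar f_j := \int f\,d\mu_j$ are all close to the global mean $\bar f := \int f\,d\mu$. The key observation that drives everything is that $f$ being $1$-Lipschitz on a space of diameter at most $1$ forces the range of $f$ to lie in an interval of length $\le 1$, and in particular $|\bar f_i - \bar f_k| \le 1$ for arbitrary indices $i,k \in I$ (since each mean lies in the range of $f$).

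First I would fix an arbitrary $j\in J$ and estimate $|\bar f - \bar f_j|$. For any two indices $j,k \in J$, the coupling $\mu_{j,k}$ gives
\[
\bar f_j - \bar f_k \;=\; \int (f(x) - f(y))\,d\mu_{j,k}(x,y),
\]
and since $|f(x)-f(y)| \le d_X(x,y) \le \delta$ on the support of $\mu_{j,k}$, this yields $|\bar f_j - \bar f_k| \le \delta$. Writing $A := \bigcup_{j\in J} X_j$ and decomposing
\[
\bar f - \bar f_j \;=\; \sum_{k\in J}\mu(X_k)(\bar f_k - \bar f_j) \;+\; \sum_{k\in I\setminus J}\mu(X_k)(\bar f_k - \bar f_j),
\]
the first sum is bounded by $\delta\cdot \mu(A) \le \delta$, and the second by $1 \cdot \mu(A^c) \le \eta$ using the diameter bound above. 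Hence $|\bar f - \bar f_j| \le \delta + \eta$ for every $j \in J$.

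Second, I would split $\mu(|f-\bar f| > \eps)$ along the partition:
\[
\mu(|f - \bar f| > \eps) \;\le\; \mu(A^c) \;+\; \sum_{j\in J}\mu(X_j)\,\mu_j(|f - \bar f| > \eps).
\]
On each $X_j$ with $j\in J$, the triangle inequality gives $|f(x)-\bar f_j| \ge |f(x)-\bar f| - |\bar f - \bar f_j|$, so $\{|f-\bar f| > \eps\} \subset \{|f - \bar f_j| > \eps - \delta - \eta\}$. Under the hypothesis $\eps > \delta + 2\eta$, this radius is positive, and the $(c,\lambda)$-concentration of $(X_j,d_X,\mu_j)$ yields
\[
\mu_j(|f - \bar f_j| > \eps - \delta - \eta) \;\le\; c\exp\!\left(-\lambda(\eps - \delta - \eta)^2\right).
\]
Summing over $j\in J$ (the measures $\mu(X_j)$ sum to at most $1$) and using $\mu(A^c) \le \eta$ gives the bound $\eta + c\exp(-\lambda(\eps-\delta-\eta)^2)$, which is at least as strong as the claimed $\eta + c\exp(-\lambda(\eps-\delta-2\eta)^2)$.

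The argument is essentially bookkeeping, so I do not anticipate any real obstacle. The only point that requires a slight care is the estimate $|\bar f - \bar f_j|\le \delta+\eta$: one must exploit both the coupling (to get $\delta$ from the $J$-part) \emph{and} the diameter bound (to get $\eta$ from the $J^c$-part, via $|\bar f_k - \bar f_j|\le 1$ for arbitrary $k$). Without the diameter hypothesis, the mass on $A^c$ would be uncontrolled, and the whole strategy would fail.
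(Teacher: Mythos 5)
Your proof is correct and takes essentially the same route as the paper's: bound the deviation of each local mean $\bar f_j$ ($j\in J$) from the global mean using the diameter bound on $I\setminus J$ and the couplings on $J$, then split the tail probability along the partition and invoke $(c,\lambda)$-concentration on each $X_j$. The only difference is a minor bookkeeping choice — expanding $\bar f=\sum_k\mu(X_k)\bar f_k$ directly rather than normalizing $\bar f=0$ and dividing by $\mu(\cup_J X_j)$ — which gives you $|\bar f-\bar f_j|\le\delta+\eta$ instead of the paper's $\delta+2\eta$, a slightly sharper (and equally valid) constant.
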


\begin{proof}
Let $f:X \to \R$ be a 1-Lipschitz function. After adding a constant to $f$ if necessary, we may assume $\int f~d\mu=0$. Note that the mean of $f$ is a convex combination of its restrictions to the $X_i$'s:
\begin{eqnarray*}
0=\int f(x)~d\mu(x) &=&  \sum_{i\in I} \mu(X_i) \int f(x_i) ~d\mu_i(x_i) \\
&=& \sum_{i\in I \setminus J} \mu(X_i) \int f(x_i) ~d\mu_i(x_i) + \sum_{j\in J} \mu(X_j) \int f(x_j) d\mu_j(x_j).
\end{eqnarray*}
Since $f$ is 1-Lipschitz with zero mean, $|f| \le \textrm{diam}(X) \le 1$.  So
\begin{eqnarray*}
\left|\mu(\cup_{j\in J} X_j)^{-1} \sum_{j \in J} \mu(X_j) \int f(x_j)~d\mu_j(x_j) \right| &=& \left|\mu(\cup_{j\in J} X_j)^{-1} \sum_{i\in I \setminus J} \mu(X_i) \int f(x_i) ~d\mu_i(x_i) \right| \\
&\le& \frac{\eta}{1-\eta} \le 2\eta
\end{eqnarray*}
where the last inequality uses that $\mu(\cup_{j\in J} X_j) \ge 1-\eta$ and $\eta \le 1/2$. 

For any $j,k \in J$, the $\mu_j$ and $\mu_k$-means of $f$ are $\d$-close:
\begin{eqnarray*}
\left| \int f(x_j)~d\mu_j(x_j)  - \int f(x_k)~d\mu_k(x_k)\right| &=&\left| \int f(x_j) - f(x_k) ~d\mu_{j,k}(x_j,x_k)  \right| \\
&\le&  \int |f(x_j) - f(x_k)| ~d\mu_{j,k}(x_j,x_k)  \le \delta.
\end{eqnarray*}
So for any $j_0 \in J$,
$$\left| \int f(x_{j_0})~d\mu_{j_0}(x_{j_0}) - \mu(\cup_{j\in J} X_j)^{-1} \sum_{j \in J} \mu(X_j) \int f(x_j)~d\mu_j(x_j) \right| \le \delta.$$
Combined with the previous estimate, this gives
$$\left| \int f(x_{j_0})~d\mu_{j_0}(x_{j_0}) \right| \le \delta + 2\eta.$$

Now we estimate the $\mu$-probability that $f$ is $>\eps$ (assuming $\eps>\delta+2\eta$):

\begin{eqnarray*}
\mu\left( \left| f - \int f~d\mu\right| > \eps\right)  &=& \mu( | f | > \eps) \\
&\le&  \eta +  \sum_{j \in J}  \mu_j( | f|  > \eps) \mu(X_j) \\
&\le& \eta + \sum_{j \in J}  \mu_j\left( \left| f -\int f(x_{j})~d\mu_{j}(x_{j}) \right|  > \eps - \left| \int f(x_{j})~d\mu_{j}(x_{j}) \right| \right) \mu(X_j) \\
&\le& \eta + \sum_{j \in J}  \mu_j\left( \left| f -\int f(x_{j})~d\mu_{j}(x_{j}) \right|  > \eps -\delta -  2\eta\right) \mu(X_j) \\
&\le& \eta + c\exp\left( - \l\left( \eps -\delta - 2\eta \right)^2\right). 
\end{eqnarray*}
%Because $\eps>2\delta+4\eta$, $( \eps -\delta - 2\eta \right)^2 \ge \eps^2/4$. Because $\eta \le \exp(-\l \eps^2/4)$, we have
%$$\mu\left( \left| f - \int f~d\mu\right| > \eps\right) \le (c+1) \exp(- \l \eps^2/4).$$

\end{proof}

The next lemma is essentially the same as \cite[Proposition 1.11]{MR1849347}. We include a proof for convenience.
\begin{lem}\label{lem:product-concentration}\cite{MR1849347}
Suppose $(X,d_X,\mu)$ is $(c_1,\l_1)$-concentrated and $(Y,d_Y, \nu)$ is $(c_2,\l_2)$-concentrated. Define a metric  on $X\times Y$  by $d_{X\times Y}( (x_1,y_1),(x_2,y_2)) = d_X(x_1,x_2)  + d_Y(y_1,y_2)$. Then $(X\times Y, d_{X\times Y}, \mu \times \nu)$ is $(c_1+c_2, \min(\l_1,\l_2)/4)$-concentrated.  
\end{lem}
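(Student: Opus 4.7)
The plan is to follow the standard two-step reduction: integrate out one coordinate to collapse the problem to separate concentration estimates on each factor, then combine with a union bound.

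First, fix a $1$-Lipschitz function $f: X \times Y \to \mathbb{R}$ (with respect to $d_{X\times Y}$) and define the partial average
$g(x) := \int_Y f(x,y)\, d\nu(y).$
I would check two Lipschitz properties. For each fixed $x \in X$, the section $y \mapsto f(x,y)$ is $1$-Lipschitz on $(Y,d_Y)$ since $d_{X\times Y}((x,y_1),(x,y_2)) = d_Y(y_1,y_2)$. For $g$ itself, for any $x_1,x_2 \in X$,
$|g(x_1) - g(x_2)| \le \int_Y |f(x_1,y) - f(x_2,y)|\, d\nu(y) \le d_X(x_1,x_2),$
so $g$ is $1$-Lipschitz on $(X,d_X)$. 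Also by Fubini, $\int_X g\, d\mu = \int_{X\times Y} f\, d(\mu\times \nu)$.

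Next I would split the deviation using the triangle inequality:
$\Big| f(x,y) - \int f\, d(\mu\times\nu)\Big| \le |f(x,y) - g(x)| + \Big|g(x) - \int g\, d\mu\Big|.$
So if the left side exceeds $\varepsilon$, then one of the two right-hand terms exceeds $\varepsilon/2$. Applying concentration on $Y$ to the $1$-Lipschitz section $f(x,\cdot)$ (whose $\nu$-mean is $g(x)$) gives, for each $x$,
$\nu\bigl\{y:\, |f(x,y) - g(x)| > \varepsilon/2\bigr\} \le c_2 \exp(-\lambda_2 \varepsilon^2/4),$
and applying concentration on $X$ to $g$ gives
$\mu\bigl\{x:\, |g(x) - \textstyle\int g\, d\mu| > \varepsilon/2\bigr\} \le c_1 \exp(-\lambda_1 \varepsilon^2/4).$

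Finally, integrate the first estimate against $\mu$ and combine with the second by a union bound:
\begin{align*}
(\mu\times\nu)\Big\{ \Big|f - \textstyle\int f\, d(\mu\times\nu)\Big| > \varepsilon\Big\}
&\le c_2 \exp(-\lambda_2\varepsilon^2/4) + c_1 \exp(-\lambda_1 \varepsilon^2/4) \\
&\le (c_1+c_2)\exp\bigl(-\min(\lambda_1,\lambda_2)\varepsilon^2/4\bigr),
\end{align*}
which is exactly the $(c_1+c_2, \min(\lambda_1,\lambda_2)/4)$-concentration claim. There is no real obstacle here; the only subtlety worth double-checking is the Lipschitz constant of $g$, which is where the choice of the sum metric $d_X + d_Y$ (rather than, say, a max) makes the constants come out cleanly.
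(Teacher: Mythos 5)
Your proof is correct and is essentially the paper's own argument (with the roles of $X$ and $Y$ swapped — the paper integrates out $X$ first, you integrate out $Y$ first, but this is a symmetric choice). The decomposition into a section term and a partial-average term, the Lipschitz checks, and the union bound all match.
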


\begin{proof}
Let $F:X \times Y \to \R$ be 1-Lipschitz. For $y \in Y$, define $F^y:X \to \R$ by $F^y(x)=F(x,y)$. Define $G:Y \to \R$ by $G(y) = \int F^y(x)~d\mu(x)$. Then $F^y$ and $G$ are $1$-Lipschitz. 

If $|F(x,y) - \int F~d\mu\times \nu| > \eps$ then either $|F^y(x) - \int F^y~d\mu| > \eps/2$ or $|G(y) - \int G~d\nu| > \eps/2$. Thus
\begin{eqnarray*}
&&\mu\times \nu\left( \left\{ (x,y):~ \left|F(x,y)-\int F~d\mu\times \nu\right| > \eps\right\} \right)\\
 &\le& \mu \times \nu\left( \left\{(x,y):~\left|F^y(x) - \int F^y~d\mu\right| > \eps/2 \right\}\right) + \nu\left( \left\{y:~\left| G(y) - \int G~d\nu\right| > \eps/2 \right\}\right) \\
&\le& c_1 e^{-\l_1 \eps^2/4} + c_2 e^{-\l_2 \eps^2/4} \le (c_1+c_2) \exp( - \min(\l_1,\l_2) \eps^2/4).
\end{eqnarray*}
\end{proof}

\begin{lem}\label{lem:base}
Let $(X,d_X,\mu)$ and $(Y,d_Y,\nu)$ be metric-measure spaces. Suppose 
\begin{enumerate}
\item $X,Y$ are finite sets, $\mu$ and $\nu$ are uniform probability measures,
\item there is a surjective map $\Phi:X \to Y$ and a constant $C>0$ such that $|\Phi^{-1}(y)|=C$ for all $y \in Y$,
\item $(Y,d_Y, \nu)$ is $(c_1,\l_1)$-concentrated,
\item for each $y \in Y$, the fiber $\Phi^{-1}(y)$ is $(c_2,\l_2)$-concentrated (with respect to the uniform measure on $\Phi^{-1}(y)$ and the restricted metric),
\item for each $y_1,y_2 \in Y$ there is a probability measure $\mu_{y_1,y_2}$ on $\Phi^{-1}(y_1) \times \Phi^{-1}(y_2)$ with marginals equal to the uniform measures on $\Phi^{-1}(y_1)$ and  $\Phi^{-1}(y_2)$ such that
$$\mu_{y_1,y_2}(\{(x_1,x_2):~ d_X(x_1,x_2) \le d_Y(y_1,y_2)\})=1.$$
\end{enumerate}
Then  $(X,d_X,\mu)$ is $(c_1+c_2, \min(\l_1,\l_2)/4)$-concentrated. 
\end{lem}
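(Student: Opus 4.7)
The plan is to mimic the two–factor argument of Lemma \ref{lem:product-concentration}, but with ``$Y$'' playing the role of the horizontal factor and the fibers $\Phi^{-1}(y)$ playing the role of the vertical factor. Given a $1$-Lipschitz $f:X\to\R$, I would split it as $f = (f - g\circ\Phi) + g\circ\Phi$, where
\[
g(y) := \frac{1}{C}\sum_{x\in\Phi^{-1}(y)} f(x)
\]
is the fiberwise mean of $f$. Note $\int g\,d\nu = \int f\,d\mu$ because the fibers are all of equal size $C$.

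The first key step is to verify that $g:Y\to\R$ is $1$-Lipschitz. Given $y_1,y_2\in Y$, use the coupling $\mu_{y_1,y_2}$ of hypothesis (5):
\[
|g(y_1)-g(y_2)| = \left|\int (f(x_1)-f(x_2))\,d\mu_{y_1,y_2}(x_1,x_2)\right| \le \int d_X(x_1,x_2)\,d\mu_{y_1,y_2} \le d_Y(y_1,y_2).
\]
By the fact that $\Phi$ is measure preserving (sending $\mu$ to $\nu$, because fibers are equal-sized and $\mu,\nu$ are uniform), concentration of $\nu$ then gives
\[
\mu\!\left(\left|g\circ\Phi - \int f\,d\mu\right| > \tfrac{\epsilon}{2}\right) = \nu\!\left(\left|g-\int g\,d\nu\right|>\tfrac{\epsilon}{2}\right) \le c_1\exp(-\lambda_1\epsilon^2/4).
\]

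The second key step is to control $f - g\circ\Phi$ on each fiber. Restricted to a fiber $\Phi^{-1}(y)$, the function $f$ is still $1$-Lipschitz with respect to $d_X$, and its mean on that fiber is exactly $g(y)$. By hypothesis (4), applied to each fiber,
\[
\mu\!\left(\left\{x:\left|f(x)-g(\Phi(x))\right|>\tfrac{\epsilon}{2}\right\}\right) = \int_Y \mu_y\!\left(\left|f|_{\Phi^{-1}(y)} - g(y)\right|>\tfrac{\epsilon}{2}\right)\,d\nu(y) \le c_2\exp(-\lambda_2\epsilon^2/4),
\]
where $\mu_y$ is the uniform probability on the fiber $\Phi^{-1}(y)$. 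A union bound on the events $\{|g\circ\Phi - \int f\,d\mu|>\epsilon/2\}$ and $\{|f - g\circ\Phi|>\epsilon/2\}$, combined with the triangle inequality for deviations, gives
\[
\mu\!\left(\left|f-\int f\,d\mu\right|>\epsilon\right) \le (c_1+c_2)\exp\!\left(-\min(\lambda_1,\lambda_2)\epsilon^2/4\right),
\]
which is the desired $(c_1+c_2,\min(\lambda_1,\lambda_2)/4)$-concentration.

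I do not anticipate a serious obstacle here; the argument is essentially the standard ``base $\times$ fiber'' decomposition. The only subtle point to be careful with is that the coupling hypothesis (5) is used only once, to certify that $g$ is $1$-Lipschitz on $Y$; in particular, the measurability of $g$ on the finite set $Y$ is trivial, and the equality of fiber sizes is what makes $\Phi_*\mu=\nu$, so that horizontal concentration for $g$ directly yields the probability estimate for $g\circ\Phi$ on $X$.
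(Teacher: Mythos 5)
Your proof is correct and follows essentially the same route as the paper's: define the fiberwise mean $g = \E[f\mid Y]$, show it is $1$-Lipschitz via the coupling hypothesis, then apply concentration on the base $Y$ and on each fiber, and combine with a union bound. No gaps.
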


\begin{proof}
Let $f:X \to \R$ be 1-Lipschitz. Let $\E[f|Y]:Y \to \R$ be its conditional expectation defined by
$$\E[f|Y](y) = | \Phi^{-1}(y)|^{-1} \sum_{x \in \Phi^{-1}(y)} f(x).$$
Also let $\E[f] = |X|^{-1}\sum_{x\in X} f(x)$ be its expectation. 

We claim that $\E[f|Y]$ is $1$-Lipschitz. So let $y_1,y_2 \in Y$. By hypothesis (5)
\begin{eqnarray*}
\E[f|Y](y_1) - \E[f|Y](y_2) &=& \int f(x_1)-f(x_2)~d\mu_{y_1,y_2}(x_1,x_2) \\
&\le& \int d_X(x_1, x_2)~d\mu_{y_1,y_2}(x_1,x_2) \\
&\le& d_Y(y_1, y_2).
\end{eqnarray*}
The first inequality holds because $f$ is 1-Lipschitz and the second by hypothesis (5). This proves $\E[f|Y]$ is $1$-Lipschitz. 

Let $\eps>0$. Because $\Phi$ is $C$-to-1, it pushes forward the measure $\mu$ to $\nu$. Because $(Y,d_Y, \nu)$ is $(c_1,\l_1)$-concentrated, 
\begin{eqnarray}\label{base}
\mu\left( \left|\E[f|Y]\circ \Phi  - \int f~d\mu\right| > \eps/2\right) = \nu\left( \left|\E[f|Y]  - \E[f] \right| > \eps/2\right) <  c_1e^{-\l_1 \eps^2/4}.
\end{eqnarray}
Because each fiber $\Phi^{-1}(y)$ is $(c_2,\l_2)$-concentrated, for any $y \in Y$,
$$|\Phi^{-1}(y)|^{-1}\#\left\{x \in \Phi^{-1}(y):~\left|f(x) - \E[f|Y](y)  \right| > \eps/2\right\} <  c_2e^{-\l_2 \eps^2/4}.$$ 
Average this over $y \in Y$ to obtain
$$\mu\left( \{x\in X:~ \left| f(x) - \E[f|Y](\Phi(x))  \right| > \eps/2 \right) <  c_2e^{-\l_2 \eps^2/4}.$$ 
Combine this with (\ref{base})  to obtain
\begin{eqnarray*}
\mu\left( \left|f  - \int f~d\mu\right| > \eps\right) &\le& \mu\left( \left|f  - \E[f|Y](\Phi(x)) \right| > \eps/2 \right) +  \mu\left( \left|\E[f|Y](\Phi(x))  - \int f~d\mu\right| > \eps/2\right) \\
 &\le& c_2e^{-\l_2 \eps^2/4}+  c_1e^{-\l_1 \eps^2/4}
\end{eqnarray*}
which implies the lemma.

\end{proof}

\subsection{Specific considerations}

Given an equitable coloring $\chi:[n] \to \{0,1\}$, let $H_\chi$ be the stabilizer of $\chi$:
$$H_\chi = \{g \in \Sym(n):~ \chi(gv)=\chi(v)~\forall v \in [n]\}.$$
%Given a $k$-partition $\pi$ of $[n]$, let $G_\pi \le \Sym(n)$ be the point-wise stabilizer of $\pi$. This means $\s \in G_\pi$ if and only if $\s P_i = P_i$ for all $1\le i \le n/k$. 

\begin{lem}\label{lem:sym2}
The group $H_\chi$ is $(4, n/16)$-concentrated (when equipped with the uniform probability measure and the restriction of the normalized Hamming metric $d_{\Sym(n)}$).
\end{lem}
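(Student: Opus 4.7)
The plan is to identify $H_\chi$ with a product of two symmetric groups, compare the Hamming metric on $H_\chi$ with the natural product metric, and then invoke Maurey's concentration theorem for the symmetric group together with the product lemma already proved (Lemma \ref{lem:product-concentration}). The constants should fall out exactly to give $(4, n/16)$.

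First I would observe that an element of $H_\chi$ is, by definition, any permutation of $[n]$ that preserves the color classes $\chi^{-1}(0)$ and $\chi^{-1}(1)$. Since $|\chi^{-1}(0)| = |\chi^{-1}(1)| = n/2$, this identifies $H_\chi$ with $\Sym(\chi^{-1}(0)) \times \Sym(\chi^{-1}(1)) \cong \Sym(n/2) \times \Sym(n/2)$, and the uniform probability measure on $H_\chi$ corresponds to the product of the two uniform measures. Writing $g \in H_\chi$ as a pair $(\pi_0, \pi_1)$ and letting $d_{n/2}$ denote the normalized Hamming metric on $\Sym(n/2)$, a direct count of disagreements gives
\[
d_{\Sym(n)}((\pi_0,\pi_1),(\pi_0',\pi_1')) \;=\; \tfrac{1}{2}\bigl(d_{n/2}(\pi_0,\pi_0') + d_{n/2}(\pi_1,\pi_1')\bigr),
\]
i.e.\ the restriction of $d_{\Sym(n)}$ to $H_\chi$ is exactly one-half of the sum metric appearing in Lemma \ref{lem:product-concentration}.

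Next I would invoke Maurey's concentration theorem for the symmetric group: for any $m$, the space $(\Sym(m), d_m, \text{uniform})$ is $(2, m/8)$-concentrated. Applying this with $m = n/2$ gives that each factor of $H_\chi$ is $(2, n/16)$-concentrated. By Lemma \ref{lem:product-concentration}, the space $(H_\chi, d_{n/2}^{(0)} + d_{n/2}^{(1)}, \text{uniform})$ is $(4, n/16 \cdot \tfrac{1}{4}) = (4, n/64)$-concentrated.

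Finally I would transfer from the sum metric to $d_{\Sym(n)}$ using the relation $d_{\Sym(n)} = \tfrac{1}{2}(d_{n/2}^{(0)} + d_{n/2}^{(1)})$ on $H_\chi$. If $f: H_\chi \to \R$ is $1$-Lipschitz with respect to $d_{\Sym(n)}$, then $2f$ is $1$-Lipschitz with respect to the sum metric, so from the concentration in the previous step,
\[
\mu\bigl(|2f - 2\E f| > \eps\bigr) \;\le\; 4\exp\bigl(-n\eps^2/64\bigr),
\]
and substituting $\eps' = \eps/2$ yields $\mu(|f - \E f| > \eps') \le 4 \exp(-n\eps'^2/16)$, which is the claimed $(4, n/16)$-concentration. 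The main (essentially bookkeeping) obstacle is just keeping the constants straight through the three-way interaction between Maurey's theorem, the product lemma, and the factor of $\tfrac{1}{2}$ in the metric identification; the only nontrivial input is Maurey's theorem itself, which I would cite from \cite{MR1849347} rather than reproduce.
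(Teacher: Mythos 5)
Your proposal is correct and follows essentially the same route as the paper: identify $H_\chi$ with $\Sym(n/2)^2$, invoke the concentration of $\Sym(n/2)$ from Ledoux (the paper cites \cite[Corollary 4.3]{MR1849347}, giving the same constants $(2,n/16)$), combine via Lemma \ref{lem:product-concentration}, and account for the factor of $\tfrac12$ in passing between the sum metric and $d_{\Sym(n)}$. The only cosmetic difference is that the paper packages the final metric transfer as an application of Lemma \ref{lem:lipschitz} (the inclusion being $(1/2)$-Lipschitz), whereas you carry out that rescaling by hand; the bookkeeping is the same.
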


\begin{proof}
The group $H_\chi$ is isomorphic to the direct product $\Sym(\chi^{-1}(0)) \times \Sym(\chi^{-1}(1)) $ which is isomorphic to $\Sym(n/2)^{2}$. By \cite[Corollary 4.3]{MR1849347}, $\Sym(n/2)$ is $(2,n/16)$-concentrated.  So the result follows from Lemmas \ref{lem:product-concentration} and \ref{lem:lipschitz}. This uses that the inclusion map from $\Sym(n/2)^2$ to itself is $(1/2)$-Lipschitz when the source is equipped with the sum of the $d_{\Sym(n/2)}$-metrics and the target equipped with the $d_{\Sym(n)}$ metric. %\cite[Proposition 1.11]{MR1849347}. % which is a general result about direct products of spaces that are concentrated.
\end{proof}

We need to show that certain subsets of the group $\Sym(n)$ are concentrated. To define these subsets, we need the following terminology.

Recall that a {\bf $k$-partition} of $[n]$ is an unordered partition $\pi =\{P_1,\ldots,P_{n/k}\}$ of $[n]$ such that each $P_i$ has cardinality $k$. Let $\Part(n,k)$ be the set of all $k$-partitions of $[n]$. The group $\Sym(n)$ acts on $\Part(n,k)$ by $g \pi = \{gP_1,\ldots, gP_{n/k} \}$. 

Let $\s \in \Sym(n)$. The {\bf orbit-partition} of $\s$ is the partition $\Orb(\s)$ of $[n]$ into orbits of $\s$. For example, for any $v \in [n]$ the element of $\Orb(\s)$ containing $v$ is $\{\s^i v:~ i \in \Z\} \subset [n]$. Let $\Sym(n,k) \subset \Sym(n)$ be the set of all permutations $\s \in \Sym(n)$ such that the orbit-partition of $\s$ is a $k$-partition. 

%A permutation $\s$ is a {\bf disjoint union of $k$-cycles} if its orbit-partition $\Orb(\s)$ is a $k$-partition which means that every partition element of $\Orb(\s)$ has cardinality $k$. 

Recall from \S \ref{sec:almost} that a $k$-partition $\pi$ {\bf has type $\vt =(t_j)_{j=0}^k \in [0,1]^{k+1}$ with respect to a coloring $\chi$} if the number of partition elements $P$ of $\pi$ with $|P \cap \chi^{-1}(1)| = j$ is $t_jn$. We will also say that a permutation $\s \in \Sym(n,k)$ {\bf has type $\vt =(t_j)_{j=0}^k \in [0,1]^{k+1}$ with respect to a coloring $\chi$} if its orbit-partition $\Orb(\s)$ has type $\vt$ with respect to $\chi$. 

Let $\Sym(n,k;\chi,\vt)$ be the set of all permutations $\s \in \Sym(n,k)$ such that $\s$ has type $\vt$ with respect to $\chi$. %Also, if $\pi$ is a $k$-partition with type $\vt$ (with respect to $\chi$) then let $\Sym_\pi(n,k;\chi,\vt) = \{\s \in \Sym(n,k;\chi,\vt):~ \Orb(\s) = \pi\}$. 

\begin{lem}\label{lem:orbit-concentration}
The subset $\Sym(n,k;\chi,\vt)$ is either empty or $(6,\l n)$-concentrated (when equipped with the normalized Hamming metric $d_{\Sym(n)}$ and the uniform probability measure) where $\l>0$ is a constant depending only on $k$. 
\end{lem}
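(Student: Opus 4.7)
The plan is to apply Lemma~\ref{lem:base} with $X=\Sym(n,k;\chi,\vt)$, with $Y=\Part(n,k;\chi,\vt)$ (the set of $k$-partitions of $[n]$ whose type with respect to $\chi$ is $\vt$), and with $\Phi\colon X\to Y$ given by $\Phi(\s)=\Orb(\s)$. One may assume $X$ is nonempty. Then $\Phi$ is surjective, and every fiber has constant size $C=[(k-1)!]^{n/k}$ (the number of ways to cyclically orient each of the $n/k$ parts of $\pi$). The natural metric on $Y$ is $d_Y(\pi_1,\pi_2):=n^{-1}\#\{v\in [n]:\pi_1(v)\ne \pi_2(v)\}$, where $\pi(v)$ denotes the part of $\pi$ containing $v$.

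For hypothesis~(3) of Lemma~\ref{lem:base}, I use the permutation action of $H_\chi$ on $Y$. Since the type $\vt$ depends only on the coloring, $H_\chi$ acts transitively on $Y$, so fixing $\pi_0\in Y$ yields a surjection $\phi\colon H_\chi\to Y$, $\phi(h)=h\pi_0$, with fibers of constant size (the left cosets of $\Stab_{H_\chi}(\pi_0)$). Hence $\phi$ pushes the uniform measure forward to the uniform measure. A short counting argument shows $\phi$ is $k$-Lipschitz: letting $B=\{u\in [n]:h_1(u)\ne h_2(u)\}$, any vertex $v$ outside $h_1\bigl(B\cup \pi_0^{-1}(\pi_0(B))\bigr)$, a set of size at most $k|B|$, satisfies $(h_1\pi_0)(v)=(h_2\pi_0)(v)$, so $d_Y(h_1\pi_0,h_2\pi_0)\le k\,d_{\Sym(n)}(h_1,h_2)$. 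Combining Lemma~\ref{lem:sym2} with Lemma~\ref{lem:lipschitz} then yields that $(Y,d_Y)$ with the uniform measure is $(4,\,n/(16k^2))$-concentrated.

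For hypothesis~(4), I identify each fiber $\Phi^{-1}(\pi)$ with $\prod_{P\in\pi}\mathrm{Cyc}(P)$, where $\mathrm{Cyc}(P)$ is the $(k-1)!$-element set of $k$-cycles on $P$, under the product uniform measure. The restriction of the normalized Hamming metric decomposes as $\sum_{P\in\pi} n^{-1}\#\{v\in P:\s_1(v)\ne\s_2(v)\}$, so changing the cyclic ordering on a single part alters this distance by at most $k/n$. McDiarmid's bounded-differences inequality, applied to the $n/k$ independent factors each with bounded difference $k/n$, gives fiber concentration $(2,\,2n/k)$. For hypothesis~(5), couple $\s_1\in\Phi^{-1}(\pi_1)$ and $\s_2\in\Phi^{-1}(\pi_2)$ by using a single uniform cyclic ordering on each part of $\pi_1\cap\pi_2$ (shared by both) and independent uniform orderings elsewhere; the marginals are uniform and $\s_1=\s_2$ on $\bigcup(\pi_1\cap\pi_2)$, so $d_{\Sym(n)}(\s_1,\s_2)\le d_Y(\pi_1,\pi_2)$ almost surely. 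Plugging everything into Lemma~\ref{lem:base} gives $\bigl(4+2,\,\min(n/(16k^2),\,2n/k)/4\bigr)=(6,\,n/(64k^2))$-concentration, which is the conclusion with $\l=1/(64k^2)$.

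The main obstacle is obtaining the fiber concentration of hypothesis~(4) at the correct linear rate in $n$. A naive iteration of Lemma~\ref{lem:product-concentration} across the $n/k$ factors would shrink $\l$ by a factor of $4$ at each step and produce the useless bound $\l/4^{n/k}$, so a genuine Azuma--McDiarmid-type martingale inequality is essential; this is the one substantive input beyond the tools already developed in Appendix~\ref{sec:concentration}.
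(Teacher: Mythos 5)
Your proof is correct and follows essentially the same route as the paper's: it invokes Lemma~\ref{lem:base} with $X=\Sym(n,k;\chi,\vt)$, $Y=\Part(n,k;\chi,\vt)$ and $\Phi=\Orb$, obtains concentration on $Y$ by pushing forward from $H_\chi$ via the transitive action, gets fiber concentration from the product structure $\prod_{P\in\pi}\mathrm{Cyc}(P)$ with a bounded-differences inequality, and couples fibers by matching the cyclic orders on shared parts. (Your metric on $Y$ is the same as the paper's $\tfrac{k|\pi\vartriangle\pi'|}{2n}$, your Lipschitz constant $k$ is a slight sharpening of the paper's $k^2/2$, and your citation of McDiarmid plays the role of the paper's reference to Ledoux's Corollary~1.17 — all cosmetic.)
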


\begin{proof}
Let $\Part(n,k;\chi,\vt)$ be the set of all (unordered) $k$-partitions of $[n]$ with type $\vt$ (with respect to $\chi$). We will consider this set as a metric space in which the distance between partitions $\pi,\pi' \in \Part(n,k;\chi,\vt)$ is $d(\pi,\pi')=\frac{k|\pi \vartriangle \pi'|}{2n}$ where $\vartriangle$ denotes symmetric difference. 

Let $\Orb:\Sym(n,k;\chi,\vt) \to \Part(n,k;\chi,\vt)$ be the map which sends a permutation to its orbit-partition. We will verify the conditions of Lemma \ref{lem:base} with $X=\Sym(n,k;\chi,\vt)$, $Y=\Part(n,k;\chi,\vt)$ and $\Phi=\Orb$. Condition (1) is immediate.

 Observe that $\Orb$ is surjective and constant-to-1. In fact for any partition $\pi \in \Part(n,k;\chi,\vt)$, $|\Orb^{-1}(\pi)| = (k-1)!^{n/k}$ since an element $\s\in \Orb^{-1}(\pi)$ is obtained by choosing a $k$-cycle for every part of $\pi$. To be precise, if $\pi=\{P_1,\ldots, P_{n/k}\}$ then $\Orb^{-1}(\pi)$ is the set of all permutations $\s$ of the form $\s = \prod_{i=1}^{n/k} \s_i$ where $\s_i$ is a $k$-cycle with support in $P_i$.  This verifies condition (2) of Lemma \ref{lem:base}.

Observe that $H_\chi$ acts transitively on $\Part(n,k;\chi,\vt)$. Fix $\pi \in \Part(n,k;\chi,\vt)$ and define a map $\phi:H_\chi \to \Part(n,k;\chi,\vt)$ by $\phi(h)=h\pi$. We claim that $\phi$ is $k^2/2$-Lipschitz. Indeed, if $h_1,h_2 \in H_\chi$ then
\begin{eqnarray*}
d(h_1\pi, h_2\pi) &=& \frac{k|h_1\pi \vartriangle h_2\pi|}{2n} \\
&\le& \frac{k^2 \#\{p \in [n]:~ h_1(p) \ne h_2(p) \} }{2n} \\
&=& \frac{k^2}{2}d_{\Sym(n)}(h_1,h_2). 
\end{eqnarray*}
Because $H_\chi$ is $(4, n/16)$-concentrated by Lemma \ref{lem:sym2}, Lemma \ref{lem:lipschitz} implies 
$\Part(n,k;\chi,\vt)$ is $(4, n/4k^4)$-concentrated. This verifies condition (3) of Lemma \ref{lem:base}.

We claim that $\Orb^{-1}(\pi)$ is $(2, n/2k)$-concentrated. To see this, let $\pi=\{P_1,\ldots, P_{n/k}\}$ and let $\Sym_k(P_i) \subset \Sym(n)$ be the set of all $k$-cycles with support in $P_i$. Then $\Orb^{-1}(\pi)$ is isometric to $\Sym_k(P_1) \times \cdots \times \Sym_k(P_{n/k})$. The diameter of $\Sym_k(P_i)$, viewed as a subset of $\Sym(n)$ with the normalized Hamming metric on $\Sym(n)$, is $k/n$.  So the claim follows from  \cite[Corollary 1.17]{MR1849347}. This verifies condition (4) of Lemma \ref{lem:base}.

For $\pi_1,\pi_2 \in \Part(n,k;\chi,\vt)$, let $X_{\pi_1,\pi_2}$ be the set of all pairs $(\s_1,\s_2) \in \Orb^{-1}(\pi_1)\times \Orb^{-1}(\pi_2)$ such that if $P \in \pi_1\cap \pi_2$ then the restriction of $\s_1$ to $P$ equals the restriction of $\s_2$ to $P$. Observe that $X_{\pi_1,\pi_2}$ is non-empty and the projection maps $X_{\pi_1,\pi_2} \to\Orb^{-1}(\pi_i)$ ($i=1,2$) are constant-to-1. In fact, for any $\s_1 \in \Orb^{-1}(\pi_1)$, the set of $\s_2$ with $(\s_1,\s_2) \in X_{\pi_1,\pi_2}$ is bijective with the set of assignments of $k$-cycles to parts in $\pi_2 \setminus \pi_1$. 

Let $\mu_{\pi_1,\pi_2}$ be the uniform probability measure on $X_{\pi_1,\pi_2}$. Since the projection maps are constant-to-1, the marginals of $\mu_{\pi_1,\pi_2}$ are uniform.  Moreover, if $(\s_1,\s_2) \in X_{\pi_1,\pi_2}$ then 
$$\{i \in [n]:~ \s_1(i) \ne \s_2(i)\} \subset \cup_{P \in \pi_1 \setminus \pi_2} P.$$
Thus
$$d_{\Sym(n)}(\s_1,\s_2) \le n^{-1} |\cup_{P \in \pi_1 \setminus \pi_2} P|  = n^{-1} k |\pi_1 \vartriangle \pi_2|/2 = d(\pi_1,\pi_2).$$
This verifies condition (5) of Lemma \ref{lem:base}.

We have now verified all of the conditions of Lemma \ref{lem:base}. The lemma follows.

\end{proof}

Let $\Sym(n,k;\chi)$ be the set of all $\s \in \Sym(n,k)$ such that if $\vt = (t_j)_{j=0}^k$ is the type of $\s$ with respect to $\chi$ then $t_0=t_k=0$. In other words, $\s \in \Sym(n,k;\chi)$ if and only if the orbit-partition $\pi$ of $\s$ is proper with respect to $\chi$ (where we think of $\pi$ as a collection of hyper-edges). 

Let $\vs=(s_j)$ with $s_0 = s_k = 0$ and $s_j = \frac{1}{k(2^k-2)} {k \choose j}$ for $0 < j < k$. For $\delta>0$ let $\Sym_\delta(n,k;\chi)$ be the set of all $\s \in \Sym(n,k;\chi)$ such that if $\vt=(t_i)_{i=0}^k$ is the type of $\s$ (with respect to $\chi$) then 
$$\sum_{i=0}^k |s_i - t_i|^2 < \delta^2.$$

\begin{lem}\label{lem:average}
With notation as above, for sufficiently large $n$
$$ \frac{|\Sym_\delta(n,k;\chi)|}{|\Sym(n,k;\chi)|} \ge 1 - e^{-\l_1 \delta^2 n}$$
where $\l_1>0$ is a constant depending only on $k$.
\end{lem}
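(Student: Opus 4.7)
The plan is to partition $\Sym(n,k;\chi)$ by type and show that the number of permutations of each type is maximized at types close to $\vec{s}$, with Gaussian-like decay in the $\ell^2$ distance from $\vec{s}$.

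First, let $\cT'$ be the set of all $\vec{t}=(t_j)_{j=0}^k \in [0,1]^{k+1}$ satisfying $t_0=t_k=0$, $\sum_j t_j=1/k$, and $\sum_j j t_j=1/2$; let $\cT'_n$ be the subset where $n\vec{t}$ is integer-valued. Note $|\cT'_n|$ is polynomial in $n$. By the same counting as in Lemma \ref{lem:numberofpartitions} (applied with $p=1/2$) and the fact that each $k$-partition has orbit-preimage of size $(k-1)!^{n/k}$, for $\vec{t}\in\cT'_n$,
$$n^{-1}\log |\Sym(n,k;\chi,\vec{t})| = G_0(\vec{t}) + O(n^{-1}\log n),$$
where $G_0(\vec{t}) := H(\vec{t}) + \sum_{j=0}^k t_j \log\binom{k}{j} + c_k$ for a constant $c_k$ depending only on $k$ (the error constant likewise depends only on $k$).

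Second, I show $G_0$ is strictly concave on $\cT'$ with unique maximum at $\vec{s}$ and that this maximum is quadratically dominant. Concavity and uniqueness follow from the Lagrange multiplier computation carried out in the proof of Theorem~\ref{thm:1moment} (specialized to $p=1/2$, which gives exactly the formula \eqref{max-type}). On the affine span of $\cT'$, the Hessian of $G_0$ equals $-\operatorname{diag}(1/t_j)_{j=1}^{k-1}$. Since $s_j>0$ for $1\le j\le k-1$, there is a neighborhood $U$ of $\vec{s}$ and a constant $c_1=c_1(k)>0$ such that, on $U\cap\cT'$,
$$G_0(\vec{t}) \le G_0(\vec{s}) - c_1\|\vec{t}-\vec{s}\|_2^2.$$
Outside $U$, compactness of $\cT'$ together with continuity and strict concavity of $G_0$ gives $G_0(\vec{t})\le G_0(\vec{s})-c_2(k)$ for some $c_2>0$, hence a bound of the same form (with a smaller constant) on all of $\cT'$.

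Third, combine these with Lemma \ref{lem:diophantine} applied to $\cT'$ to select $\vec{s}^{(n)}\in\cT'_n$ with $\|\vec{s}^{(n)}-\vec{s}\|_\infty \le C(k)/n$. Since $G_0$ is Lipschitz on $\cT'$,
$$|\Sym(n,k;\chi)| \ge |\Sym(n,k;\chi,\vec{s}^{(n)})| \ge \exp\bigl(nG_0(\vec{s}) - O(\log n)\bigr).$$
On the other hand, summing over the polynomially many types $\vec{t}\in\cT'_n$ with $\|\vec{t}-\vec{s}\|_2\ge\delta$ and applying the quadratic bound gives
$$|\Sym(n,k;\chi)\setminus \Sym_\delta(n,k;\chi)| \le |\cT'_n|\cdot \exp\bigl(n G_0(\vec{s}) - c_1\delta^2 n + O(\log n)\bigr).$$
Taking the ratio, the polynomial factor and the $O(\log n)$ terms are absorbed by shrinking $c_1$ slightly, yielding
$$\frac{|\Sym(n,k;\chi)\setminus\Sym_\delta(n,k;\chi)|}{|\Sym(n,k;\chi)|} \le e^{-\l_1\delta^2 n}$$
for any $\l_1<c_1$ and all sufficiently large $n$ (the threshold on $n$ absorbing the subexponential factors), which is the desired bound.

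The main obstacle is isolating a clean quadratic lower bound on $G_0(\vec{s})-G_0(\vec{t})$ that is uniform over $\cT'$, rather than only a local bound near $\vec{s}$. This is handled by splitting into the neighborhood $U$ (Taylor expansion using the negative definite Hessian, whose eigenvalues are uniformly bounded away from zero since $s_j>0$) and the complement (compactness plus strict concavity), as indicated above.
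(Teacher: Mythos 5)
Your proof is correct and follows essentially the same route as the paper: reduce to counting by type, show that the exponential rate functional (which you call $G_0$, the paper calls $J$, differing only by a $k$-dependent constant) is strictly concave with unique maximizer $\vec{s}$, get a quadratic deficit away from $\vec{s}$, and beat the polynomially many types. Two minor points of comparison: first, the paper works with $k$-partitions $\Part(n,k;\chi,\vt)$ and passes to permutations at the end via the $(k-1)!^{n/k}$-to-one orbit map, whereas you count permutations directly; this is cosmetic. Second, the paper obtains the global quadratic bound $J(\vt)\le J(\vs)-\l_1'\|\vt-\vs\|^2$ in one shot by observing that the Hessian $-\operatorname{diag}(1/t_j)$ has all eigenvalues of absolute value at least $k$ uniformly over $\widetilde{\sM}$ (since $t_j\le 1/k$), while you split into a Taylor neighborhood of $\vec{s}$ and a far region handled by compactness plus uniqueness of the maximum. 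Both arguments work; the paper's is slightly tighter here because the Hessian happens to be uniformly bounded, while your split is more robust and would survive if it weren't. Two small imprecisions worth fixing in a write-up: (i) your claimed identity $n^{-1}\log|\Sym(n,k;\chi,\vt)|=G_0(\vt)+O(n^{-1}\log n)$ omits the $n$-dependent term $(1-1/k)(\log n -1)$ from Lemma \ref{lem:numberofpartitions}, so $c_k$ must be allowed to depend on $n$; this is harmless since the term is type-independent and cancels in the ratio, but as stated the equation is false; (ii) the Hessian of $G_0$ restricted to the affine span of $\cT'$ is not literally $-\operatorname{diag}(1/t_j)$ (that is the Hessian on the ambient simplex face) but rather its restriction to the two-codimensional tangent space, which is still negative definite with eigenvalues bounded away from zero by interlacing — the conclusion you draw is right, the phrasing is slightly off.
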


\begin{proof}
Let 
\begin{eqnarray*}
\Part(n, k; \chi,\vt) &=& \{\pi \in \Part(n,k):~\pi \textrm{ has type $\vt$ with respect to $\chi$}\},\\
\Part(n, k; \chi) &=& \{ \pi \in \Part(n,k):~ \chi \textrm{ is proper with respect to $\pi$} \}, \\
\Part_\delta(n, k; \chi) &=& \left\{ \pi \in \Part(n,k;\chi):~ \textrm{ if $\vt$ is the type of $\pi$ with respect to $\chi$ then } \sum_{i=0}^k |t_i - s_i|^2 < \delta^2\right\}.
\end{eqnarray*}
The orbit-partition map from $\Sym(n,k) \to \Part(n,k)$ is constant-to-1 and maps $\Sym(n,k;\chi)$ onto $\Part(n,k;\chi)$ and $\Sym_\delta(n,k;\chi)$ onto $\Part_\delta(n,k;\chi)$. Therefore, it suffices to prove
$$ \frac{|\Part_\delta(n,k;\chi)|}{|\Part(n,k;\chi)|} \ge 1 - e^{-\l \delta^2 n}$$
where $\l>0$ is a constant depending only on $k$.

Let $\widetilde{\sM}$ be the set of all vectors $\vt = (t_i)_{i=0}^{k} \in [0,1]^{k+1}$ such that $t_0=t_k=0$, $\sum_i t_i = 1/k$ and $\sum_j jt_j = 1/2$. 

 Recall from Lemma \ref{lem:numberofpartitions} that if $\vt \in \widetilde{\sM}$  and $n\vt$ is $\Z$-valued then
$$(1/n)\log |\Part(n,k;\chi,\vt) | = (1-1/k)(\log(n)-1) -\log(2)+J(\vt) + O(n^{-1}\log(n))$$
where $J(\vt) = H(\vec{t})-\sum_{j=0}^k t_j \log(j!(k-j)!)$. By the proof of Theorem  \ref{thm:1moment} (specifically equation (\ref{max-type})), $J$ is uniquely maximized in $\widetilde{\sM}$ by the vector $\vs$. 

In order to get a lower bound on $|\Part(n,k;\chi)|$, observe that there exists $\vtr \in  \widetilde{\sM}$ such that $n\vtr$ is $\Z$-valued and $|s_i-r_i| \le k/n$ for all $i$. Thus $J(\vtr)-J(\vs)=O(1/n)$. It follows that
\begin{eqnarray*}
\frac{1}{n} \log |\Part(n,k;\chi)| &\ge& \frac{1}{n} \log |\Part(n,k;\chi, \vtr)| = (1-1/k)(\log(n)-1) -\log(2)+J(\vs) + O(n^{-1}\log(n)).
\end{eqnarray*}

We claim that the Hessian of $J$ is negative definite. To see this, one can consider $J$ to be a function of  $[0,1]^{k+1}$. The linear terms in $J$ do not contribute to its Hessian. Since the second derivative of $x\mapsto -x\log x$ is $-1/x$, 
\begin{displaymath}
\frac{\partial^2 J}{\partial t_{i} \partial t_{j}} = \left\{
 \begin{array}{cc} 
0 & i \ne j \\
-1/t_{i} & i = j 
\end{array}\right.
\end{displaymath}
Thus the Hessian is diagonal and every eigenvalue is negative; so it is negative definite. 

Thus if $\vt \in \widetilde{\sM}$ is such that $\sum_i |t_i-s_i|^2 \ge \delta^2$ then
$$(1/n)\log |\Part(n,k;\chi,\vt) | \le  (1-1/k)(\log(n)-1) -\log(2)+J(\vs) - \delta^2 \l_1' + O(n^{-1}\log(n))$$
where $\l_1' = \frac{1}{2} \min_{\vt \in \widetilde{\sM}} \min_{1\le i \le k-1} 1/t_i$ is half the smallest absolute value of an eigenvalue of the Hessian of $J$ on $\widetilde{\sM}$.

If $\vt$ is the type of a $k$-partition $\pi$ of $n$ then $t_i \in \{0,1/n,2/n,\ldots, 1\}$. Thus the number of different types of $k$-partitions of $[n]$ is bounded by a polynomial in $n$ (namely $(n+1)^{k+1}$). Thus 
\begin{eqnarray*}
 \frac{|\Part_\delta(n,k;\chi)|}{|\Part(n,k;\chi)|} &\ge& 1 - (n+1)^{k+1} \frac{\exp(n  [(1-1/k)(\log(n)-1) -\log(2)+J(\vs) - \delta^2 \l_1' + O(n^{-1}\log(n))])}{\exp(n  [(1-1/k)(\log(n)-1) -\log(2)+J(\vs) + O(n^{-1}\log(n))])} \\
 &=& 1 - n^c \exp(-\delta^2 \l_1' n)
\end{eqnarray*}
where $c = O_k(1)$. This implies the lemma.
 
\end{proof}

Recall that a {\bf $k$-cycle} is a permutation $\pi \in \Sym(n)$ of the form $\pi=(v_1,\ldots, v_k)$ for some $v_1,\ldots, v_k \in [n]$. In other words, $\pi$ has $n-k$ fixed points and one orbit of size $k$. The {\bf support} of $\pi \in \Sym(n)$ is the complement of the set of $\pi$-fixed points. It is denoted by $\supp(\pi)$. Two permutations are {\bf disjoint} if their supports are disjoint. A permutation $\pi \in \Sym(n)$ is a {\bf disjoint product of $k$-cycles} if there exist pairwise disjoint $k$-cycles $\pi_1,\ldots, \pi_m$ such that $\pi = \pi_1\cdots \pi_m$. In this case we say that each $\pi_i$ is {\bf contained in $\pi$}.

\begin{lem}\label{lem:coupling}
Let $\vt, \vu \in [0,1]^{k+1}$. Suppose
$$\sum_{i=0}^k |t_i-u_i| < \delta.$$
Suppose $ \Sym(n,k;\chi,\vt)$ and $ \Sym(n,k;\chi,\vu)$ are non-empty (for some integer $n$ and equitable coloring $\chi$). 

For $\s, \s' \in \Sym(n,k)$, let $|\s \vartriangle \s'|$ be the number of $k$-cycles $\tau$ that are either in $\s$ or in $\s'$ but not in both. Let
$$Z = \{(\s,\s') \in \Sym(n,k;\chi, \vt) \times \Sym(n,k;\chi, \vu):~| \s \vartriangle \s'| \le  \delta n \}.$$
Then $Z$ is non-empty and there exists a probability measure $\mu$ on $Z$ with marginals equal to the uniform probability measures on $ \Sym(n,k;\chi,\vt)$ and $ \Sym(n,k;\chi,\vu)$ respectively. 
\end{lem}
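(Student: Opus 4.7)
The plan is to construct an explicit coupling by building $\sigma$ and $\sigma'$ from a common scaffold of shared $k$-cycles. Set $m_j := \min(nt_j, nu_j)$ for each $0 \le j \le k$; these are nonnegative integers since $\Sym(n,k;\chi,\vt)$ and $\Sym(n,k;\chi,\vu)$ are nonempty. Writing $M := \sum_j m_j$ and using $\sum_j t_j = \sum_j u_j = 1/k$, a one-line identity gives $\tfrac{n}{k} - M = \tfrac{n}{2}\sum_j |t_j - u_j| < \delta n / 2$. The strategy is to force $\sigma$ and $\sigma'$ to share a designated set of $M$ many $k$-cycles, so that $|\sigma \triangle \sigma'| \le 2(\tfrac{n}{k}-M) < \delta n$ automatically.

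The coupling is sampled in three stages. First, choose uniformly a pair $(V_c, \Pi_c)$ where $V_c \subset [n]$ has cardinality $kM$ and $\Pi_c$ is a $k$-partition of $V_c$ with exactly $m_j$ parts $P$ satisfying $|P \cap \chi^{-1}(1)| = j$. Such a pair exists because $m_j \le nt_j$ and $m_j \le nu_j$ force $\sum_j j m_j \le n/2$ (and similarly for the complementary color class). Second, conditionally on $V_c$, independently sample a uniform $k$-partition $\pi_1$ of $[n] \setminus V_c$ with $(nt_j - m_j)$ parts of type $j$, and a uniform $k$-partition $\pi_2$ of $[n] \setminus V_c$ with $(nu_j - m_j)$ parts of type $j$. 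Set $\pi = \Pi_c \cup \pi_1$ and $\pi' = \Pi_c \cup \pi_2$. Third, for each $P \in \Pi_c$ draw a single uniform $k$-cycle on $P$ to be used in both $\sigma$ and $\sigma'$, while for each part of $\pi \setminus \Pi_c$ (resp.\ $\pi' \setminus \Pi_c$) sample an independent uniform $k$-cycle on that part for $\sigma$ (resp.\ $\sigma'$).

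Uniformity of the marginals is the main thing to verify, and it reduces to a double-count. Each $\pi_0 \in \Part(n,k;\chi,\vt)$ admits exactly $\prod_j \binom{nt_j}{m_j}$ decompositions $\pi_0 = \Pi_c \sqcup \pi_1$ with $\Pi_c$ having $m_j$ parts of type $j$. Moreover, the number of admissible residual partitions $\pi_1$ over $[n] \setminus V_c$ depends on $V_c$ only through the sizes $|V_c \cap \chi^{-1}(i)|$, which are determined by $(m_j)_j$. Hence the probability that the constructed partition equals $\pi_0$ is proportional to $\prod_j \binom{nt_j}{m_j}$ and is in particular independent of $\pi_0$. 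Combined with independent uniform $k$-cycles on each part, this shows $\sigma$ is uniform on $\Sym(n,k;\chi,\vt)$, and the argument for $\sigma'$ is identical. Since every $k$-cycle supported on a part of $\Pi_c$ is shared by construction, the coupling is supported on $Z$, which therefore is nonempty and carries the desired measure $\mu$.

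The plan contains no genuinely hard step; the only minor care required is the marginal uniformity computation, which hinges on the observation that $V_c$ enters the count of residual partitions only via its color-class sizes.
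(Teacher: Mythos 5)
Your proof is correct, and it reaches the conclusion by a genuinely different (and arguably cleaner) coupling than the one in the paper. The paper shows that the bipartite relation $Z \subset \Sym(n,k;\chi,\vt)\times\Sym(n,k;\chi,\vu)$ is biregular --- for each $\sigma$ the number of $\sigma'$ with $(\sigma,\sigma')\in Z$ is a constant $C_1$, and symmetrically a constant $C_2$ --- by an algorithmic enumeration over a shared subset $S$ of cycles of variable size, and then takes $\mu$ to be the uniform measure on all of $Z$. You instead fix the overlap in advance by taking $m_j=\min(nt_j,nu_j)$ shared parts of each type (so $M=\sum_j m_j$ is the maximal overlap forced by the two types), sample a scaffold $\Pi_c$ of exactly that shape, and then fill in the residual $\vt$- and $\vu$-partitions and cycles independently; the identity $\tfrac{n}{k}-M=\tfrac{n}{2}\sum_j|t_j-u_j|<\tfrac{\delta n}{2}$ guarantees the pair lands in $Z$. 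Your measure is supported on a proper subset of $Z$ (pairs sharing at least $m_j$ cycles of each type) rather than all of $Z$, which is of course fine for the lemma. The marginal check replaces the paper's two biregularity counts by the single observation that the number of residual $\vt$-partitions of $[n]\setminus V_c$ depends on $V_c$ only through the color-class counts $|V_c\cap\chi^{-1}(i)|=\sum_j j\,m_j$ (and $\sum_j(k-j)m_j$), which are fixed; this is an instance of the transitivity of $H_\chi$ on subsets with prescribed color-class sizes and is worth stating explicitly rather than leaving as ``by symmetry''. The two proofs are comparable in length; yours has the advantage that nonemptiness of $Z$ falls out immediately from well-definedness of the sampler, whereas the paper proves nonemptiness as a separate preliminary step.
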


\begin{proof}
Let $\rho \in \Sym(n)$ be a disjoint product of $k$-cycles. The {\bf type of $\rho$  with respect to $\chi$} is the vector $\vtr=(r_i)_{i=0}^k$ defined by: $r_i$ is $1/n$ times the number of $k$-cycles $\rho'$ contained in $\rho$ such that $|\supp(\rho') \cap \chi^{-1}(1)| = i$.

Let $\s \in \Sym(n,k;\chi,\vt)$. Then there exist disjoint $k$-cycles $\s'_1,\ldots, \s'_m$ in $\s$ such that if $\rho=\s'_1\cdots \s'_m$ and $\vtr=(r_i)_{i=0}^k$ is the type of $\rho$ then $r_i = \min(t_i,u_i)$. Note $m \ge n(1/k-\delta/2)$ by assumption on $\vt$ and $\vu$. Moreover, there exist $k$-cycles $\s'_{m+1},\cdots, \s'_{n/k}$ such that the collection $\s'_1,\ldots, \s'_{n/k}$ is pairwise disjoint and the type of $\s' = \s'_1\cdots \s'_{n/k}$ is $\vu$. Then $|\s \vartriangle \s'| = 2(n/k-m) \le \delta n$. So $(\s,\s') \in Z$ which proves $Z$ is non-empty. 

We claim that there is a constant $C_1>0$ such that for every $\s \in \Sym(n,k;\chi,\vt)$ the number of $\s' \in \Sym(n,k;\chi,\vu)$ with $(\s,\s') \in Z$ is $C_1$. Indeed the following algorithm constructs all such $\s'$ with no duplications:

{\bf Step 1}. Let $\s = \s_1\cdots \s_{n/k}$ be a representation of $\s$ as a disjoint product of $k$-cycles. Choose a vector $\vtr=(r_i)_{i=0}^k$ such that
\begin{enumerate}
\item there exists a subset $ S \subset [n/k]$ with cardinality $|S| \ge n(1/k- \delta/2)$ such that if  $\rho = \prod_{i \in S} \s_i$ then $\vtr$ is the type of $\rho$;
\item $r_i \le u_i$ for all $i$. 
\end{enumerate}

{\bf Step 2}. Choose a subset $S \subset [n/k]$ satisfying the condition in Step 1. 

{\bf Step 3}. Choose pairwise disjoint $k$-cycles $\s'_1,\ldots, \s'_{n/k -|S|}$ such that 
\begin{enumerate}
\item $\supp(\s_i) \cap \supp(\s'_j) = \emptyset$ ($\forall i\in S$) ($\forall j$);
\item $\s'_j$ is not contained in $\s$ ($\forall j$);
\item if $\s' = \prod_{i \in S} \s_i \prod_j \s'_j$ then $\s'$ has type $\vu$.
%\item $\supp(\s'_i) \ne \supp(\s_j)$ for any $1\le i \le n/k -|S|$ and $1 \le j \le n/k$.
\end{enumerate}

The range of possible vectors $\vtr$ in Step 1 depends only on $k,n,\vt,\vu$. The number of choices in Steps 2 and 3 depends only on the choice of $\vtr$ in Step 1 and on $k,n,\vt,\vu$. This proves the claim.

Similarly,  there is a constant $C_2>0$ such that for every $\s' \in \Sym(n,k;\chi,\vu)$ the number of $\s \in \Sym(n,k;\chi,\vt)$ with $(\s,\s') \in Z$ is $C_2$. It follows that the uniform probability measure on $Z$ has marginals equal to the uniform probability measures on $ \Sym(n,k;\chi,\vt)$ and $ \Sym(n,k;\chi,\vu)$ respectively.

\end{proof}

\begin{cor}\label{cor:1-permutation}
Let $U_{\Sym(n,k;\chi)}$ denote the uniform probability measure on $\Sym(n,k;\chi)$ and let $\E_{\Sym(n,k;\chi) }$ be the associated expectation operator. There are constants $c,\l>0$ (depending only on $k$) such that for every $\d>0$, there exists $N_\d$ such that for all $n>N_\d$, for every  1-Lipschitz $f:\Sym(n,k;\chi) \to \R$,
\begin{eqnarray*}
U_{\Sym(n,k;\chi)}\left( \left| f - \E_{\Sym(n,k;\chi) } [f] \right| > \delta \right) \le c\exp(-\l \d^2n).
\end{eqnarray*}
%where  $\E_{\Sym(n,k;\chi) } [f] = |\Sym(n,k;\chi)|^{-1}\sum_{\s \in \Sym(n,k;\chi)} f(\s)$ is the average of $f$. 

Moreover $\d \mapsto N_\d$ is monotone decreasing. 
\end{cor}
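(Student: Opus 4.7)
The plan is to decompose $\Sym(n,k;\chi)$ as a disjoint union over its type classes and then invoke Lemma \ref{lem:concentration-criterion}. Specifically, writing $\Sym(n,k;\chi) = \bigsqcup_\vt \Sym(n,k;\chi,\vt)$ over nonempty type classes (those with $t_0 = t_k = 0$ and $\sum_j j t_j = 1/2$), I would take the ``good'' index set $J$ to consist of types lying in the $\ell^2$-ball $\{\vt:\ \|\vt-\vs\|_2 < \delta'\}$ around the optimal type $\vs$ identified in Lemma \ref{lem:average}, where $\delta' > 0$ is a parameter to be chosen as a constant multiple of $\delta$. Lemma \ref{lem:average} immediately supplies condition~(1) of Lemma \ref{lem:concentration-criterion}: the sub-union $\bigsqcup_{\vt\in J} \Sym(n,k;\chi,\vt)$ has mass at least $1 - \eta$, with $\eta := e^{-\lambda_1 (\delta')^2 n}$, which is eventually $\leq 1/2$. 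Lemma \ref{lem:orbit-concentration} supplies condition~(3): each nonempty fiber $\Sym(n,k;\chi,\vt)$ is $(6, \lambda_2 n)$-concentrated for a constant $\lambda_2 = \lambda_2(k) > 0$.

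To verify condition~(2) I would observe that for any $\vt, \vu \in J$, the Cauchy--Schwarz inequality gives $\|\vt-\vu\|_1 \leq \sqrt{k+1}\,\|\vt-\vu\|_2 \leq 2\sqrt{k+1}\,\delta'$. Lemma \ref{lem:coupling} then produces a probability measure on $\Sym(n,k;\chi,\vt) \times \Sym(n,k;\chi,\vu)$ with uniform marginals, supported on pairs whose $k$-cycle symmetric difference has at most $2\sqrt{k+1}\,\delta' n$ elements. Since each $k$-cycle in this symmetric difference contributes at most $k$ points to the Hamming discrepancy, the normalized Hamming distance along any coupled pair is at most $2k\sqrt{k+1}\,\delta'$, which is the coupling parameter required by Lemma \ref{lem:concentration-criterion}. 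Note that the diameter of $\Sym(n,k;\chi)$ in $d_{\Sym(n)}$ is at most $1$, so the hypotheses of Lemma \ref{lem:concentration-criterion} are all met.

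Finally I would set $\delta' := \delta/(8k\sqrt{k+1})$ so that the coupling parameter equals $\delta/4$, and define $N_\delta$ to be the smallest $n$ such that $\eta < \delta/16$; this $N_\delta$ is clearly monotone decreasing in $\delta$. For $n \geq N_\delta$, Lemma \ref{lem:concentration-criterion} yields
$$\mu\bigl(\,|f - \E_{\Sym(n,k;\chi)}[f]| > \delta\,\bigr) \;\leq\; \eta + 6\exp\!\Bigl(-\lambda_2 n\,\bigl(\delta - \tfrac{\delta}{4} - 2\eta\bigr)^2\Bigr) \;\leq\; e^{-\lambda_1 (\delta')^2 n} + 6\exp\!\Bigl(-\tfrac{\lambda_2}{4}\,\delta^2 n\Bigr).$$
Both exponents have the form (constant depending only on $k$) times $\delta^2 n$, so they can be absorbed into a single bound $c\exp(-\lambda \delta^2 n)$ with $c, \lambda$ depending only on $k$, as desired. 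The main obstacle is just cleanly tracking the constants as one translates between the $\ell^2$-radius $\delta'$ (which controls the mass of the sub-union via Lemma \ref{lem:average}), the $\ell^1$-radius (which is the natural input to Lemma \ref{lem:coupling}), and the normalized Hamming radius (which appears in Lemma \ref{lem:concentration-criterion}); no new ideas beyond combining the earlier lemmas are needed.
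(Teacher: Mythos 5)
Your proof is correct and takes essentially the same approach as the paper: decompose by type class, apply Lemma \ref{lem:concentration-criterion} using Lemmas \ref{lem:orbit-concentration}, \ref{lem:average}, and \ref{lem:coupling} to check its three hypotheses, and absorb constants. The only cosmetic difference is that the paper applies the criterion with $\eps = 3\delta$ and then rescales at the end, whereas you shrink the $\ell^2$-radius $\delta'$ up front so that $\eps = \delta$ works directly; your explicit tracking of the $\ell^2$-to-$\ell^1$-to-Hamming conversion is slightly more careful than the paper's.
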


\begin{proof}
The set $\Sym(n,k;\chi)$ is the disjoint union of $ \Sym(n,k;\chi,\vt)$ over $\vt \in [0,1]^{k+1}$. Let $\d>0$.  Lemmas \ref{lem:orbit-concentration}, \ref{lem:average} and \ref{lem:coupling} imply that for all sufficiently large $n$, this decomposition of $\Sym(n,k;\chi)$ satisfies the criterion in  Lemma \ref{lem:concentration-criterion} where we set $c=3$, $\eta = \exp(-\l_1 \d^2n)$ and $\l = \l_0 n$ where $\l_0,\l_1>0$ depend only on $k$. So for every $1$-Lipschitz function $f:\Sym(n,k;\chi) \to \R$, every $\eps>\d+2\eta$ and all sufficiently large $n$,
\begin{eqnarray*}
U_{\Sym(n,k;\chi)}\left( \left| f - \E_{\Sym(n,k;\chi) } [f] \right| > \eps\right) \le \exp(-\l_1 \d^2n) + c\exp\left( - \l_0n\left( \eps -\delta - 2\eta \right)^2\right). 
\end{eqnarray*}
In particular, there exist $N_\d$ such that if $n>N_\d$ the inequality above holds and $2\eta < \delta$. By choosing $N_\d$ larger if necessary, we require that  $\d \mapsto N_\d$ is monotone decreasing.

Set $\eps=3\delta$. Because $\eps-\d-2\eta \ge \d$
\begin{eqnarray*}
U_{\Sym(n,k;\chi)}\left( \left| f - \E_{\Sym(n,k;\chi) } [f]\right| > 3\delta \right) \le \exp(-\l_1 \d^2n) + c\exp\left( - \l_0n\d^2\right) \le  (1+c) \exp(-\l \d^2n) 
\end{eqnarray*}
where $\l=\min(\l_0,\l_1)$. The corollary is now finished by changing variables.

\end{proof}

\begin{proof}[Proof of Theorem \ref{thm:planted-concentration}]
The space of homomorphisms $\Hom_\chi(\G,\Sym(n))$ is the $d$-fold direct power of the spaces $\Sym(n,k;\chi)$. So the Theorem follows from Corollary \ref{cor:1-permutation} and the proof of Lemma \ref{lem:product-concentration}.

\end{proof}

%\cite{ElekSzabo2004}

\bibliography{biblio}
\bibliographystyle{unsrt}

\end{document}